\DeclareFontFamily{U}{rsfs}{} \DeclareFontShape{U}{rsfs}{n}{it}{<->
rsfs10}{} \DeclareSymbolFont{mscr}{U}{rsfs}{n}{it}
\DeclareSymbolFontAlphabet{\scr}{mscr}
\def\mathscr{\scr}
\begin{document}
%%%%%%%%%%%%%%%%%%%%%%%%%%%%%%%%%%%%%%%%%%%%%%%%%%%%%%%%%%%%%%%%%%%%%%%%
%%%%%%%%%%%%%%%%%%%%%%%%%%     Macros      %%%%%%%%%%%%%%%%%%%%%%%%%%%%%
%%%%%%%%%%%%%%%%%%%%%%%%%%%%%%%%%%%%%%%%%%%%%%%%%%%%%%%%%%%%%%%%%%%%%%%%
\def\e#1\e{\begin{equation}#1\end{equation}}
\def\ea#1\ea{\begin{align}#1\end{align}}
\def\eq#1{{\rm(\ref{#1})}}
\theoremstyle{plain}% default
\newtheorem{thm}{Theorem}[section]
\newtheorem{lem}[thm]{Lemma}
\newtheorem{prop}[thm]{Proposition}
\newtheorem{cor}[thm]{Corollary}
\theoremstyle{definition}
\newtheorem{dfn}[thm]{Definition}
\newtheorem{ex}[thm]{Example}
\newtheorem{rem}[thm]{Remark}
\numberwithin{figure}{section}
\numberwithin{equation}{section}
\def\dim{\mathop{\rm dim}\nolimits}
\def\codim{\mathop{\rm codim}\nolimits}
\def\vdim{\mathop{\rm vdim}\nolimits}
\def\depth{\mathop{\rm depth}\nolimits}
\def\Ker{\mathop{\rm Ker}}
\def\supp{\mathop{\rm supp}}
\def\Im{\mathop{\rm Im}}
\def\rank{\mathop{\rm rank}\nolimits}
\def\Hom{\mathop{\rm Hom}\nolimits}
\def\id{{\mathop{\rm id}\nolimits}}
\def\Id{{\mathop{\rm Id}\nolimits}}
\def\inc{{\rm inc}}
\def\ev{{\rm ev}}
\def\gp{{\rm gp}}
\def\Spec{\mathop{\rm Spec}\nolimits}
\def\Sets{{\mathop{\bf Sets}}}
\def\Mon{{\mathop{\bf Mon}}}
\def\Monfg{{\mathop{\bf Mon^{fg}}}}
\def\Monwt{{\mathop{\bf Mon^{wt}}}}
\def\Monto{{\mathop{\bf Mon^{to}}}}
\def\Man{{\mathop{\bf Man}}}
\def\Manb{{\mathop{\bf Man^b}}}
\def\Manc{{\mathop{\bf Man^c}}}
\def\Mangc{{\mathop{\bf Man^{gc}}}}
\def\Mancra{{\mathop{\bf Man^c_{ra}}}}
\def\Mangcra{{\mathop{\bf Man^{gc}_{ra}}}}
\def\Mancin{{\mathop{\bf Man^c_{in}}}}  
\def\Mancsi{{\mathop{\bf Man^c_{si}}}} 
\def\Mancst{{\mathop{\bf Man^c_{st}}}} 
\def\Mancis{{\mathop{\bf Man^c_{is}}}}
\def\cManc{{\mathop{\bf\check{M}an^c}}}
\def\cMancin{{\mathop{\bf\check{M}an^c_{in}}}} 
\def\cMancst{{\mathop{\bf\check{M}an^c_{st}}}}
\def\cMancis{{\mathop{\bf\check{M}an^c_{is}}}}
\def\cMancsi{{\mathop{\bf\check{M}an^c_{si}}}}
\def\cMangc{{\mathop{\bf\check{M}an^{gc}}}}
\def\Mangcin{{\mathop{\bf Man^{gc}_{in}}}}
\def\cMangcin{{\mathop{\bf\check{M}an^{gc}_{in}}}}
\def\Mangcsi{{\mathop{\bf Man^{gc}_{si}}}}
\def\cMangcsi{{\mathop{\bf\check{M}an^{gc}_{si}}}}
\def\dMan{{\mathop{\bf dMan}}}
\def\dOrb{{\mathop{\bf dOrb}}}
\def\MKur{{\mathop{\bf MKur}}}
\def\Kur{{\mathop{\bf Kur}}}
\def\MKurc{{\mathop{\bf MKur^c}}}
\def\Kurc{{\mathop{\bf Kur^c}}}
\def\MKurgc{{\mathop{\bf MKur^{gc}}}}
\def\Kurgc{{\mathop{\bf Kur^{gc}}}}
\def\Kurgcin{{\mathop{\bf Kur^{gc}_{in}}}}
\def\ul{\underline}
\def\bs{\boldsymbol}
\def\ge{\geqslant}
\def\le{\leqslant\nobreak}
\def\D{{\mathbin{\mathbb D}}}
\def\R{{\mathbin{\mathbb R}}}
\def\Z{{\mathbin{\mathbb Z}}}
\def\Q{{\mathbin{\mathbb Q}}}
\def\N{{\mathbin{\mathbb N}}}
\def\C{{\mathbin{\mathbb C}}}
\def\cC{{\mathbin{\cal C}}}
\def\cI{{\mathbin{\cal I}}}
\def\O{{\mathbin{\cal O}}}
\def\oM{{\mathbin{\smash{\,\,\overline{\!\!\mathcal M\!}\,}}}}
\def\bW{{\bs W}\kern -0.1em}
\def\bX{{\bs X}}
\def\bY{{\bs Y}\kern -0.1em}
\def\bZ{{\bs Z}}
\def\al{\alpha}
\def\be{\beta}
\def\ga{\gamma}
\def\de{\delta}
\def\io{\iota}
\def\ep{\epsilon}
\def\la{\lambda}
\def\ka{\kappa}
\def\th{\theta}
\def\ze{\zeta}
\def\up{\upsilon}
\def\vp{\varphi}
\def\si{\sigma}
\def\om{\omega}
\def\De{\Delta}
\def\La{\Lambda}
\def\Om{\Omega}
\def\Up{\Upsilon}
\def\Ga{\Gamma}
\def\Si{\Sigma}
\def\Th{\Theta}
\def\pd{\partial}
\def\ts{\textstyle}
\def\st{\scriptstyle}
\def\sst{\scriptscriptstyle}
\def\w{\wedge}
\def\sm{\setminus}
\def\bu{\bullet}
\def\sh{\sharp}
\def\op{\oplus}
\def\op{\oplus}
\def\ot{\otimes}
\def\ov{\overline}
\def\bigop{\bigoplus}
\def\bigot{\bigotimes}
\def\iy{\infty}
\def\es{\emptyset}
\def\ra{\rightarrow}
\def\Ra{\Rightarrow}
\def\Longra{\Longrightarrow}
\def\ab{\allowbreak}
\def\longra{\longrightarrow}
\def\hookra{\hookrightarrow}
\def\dashra{\dashrightarrow}
\def\t{\times}
\def\ci{\circ}
\def\ti{\tilde}
\def\d{{\rm d}}
\def\md#1{\vert #1 \vert}
%%%%%%%%%%%%%%%%%%%%%%%%%%%%%%%%%%%%%%%%%%%%%%%%%%%%%%%%%%%%%%%%%%%%%%%%
%%%%%%%%%%%%%%%%%%%%%%%    Text of paper    %%%%%%%%%%%%%%%%%%%%%%%%%%%%
%%%%%%%%%%%%%%%%%%%%%%%%%%%%%%%%%%%%%%%%%%%%%%%%%%%%%%%%%%%%%%%%%%%%%%%%
\title{A generalization of manifolds with corners}
\author{Dominic Joyce}
\date{}
\maketitle

\begin{abstract} In conventional Differential Geometry one studies manifolds, locally modelled on $\R^n$, manifolds with boundary, locally modelled on $[0,\iy)\t\R^{n-1}$, and manifolds with corners, locally modelled on $[0,\iy)^k\t\R^{n-k}$. They form categories $\Man\subset\Manb\subset\Manc$. Manifolds with corners $X$ have boundaries $\pd X$, also manifolds with corners, with~$\dim\pd X\!=\!\dim X\!-\!1$.

We introduce a new notion of {\it manifolds with generalized corners}, or {\it manifolds with g-corners}, extending manifolds with corners, which form a category $\Mangc$ with $\Man\subset\Manb\subset\Manc\subset\Mangc$. Manifolds with g-corners are locally modelled on $X_P=\Hom_\Mon\bigl(P,[0,\iy)\bigr)$ for $P$ a weakly toric monoid, where $X_P\cong[0,\iy)^k\t\R^{n-k}$ for $P=\N^k\t\Z^{n-k}$.

Most differential geometry of manifolds with corners extends nicely to manifolds with g-corners, including well-behaved boundaries $\pd X$. In some ways manifolds with g-corners have better properties than manifolds with corners; in particular, transverse fibre products in $\Mangc$ exist under much weaker conditions than in~$\Manc$.

This paper was motivated by future applications in symplectic geometry, in which some moduli spaces of $J$-holomorphic curves can be manifolds or Kuranishi spaces with g-corners rather than ordinary corners. 

Our manifolds with g-corners are related to the `interior binomial varieties' of Kottke and Melrose \cite{KoMe}, and the `positive log differentiable spaces' of Gillam and Molcho~\cite{GiMo}.
\end{abstract}

\setcounter{tocdepth}{2}
\tableofcontents

\section{Introduction}
\label{gc1}

Manifolds with corners are differential-geometric spaces locally modelled on $\R^n_k=[0,\iy)^k\t\R^{n-k}$, just as manifolds are spaces locally modelled on $\R^n$. Manifolds with corners form a category $\Manc$, containing manifolds $\Man\subset\Manc$ as a full subcategory. Some references are Melrose \cite{Melr2,Melr3,Melr4} and the author~\cite{Joyc1}.

This paper introduces an extension of manifolds with corners, called {\it manifolds with generalized corners}, or {\it manifolds with g-corners}. They are differential-geometric spaces locally modelled on $X_P=\Hom_\Mon\bigl(P,[0,\iy)\bigr)$ for $P$ a weakly toric monoid, where $\Mon$ is the category of (commutative) monoids, and $[0,\iy)$ is a monoid under multiplication. When $P=\N^k\t\Z^{n-k}$ we have $X_P\cong\R^n_k=[0,\iy)^k\t\R^{n-k}$, so the local models include those for manifolds with corners. Manifolds with g-corners form a category $\Mangc$, which contains manifolds with corners $\Manc\subset\Mangc$ as a full subcategory.

To convey the idea, we start with an example:

\begin{ex} The simplest manifold with g-corners which is not a manifold with corners is $X=\bigl\{(x_1,x_2,x_3,x_4)\in[0,\iy)^4: x_1x_2=x_3x_4\bigr\}$. We have $X\cong X_P$, where $P$ is the monoid $P=\bigl\{(a,b,c)\in\N^3: c\le a+b\bigr\}$.

Then $X$ is 3-dimensional, and has four 2-dimensional boundary faces
\begin{align*}
X_{13}&\!=\!\bigl\{(x_1,0,x_3,0):x_1,x_3\!\in\![0,\iy)\bigr\}, &
X_{14}&\!=\!\bigl\{(x_1,0,0,x_4):x_1,x_4\!\in\![0,\iy)\bigr\}, \\
X_{23}&\!=\!\bigl\{(0,x_2,x_3,0):x_2,x_3\!\in\![0,\iy)\bigr\}, &
X_{24}&\!=\!\bigl\{(0,x_2,0,x_4):x_2,x_4\!\in\![0,\iy)\bigr\},
\end{align*}
and four 1-dimensional edges 
\begin{align*}
X_1&=\bigl\{(x_1,0,0,0):x_1\in[0,\iy)\bigr\}, &
X_2&=\bigl\{(0,x_2,0,0):x_2\in[0,\iy)\bigr\}, \\
X_3&=\bigl\{(0,0,x_3,0):x_3\in[0,\iy)\bigr\}, &
X_4&=\bigl\{(0,0,0,x_4):x_4\in[0,\iy)\bigr\},
\end{align*}
all meeting at the vertex $(0,0,0,0)\in X$. In a 3-manifold with (ordinary) corners such as $[0,\iy)^3$, three 2-dimensional boundary faces and three 1-dimensional edges meet at each vertex, so $X$ has an exotic corner structure at~$(0,0,0,0)$.
\label{gc1ex1}
\end{ex}

Most of the important differential geometry of manifolds with corners extends to manifolds with g-corners, and in some respects manifolds with g-corners are better behaved than manifolds with corners. In particular, for manifolds with corners, {\it transverse fibre products\/} $X\t_{g,Z,h}Y$ in $\Manc$ exist only under restrictive combinatorial conditions on the boundary strata $\pd^jX,\pd^kY,\pd^lZ$, but for manifolds with g-corners, transverse fibre products $X\t_{g,Z,h}Y$ in $\Mangc$ exist under much milder assumptions. One can in fact regard $\Mangc$ as being a kind of closure of $\Manc$ under a certain class of transverse fibre products.

The author's motivation for introducing manifolds with g-corners concerns eventual applications in symplectic geometry. As we explain in \S\ref{gc44}, {\it Kuranishi spaces\/} are a geometric structure on moduli spaces of $J$-holomorphic curves in symplectic geometry, introduced by Fukaya, Oh, Ohta and Ono \cite{FOOO,FuOn}. Finding a good definition of Kuranishi space has a problem from the outset. Recently the author gave a new definition \cite{Joyc5}, and explained that Kuranishi spaces should be interpreted as {\it derived smooth orbifolds}, where `derived' is in the sense of the Derived Algebraic Geometry of Jacob Lurie and To\"en--Vezzosi.

Given a suitable category of manifolds, such as manifolds without boundary $\Man$ or manifolds with corners $\Manc$, the author \cite{Joyc5} defines a 2-category of Kuranishi spaces $\Kur$ or Kuranishi spaces with corners $\Kurc$ containing $\Man\subset\Kur$ and $\Manc\subset\Kurc$ as full (2-)subcategories. Beginning with manifolds with g-corners, the same construction yields a 2-category $\Kurgc$ of {\it Kuranishi spaces with g-corners\/} $\Kurgc$ with full (2-)subcategories $\Kur\subset\Kurc\subset\Kurgc$ and~$\Man\subset\Manc\subset\Mangc\subset\Kurgc$.

For some applications the author is planning, it will be important to work in $\Kurgc$ rather than $\Kurc$. One reason is that fibre products in $\Kurgc$ exist under milder conditions than in $\Kurc$ (basically, some fibre products in $\Kurc$ ought to be Kuranishi spaces with g-corners rather than ordinary corners, and so exist in $\Kurgc$ but not in $\Kurc$) and this is needed in some constructions. 

A second reason is that some classes of moduli spaces of $J$-holomorphic curves will be Kuranishi spaces with g-corners rather than ordinary corners. Ma'u, Wehrheim and Woodward \cite{Mau,MaWo,WeWo1,WeWo2,WeWo3}, study moduli spaces of {\it pseudoholomorphic quilts}, which are used to define actions of Lagrangian correspondences on Lagrangian Floer cohomology and Fukaya categories. 

Ma'u and Woodward \cite{MaWo} define moduli spaces $\oM_{n,1}$ of `stable $n$-marked quilted discs'. As in \cite[\S 6]{MaWo}, for $n\ge 4$ these are not manifolds with corners, but have an exotic corner structure; in the language of this paper, the $\oM_{n,1}$ are manifolds with g-corners. More generally, one should expect moduli spaces of stable marked quilted $J$-holomorphic curves to be Kuranishi spaces with g-corners. Pardon \cite{Pard} uses moduli spaces of $J$-holomorphic curves with g-corners to define contact homology of Legendrian submanifolds.

Manifolds with g-corners may also occur in moduli problems elsewhere in geometry. Work of Chris Kottke (private communication) suggests that natural compactifications of $\mathop{\rm SU}(2)$ magnetic monopole spaces may have the structure of manifolds with g-corners.

In \cite{Joyc6} the author defines `M-homology', a new homology theory $MH_*(Y;R)$ of a manifold $Y$ and a commutative ring $R$, canonically isomorphic to ordinary homology $H_*(Y;R)$. The chains $MC_k(Y;R)$ for $MH_*(Y;R)$ are $R$-modules generated by quadruples $[V,n,s,t]$ for $V$ an oriented manifold with corners (or something similar) with $\dim V=n+k$ and $s:V\ra\R^n$, $t:V\ra Y$ smooth maps with $s$ proper near 0 in $\R^n$. In future work the author will define virtual chains for Kuranishi spaces in M-homology, for applications in symplectic geometry. The set-up of \cite{Joyc6} allows $V$ to be a manifold with g-corners.

The inspiration for this paper came from two main sources. Firstly, Kottke and Melrose \cite[\S 9]{KoMe} define {\it interior binomial varieties} $X\subset Y$, which in our language are a manifold with g-corners $X$ embedded as a submanifold of a manifold with corners $Y$. They study transverse fibre products $W=X\t_{g,Z,h}Y$ in $\Manc$, and observe that often the fibre product may not exist as a manifold with corners, but still makes sense as an interior binomial variety~$W\subset X\t Y$. 

For Kottke and Melrose, the exotic corners of interior binomial varieties are a problem to be eliminated, and one of their main results \cite[\S 10]{KoMe} in our language is essentially an algorithm to repeatedly blow up a manifold with g-corners (interior binomial variety) $X$ at its corner strata to obtain a manifold with corners $\ti X$. In contrast, we embrace manifolds with g-corners as an attractive new idea, which are just as good as manifolds with corners for many purposes. It seems clear from \cite{KoMe} that Kottke and Melrose could have written a paper similar to this one, had they wanted to.

Kottke \cite{Kott} translates the results of \cite{KoMe} into our language of manifolds with g-corners and extends them, explaining how (after making some discrete choices) to blow up a manifold with g-corners $X$ to get a manifold with corners $\ti X$ with a proper, surjective blow-down map $\pi:\ti X\ra X$ satisfying a universal property, and that such blow-ups pull back by interior maps $f:X_1\ra X_2$ in~$\Mangc$.

Secondly, as part of a project to generalize logarithmic geometry in algebraic geometry, Gillam and Molcho \cite[\S 6]{GiMo} define a category of {\it positive log differentiable spaces}, singular differential-geometric spaces with good notions of boundary and corners. In their setting, manifolds with g-corners (or manifolds with corners) correspond to positive log differentiable spaces which are log smooth (or log smooth with free log structure). Their morphisms correspond to our interior maps. Motivated by \cite{GiMo}, the author learnt a lot of useful material on monoids and log smoothness from the literature on logarithmic geometry, in particular Ogus \cite{Ogus}, Gillam \cite{Gill}, Kazuya Kato \cite{Kato3,Kato4} and Fumiharo Kato~\cite{Kato1,Kato2}.
 
We begin in \S\ref{gc2} with background material on manifolds with corners. The category $\Mangc$ of manifolds with g-corners is defined in \S\ref{gc3}. Section \ref{gc4} studies the differential geometry of manifolds with g-corners, including immersions, embeddings, submanifolds, and existence of fibre products under suitable transversality conditions. Longer proofs of theorems in \S\ref{gc4} are postponed to~\S\ref{gc5}.
\medskip

\noindent{\it Acknowledgements.} I would like to thank Lino Amorim and Chris Kottke for helpful conversations, Paul Seidel for pointing out the references \cite{Mau,MaWo,WeWo1,WeWo2,WeWo3}, and a referee for helpful comments. This research was supported by EPSRC grants EP/H035303/1 and EP/J016950/1.

\section{Manifolds with corners}
\label{gc2}

We discuss the category of {\it manifolds with corners}, spaces locally modelled on $\R^n_k=[0,\iy)^k\t\R^{n-k}$ for $0\le k\le n$. Some references are Melrose \cite{Melr2,Melr3,Melr4} and the author \cite{Joyc1}, \cite[\S 5]{Joyc2}, \cite[\S 3.1--\S 3.3]{Joyc5}.

\subsection{The definition of manifolds with corners}
\label{gc21}

We now define the category $\Manc$ of manifolds with corners. The relation of our definitions to other definitions in the literature is explained in Remark~\ref{gc2rem1}.

\begin{dfn} Use the notation $\R^m_k=[0,\iy)^k\t\R^{m-k}$
for $0\le k\le m$, and write points of $\R^m_k$ as $u=(u_1,\ldots,u_m)$ for $u_1,\ldots,u_k\in[0,\iy)$, $u_{k+1},\ldots,u_m\in\R$. Let $U\subseteq\R^m_k$ and $V\subseteq \R^n_l$ be open, and $f=(f_1,\ldots,f_n):U\ra V$ be a continuous map, so that $f_j=f_j(u_1,\ldots,u_m)$ maps $U\ra[0,\iy)$ for $j=1,\ldots,l$ and $U\ra\R$ for $j=l+1,\ldots,n$. Then we say:
\begin{itemize}
\setlength{\itemsep}{0pt}
\setlength{\parsep}{0pt}
\item[(a)] $f$ is {\it weakly smooth\/} if all derivatives $\frac{\pd^{a_1+\cdots+a_m}}{\pd u_1^{a_1}\cdots\pd u_m^{a_m}}f_j(u_1,\ldots,u_m):U\ra\R$ exist and are continuous in for all $j=1,\ldots,m$ and $a_1,\ldots,a_m\ge 0$, including one-sided derivatives where $u_i=0$ for $i=1,\ldots,k$.

By Seeley's Extension Theorem, this is equivalent to requiring $f_j$ to extend to a smooth function $f_j':U'\ra\R$ on open neighbourhood $U'$ of $U$ in~$\R^m$.
\item[(b)] $f$ is {\it smooth\/} if it is weakly smooth and every $u=(u_1,\ldots,u_m)\in U$ has an open neighbourhood $\ti U$ in $U$ such that for each $j=1,\ldots,l$, either:
\begin{itemize}
\setlength{\itemsep}{0pt}
\setlength{\parsep}{0pt}
\item[(i)] we may uniquely write $f_j(\ti u_1,\ldots,\ti u_m)=F_j(\ti u_1,\ldots,\ti u_m)\cdot\ti u_1^{a_{1,j}}\cdots\ti u_k^{a_{k,j}}$ for all $(\ti u_1,\ldots,\ti u_m)\in\ti U$, where $F_j:\ti U\ra(0,\iy)$ is weakly smooth and $a_{1,j},\ldots,a_{k,j}\in\N=\{0,1,2,\ldots\}$, with $a_{i,j}=0$ if $u_i\ne 0$; or 
\item[(ii)] $f_j\vert_{\smash{\ti U}}=0$.
\end{itemize}
\item[(c)] $f$ is {\it interior\/} if it is smooth, and case (b)(ii) does not occur.
\item[(d)] $f$ is {\it b-normal\/} if it is interior, and in case (b)(i), for each $i=1,\ldots,k$ we have $a_{i,j}>0$ for at most one $j=1,\ldots,l$.
\item[(e)] $f$ is {\it strongly smooth\/} if it is smooth, and in case (b)(i), for each $j=1,\ldots,l$ we have $a_{i,j}=1$ for at most one $i=1,\ldots,k$, and $a_{i,j}=0$ otherwise. 
\item[(f)] $f$ is {\it simple\/} if it is interior, and in case (b)(i), for each $i=1,\ldots,k$ with $u_i=0$ we have $a_{i,j}=1$ for exactly one $j=1,\ldots,l$ and $a_{i,j}=0$ otherwise, and for all $j=1,\ldots,l$ we have $a_{i,j}=1$ for at most one $i=1,\ldots,k$. 

Simple maps are strongly smooth and b-normal. 
\item[(g)] $f$ is a {\it diffeomorphism\/} if it is a bijection, and both $f:U\ra V$ and $f^{-1}:V\ra U$ are weakly smooth.

This implies that $f,f^{-1}$ are also smooth, interior, b-normal, strongly smooth, and simple. Hence, all the different definitions of smooth maps of manifolds with corners we discuss yield the same notion of diffeomorphism. 

\end{itemize}

All seven of these classes of maps $f:U\ra V$ include identities, and are closed under compositions from $f:U\ra V$, $g:V\ra W$ to $g\ci f:U\ra W$. Thus, each of them makes the open subsets $U\subseteq\R^m_k$ for all $m,k$ into a category.

\label{gc2def1}
\end{dfn}

\begin{dfn} Let $X$ be a second countable Hausdorff topological space. An {\it $m$-dimensional chart on\/} $X$ is a pair $(U,\phi)$, where
$U\subseteq\R^m_k$ is open for some $0\le k\le m$, and $\phi:U\ra X$ is a
homeomorphism with an open set~$\phi(U)\subseteq X$.

Let $(U,\phi),(V,\psi)$ be $m$-dimensional charts on $X$. We call
$(U,\phi)$ and $(V,\psi)$ {\it compatible\/} if
$\psi^{-1}\ci\phi:\phi^{-1}\bigl(\phi(U)\cap\psi(V)\bigr)\ra
\psi^{-1}\bigl(\phi(U)\cap\psi(V)\bigr)$ is a diffeomorphism between open subsets of $\R^m_k,\R^m_l$, in the sense of Definition \ref{gc2def1}(g).

An $m$-{\it dimensional atlas\/} for $X$ is a system
$\{(U_a,\phi_a):a\in A\}$ of pairwise compatible $m$-dimensional
charts on $X$ with $X=\bigcup_{a\in A}\phi_a(U_a)$. We call such an
atlas {\it maximal\/} if it is not a proper subset of any other
atlas. Any atlas $\{(U_a,\phi_a):a\in A\}$ is contained in a unique
maximal atlas, the set of all charts $(U,\phi)$ of this type on $X$
which are compatible with $(U_a,\phi_a)$ for all~$a\in A$.

An $m$-{\it dimensional manifold with corners\/} is a second
countable Hausdorff topological space $X$ equipped with a maximal
$m$-dimensional atlas. Usually we refer to $X$ as the manifold,
leaving the atlas implicit, and by a {\it chart\/ $(U,\phi)$ on\/}
$X$, we mean an element of the maximal atlas.

Now let $X,Y$ be manifolds with corners of dimensions $m,n$, and $f:X\ra Y$ a continuous map. We call $f$ {\it weakly smooth}, or {\it smooth}, or {\it interior}, or {\it b-normal}, or {\it strongly smooth}, or {\it simple}, if whenever $(U,\phi),(V,\psi)$ are charts on $X,Y$ with $U\subseteq\R^m_k$, $V\subseteq\R^n_l$ open, then
\e
\psi^{-1}\ci f\ci\phi:(f\ci\phi)^{-1}(\psi(V))\longra V
\label{gc2eq1}
\e
is weakly smooth, or smooth, or interior, or b-normal, or strongly smooth, or simple, respectively, as maps between open subsets of $\R^m_k,\R^n_l$ in the sense of Definition \ref{gc2def1}. It is sufficient to check this on any collections of charts $(U_a,\phi_a)_{a\in A}$ covering $X$ and $(V_b,\psi_b)_{b\in B}$ covering~$Y$.

We call $f:X\ra Y$ a {\it diffeomorphism\/} if $f$ is a bijection and $f:X\ra Y$, $f^{-1}:Y\ra X$ are weakly smooth. This implies that $f,f^{-1}$ are also smooth, interior, strongly smooth, and simple.

These seven classes of (a) weakly smooth maps, (b) smooth maps, (c) interior maps, (d) b-normal maps, (e) strongly smooth maps, (f) simple maps, and (g) diffeomorphisms, of manifolds with corners, all contain identities and are closed under composition, so each makes manifolds with corners into a category. 

In this paper, we work with smooth maps of manifolds with corners (as we have defined them), and we write $\Manc$ for the category with objects manifolds with corners $X,Y,$ and morphisms smooth maps $f:X\ra Y$ in the sense above. 

We will also write $\Mancin,\Mancst,\Mancis,\Mancsi$ for the subcategories of $\Manc$ with morphisms interior maps, and strongly smooth maps, and interior strongly smooth maps, and simple maps, respectively.

Write $\cManc$ for the category whose objects are disjoint unions $\coprod_{m=0}^\iy X_m$, where $X_m$ is a manifold with corners of dimension $m$, allowing $X_m=\es$, and whose morphisms are continuous maps $f:\coprod_{m=0}^\iy X_m\ra\coprod_{n=0}^\iy Y_n$, such that
$f\vert_{X_m\cap f^{-1}(Y_n)}:X_m\cap f^{-1}(Y_n)\ra
Y_n$ is a smooth map of manifolds with corners for all $m,n\ge 0$.
Objects of $\cManc$ will be called {\it manifolds with corners of
mixed dimension}. We regard $\Manc$ as a full subcategory of~$\cManc$.

Alternatively, we can regard $\cManc$ as the category defined exactly as for $\Manc$ above, except that in defining atlases $\{(U_a,\phi_a):a\in A\}$ on $X$, we omit the condition that all charts $(U_a,\phi_a)$ in the atlas must have the same dimension~$\dim U_a=m$.

We will also write $\cMancin,\cMancst,\cMancis,\cMancsi$ for the subcategories of $\cManc$ with the same objects, and morphisms interior, or strongly smooth, or interior strongly smooth, or simple maps, respectively.
\label{gc2def2}
\end{dfn}

\begin{ex}{\bf(i)} $f:\R\ra[0,\iy)$, $f(x)=x^2$ is weakly
smooth but not smooth.

\smallskip

\noindent{\bf(ii)} $f:\R\ra[0,\iy)$, $f(x)=x^2+1$ is strongly smooth and
interior.
\smallskip

\noindent{\bf(iii)} $f:[0,\iy)\ra[0,\iy)$, $f(x)=x^2$ is interior,
but not strongly smooth.
\smallskip

\noindent{\bf(iv)} $f:*\ra[0,\iy)$, $f(*)=0$ is strongly smooth but not
interior.
\smallskip

\noindent{\bf(v)} $f:*\ra[0,\iy)$, $f(*)=1$ is strongly smooth and interior.
\smallskip

\noindent{\bf(vi)} $f:[0,\iy)^2\ra[0,\iy)$, $f(x,y)=x+y$ is weakly
smooth, but not smooth.
\smallskip

\noindent{\bf(vii)} $f:[0,\iy)^2\ra[0,\iy)$, $f(x,y)=xy$ is
interior, but not strongly smooth.
\label{gc2ex1}
\end{ex}

\begin{rem} Some references on manifolds with corners are Cerf \cite{Cerf}, Douady \cite{Doua}, Gillam and Molcho \cite[\S 6.7]{GiMo}, Kottke and Melrose \cite{KoMe}, Margalef-Roig and Outerelo Dominguez \cite{MaOu}, Melrose \cite{Melr2,Melr3,Melr4}, Monthubert \cite{Mont}, and the author \cite{Joyc1}, \cite[\S 5]{Joyc2}. Just as objects, without considering morphisms, most authors define manifolds with corners as in Definition \ref{gc2def2}. However, Melrose \cite{KoMe,Melr1,Melr2,Melr3,Melr4} and authors who follow him impose an extra condition: in \S\ref{gc22} we will define the boundary $\pd X$ of a manifold with corners $X$, with an immersion $i_X:\pd X\ra X$. Melrose requires that $i_X\vert_C:C\ra X$ should be injective for each connected component $C$ of $\pd X$ (such $X$ are sometimes called {\it manifolds with faces\/}).

There is no general agreement in the literature on how to define smooth maps, or morphisms, of manifolds with corners: 
\begin{itemize}
\setlength{\itemsep}{0pt}
\setlength{\parsep}{0pt}
\item[(i)] Our notion of `smooth map' in Definitions \ref{gc2def1} and \ref{gc2def2} is due to Melrose \cite[\S 1.12]{Melr3}, \cite[\S 1]{Melr1}, \cite[\S 1]{KoMe}, who calls them {\it b-maps}. 

Our notation of `interior maps' and `b-normal maps' is also due to Melrose.
\item[(ii)] Monthubert's {\it morphisms of manifolds with corners\/} \cite[Def.~2.8]{Mont} coincide with our strongly smooth b-normal maps. 
\item[(iii)] The author \cite{Joyc1} defined and studied `strongly smooth maps' above (which were just called `smooth maps' in \cite{Joyc1}). 

Strongly smooth maps were also used to define {\it d-manifolds with corners\/} in the 2012 version of \cite{Joyc2}. However, the final version of \cite{Joyc2} will have a different definition using smooth maps (i.e.~Melrose's b-maps).
\item[(iv)] Gillam and Molcho's {\it morphisms of manifolds with corners\/} \cite[\S 6.7]{GiMo} coincide with our `interior maps'.
\item[(v)] Most other authors, such as Cerf \cite[\S I.1.2]{Cerf}, define smooth maps of manifolds with corners to be weakly smooth maps, in our notation.
\end{itemize}
\label{gc2rem1}
\end{rem}

\begin{rem} We can also define {\it real analytic\/} manifolds with corners, and {\it real analytic\/} maps between them. To do this, if $U\subseteq\R^m_k$ and $V\subseteq \R^n_l$ are open, we define a smooth map $f=(f_1,\ldots,f_n):U\ra V$ in Definition \ref{gc2rem1} to be {\it real analytic\/} if each map $f_i:U\ra\R$ for $i=1,\ldots,n$ is of the form $f_i=f_i'\vert_U$, for $U'$ an open neighbourhood of $U$ in $\R^m$ and $f_i':U'\ra\R$ real analytic in the usual sense (i.e.\ the Taylor series of $f_i'$ at $x$ converges to $f_i'$ near $x$ for each $x\in U'$).

Then we define $\{(U_a,\phi_a):a\in A\}$ to be a {\it real analytic atlas\/} on a topological space $X$ as in Definition \ref{gc2def2}, except that the transition functions $\phi_b^{-1}\ci\phi_a$ are required to be real analytic rather than just smooth. We define a {\it real analytic manifold with corners\/} to be a Hausdorff, second countable topological space $X$ equipped with a maximal real analytic atlas. 

Given real analytic manifolds with corners $X,Y$, we define a continuous map $f:X\ra Y$ to be {\it real analytic\/} if whenever $(U,\phi),(V,\psi)$ are real analytic charts on $X,Y$ (that is, charts in the maximal real analytic atlases), the transition map $\psi^{-1}\ci f\ci\phi$ in \eq{gc2eq1} is a real analytic map between open subsets of $\R^m_k,\R^n_l$ in the sense above. Then real analytic manifolds with corners and real analytic maps between them form a category~$\Mancra$.

There is an obvious faithful functor $F_\Mancra^\Manc:\Mancra\ra\Manc$, which on objects replaces the maximal real analytic atlas by the (larger) corresponding maximal smooth atlas containing it. Note that given a smooth manifold with corners $X$, making $X$ into a real analytic manifold with corners is an additional structure on $X$, a refinement of the maximal smooth atlas on $X$, which can be done in many ways. So $F_\Mancra^\Manc$ is far from injective on objects. Essentially all the material we discuss for manifolds with corners also works for real analytic manifolds with corners, except for constructions requiring partitions of unity.
\label{gc2rem2}
\end{rem}

\subsection{Boundaries and corners of manifolds with corners}
\label{gc22}

The material of this section broadly follows the author \cite{Joyc1}, \cite[\S 5]{Joyc2}.

\begin{dfn} Let $U\subseteq\R^n_l$ be open. For each
$u=(u_1,\ldots,u_n)$ in $U$, define the {\it depth\/} $\depth_Uu$ of
$u$ in $U$ to be the number of $u_1,\ldots,u_l$ which are zero. That
is, $\depth_Uu$ is the number of boundary faces of $U$
containing~$u$.

Let $X$ be an $n$-manifold with corners. For $x\in X$, choose a
chart $(U,\phi)$ on the manifold $X$ with $\phi(u)=x$ for $u\in U$,
and define the {\it depth\/} $\depth_Xx$ of $x$ in $X$ by
$\depth_Xx=\depth_Uu$. This is independent of the choice of
$(U,\phi)$. For each $l=0,\ldots,n$, define the {\it depth\/ $l$
stratum\/} of $X$ to be
\begin{equation*}
S^l(X)=\bigl\{x\in X:\depth_Xx=l\bigr\}.
\end{equation*}
Then $X=\coprod_{l=0}^nS^l(X)$ and $\overline{S^l(X)}=
\bigcup_{k=l}^n S^k(X)$. The {\it interior\/} of $X$ is
$X^\ci=S^0(X)$. Each $S^l(X)$ has the structure of an
$(n-l)$-manifold without boundary.

\label{gc2def3}
\end{dfn}

\begin{dfn} Let $X$ be an $n$-manifold with corners, $x\in X$, and $k=0,1,\ldots,n$. A {\it local $k$-corner component\/ $\ga$ of\/ $X$ at\/} $x$ is a local choice of connected component of $S^k(X)$ near $x$. That is, for
each sufficiently small open neighbourhood $V$ of $x$ in $X$, $\ga$
gives a choice of connected component $W$ of $V\cap S^k(X)$ with
$x\in\overline W$, and any two such choices $V,W$ and $V',W'$ must
be compatible in that~$x\in\overline{(W\cap W')}$.

Let $\depth_Xx=l$. Choose a chart $(U,\phi)$ on $X$ with $(0,\ldots,0)\in U\subseteq\R^n_l$ open and $\phi(0,\ldots,0)=x$. Then we have
\e
\begin{split}
S^k(U)=\coprod_{1\le a_1<a_2<\cdots<a_k\le l}\begin{aligned}[t]
\bigl\{(u_1&,\ldots,u_n)\in U:u_{a_i}=0,\;\> i=1,\ldots,k,\\
&u_j\ne 0,\;\> j\in \{1,\ldots,l\}\sm\{a_1,\ldots,a_k\}\bigr\}.
\end{aligned}
\end{split}
\label{gc2eq2}
\e
For each choice of $a_1,\ldots,a_k$, the subset on the right hand of \eq{gc2eq2} contains $(0,\ldots,0)$ in its closure in $U$, and its intersection with a small ball about $(0,\ldots,0)$ is connected. Thus this subset determines a local $k$-corner component of $U$ at $(0,\ldots,0)$, and hence a local $k$-corner component of $X$ at~$x$. 

Equation \eq{gc2eq2} implies that all local $k$-corner components of $U$ at $(0,\ldots,0)$ and $X$ at $x$ are of this form. Therefore, local $k$-corner components of $U\subseteq\R^n_l$ at $(0,\ldots,0)$ are in 1-1 correspondence with subsets $\{a_1,\ldots,a_k\}\subseteq\{1,\ldots,l\}$ of size $k$, and there are $\binom{\depth_Xx}{k}$ distinct local $k$-corner components of $X$ at~$x$.

When $k=1$, we also call local 1-corner components {\it local boundary components of\/ $X$ at\/} $x$. There are $\depth_Xx$ distinct local boundary components of $X$ at $x$. By considering the local model $\R^n_l$, it is easy to see that there is a natural 1-1 correspondence between local $k$-corner components $\ga$ of $X$ at $x$, and (unordered) sets $\{\be_1,\ldots,\be_k\}$ of $k$ distinct local boundary components $\be_1,\ldots,\be_k$ of $X$ at $x$, such that if $V$ is a sufficiently small open neighbourhood of $x$ in $X$ and $\be_1,\ldots,\be_k$ and $\ga$ give connected components $W_1,\ldots,W_k$ of $V\cap S^1(X)$ and $W'$ of $V\cap S^k(X)$, then $W'\subseteq\bigcap_{i=1}^k\kern .1em\overline{\kern -.1em W}_{\!i}$.

As sets, define the {\it boundary\/} $\pd X$ and {\it k-corners\/} $C_k(X)$ for $k=0,1,\ldots,n$ by
\ea
\pd X&=\bigl\{(x,\be):\text{$x\in X$, $\be$ is a local boundary
component of $X$ at $x$}\bigr\},
\label{gc2eq3}
\\
C_k(X)&=\bigl\{(x,\ga):\text{$x\in X$, $\ga$ is a local $k$-corner 
component of $X$ at $x$}\bigr\},
\label{gc2eq4}
\ea
so that $\pd X=C_1(X)$. The 1-1 correspondence above shows that
\e
\begin{aligned}
C_k(X)\cong\bigl\{(x,\,&\{\be_1,\ldots,\be_k\}):\text{$x\in X,$
$\be_1,\ldots,\be_k$ are distinct}\\
&\text{local boundary components for $X$ at $x$}\bigr\}.
\end{aligned}
\label{gc2eq5}
\e
Since each $x\in X$ has a unique 0-boundary component, we have~$C_0(X)\cong X$.

If $(U,\phi)$ is a chart on $X$ with $U\subseteq\R^n_l$ open, then
for each $i=1,\ldots,l$ we can define a chart $(U_i,\phi_i)$ on $\pd
X$ by
\begin{align*}
&U_i=\bigl\{(v_1,\ldots,v_{n-1})\in\R^{n-1}_{l-1}:
(v_1,\ldots,v_{i-1},0,v_i,\ldots,v_{n-1})\in
U\subseteq\R^n_l\bigr\},\\
&\phi_i:(v_1,\ldots,v_{n-1})\longmapsto\bigl(\phi
(v_1,\ldots,v_{i-1},0,v_i,\ldots,v_{n-1}),\phi_*(\{u_i=0\})\bigr).
\end{align*}
Similarly, if $0\le k\le l$, then for each $1\le a_1<\cdots<a_k\le l$ we can define a chart $(U_{\{a_1,\ldots,a_k\}},\phi_{\{a_1,\ldots,a_k\}})$ on $C_k(X)$ by
\e
\begin{split}
&U_{\{a_1,\ldots,a_k\}}\!=\!\bigl\{(v_1,\ldots,v_{n-k})\!\in\!\R^{n-k}_{l-k}:
(v_1,\ldots,v_{a_1-1},0,v_{a_1},\ldots,v_{a_2-2},0,\\
&\quad v_{a_2-1},\ldots,v_{a_3-3},0,v_{a_3-2},\ldots,v_{a_k-k},0,v_{a_k-k+1},\ldots,v_{n-k})\!\in\!
U\subseteq\R^n_l\bigr\},\\
&\phi_{\{a_1,\ldots,a_k\}}:(v_1,\ldots,v_{n-k})\longmapsto\bigl( 
\phi(v_1,\ldots,v_{a_1-1},0,v_{a_1},\ldots,v_{a_2-2},0,\\
&\quad v_{a_2-1},\ldots,v_{a_3-3},0,v_{a_3-2},\ldots,v_{a_k-k},0,v_{a_k-k+1},\ldots,v_{n-k}),\\
&\qquad\qquad\qquad\qquad \phi_*(\{u_{a_1}=\cdots=u_{a_k}=0\})\bigr).
\end{split}
\label{gc2eq6}
\e
The families of all such charts on $\pd X$ and $C_k(X)$ are pairwise compatible, and define atlases on $\pd X$ and $C_k(X)$. The corresponding maximal atlases make $\pd X$ into an $(n-1)$-manifold with corners and $C_k(X)$ into an $(n-k)$-manifold with corners, with $\pd X=C_1(X)$ and $C_0(X)\cong X$ as manifolds with corners.

We call $X$ a {\it manifold without boundary\/} if $\pd X=\es$, and
a {\it manifold with boundary\/} if $\pd^2X=\es$. We write $\Man$ and $\Manb$ for the full subcategories of $\Manc$ with objects manifolds without boundary, and manifolds with boundary, so that $\Man\subset\Manb\subset\Manc$. This definition of $\Man$ is equivalent to the usual definition of the category of manifolds.

Define maps $i_X:\pd X\ra X$, $\Pi:C_k(X)\ra X$ and $\io:X\ra C_0(X)$ by $i_X:(x,\be)\mapsto x$, $\Pi:(x,\ga)\mapsto x$ and $\io:x\mapsto(x,[X^\ci])$. Considering local models, we see that $i_X,\Pi,\io$ are are (strongly) smooth, but $i_X,\Pi$ are not interior. Note that these maps $i_X,\Pi$ {\it may not be injective}, since the preimage of $x\in X$ is $\depth_Xx$ points in $\pd X$ and $\binom{\depth_Xx}{k}$ points in $C_k(X)$. So we cannot regard $\pd X$ and $C_k(X)$ as subsets of $X$.
\label{gc2def4}
\end{dfn}

\begin{ex} The {\it teardrop\/} $T=\bigl\{(x,y)\in\R^2:x\ge 0$, $y^2\le x^2-x^4\bigr\}$, shown in Figure \ref{gc2fig1}, is a manifold with corners of dimension 2. The boundary $\pd T$ is diffeomorphic to $[0,1]$, and so is connected, but $i_T:\pd T\ra T$ is not injective. Thus $T$ is not a manifold with faces, in the sense of Remark \ref{gc2rem1}.
\begin{figure}[htb]
\begin{xy}
,(-1.5,0)*{}
,<6cm,-1.5cm>;<6.7cm,-1.5cm>:
,(3,.3)*{x}
,(-1.2,2)*{y}
,(-1.5,0)*{\bullet}
,(-1.5,0); (1.5,0) **\crv{(-.5,1)&(.1,1.4)&(1.5,1.2)}
?(.06)="a"
?(.12)="b"
?(.2)="c"
?(.29)="d"
?(.4)="e"
?(.5)="f"
?(.6)="g"
?(.7)="h"
?(.83)="i"
,(-1.5,0); (1.5,0) **\crv{(-.5,-1)&(.1,-1.4)&(1.5,-1.2)}
?(.06)="j"
?(.12)="k"
?(.2)="l"
?(.29)="m"
?(.4)="n"
?(.5)="o"
?(.6)="p"
?(.7)="q"
?(.83)="r"
,"a";"j"**@{.}
,"b";"k"**@{.}
,"c";"l"**@{.}
,"d";"m"**@{.}
,"e";"n"**@{.}
,"f";"o"**@{.}
,"g";"p"**@{.}
,"h";"q"**@{.}
,"i";"r"**@{.}
\ar (-1.5,0);(3,0)
\ar (-1.5,0);(-3,0)
\ar (-1.5,0);(-1.5,2)
\ar (-1.5,0);(-1.5,-2)
\end{xy}
\caption{The teardrop, a 2-manifold with corners}
\label{gc2fig1}
\end{figure}
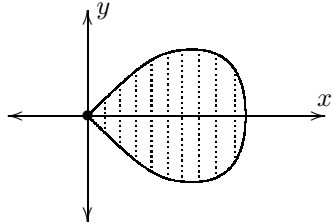

\label{gc2ex2}
\end{ex}

If $X$ is an $n$-manifold with corners, we can take boundaries repeatedly to get manifolds with corners $\pd X,\pd^2X=\pd(\pd X),\pd^3X,\ldots,\pd^nX$. To relate these to the corners $C_k(X)$, note that by considering local models $U\subseteq\R^n_l$, it is easy to see that there is a natural 1-1 correspondence
\begin{align*}
&\bigl\{\text{local boundary components of $\pd X$ at $(x,\be)$}\bigr\}\cong\\
&\bigl\{\text{local boundary components $\be'$ of $X$ at $x$ with $\be'\ne\be$}\bigr\}.
\end{align*}
Using this and induction, we can show that there is a natural identification
\e
\begin{split}
\pd^kX\cong\bigl\{(x,\be_1,\ldots,\be_k):\,&\text{$x\in X,$
$\be_1,\ldots,\be_k$ are distinct}\\
&\text{local boundary components for $X$ at $x$}\bigr\},
\end{split}
\label{gc2eq7}
\e
where under the identifications \eq{gc2eq7}, the map $i_{\pd^{k-1}X}:\pd^kX\ra\pd^{k-1}X$ maps $(x,\be_1,\ldots,\be_k)\mapsto(x,\be_1,\ldots,\be_{k-1})$. From \eq{gc2eq7}, we see that there is a natural, free action of the symmetric group $S_k$ on $\pd^kX$, by permutation of $\be_1,\ldots,\be_k$. The action is by diffeomorphisms, so the quotient $\pd^kX/S_k$ is also a manifold with corners. Dividing by $S_k$ turns the ordered $k$-tuple $\be_1,\ldots,\be_k$ into an unordered set $\{\be_1,\ldots,\be_k\}$. So from \eq{gc2eq5}, we see that there is a natural diffeomorphism
\e
C_k(X)\cong \pd^kX/S_k.
\label{gc2eq8}
\e

Corners commute with boundaries: there are natural isomorphisms
\e
\begin{aligned}
\pd C_k(X)\cong C_k&(\pd X)\cong \bigl\{(x,\{\be_1,\ldots,
\be_k\},\be_{k+1}):x\in X,\; \be_1,\ldots,\be_{k+1}\\
&\text{are distinct local boundary components for $X$ at
$x$}\bigr\}.
\end{aligned}
\label{gc2eq9}
\e

Products $X\t Y$ of manifolds with corners are defined in the
obvious way. Boundaries and corners of products $X\t Y$ behave well.
It is easy to see that there is a natural identification
\begin{align*}
&\bigl\{\text{local boundary components for $X\t Y$ at
$(x,y)$}\bigr\} \cong \\
&\qquad\qquad\bigl\{\text{local boundary components for $X$ at
$x$}\bigr\}\amalg{}\\
&\qquad\qquad \bigl\{\text{local boundary components for $Y$ at
$y$}\bigr\}.
\end{align*}
Using this, from \eq{gc2eq3} and \eq{gc2eq5} we get natural
isomorphisms
\ea
\pd(X\t Y)&\cong (\pd X\t Y)\amalg (X\t\pd Y),
\label{gc2eq10}\\
C_k(X\t Y)&\cong \ts\coprod_{i,j\ge 0,\; i+j=k}C_i(X)\t C_j(Y).
\label{gc2eq11}
\ea

Next we consider how smooth maps $f:X\ra Y$ of manifolds with corners act on boundaries $\pd X,\pd Y$ and corners $C_k(X),C_l(Y)$. The following lemma is easy to prove from Definition \ref{gc2def1}(b). The analogue is {\it false\/} for weakly smooth maps (e.g. consider $f:\R\ra[0,\iy)$, $f(x)=x^2$, which is weakly smooth but not smooth), so the rest of the section does not work in the weakly smooth case.

\begin{lem} Let\/ $f:X\ra Y$ be a smooth map of manifolds with corners. Then $f$ \begin{bfseries}is compatible with the depth stratifications\end{bfseries} $X=\coprod_{k\ge 0}S^k(X),$ $Y=\coprod_{l\ge 0}S^l(Y)$ in Definition\/ {\rm\ref{gc2def3},} in the sense that if\/ $\es\ne W\subseteq S^k(X)$ is a connected subset for some $k\ge 0,$ then $f(W)\subseteq S^l(Y)$ for some unique $l\ge 0$.
\label{gc2lem1}
\end{lem}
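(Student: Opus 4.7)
The uniqueness is immediate from the fact that the strata $S^l(Y)$ are pairwise disjoint, so the plan is to show that the function $x\mapsto l(x)$ defined by $f(x)\in S^{l(x)}(Y)$ is locally constant on $S^k(X)$; connectedness of $W$ then finishes the argument. The work happens in local coordinates via Definition \ref{gc2def1}(b).

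First I would fix $x\in W$ and pick charts $(U,\phi)$ on $X$ with $U\subseteq\R^m_{k_0}$ and $\phi(u)=x$, and $(V,\psi)$ on $Y$ with $V\subseteq\R^n_{l_0}$ and $\psi^{-1}(f(x))=v$, so that $\tilde f=\psi^{-1}\ci f\ci\phi$ is smooth in the sense of Definition \ref{gc2def1}(b). Shrinking $\tilde U$ around $u$, for each $j=1,\dots,l_0$ either $\tilde f_j\equiv 0$ on $\tilde U$ (case (ii)), or there is a factorization $\tilde f_j(\tilde u)=F_j(\tilde u)\,\tilde u_1^{a_{1,j}}\cdots\tilde u_{k_0}^{a_{k_0,j}}$ with $F_j>0$ and with the crucial property $a_{i,j}=0$ whenever $u_i\ne 0$ (case (i)).

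Next I would record the following elementary observation about depth strata of $\R^m_{k_0}$: if $u\in S^k(U)$ and $\tilde u\in S^k(U)$ lies in a sufficiently small neighbourhood of $u$, then for $i\le k_0$ one has $\tilde u_i=0$ iff $u_i=0$. Indeed, coordinates $u_i\ne 0$ remain nonzero near $u$, so $\{i\le k_0:\tilde u_i=0\}\subseteq\{i\le k_0:u_i=0\}$; equality of depths forces equality of these sets. Combining this with the factorization: for $\tilde u$ near $u$ in the same stratum, $\tilde f_j(\tilde u)=0$ iff either we are in case (ii), or some $\tilde u_i=0$ with $a_{i,j}>0$. But by the property $a_{i,j}=0$ for $u_i\ne 0$, any $i$ contributing a nonzero exponent must satisfy $u_i=0$, hence by the observation also $\tilde u_i=0$. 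Thus $\tilde f_j(\tilde u)=0$ iff $\tilde f_j(u)=0$, uniformly in $\tilde u$ near $u$ within $S^k(U)$.

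Consequently the set of indices $j\le l_0$ with $\tilde f_j(\tilde u)=0$ is constant for $\tilde u$ near $u$ in $S^k(U)$, so $\depth_V\tilde f(\tilde u)=\depth_V\tilde f(u)$, i.e.\ $f$ sends a neighbourhood of $x$ in $S^k(X)$ into a single stratum $S^{l(x)}(Y)$. Hence $x\mapsto l(x)$ is locally constant on $S^k(X)$, and since $W\subseteq S^k(X)$ is connected, it is constant on $W$, giving the required unique $l$. The only subtle step is the local stability of the zero pattern of coordinates on a depth stratum, and this is exactly where Definition \ref{gc2def1}(b) is used decisively; the analogous statement fails for merely weakly smooth maps, as illustrated by $x\mapsto x^2\colon\R\to[0,\iy)$.
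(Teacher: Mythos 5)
Your proof is correct, and it fills in exactly the argument the paper has in mind: the paper states the lemma without proof, remarking only that it "is easy to prove from Definition \ref{gc2def1}(b)" and that the analogue fails for weakly smooth maps (which you also note). Your key steps — that depth is preserved under changes of chart, that within $S^k(U)$ the zero pattern of the first $k_0$ coordinates is locally constant, and that Definition \ref{gc2def1}(b)(i)'s condition $a_{i,j}=0$ for $u_i\ne 0$ forces the zero pattern of $\tilde f_1,\ldots,\tilde f_{l_0}$ to be constant along the stratum — are precisely how one extracts the result from the definition of smooth maps, and the argument is complete.
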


It is {\it not\/} true that general smooth $f:X\ra Y$ induce maps $\pd f:\pd X\ra\pd Y$ or $C_k(f):C_k(Y)\ra C_k(Y)$ (although this does hold for {\it simple\/} maps, as in Proposition \ref{gc2prop1}(d)). For example, if $f:X\ra Y$ is the inclusion $[0,\iy)\hookra\R$ then no map $\pd f:\pd X\ra\pd Y$ exists, as $\pd X\ne\es$ and $\pd Y=\es$. So boundaries and $k$-corners do not give functors on $\Manc$. However, if we work in the enlarged category $\cManc$ of Definition \ref{gc2def2} and consider the full corners $C(X)=\coprod_{k\ge 0}C_k(X)$, we can define a functor.

\begin{dfn} Define the {\it corners\/} $C(X)$ of a manifold with corners $X$ by
\begin{align*}
&C(X)=\ts\coprod_{k=0}^{\dim X}C_k(X)\\
&=\bigl\{(x,\ga):\text{$x\in X$, $\ga$ is a local $k$-corner 
component of $X$ at $x$, $k\ge 0$}\bigr\},
\end{align*}
considered as an object of $\cManc$ in Definition \ref{gc2def2}, a manifold with corners of mixed dimension. Define $\Pi:C(X)\ra X$ by $\Pi:(x,\ga)\mapsto x$. This is smooth (i.e. a morphism in $\cManc$) as the maps $\Pi:C_k(X)\ra X$ are smooth for~$k\ge 0$.

Equations \eq{gc2eq9} and \eq{gc2eq11} imply that if $X,Y$ are manifolds with corners, we have natural isomorphisms
\e
\pd C(X)\cong C(\pd X),\quad C(X\t Y)\cong C(X)\t C(Y).
\label{gc2eq12}
\e

Let $f:X\ra Y$ be a smooth map of manifolds with corners, and suppose $\ga$ is a local $k$-corner component of $X$ at $x\in X$. For each sufficiently small open neighbourhood $V$ of $x$ in $X$, $\ga$ gives a choice of connected component $W$ of $V\cap S^k(X)$ with $x\in\overline W$, so by Lemma \ref{gc2lem1} $f(W)\subseteq S^l(Y)$ for some $l\ge 0$. As $f$ is continuous, $f(W)$ is connected, and $f(x)\in\ov{f(W)}$. Thus there is a unique local $l$-corner component $f_*(\ga)$ of $Y$ at $f(x)$, such that if $\ti V$ is a sufficiently small open neighbourhood of $f(x)$ in $Y$, then the connected component $\ti W$ of $\ti V\cap S^l(Y)$ given by $f_*(\ga)$ has $\ti W\cap f(W)\ne\es$. This $f_*(\ga)$ is independent of the choice of sufficiently small $V,\ti V$, so is well-defined.

Define a map $C(f):C(X)\ra C(Y)$ by $C(f):(x,\ga)\mapsto (f(x),f_*(\ga))$. Given charts $(U,\phi)$ on $X$ and $(V,\psi)$ on $Y$, so that \eq{gc2eq1} gives a smooth map $\psi^{-1}\ci f\ci\phi$, then in the charts $(U_{\{a_1,\ldots,a_k\}},\phi_{\{a_1,\ldots,a_k\}})$ on $C_k(X)$ and $(V_{\{b_1,\ldots,b_l\}},\psi_{\{b_1,\ldots,b_l\}})$ on $C_l(Y)$ defined from $(U,\phi)$ and $(V,\psi)$ in \eq{gc2eq6}, we see that 
\begin{align*}
\psi_{\{b_1,\ldots,b_l\}}^{-1}\ci C(f)\ci\phi_{\{a_1,\ldots,a_k\}}:(C(f)\ci\phi_{\{a_1,\ldots,a_k\}})^{-1}(\psi_{\{b_1,\ldots,b_l\}}(V_{\{b_1,\ldots,b_l\}}))&\\
\longra V_{\{b_1,\ldots,b_l\}}&
\end{align*}
is just the restriction of $\psi^{-1}\ci f\ci\phi$ to a map from a codimension $k$ boundary face of $U$ to a codimension $l$ boundary face of $V$, and so is clearly smooth in the sense of Definition \ref{gc2def1}. Since such charts $(U_{\{a_1,\ldots,a_k\}},\phi_{\{a_1,\ldots,a_k\}})$ and $(V_{\{b_1,\ldots,b_l\}},\psi_{\{b_1,\ldots,b_l\}})$ cover $C_k(X)$ and $C_l(Y)$, it follows that $C(f)$ is smooth (that is, $C(f)$ is a morphism in $\cManc$).

If $g:Y\ra Z$ is another smooth map of manifolds with corners, and $\ga$ is a local $k$-corner component of $X$ at $x$, it is easy to see that $(g\ci f)_*(\ga)=g_*\ci f_*(\ga)$ in local $m$-corner components of $Z$ at $g\ci f(x)$. Therefore $C(g\ci f)=C(g)\ci C(f):C(X)\ra C(Z)$. Clearly $C(\id_X)=\id_{C(X)}:C(X)\ra C(X)$. Hence $C:\Manc\ra\cManc$ is a functor, which we call the {\it corner functor}. We extend $C$ to
$C:\cManc\ra\cManc$ by $C(\coprod_{m\ge 0}X_m)=\coprod_{m\ge
0}C(X_m)$. 
\label{gc2def5}
\end{dfn}

The following properties of the corner functor are easy to check using the local models in Definition~\ref{gc2def1}.

\begin{prop} Let\/ $f:X\ra Y$ be a smooth map of manifolds with corners. 

\smallskip

\noindent{\bf(a)} $C(f):C(X)\ra C(Y)$ is an interior map of manifolds with corners of mixed dimension, so $C$ is a functor $C:\Manc\ra\cMancin$.
\smallskip

\noindent{\bf(b)} $f$ is interior if and only if\/ $C(f)$ maps $C_0(X)\ra C_0(Y),$ if and only if the following commutes:
\begin{equation*}
\xymatrix@C=95pt@R=13pt{ *+[r]{X} \ar[d]^\io \ar[r]_{f} &
*+[l]{Y} \ar[d]_\io \\ *+[r]{C(X)} \ar[r]^{C(f)} & *+[l]{C(Y).\!\!} }
\end{equation*}
Thus $\io:\Id\!\Ra\! C$ is a natural transformation on
$\Id,C\vert_{\Mancin}:\Mancin\!\ra\!\cMancin$.

\noindent{\bf(c)} $f$ is b-normal if and only if\/ $C(f)$ maps $C_k(X)\ra \coprod_{l=0}^kC_l(Y)$ for all\/~$k$.
\smallskip

\noindent{\bf(d)} If\/ $f$ is simple then $C(f)$ maps $C_k(X)\ra C_k(Y)$ for all\/ $k\ge 0,$ and\/ $C_k(f):=C(f)\vert_{C_k(X)}:C_k(X)\ra C_k(Y)$ is also a simple map.

Thus we have a \begin{bfseries}boundary functor\end{bfseries} $\pd:\Mancsi\ra\Mancsi$ mapping $X\mapsto\pd X$ on objects and\/ $f\mapsto \pd f:=C(f)\vert_{C_1(X)}:\pd X\ra\pd Y$ on (simple) morphisms $f:X\ra Y,$ and for all\/ $k\ge 0$ a \begin{bfseries}$k$-corner functor\end{bfseries} $C_k:\Mancsi\ra\Mancsi$ mapping $X\mapsto C_k(X)$ on objects and\/ $f\mapsto C_k(f):=C(f)\vert_{C_k(X)}:C_k(X)\ra C_k(Y)$ on (simple) morphisms.
\smallskip

\noindent{\bf(e)} The following commutes:
\begin{equation*}
\xymatrix@C=95pt@R=13pt{ *+[r]{C(X)} \ar[d]^\Pi \ar[r]_{C(f)} &
*+[l]{C(Y)} \ar[d]_\Pi \\ *+[r]{X} \ar[r]^f & *+[l]{Y.\!\!} }
\end{equation*}
Thus $\Pi:C\Ra\Id$ is a natural transformation.
\smallskip

\noindent{\bf(f)} The functor $C$ \begin{bfseries}preserves products
and direct products\end{bfseries}. That is, if\/ $f:W\ra Y,$ $g:X\ra
Y,$ $h:X\ra Z$ are smooth then the following commute
\begin{equation*}
\xymatrix@C=60pt@R=20pt{ *+[r]{C(W\t X)} \ar[d]^\cong \ar[r]_{C(f\t
h)} & *+[l]{C(Y\t Z)} \ar[d]_\cong \\ *+[r]{C(W)\!\t\! C(X)}
\ar[r]^{\raisebox{8pt}{$\st C(f) \t C(h)$}} &
*+[l]{C(Y)\!\t\! C(Z),} }\;
\xymatrix@C=65pt@R=3pt{ & C(Y\t Z) \ar[dd]^\cong \\
C(X) \ar[ur]^(0.4){C((g,h))} \ar[dr]_(0.4){(C(g),C(h))} \\
& C(Y)\!\t\! C(Z), }
\end{equation*}
where the columns are the isomorphisms \eq{gc2eq12}.
\label{gc2prop1}
\end{prop}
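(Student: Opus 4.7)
All six claims should follow by a direct local coordinate analysis, using the explicit normal form \ref{gc2def1}(b)(i),(ii) for smooth maps between open subsets of $\R^m_k$ and $\R^n_l$, together with the chart description of $C_k(X)$ given in \eq{gc2eq6}. The plan is to fix a chart $(U,\phi)$ on $X$ around $x\in S^k(X)$ with $\phi(0,\ldots,0)=x$, a chart $(V,\psi)$ on $Y$ around $f(x)\in S^l(Y)$, and write $\psi^{-1}\ci f\ci\phi=(f_1,\ldots,f_n)$ with the decomposition $f_j=F_j\cdot u_1^{a_{1,j}}\cdots u_k^{a_{k,j}}$ of \ref{gc2def1}(b)(i) (or $f_j\equiv 0$ in case (ii)) on a neighbourhood of $0$. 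Shrinking $U$, we may assume $f$ has this form globally and that $F_j>0$ everywhere.

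For \textbf{(a)}, I would show that the restriction of $f$ to the face $F_{\{a_1,\ldots,a_k\}}=\{u_{a_1}=\cdots=u_{a_k}=0\}\cap U$ never falls into case \ref{gc2def1}(b)(ii). The face maps into a face $\{v_{b_1}=\cdots=v_{b_l}=0\}\cap V$ where $b\in\{b_1,\ldots,b_l\}$ iff some $a_{a_i,b}>0$. In the chart $(U_{\{a_1,\ldots,a_k\}},\phi_{\{a_1,\ldots,a_k\}})$ the remaining coordinates are $(u_i)_{i\notin\{a_1,\ldots,a_k\}}$, and for $j\notin\{b_1,\ldots,b_l\}$ the component $f_j$ restricts to $F_j\vert\cdot\prod_{i\notin\{a_1,\ldots,a_k\}} u_i^{a_{i,j}}$ with $F_j\vert>0$; this is the normal form \ref{gc2def1}(b)(i) with positive leading coefficient, never (b)(ii). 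Hence $C(f)$ is interior, and then $C$ is a functor to $\cMancin$.

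Parts \textbf{(b)}–\textbf{(d)} follow by counting which $v_j$'s are forced to vanish on the image of $F_{\{a_1,\ldots,a_k\}}$. Interiority of $f$ is exactly the statement that case (b)(ii) never occurs, which is equivalent to $f$ mapping a neighbourhood of any interior point entirely into $Y^\ci$, giving \textbf{(b)}; naturality of $\io$ on $\Mancin$ then drops out. For \textbf{(c)}, b-normality says each index $i\in\{1,\ldots,k\}$ with $a_{i,j}>0$ for some $j$ contributes exactly one such $j$, so at most $k$ of the $v_j$'s vanish on $F_{\{a_1,\ldots,a_k\}}$, giving depth $\le k$; conversely, a failure of b-normality produces two coordinates $j,j'$ with $a_{i,j},a_{i,j'}>0$, and an interior point of the face $\{u_i=0\}$ maps into a depth $\ge 2$ stratum. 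For \textbf{(d)}, simplicity gives a bijection between the vanishing $u_i$'s and the vanishing $v_j$'s on the face, preserving depth, and one reads off directly that the restriction is again simple; the functoriality of $\pd$ and $C_k$ on $\Mancsi$ is then immediate from $C$ being a functor.

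Part \textbf{(e)} is essentially tautological from the definition $C(f)(x,\ga)=(f(x),f_*(\ga))$ and $\Pi(x,\ga)=x$. For \textbf{(f)}, I would use that the chart structure on a product $X\t Y$ is a product of charts, so a local $k$-corner component of $X\t Y$ at $(x,y)$ is exactly a pair $(\ga_X,\ga_Y)$ of local $i$- and $j$-corner components with $i+j=k$, which is the content of the isomorphism \eq{gc2eq12}; both squares then commute on the nose because $(f\t h)_*(\ga_W,\ga_X)=(f_*(\ga_W),h_*(\ga_X))$ and $(g,h)_*(\ga_X)=(g_*(\ga_X),h_*(\ga_X))$ from the definition of $f_*$ applied componentwise. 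The main (minor) obstacle is the bookkeeping in \textbf{(a)}: one must check that the chart formula \eq{gc2eq6} for $C_k(X),C_l(Y)$ precisely matches the restriction $F_j\vert\cdot\prod u_i^{a_{i,j}}$, so that interiority of the restricted map is visible in those charts rather than merely on the nose.
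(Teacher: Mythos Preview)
Your proposal is correct and follows precisely the approach the paper indicates: the paper does not give a detailed proof but simply states that ``the following properties of the corner functor are easy to check using the local models in Definition~\ref{gc2def1}'', and your argument is exactly such a check. One small bookkeeping point in part (a): when defining $\{b_1,\ldots,b_l\}$ you should include any index $j$ for which $f_j\equiv 0$ (case (b)(ii)) as well as those with some $a_{a_i,j}>0$, so that such $j$ are automatically absorbed into the target face and do not appear among the surviving components; with that adjustment the interiority of $C(f)$ follows as you describe.
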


\begin{ex}{\bf(a)} Let $X=[0,\iy)$, $Y=[0,\iy)^2$, and define $f:X\ra Y$
by $f(x)=(x,x)$. We have
\begin{align*}
C_0(X)&\cong[0,\iy), \qquad\quad C_1(X)\cong\{0\}, & C_0(Y)&\cong[0,\iy)^2,\\
C_1(Y)&\cong\bigl(\{0\}\t [0,\iy)\bigr)\amalg \bigl([0,\iy)\t\{0\}\bigr),&
C_2(Y)&\cong\{(0,0)\}.
\end{align*}
Then $C(f)$ maps $C_0(X)\ra C_0(Y)$, $x\mapsto (x,x)$, and
$C_1(X)\ra C_2(Y)$, $0\mapsto(0,0)$.

\smallskip

\noindent{\bf(b)} Let $X=*$, $Y=[0,\iy)$ and define $f:X\ra Y$ by
$f(*)=0$. Then $C_0(X)\cong *$, $C_0(Y)\cong [0,\iy)$,
$C_1(Y)\cong\{0\}$, and $C(f)$ maps $C_0(X)\ra C_1(Y)$, $*\mapsto
0$.
\label{gc2ex3}
\end{ex}

Note that $C(f)$ need not map $C_k(X)\ra C_k(Y)$.

\subsection{Tangent bundles and b-tangent bundles}
\label{gc23}

Manifolds with corners $X$ have two notions of tangent bundle with
functorial properties, the ({\it ordinary\/}) {\it tangent bundle\/}
$TX$, the obvious generalization of tangent bundles of manifolds
without boundary, and the {\it b-tangent bundle\/} ${}^bTX$
introduced by Melrose \cite[\S 2.2]{Melr2}, \cite[\S I.10]{Melr3},
\cite[\S 2]{Melr1}. Taking duals gives two notions of cotangent
bundle $T^*X,{}^bT^*X$. First we discuss vector bundles:

\begin{dfn} Let $X$ be an $n$-manifold with corners. A {\it vector
bundle\/ $E\ra X$ of rank\/} $k$ is a manifold with corners $E$ and
a smooth (in fact strongly smooth and simple) map $\pi:E\ra X$, such that each fibre $E_x:=\pi^{-1}(x)$ for $x\in X$ is given the structure of a $k$-dimensional real vector space, and $X$ may be covered by open
subsets $U\subseteq X$ with diffeomorphisms $\pi^{-1}(U)\cong
U\t\R^k$ identifying $\pi\vert_{\pi^{-1}(U)}:\pi^{-1}(U)\ra U$ with
the projection $U\t\R^k\ra\R^k$, and the vector space structure on
$E_x$ with that on $\{x\}\t\R^k\cong\R^k$, for each $x\in U$.

A {\it section\/} of $E$ is a smooth map $s:X\ra E$ with $\pi\ci
s=\id_X$. As a map of manifolds with corners, $s:X\ra E$ is
automatically strongly smooth.

Morphisms of vector bundles, dual vector bundles, tensor products of
vector bundles, exterior products, and so on, all work as usual.

Write $C^\iy(X)$ for the $\R$-algebra of smooth functions
$f:X\ra\R$. Write $C^\iy(E)$ for
the $\R$-vector space of smooth sections $s:X\ra E$. Then $C^\iy(E)$
is a module over~$C^\iy(X)$.

Sometimes we also consider {\it vector bundles of mixed rank\/}
$E\ra X$, in which we allow the rank $k$ to vary over $X$, so that
$E$ can have different ranks on different connected components of
$X$. This happens often when working with objects
$X=\coprod_{m=0}^\iy X_m$ in the category $\cManc$ from Definition
\ref{gc2def2}, for instance, the tangent bundle $TX$ has rank $m$
over $X_m$ for each~$m$.
\label{gc2def6}
\end{dfn}

\begin{dfn} Let $X$ be an $m$-manifold with corners. The {\it tangent
bundle\/} $\pi:TX\ra X$ of $X$ is a natural (unique up to canonical
isomorphism) rank $m$ vector bundle on $X$. Here are two equivalent ways to characterize~$TX$:
\begin{itemize}
\setlength{\itemsep}{0pt}
\setlength{\parsep}{0pt}
\item[(a)] {\bf In coordinate charts:} let $(U,\phi)$ be a chart 
on $X$, with $U\subseteq\R^m_k$ open. Then over $\phi(U)$, $TX$ is the trivial vector bundle with basis of sections $\frac{\pd}{\pd u_1},\ldots,\frac{\pd}{\pd u_m}$, for $(u_1,\ldots,u_m)$ the coordinates on $U$. There is a corresponding chart $(TU,T\phi)$ on $TX$, where $TU=U\t\R^m\subseteq\R^{2m}_k$, such that $(u_1,\ldots,u_m,q_1,\ldots,q_m)\in TU$ represents the vector $q_1\frac{\pd}{\pd u_1}+\cdots+q_m\frac{\pd}{\pd u_m}$ over $(u_1,\ldots,u_m)\in U$ or $\phi(u_1,\ldots,u_m)\in X$. Under change of coordinates $(u_1,\ldots,u_m)\rightsquigarrow(\ti u_1,\ldots,\ti u_m)$ from $(U,\phi)$ to $(\ti U,\ti\phi)$, the corresponding change $(u_1,\ldots,u_m,q_1,\ldots,q_m)\rightsquigarrow(\ti u_1,\ldots,\ti u_m,\ti q_1,\ldots,\ti q_m)$ from $(TU,T\phi)$ to $(T\ti U,T\ti\phi)$ is determined by $\frac{\pd}{\pd u_i}=\sum_{j=1}^m\frac{\pd\ti u_j}{\pd u_i}(u_1,\ldots,u_m)\cdot\frac{\pd}{\pd\ti u_j}$, so that~$\ti q_j=\sum_{i=1}^m\frac{\pd\ti u_j}{\pd u_i}(u_1,\ldots,u_m)q_i$.
\item[(b)] {\bf Intrinsically in terms of germs:} For $x\in X$, write $C^\iy_x(X)$ for the set of {\it germs\/ $[a]$ at\/ $x$ of smooth functions\/ $a:X\ra\R$ defined near\/} $x\in X$. That is, elements of $C^\iy_x(X)$ are equivalence classes $[a]$ of smooth functions $a:U\ra\R$ in the sense of \S\ref{gc21}, where $U$ is an open neighbourhood of $x$ in $X$, and $a:U\ra\R,$ $a':U'\ra\R$ are equivalent if there exists an open neighbourhood $U''$ of $x$ in $U\cap U'$ with $a\vert_{U''}=a'\vert_{U''}$. Then $C^\iy_x(X)$ is a commutative $\R$-algebra, with operations $\la[a]+\mu[b]=[\la a+\mu b]$ and $[a]\cdot[b]=[a\cdot b]$ for $[a],[b]\in C^\iy_x(X)$ and $\la,\mu\in\R$. It has an {\it evaluation map\/} $\ev:C^\iy_x(X)\ra\R$ mapping $\ev:[a]\mapsto a(x)$, an $\R$-algebra morphism.

Then there is a natural isomorphism
\e
\begin{split}
&T_xX\cong\bigl\{v:\text{$v$ is a linear map $C^\iy_x(X)\ra\R$
satisfying}\\
&\;\>\text{$v([a]\!\cdot\! [b])\!=\!v([a])\ev([b])\!+\!\ev([a])v([b])$, all $[a],[b]\!\in\!
C^\iy_x(X)$}\bigr\}.
\end{split}
\label{gc2eq13}
\e
This also holds with $C^\iy(X)$ in place of $C^\iy_x(X)$.

Also there is a natural isomorphism of $C^\iy(X)$-modules
\begin{align*}
C^\iy(TX)\cong\bigl\{v:\,&\text{$v$ is a linear map $C^\iy(X)\ra
C^\iy(X)$
satisfying}\\
&\text{$v(ab)=v(a)\cdot b+a\cdot v(b)$ for all $a,b\in
C^\iy(X)$}\bigr\}.
\end{align*}
Elements of $C^\iy(TX)$ are called {\it vector fields}.
\end{itemize}

Now suppose $f:X\ra Y$ is a smooth map of manifolds with corners. We
will define a natural smooth map $Tf:TX\ra TY$ so that the following
commutes:
\begin{equation*}
\xymatrix@C=80pt@R=15pt{ *+[r]{TX} \ar[d]^\pi \ar[r]_{Tf} &
*+[l]{TY} \ar[d]_\pi \\ *+[r]{X} \ar[r]^f & *+[l]{Y.} }
\end{equation*}
For definition (a) of $TX,TY$, let $(U,\phi)$ and $(V,\psi)$ be coordinate charts on $X,Y$ with $U\subseteq\R^m_k$, $V\subseteq\R^n_l$, with coordinates $(u_1,\ldots,u_m)\in U$ and $(v_1,\ldots,v_n)\in V$, and let $(TU,T\phi)$, $(TV,T\psi)$ be the corresponding charts on $TX,TY$, with coordinates $(u_1,\ab\ldots,\ab u_m,\ab q_1,\ab\ldots,\ab q_m)\in TU$ and $(v_1,\ldots,v_n,r_1,\ldots,r_n)\in TV$. Equation \eq{gc2eq1} defines a map $\psi^{-1}\ci f\ci\phi$ between open subsets of $U,V$. Write $\psi^{-1}\ci f\ci\phi=(f_1,\ldots,f_n)$, for $f_j=f_j(u_1,\ldots,u_m)$. Then the corresponding $T\psi^{-1}\ci Tf\ci T\phi$ maps
\begin{align*}
&T\psi^{-1}\ci Tf\ci T\phi:(u_1,\ldots,u_m,q_1,\ldots,q_m)\longmapsto
\bigl(f_1(u_1,\ldots,u_m),\ldots,\\
&f_n(u_1,\ldots,u_m),\ts\sum_{i=1}^m\frac{\pd f_1}{\pd u_i}(u_1,\ldots,u_m)q_i,\ldots,\ts\sum_{i=1}^m\frac{\pd f_n}{\pd u_i}(u_1,\ldots,u_m)q_i\bigr).
\end{align*}

For definition (b) of $TX,TY$, $Tf$ acts as $Tf:(x,v)\mapsto (y,w)$ for $y=f(x)\in Y$ and $w=v\ci f^*$, where $f^*:C^\iy_y(Y)\ra C^\iy_x(X)$ maps $f^*:[a]\mapsto[a\ci f]$.

If $g:Y\ra Z$ is smooth then $T(g\ci f)=Tg\ci Tf:TX\ra
TZ$, and $T(\id_X)=\id_{TX}:TX\ra TX$. Thus, the assignment
$X\mapsto TX$, $f\mapsto Tf$ is a functor, the {\it tangent
functor\/} $T:\Manc\ra\Manc$. It restricts to $T:\Mancin\ra\Mancin$. 

If $f:X\ra Y$ is only weakly smooth, the same
definition gives a weakly smooth map $Tf:TX\ra TY$. We can also
regard $Tf$ as a vector bundle morphism $\d f:TX\ra f^*(TY)$ on $X$.

The {\it cotangent bundle\/} $T^*X$ of a manifold with corners $X$
is the dual vector bundle of $TX$. Cotangent bundles $T^*X$ are not
functorial in the same way, though we do have vector bundle morphisms $(\d f)^*:f^*(T^*Y)\ra T^*X$ on~$X$.
\label{gc2def7}
\end{dfn}

Here is the parallel definition for b-(co)tangent bundles:

\begin{dfn} Let $X$ be an $m$-manifold with corners. The {\it b-tangent
bundle\/} ${}^bTX\ra X$ of $X$ is a natural (unique up to canonical
isomorphism) rank $m$ vector bundle on $X$. It has a natural {\it inclusion morphism\/} $I_X:{}^bTX\ra TX$, which is an isomorphism over the interior $X^\ci$, but not over the boundary strata $S^k(X)$ for $k\ge 1$. Here are three equivalent ways to characterize~${}^bTX,I_X$:
\begin{itemize}
\setlength{\itemsep}{0pt}
\setlength{\parsep}{0pt}
\item[(a)] {\bf In coordinate charts:} let $(U,\phi)$ be a chart 
on $X$, with $U\subseteq\R^m_k$ open. Then over $\phi(U)$, ${}^bTX$ is the trivial vector bundle with basis of sections $u_1\frac{\pd}{\pd u_1},\ldots,u_k\frac{\pd}{\pd u_k},\frac{\pd}{\pd u_{k+1}},\ldots,
\frac{\pd}{\pd u_m}$, for $(u_1,\ldots,u_m)$ the coordinates on $U$. There is a corresponding chart $({}^bTU,{}^bT\phi)$ on ${}^bTX$, where ${}^bTU=U\t\R^m\subseteq\R^{2m}_k$, such that $(u_1,\ldots,u_m,s_1,\ldots,s_m)\in {}^bTU$ represents the vector $s_1u_1\frac{\pd}{\pd u_1}+\cdots+s_ku_k\frac{\pd}{\pd u_k}+s_{k+1}\frac{\pd}{\pd u_{k+1}}+\cdots+s_m\frac{\pd}{\pd u_m}$ over $(u_1,\ldots,u_m)$ in $U$ or $\phi(u_1,\ldots,u_m)$ in $X$. Under change of coordinates $(u_1,\ldots,u_m)\rightsquigarrow(\ti u_1,\ldots,\ti u_m)$ from $(U,\phi)$ to $(\ti U,\ti\phi)$, the corresponding change $(u_1,\ab\ldots,\ab u_m,\ab s_1,\ab\ldots,\ab s_m)\ab\rightsquigarrow(\ti u_1,\ldots,\ti u_m,\ti s_1,\ldots,\ti s_m)$ from $({}^bTU,{}^bT\phi)$ to $({}^bT\ti U,{}^bT\ti\phi)$ is 
\begin{equation*}
\ti s_j=\begin{cases} \sum_{i=1}^k\ti u_j^{-1}u_i\frac{\pd\ti u_j}{\pd u_i}\,s_i+\sum_{i=k+1}^m\ti u_j^{-1}\frac{\pd\ti u_j}{\pd u_i}\,s_i, & j\le k, \\[4pt]
\sum_{i=1}^ku_i\frac{\pd\ti u_j}{\pd u_i}\,s_i+\sum_{i=k+1}^m\frac{\pd\ti u_j}{\pd u_i}\,s_i, & j>k. \end{cases}
\end{equation*}

The morphism $I_X:{}^bTX\ra TX$ acts in coordinate charts $({}^bTU,{}^bT\phi)$, $(TU,T\phi)$ by
\begin{align*}
&(u_1,\ldots,u_m,s_1,\ldots,s_m)\longmapsto (u_1,\ldots,u_m,q_1,\ldots,q_m)\\
&\qquad\qquad =(u_1,\ldots,u_m,u_1s_1,\ldots,u_ks_k,s_{k+1},\ldots,s_m).
\end{align*}
\item[(b)] {\bf Intrinsically in terms of germs:} Let $x\in X$. As in Definition \ref{gc2def7}(b), write $C^\iy_x(X)$ for the set of germs $[a]$ at $x$ of smooth functions $a:X\ra\R$. Then $C^\iy_x(X)$ is an $\R$-algebra, with evaluation map $\ev:C^\iy_x(X)\ra\R$, $\ev:[a]\mapsto a(x)$. Also write $\cI_x(X)$ for the subset of germs $[b]$ at $x\in X$ of interior maps $b:X\ra[0,\iy)$. Then $\cI_x(X)$ is a monoid with operation multiplication $[b]\cdot[c]=[b\cdot c]$ and identity $[1]$. It has an {\it evaluation map\/} $\ev:\cI_x(X)\ra[0,\iy)$, $\ev:[b]\mapsto b(x)$, a monoid morphism. There is also an {\it exponential map\/} $\exp:C^\iy_x(X)\ra \cI_x(X)$ mapping $\exp:[a]\mapsto[\exp a]$, which is a monoid morphism, regarding $C^\iy_x(X)$ as a monoid under addition, and an {\it inclusion map\/} $\inc:\cI_x(X)\ra C^\iy_x(X)$ mapping $\inc:[b]\mapsto[b]$, which is a monoid morphism, regarding $C^\iy_x(X)$ as a monoid under multiplication.

Then there is a natural isomorphism
\ea
{}^bT_xX\cong\bigl\{&(v,v'):\text{$v$ is a linear map $C^\iy_x(X)\ra\R$,}
\nonumber\\
&\text{$v'$ is a monoid morphism $\cI_x(X)\ra\R$,}
\nonumber\\
&\text{$v([a]\!\cdot\! [b])\!=\!v([a])\ev([b])\!+\!\ev([a])v([b])$, all $[a],[b]\!\in\!
C^\iy_x(X)$},
\nonumber\\
&\text{$v'\ci\exp([a])=v([a])$, all $[a]\in C^\iy_x(X)$, and}
\nonumber\\
&\text{$v\ci\inc([b])=\ev([b])v'\bigl([b]\bigr)$, all $[b]\in\cI_x(X)$}\bigr\}.
\label{gc2eq14}
\ea
Here in pairs $(v,v')$ in \eq{gc2eq14}, $v$ is as in \eq{gc2eq13}. If $[b]\in\cI_x(X)$ with $\ev([b])>0$, then $[\log b]\in C^\iy_x(X)$ with $v'([b])=v([\log b])$. So the extra data in $v'$ is $v'([b])$ for $[b]\in\cI_x(X)$ with $\ev([b])=0$.

The morphism $I_X:{}^bTX\ra TX$ acts by $I_X:(v,v')\mapsto v$.

If $X$ is a manifold with faces, as in Remark \ref{gc2rem1}, then we can replace $C^\iy_x(X),\cI_x(X)$ by $C^\iy(X),\cI(X)$, where $\cI(X)$ is the monoid of interior maps $X\ra[0,\iy)$. But if $X$ does not have faces, in general there are too few interior maps $X\ra[0,\iy)$ for the definition to work. This is why we use germs $C^\iy_x(X),\cI_x(X)$ in~\eq{gc2eq14}.
\item[(c)] {\bf In terms of $TX$:} there is a natural isomorphism
of $C^\iy(X)$-modules
\e
{}\!\!\!\!\!\!\!\! C^\iy({}^bTX)\!\cong\!\bigl\{v\!\in\! C^\iy(TX):\text{$v\vert_{S^k(X)}$ is tangent to $S^k(X)$ for all $k$}\bigr\}.
\label{gc2eq15}
\e
Elements of $C^\iy({}^bTX)$ are called {\it b-vector fields}.

The morphism $I_X:{}^bTX\ra TX$ induces
$(I_X)_*:C^\iy({}^bTX)\ra C^\iy(TX)$, which under the isomorphism
\eq{gc2eq15} corresponds to the inclusion of the right hand side
of \eq{gc2eq15} in~$C^\iy(TX)$.
\end{itemize}

In Definition \ref{gc2def7}, we defined $Tf:TX\ra TY$ for any smooth
(or even weakly smooth) map $f:X\ra Y$. As in \cite[\S 2]{Melr1},
\cite[\S 1]{KoMe} the analogue for b-tangent bundles works only for
{\it interior\/} maps $f:X\ra Y$. So let $f:X\ra Y$ be an interior
map of manifolds with corners. We will define a natural smooth map
${}^bTf:{}^bTX\ra{}^bTY$ so that the following commutes:
\begin{equation*}
\xymatrix@C=30pt@R=15pt{ {{}^bTX} \ar[ddr]_(0.6)\pi \ar[dr]^{I_X}
\ar[rrr]_{{}^bTf} &&&
{{}^bTY} \ar[dr]^{I_Y} \ar[ddr]_(0.6)\pi \\
& {TX} \ar[d]^\pi \ar[rrr]^{Tf} &&&
{TY} \ar[d]^\pi \\
& {X} \ar[rrr]^f &&& {Y.\!} }
\end{equation*}

For definition (a) of ${}^bTX,{}^bTY$ above, let $(U,\phi)$ and $(V,\psi)$ be coordinate charts on $X,Y$ with $U\subseteq\R^m_k$, $V\subseteq\R^n_l$, with coordinates $(u_1,\ldots,u_m)\in U$ and $(v_1,\ldots,v_n)\in V$, and let $({}^bTU,{}^bT\phi)$, $({}^bTV,{}^bT\psi)$ be the corresponding charts on ${}^bTX,{}^bTY$, with coordinates $(u_1,\ab\ldots,\ab u_m,\ab s_1,\ab\ldots,\ab s_m)\in {}^bTU$ and $(v_1,\ldots,v_n,t_1,\ldots,t_n)\in {}^bTV$. Then \eq{gc2eq1} defines a map $\psi^{-1}\ci f\ci\phi$ between open subsets of $U,V$. Write $\psi^{-1}\ci f\ci\phi=(f_1,\ldots,f_n)$, for $f_j=f_j(u_1,\ldots,u_m)$. Then the corresponding ${}^bT\psi^{-1}\ci {}^bTf\ci {}^bT\phi$ maps
\e
\begin{split}
&{}^bT\psi^{-1}\!\ci\! {}^bTf\!\ci\! {}^bT\phi:(u_1,\ldots,u_m,s_1,\ldots,s_m)\!\longmapsto\! (v_1,\ldots,v_n,t_1,\ldots,t_n),\!\!\!\!\!\!\!\!{}
\\
&\text{where}\quad v_j=f_j(u_1,\ldots,u_m),\quad j=1\ldots,n,\\
&\text{and}\quad t_j=
\begin{cases} \sum_{i=1}^kf_j^{-1}u_i\frac{\pd f_j}{\pd u_i}\,s_i+\sum_{i=k+1}^mf_j^{-1}\frac{\pd f_j}{\pd u_i}\,s_i, & j\le l, \\[4pt]
\sum_{i=1}^ku_i\frac{\pd f_j}{\pd u_i}\,s_i+\sum_{i=k+1}^m\frac{\pd f_j}{\pd u_i}\,s_i, & j>l. \end{cases}
\end{split}
\label{gc2eq16}
\e

Since $f$ is interior, the functions $f_j^{-1}u_i\frac{\pd f_j}{\pd u_i}$ for $i\le k$, $j\le l$ and $f_j^{-1}\frac{\pd f_j}{\pd u_i}$ for $i>k$, $j\le l$ occurring in \eq{gc2eq16} extend uniquely to
smooth functions of $(u_1,\ldots,u_m)$ where $f_j=0$, which by Definition \ref{gc2def1}(b)(i) is only where $u_i=0$ for certain $i=1,\ldots,k$. If $f$ were not interior, we could have $f_j(u_1,\ldots,u_m)=0$ for all $(u_1,\ldots,u_m)$, and then there are no natural values for $f_j^{-1}u_i\frac{\pd f_j}{\pd u_i}$, $f_j^{-1}\frac{\pd f_j}{\pd u_i}$ (just setting them zero is not functorial under change of coordinates), so we could not define~${}^bTf$.

For definition (b) of ${}^bTX,{}^bTY$, ${}^bTf$ acts by ${}^bTf:(x,v,v')\mapsto (y,w,w')$ for $y=f(x)$, $w=v\ci f^*$ and $w'=v'\ci f^*$, where composition with $f$ maps $f^*:C^\iy_y(Y)\ra C^\iy_x(X)$, $f^*:\cI_y(Y)\ra\cI_x(X)$, as $f$ is interior.

If $g:Y\ra Z$ is another interior map then ${}^bT(g\ci f)={}^bTg\ci
{}^bTf:{}^bTX\ra {}^bTZ$, and ${}^bT(\id_X)=\id_{{}^bTX}:{}^bTX\ra
{}^bTX$. Thus, writing $\Mancin$ for the subcategory of $\Manc$ with morphisms interior maps, the assignment $X\mapsto {}^bTX$, $f\mapsto {}^bTf$ is a functor, the {\it b-tangent functor\/} ${}^bT:\Mancin\ra\Mancin$. The maps $I_X:{}^bTX\ra TX$ give a natural transformation $I:{}^bT\ra T$ of functors on~$\Mancin$.

We can also regard ${}^bTf$ as a vector bundle morphism ${}^b\d
f:{}^bTX\ra f^*({}^bTY)$ on $X$. The {\it b-cotangent bundle\/} ${}^bT^*X$ of $X$ is the dual vector bundle of ${}^bTX$. B-cotangent bundles ${}^bT^*X$ are not functorial in the same way, though we do have vector bundle morphisms $({}^b\d f)^*:f^*({}^bT^*Y)\ra {}^bT^*X$ for interior~$f$.
\label{gc2def8}
\end{dfn}

The next proposition describes the functorial properties of
$TX,{}^bTX$. The proof is straightforward.

\begin{prop}{\bf(a)} As in Definitions\/
{\rm\ref{gc2def7}--\ref{gc2def8},} we have \begin{bfseries}tangent
functors\end{bfseries} $T:\Manc\ra\Manc,$ $T:\cManc\ra\cManc$
preserving the subcategories $\Mancst,$ $\Mancin,$ $\Mancis,$
$\cMancst,$ $\cMancin,$ $\cMancis,$ and \begin{bfseries}b-tangent
functors\end{bfseries} ${}^bT:\ab\Mancin\ra\Mancin,$
${}^bT:\cMancin\ra\cMancin$ preserving $\Mancis,\cMancis$.
\smallskip

\noindent{\bf(b)} The projections $\pi:TX\ra X,$ $\pi:{}^bTX\ra X,$
zero sections $0:X\ra TX,$ $0:X\ra{}^bTX,$ and inclusion
$I_X:{}^bTX\ra TX$ induce natural transformations
\e
\pi:T\Longra \Id, \;\> \pi:{}^bT\Longra \Id, \;\> 0:\Id\Longra T,
\;\> 0:\Id\Longra {}^bT,\;\> I:{}^bT\Longra T
\label{gc2eq17}
\e
on the categories on which both sides are defined.
\smallskip

\noindent{\bf(c)} The functors $T,{}^bT$ \begin{bfseries}preserve
products and direct products\end{bfseries} in each category. That
is, there are natural isomorphisms $T(W\t X)\cong TW\t TX,$
${}^bT(W\t X)\cong {}^bTW\t {}^bTX,$ such that if\/ $f:W\ra Y$ and\/
$g:X\ra Z$ are smooth or interior then the following commute
\begin{equation*}
\xymatrix@C=80pt@R=15pt{ *+[r]{T(W\t X)} \ar[d]^\cong \ar[r]_{T(f\t
g)} & *+[l]{T(Y\t Z)} \ar[d]_\cong
 \\ *+[r]{TW\t TX} \ar[r]^{Tf \t Tg} &
*+[l]{TY\t TZ,} }\;
\xymatrix@C=80pt@R=15pt{ *+[r]{{}^bT(W\t X)} \ar[d]^\cong
\ar[r]_{{}^bT(f\t g)} & *+[l]{{}^bT(Y\t Z)} \ar[d]_\cong \\
 *+[r]{{}^bTW\t {}^bTX} \ar[r]^{{}^bTf \t {}^bTg}
& *+[l]{{}^bTY\t {}^bTZ,} }
\end{equation*}
and if\/ $f:X\ra Y,$ $g:X\ra Z$ are smooth or interior then the
following commute
\begin{equation*}
\xymatrix@C=95pt@R=15pt{ *+[r]{TX} \ar[d]^\id \ar[r]_(0.4){T(f,g)} &
*+[l]{T(Y\t Z)} \ar[d]_\cong \\ *+[r]{TX} \ar[r]^(0.4){(Tf,Tg)} &
*+[l]{TY\t TZ,} }\;
\xymatrix@C=95pt@R=15pt{ *+[r]{{}^bTX} \ar[d]^\id
\ar[r]_(0.4){{}^bT(f,g)} & *+[l]{{}^bT(Y\t Z)} \ar[d]_\cong \\
 *+[r]{{}^bTX} \ar[r]^(0.4){({}^bTf,{}^bTg)}
& *+[l]{{}^bTY\t {}^bTZ.} }
\end{equation*}
These isomorphisms $T(W\t X)\cong TW\t TX,$ ${}^bT(W\t X)\cong
{}^bTW\t {}^bTX$ are also compatible with the natural
transformations~\eq{gc2eq17}.
\label{gc2prop2}
\end{prop}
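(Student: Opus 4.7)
The plan is to verify all three parts by direct calculation in local coordinate charts, using the explicit formulas for $Tf$ and ${}^bTf$ from Definitions \ref{gc2def7} and \ref{gc2def8}. No abstract machinery is required; the arguments run in parallel for the ordinary and b-tangent functors.

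For (a), I would check that if $f:U\ra V$ is smooth (respectively interior, strongly smooth, simple) in the sense of Definition \ref{gc2def1}, then $Tf$ has the same property. The key observation is that the boundary-valued components of $Tf$ are just $f_j(u)$ for $j\le l$, while the fibre components $r_j=\sum_i\frac{\pd f_j}{\pd u_i}(u)\,q_i$ take real values and are automatically weakly smooth. Hence the local product form of Definition \ref{gc2def1}(b)(i), and the exponent conditions defining strong smoothness, b-normality and simplicity, all pass from $f$ to $Tf$ unchanged. The analogous check for ${}^bTf$ via \eq{gc2eq16} is essentially identical once one knows that the coefficients $f_j^{-1}u_i\frac{\pd f_j}{\pd u_i}$ and $f_j^{-1}\frac{\pd f_j}{\pd u_i}$ extend smoothly across $\{f_j=0\}$, a point already settled within Definition \ref{gc2def8} using the interior hypothesis on $f$.

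For (b), each of the five natural transformations in \eq{gc2eq17} is verified square-by-square. The commutativities $\pi\ci Tf=f\ci\pi$ and $Tf\ci 0=0\ci f$ are immediate from the coordinate formula for $Tf$, and similarly for ${}^bT$. The only nontrivial identity is $Tf\ci I_X=I_Y\ci{}^bTf$: substituting $q_i=u_is_i$ for $i\le k$ into the formula for $Tf$ gives, in the $j\le l$ block, $\sum_{i\le k}\frac{\pd f_j}{\pd u_i}u_is_i+\sum_{i>k}\frac{\pd f_j}{\pd u_i}s_i$, which by comparison with \eq{gc2eq16} equals $f_j\cdot t_j$, precisely the $j$-th boundary coordinate of $I_Y\ci{}^bTf$; the $j>l$ block is immediate.

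For (c), the isomorphisms $T(W\t X)\cong TW\t TX$ and ${}^bT(W\t X)\cong {}^bTW\t {}^bTX$ arise from the canonical splitting of (b-)tangent spaces at a product point: in a product chart for $W\t X$, the basis of Definition \ref{gc2def7}(a) (respectively Definition \ref{gc2def8}(a)) partitions into $W$- and $X$-factors in the obvious way. Compatibility with $f\t g$ and $(f,g)$ then reduces to the chain rule, since the partial derivatives of a product map split block-diagonally and those of a pair stack columnwise; the tangent and b-tangent formulas respect both decompositions. Compatibility with \eq{gc2eq17} is automatic because $\pi,0,I$ are defined fibrewise. The main obstacle anywhere in the argument is bookkeeping rather than substance: keeping the indices for boundary versus interior coordinates aligned across products and across two kinds of tangent bundle requires care, but there are no genuinely subtle steps beyond the formula-checks outlined above.
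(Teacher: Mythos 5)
Your proposal is correct and matches the approach the paper indicates by its remark that ``the proof is straightforward'': a direct check in coordinate charts using the explicit formulas of Definitions \ref{gc2def7} and \ref{gc2def8}. The key points you identify — that the boundary-valued coordinates of $Tf$ and ${}^bTf$ are exactly the $f_j$ for $j\le l$ (so all exponent conditions pass unchanged from $f$), that the smooth extension of $f_j^{-1}u_i\tfrac{\pd f_j}{\pd u_i}$ across $\{f_j=0\}$ is already furnished by Definition \ref{gc2def8} under the interior hypothesis, that the square $I_Y\ci{}^bTf = Tf\ci I_X$ holds by the substitution $q_i=u_is_i$, and that the product isomorphisms come from the block structure of the charts and the chain rule — are precisely what makes the proposition routine.
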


\begin{rem}{\bf(i)} It is part of the philosophy of this paper, following Melrose \cite{KoMe,Melr2,Melr3,Melr4}, that we prefer to work with b-tangent bundles ${}^bTX$ rather than tangent bundles $TX$ when we can. One reason for this, explained in \S\ref{gc35}, is that for manifolds with g-corners in \S\ref{gc3}, the analogue of ${}^bTX$ behaves better than the analogue of $TX$ (which is not a vector bundle). 
\smallskip

\noindent{\bf(ii)} If $f:X\ra Y$ is a smooth map of manifolds with corners, we can define ${}^bTf:{}^bTX\ra {}^bTY$ only if $f$ is interior. But $C(f):C(X)\ra C(Y)$ is interior for any smooth $f:X\ra Y$ by Proposition \ref{gc2prop1}(a). Hence ${}^bT\ci C(f):{}^bTC(X)\ra {}^bTC(Y)$ is defined for all smooth $f:X\ra Y$, and we can use it as a substitute for ${}^bTf:{}^bTX\ra {}^bTY$ when this is not defined.
\label{gc2rem3}
\end{rem}

\begin{dfn} A smooth map $f:X\ra Y$ of manifolds with corners is called {\it \'etale\/} if it is a local diffeomorphism. That is, $f$ is \'etale if and only if for all $x\in X$ there are open neighbourhoods $U$ of $x$ in $X$ and $V=f(U)$ of $f(x)$ in $Y$ such that $f\vert_U:U\ra V$ is a diffeomorphism (invertible with smooth inverse).
\label{gc2def9}
\end{dfn}

Here are two alternative characterizations of \'etale maps:

\begin{prop} Let\/ $f:X\ra Y$ be a smooth map of manifolds with corners. Then the following are equivalent:
\begin{itemize}
\setlength{\itemsep}{0pt}
\setlength{\parsep}{0pt}
\item[{\bf(i)}] $f$ is \'etale;
\item[{\bf(ii)}] $f$ is simple (hence interior) and\/ ${}^b\d f:{}^bTX\ra f^*({}^bTY)$ is an isomorphism of vector bundles on $X;$ and
\item[{\bf(iii)}] $f$ is simple and\/ $\d f:TX\ra f^*(TY)$ is an isomorphism on $X$.
\end{itemize}

If\/ $f$ is \'etale, then $f$ is a diffeomorphism if and only if it is a bijection.
\label{gc2prop3}
\end{prop}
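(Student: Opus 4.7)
The plan is to exploit a local coordinate normal form forced by simplicity. The direction (i) $\Rightarrow$ (ii), (iii) I would dispatch immediately: an \'etale map is locally a diffeomorphism, and Definition \ref{gc2def1}(g) makes diffeomorphisms simple, while functoriality of $T$ (Definition \ref{gc2def7}) and ${}^bT$ (Definition \ref{gc2def8}) ensures that $\d f$ and ${}^b\d f$ are vector bundle isomorphisms.

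The heart of the proof is a local normal form at an arbitrary $x\in X$. I would choose charts $(U,\phi)$ on $X$ and $(V,\psi)$ on $Y$ with $U\subseteq\R^m_k$, $V\subseteq\R^n_l$, $\phi(0)=x$, $\psi(0)=f(x)$, and arranged so that $x$ is deepest in $U$ and $f(x)$ deepest in $V$. The simple condition at $0$ gives an injection $\sigma:\{1,\ldots,k\}\hookra\{1,\ldots,l\}$ with $f_{\sigma(i)}=F_{\sigma(i)}\cdot u_i$ near $0$ for smooth $F_{\sigma(i)}>0$, and $a_{i,j}=0$ otherwise. For $j\le l$ not in $\mathrm{im}(\sigma)$, simplicity forces $a_{i,j}=0$ for all $i\le k$, so $f_j=F_j>0$ near $0$, contradicting $f_j(0)=0$ from deepness of $f(x)$; hence $\sigma$ is a bijection and $k=l$. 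Moreover, the rank-matching built into either (ii) or (iii) forces $m=n$. After permuting coordinates I may assume $\sigma=\id$, so locally
$$f_j(u)=F_j(u)\cdot u_j\quad(j\le k),\qquad f_j:U\ra\R\ \text{smooth}\quad(j>k),\qquad F_j>0.$$

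A direct calculation from the formulas in Definitions \ref{gc2def7} and \ref{gc2def8} then shows that at $u=0$ the matrix of $\d f$ is block lower-triangular with diagonal blocks $\mathrm{diag}(F_1(0),\ldots,F_k(0))$ and $B:=(\pd f_j/\pd u_i(0))_{j,i>k}$, while the matrix of ${}^b\d f$ in the adapted bases $u_1\pd_{u_1},\ldots,u_k\pd_{u_k},\pd_{u_{k+1}},\ldots,\pd_{u_m}$ is block upper-triangular with diagonal blocks $I_k$ and the same $B$. Since $F_j(0)>0$, both matrices are invertible iff $B$ is, giving (ii) $\Leftrightarrow$ (iii). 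To close (ii) or (iii) $\Rightarrow$ (i), assuming $B$ invertible I would Seeley-extend $f$ to a weakly smooth map between open subsets of $\R^m,\R^n$, apply the classical inverse function theorem to produce a smooth local inverse $g$, and then use $v_j=F_j(u)u_j$ to rewrite $g_j(v)=G_j(v)\cdot v_j$ for $j\le k$ with $G_j:=1/(F_j\ci g)>0$ smooth. This displays $g$ in the form of Definition \ref{gc2def1}(b)(i), and in particular shows $g$ sends the $\R^n_k$-side into the $\R^m_k$-side, so $f$ is a diffeomorphism between corner neighbourhoods of $x$ and $f(x)$, hence \'etale.

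The final claim is routine: an \'etale bijection is a bijective local homeomorphism, hence a homeomorphism, and its inverse is smooth because it agrees locally with the smooth local inverses furnished by the \'etale property. The main obstacle I expect is the final step of (ii) $\Rightarrow$ (i) — upgrading the ordinary smooth inverse produced by the classical inverse function theorem to smoothness in the sense of Definition \ref{gc2def1}(b), and checking that it respects the corner stratification; the explicit factorisation $g_j(v)=G_j(v)v_j$ with $G_j>0$ is precisely what makes this possible, and is the reason the positivity $F_j>0$ in the normal form is essential.
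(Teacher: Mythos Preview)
The paper does not supply a proof of Proposition~\ref{gc2prop3}; it is stated in \S\ref{gc23} as a standard characterisation of \'etale maps and then used as input elsewhere (for instance in the proofs of Theorems~\ref{gc4thm2} and~\ref{gc4thm3} in \S\ref{gc51}--\S\ref{gc52}). So there is no paper proof to compare against directly.

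Your argument is correct and is the natural one. The reduction via simplicity to the local normal form $f_j(u)=F_j(u)\,u_j$ with $F_j>0$ for $j\le k$, together with the depth argument forcing $k=l$, is exactly right. Your block computations are accurate: at $u=0$ the ordinary Jacobian has the form $\bigl(\begin{smallmatrix}\mathrm{diag}(F_j(0))&0\\ *&B\end{smallmatrix}\bigr)$ while ${}^b\d f$ has the form $\bigl(\begin{smallmatrix}I_k&*\\ 0&B\end{smallmatrix}\bigr)$, so both are invertible precisely when $B$ is, giving (ii)$\Leftrightarrow$(iii). The closure of (ii)/(iii)$\Rightarrow$(i) via the classical inverse function theorem and the factorisation $g_j=G_j\cdot v_j$ with $G_j=1/(F_j\ci g)>0$ is also sound. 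One small refinement: rather than Seeley-extending $f$ arbitrarily and hoping the relation $v_j=F_j(u)u_j$ survives, it is cleaner to extend each $F_j$ separately to a positive smooth $\ti F_j$ on an open set in $\R^m$ and \emph{define} $\ti f_j:=\ti F_j\cdot u_j$ for $j\le k$; then the factorisation $g_j(v)=v_j/\ti F_j(g(v))$ is literally valid on the extended domain, and the corner-preservation and smoothness of $g$ in the sense of Definition~\ref{gc2def1}(b) follow immediately.

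For context, the paper does prove the g-corners analogue of (i)$\Leftrightarrow$(ii) as Corollary~\ref{gc4cor3}, deducing it from Theorem~\ref{gc4thm2}(v). But the proof of Theorem~\ref{gc4thm2}(v) in \S\ref{gc51} itself invokes Proposition~\ref{gc2prop3} for an auxiliary map $h$ between open subsets of $[0,\iy)^M\t\R^m$, so the ordinary-corners case is genuinely treated as prerequisite input rather than something derived within the paper.
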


\subsection{\texorpdfstring{(B-)normal bundles of $\pd^kX,C_k(X)$}{(B-)normal bundles of boundaries and corners}}
\label{gc24}

Next we study normal bundles of $\pd X,\pd^kX$ and $C_k(X)$ in $X$
using (b-)tangent bundles $TX,{}^bTX$. For tangent bundles the
picture is straightforward:

\begin{dfn} Let $X$ be a manifold with corners. From \S\ref{gc23},
the map $i_X:\pd X\ra X$ induces $Ti_X:T(\pd X)\ra TX$, which we may
regard as a morphism $\d i_X:T(\pd X)\ra i_X^*(TX)$ of vector
bundles on $\pd X$. This fits into a natural exact sequence of
vector bundles on~$\pd X$:
\e
\xymatrix@C=25pt{ 0 \ar[r] & T(\pd X) \ar[rr]^{\d i_X} && i_X^*(TX)
\ar[rr]^{\pi_N} && N_{\pd X} \ar[r] & 0, }
\label{gc2eq18}
\e
where $N_{\pd X}\ra\pd X$ is the {\it normal bundle\/} of $\pd X$ in
$X$. While $N_{\pd X}$ is not naturally trivial, it does have a
natural orientation by `outward-pointing' normal vectors, and so
$N_{\pd X}$ is trivializable. The dual vector bundle $N_{\pd X}^*$ of $N_{\pd X}$ is called the {\it conormal bundle\/} of $\pd X$ in $X$.

Similarly, we have projections $\Pi:\pd^kX\ra X$ and $\pi_1,\ldots,\pi_k:\pd^kX\ra\pd X$ mapping $\Pi:(x,\be_1,\ldots,\be_k)\mapsto x$ and $\pi_i:(x,\be_1,\ldots,\be_k)\mapsto(x,\be_i)$ under the identification \eq{gc2eq7}. As for \eq{gc2eq18}, we have a natural exact sequence
\begin{equation*}
\xymatrix@C=25pt{ 0 \ar[r] & T(\pd^k X) \ar[rr]^{\d\Pi} && \Pi^*(TX)
\ar[rr]^{\pi_N} && N_{\pd^kX} \ar[r] & 0 }
\end{equation*}
of vector bundles on $\pd^kX$, where $N_{\pd^kX}$ is the normal bundle of $\pd^kX$ in $X$, a vector bundle of rank $k$. Clearly, there is a natural isomorphism
\e
N_{\pd^kX}\cong\ts\bigop_{i=1}^k\pi_i^*(N_{\pd X}),
\label{gc2eq19}
\e
so that $N_{\pd^kX}$ is the direct sum of $k$ trivializable line
bundles, and is trivializable. It has dual bundle~$N_{\pd^kX}^*$.

As in \eq{gc2eq8} the symmetric group $S_k$ acts freely on $\pd^kX$, with $C_k(X)\cong\pd^kX/S_k$. The action of $S_k$ lifts naturally to $N_{\pd^kX}$, with $N_{\pd^kX}/S_k\cong
N_{C_k(X)}$, the normal bundle of $C_k(X)$ in $X$, in the exact sequence
\e
\xymatrix@C=12pt{ 0 \ar[r] & T(C_k(X)) \ar[rr]^{\d\Pi} && \Pi^*(TX)
\ar[rr]^(0.4){\pi_N} && N_{C_k(X)}\cong N_{\pd^kX}/S_k \ar[r] & 0. }
\label{gc2eq20}
\e
The action of $S_k$ on $N_{\pd^kX}\cong\bigop_{i=1}^k\pi_i^*(N_{\pd X})$ permutes the $k$ line bundles $\pi_i^*(N_{\pd X})$ for $i=1,\ldots,k$. Thus, $N_{C_k(X)}$ does not have a natural decomposition like \eq{gc2eq19} for $k\ge 2$. Similarly,~$N_{C_k(X)}^*\cong N_{\pd^kX}^*/S_k$.

For the corners $C(X)=\coprod_{k=0}^{\dim X}C_k(X)$, we define
vector bundles of mixed rank $N_{C(X)},N_{C(X)}^*$ on $C(X)$ by
$N_{C(X)}\vert_{C_k(X)}\!=\!N_{C_k(X)}$, $N_{C(X)}^*\vert_{C_k(X)}\!=\!N_{C_k(X)}^*$. As $\dim N_{C_k(X)}=\dim X$, these are objects of $\Manc$ rather than~$\cManc$.

Now let $f:X\ra Y$ be a smooth map of manifolds with corners. Form
the diagram of vector bundles of mixed rank on $C(X)$, with exact
rows:
\e
\begin{gathered}
\xymatrix@C=17pt@R=19pt{ 0 \ar[r] & T(C(X)) \ar[d]^(0.4){\d C(f)}
\ar[rr]^{\d\Pi} && \Pi^*(TX) \ar[d]^(0.4){\Pi^*(\d f)}
\ar[rr]^(0.5){\pi_N} && N_{C(X)} \ar@{.>}[d]^(0.4){N_{C(f)}}
\ar[r] & 0 \\
0 \ar[r] & {\begin{subarray}{l}\ts \quad C(f)^*\\
\ts (T(C(Y)))\end{subarray}} \ar[rr]^(0.4){C(f)^*(\d\Pi)} &&
{\begin{subarray}{l}\ts C(f)^*(\Pi^*(TY)) \\
\ts =\Pi^*(f^*(TY))\end{subarray}}
\ar[rr]^(0.6){C(f)^*(\pi_N)} && {\begin{subarray}{l}\ts \quad C(f)^* \\
\ts (N_{C(Y)})\end{subarray}} \ar[r] & 0. }\!\!\!\!\!\!{}
\end{gathered}
\label{gc2eq21}
\e
As the left hand square commutes, by exactness there is a unique
morphism $N_{C(f)}$ as shown making the diagram commute.

Suppose $g:Y\ra Z$ is another smooth map of manifolds with corners.
By considering the diagram
\begin{equation*}
\xymatrix@C=19pt@R=20pt{ 0 \ar[r] & T(C(X)) \ar[d]^(0.4){\d C(f)}
\ar@/_.4pc/@<-4.5ex>[dd]_(0.2){\d C(g\ci f)}
\ar[rr]^{\d\Pi} && \Pi^*(TX) \ar[d]^(0.4){\Pi^*(\d f)}
\ar@/_.4pc/@<-7.3ex>[dd]_(0.2){\Pi^*(\d(g\ci f))}
\ar[rr]^(0.5){\pi_N} && N_{C(X)} \ar[d]^(0.4){N_{C(f)}}
\ar@/_.4pc/@<-3.4ex>[dd]_(0.2){N_{C(g\ci f)}} \ar[r] & 0
\\
0 \ar[r] & {\begin{subarray}{l}\ts \quad C(f)^*\\
\ts (T(C(Y)))\end{subarray}} \ar[rr]^(0.4){C(f)^*(\d\Pi)}
\ar[d]^{C(f)^*(\d C(g))} &&
{\begin{subarray}{l}\ts C(f)^*(\Pi^*(TY)) \\
\ts =\Pi^*(f^*(TY))\end{subarray}} \ar[rr]^(0.6){C(f)^*(\pi_N)}
\ar[d]^{\Pi^*(\Pi^*(\d g))}
&& {\begin{subarray}{l}\ts \quad C(f)^*\\
\ts (N_{C(Y)})\end{subarray}} \ar[d]^{C(f)^*(N_{C(g)})} \ar[r] & 0
\\
0 \ar[r] & {\begin{subarray}{l}\ts C(g\!\ci\! f)^*\\
\ts (T(C(Z)))\end{subarray}} \ar[rr]^(0.4){C(g\ci f)^*(\d\Pi)} &&
{\begin{subarray}{l}\ts C(g\!\ci\! f)^*(\Pi^*(TZ)) \\
\ts =\Pi^*((g\!\ci\! f)^*(TZ))\end{subarray}}
\ar[rr]^(0.6){C(g\ci f)^*(\pi_N)} && {\begin{subarray}{l}\ts C(g\!\ci\! f)^*\\
\ts (N_{C(Z)})\end{subarray}} \ar[r] & 0, }
\end{equation*}
and using uniqueness of $N_{C(f)}$ in \eq{gc2eq21}, we see that
\e
N_{C(g\ci f)}=C(f)^*(N_{C(g)})\ci N_{C(f)}.
\label{gc2eq22}
\e

We can also regard $N_{C(f)}$ as a morphism $N_{C(f)}:N_{C(X)}\ra
N_{C(Y)}$. Then \eq{gc2eq22} implies that $N_{C(g\ci f)}=N_{C(g)}\ci
N_{C(f)}:N_{C(X)}\ra N_{C(Z)}$, so $X\mapsto N_{C(X)}$, $f\mapsto
N_{C(f)}$ is a functor $N_C:\Manc\ra\Manc$ and $N_C:\cManc\ra\cManc$. The zero section $z:C(X)\ra N_{C(X)}$ and projection $\pi:N_{C(X)}\ra C(X)$ give natural transformations $z:C\Ra N_C$ and $\pi:N_C\Ra C$. As in Propositions \ref{gc2prop1}(f) and \ref{gc2prop2}(c), one can show that $N_C$ preserves products and direct products.
\label{gc2def10}
\end{dfn}

Next we consider the analogue of the above for b-tangent bundles
${}^bTX$, which is more subtle. As $i_X:\pd X\ra X$ is not interior,
we do {\it not\/} have an induced map ${}^b\d i_X:{}^bT(\pd X)\ra
i_X^*({}^bTX)$, so we cannot form the analogue of \eq{gc2eq18} for
${}^bT(\pd X)$. We begin with analogues of $N_{C(X)},N_{C(f)}$ above:

\begin{dfn} Let $X$ be an $n$-manifold with corners, and $k=0,\ldots,n$. As in Definition \ref{gc2def4}, points of $C_k(X)$ are pairs $(x,\ga)$ for $x\in X$ and $\ga$ a local $k$-corner component of $X$ at $x$, and there is a natural 1-1 correspondence between such $\ga$ and (unordered) sets $\{\be_1,\ldots,\be_k\}$ of $k$ distinct local boundary components $\be_1,\ldots,\be_k$ of $X$ at $x$. Define a rank $k$ vector bundle $\pi: {}^bN_{C_k(X)}\ra C_k(X)$ over $C_k(X)$ to have fibre ${}^bN_{C_k(X)}\vert_{(x,\ga)}$ the vector
space with basis $\be_1,\ldots,\be_k$ for each $(x,\ga)\in C_k(X)$ with $\ga$ corresponding to $\{\be_1,\ldots,\be_k\}$. Considering local models, we see that the total space of
${}^bN_{C_k(X)}$ is naturally an $n$-manifold with corners.

Points of ${}^bN_{C_k(X)}$ will be written $(x,\ga,b_1\be_1+\cdots+ b_k\be_k)$ for
$(x,\ga)\in C_k(X)$ and $b_1,\ldots,b_k\in\R$, where $\ga$ corresponds to $\{\be_1,\ldots,\be_k\}$. Since $C_k(X)\cong\pd^kX/S_k$ by \eq{gc2eq8} there is an isomorphism ${}^bN_{C_k(X)}\cong (\pd^kX\t\R^k)/S_k$, where the symmetric group $S_k$ acts on $\R^k$ by permuting the coordinates.

For reasons that will become clear in Proposition \ref{gc2prop4}, we
call ${}^bN_{C_k(X)}$ the {\it b-normal bundle of\/ $C_k(X)$
in\/} $X$. The dual bundle ${}^bN_{C_k(X)}^*$ is called the {\it b-conormal bundle of\/ $C_k(X)$ in\/}~$X$.

Define the {\it monoid bundle\/} $M_{C_k(X)}$ as a subset in ${}^bN_{C_k(X)}$ by
\begin{equation*}
M_{C_k(X)}=\bigl\{(x,\ga,b_1\be_1+
\cdots+b_k\be_k)\in{}^bN_{C_k(X)}:b_i\in\N\bigr\},
\end{equation*}
where $\ga$ corresponds to $\{\be_1,\ldots,\be_k\}$ and $\N=\{0,1,2,\ldots\}$. It fibres over $C_k(X)$ with fibres $\N^k$, and is a submanifold of ${}^bN_{C_k(X)}$ of dimension $n-k$. The `$M$' in $M_{C_k(X)}$ stands for {\it monoid}, as we will
regard $\pi:M_{C_k(X)}\ra C_k(X)$ as a locally constant family
of commutative monoids $\N^k$ over $C_k(X)$, that is, each fibre
$\pi^{-1}(p)$ has a commutative, associative addition operation $+$
with identity~$0$. 

Define the {\it dual monoid bundle\/} $M_{C_k(X)}^\vee$ to be
\begin{equation*}
M_{C_k(X)}^\vee=\bigl\{(x',\bs b)\in{}^bN_{C_k(X)}^*:x'\in C_k(X),\; \bs b\bigl(M_{C_k(X)}\vert_{x'}\bigr)\subseteq\N\bigr\}.
\end{equation*}
It is a subbundle of ${}^bN_{C_k(X)}^*$ with fibre $\N^k$.

For more about monoids, see \S\ref{gc31}. The importance of the monoids $M_{C_k(X)}$ in understanding fibre products and blow-ups of manifolds with corners was emphasized
by Kottke and Melrose \cite[\S 6]{KoMe}, in their {\it basic smooth monoidal complexes}. Gillam and Molcho \cite{GiMo} work with the dual monoids~$M_{C_k(X)}^\vee$. 

Define morphisms ${}^bi_T:{}^bN_{C_k(X)}\ra\Pi^*({}^bTX)$ of vector
bundles and ${}^bi_T:M_{C_k(X)}\ra\Pi^*({}^bTX)$ of monoids on
$C_k(X)$ as follows, where $\Pi:C_k(X)\ra X$ is the projection.
Given $(x,\ga)\in C_k(X)$ where $\ga$ corresponds to $\{\be_1,\ldots,\be_k\}$, choose local
coordinates $(x_1,\ldots,x_n)\in\R^n_l$ on $X$ near $x,$ where $k\le
l\le n$ with $x=(0,\ldots,0)$ and $\be_i=\{x_i=0\}$ for
$i=1,\ldots,k$. Then define
\begin{equation*}
{}^bi_T\vert_{(x,\ga)}:b_1\be_1+\cdots+b_k\be_k
\longmapsto\ts\sum_{i=1}^kb_i\cdot \Pi^*(x_i\frac{\pd}{\pd x_i}).
\end{equation*}
One can show this is independent of the choice of coordinates. We
can also think of these as smooth maps
${}^bi_T:{}^bN_{C_k(X)}\ra{}^bTX$, ${}^bi_T:M_{C_k(X)}\ra{}^bTX$
of manifolds with corners. There is a dual morphism~${}^bi_T^*:\Pi^*({}^bTX^*)\ra\smash{{}^bN_{C_k(X)}^*}$.

\label{gc2def11}
\end{dfn}

In the next proposition, the local existence and uniqueness of
${}^b\pi_T$ is easy to check using a local model $\R^n_l$ for $X$.
The bottom row of \eq{gc2eq23} is \eq{gc2eq20}. The top row of
\eq{gc2eq23} is the analogue of \eq{gc2eq20} for
${}^bTX,{}^bT(C_k(X))$ (note the reversal of directions), and
justifies calling ${}^bN_{C_k(X)}$ the b-normal bundle of $C_k(X)$
in~$X$.

\begin{prop} Let\/ $X$ be a manifold with corners, and\/
$k=0,\ldots,\dim X$. Then there is a unique morphism
${}^b\pi_T:\Pi^*({}^bTX)\ra {}^bT(C_k(X))$ which makes the following
diagram of vector bundles on $C_k(X)$ commute, with exact rows:
\e
\begin{gathered}
\xymatrix@C=18pt@R=15pt{ 0 \ar[r] & {}^bN_{C_k(X)} \ar[d]^0
\ar[rr]_(0.5){{}^bi_T} && \Pi^*({}^bTX) \ar[d]^{\Pi^*(I_X)}
\ar@{.>}[rr]_{{}^b\pi_T} && {}^bT(C_k(X)) \ar[d]^{I_{C_k(X)}} \ar[r]
& 0 \\ 0 & N_{C_k(X)} \ar[l] && \Pi^*(TX) \ar[ll]_(0.5){\pi_N} &&
T(C_k(X)) \ar[ll]_{\d\Pi} & \ar[l] 0. }
\end{gathered}
\label{gc2eq23}
\e
\label{gc2prop4}
\end{prop}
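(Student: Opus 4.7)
\smallskip

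\noindent\emph{Proof plan.} The strategy is to construct ${}^b\pi_T$ locally on each coordinate chart by an explicit formula, verify commutativity and exactness there, and then use uniqueness to patch the local pieces to a global map.

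First I would fix a point $(x,\ga)\in C_k(X)$ with $\depth_Xx=l$ and work in a chart $(U,\phi)$ with $U\subseteq\R^n_l$ open, $\phi(0,\ldots,0)=x$, and $\ga$ corresponding to the local boundary components $\be_i=\{x_i=0\}$ for $i=1,\ldots,k$. By \eq{gc2eq6} the chart $(U_{\{1,\ldots,k\}},\phi_{\{1,\ldots,k\}})$ on $C_k(X)$ identifies a neighbourhood of $(x,\ga)$ with an open subset of $\R^{n-k}_{l-k}$ with coordinates $x_{k+1},\ldots,x_n$. In these coordinates ${}^bTX$ has local frame $x_1\frac{\pd}{\pd x_1},\ldots,x_l\frac{\pd}{\pd x_l},\frac{\pd}{\pd x_{l+1}},\ldots,\frac{\pd}{\pd x_n}$, while ${}^bT(C_k(X))$ has local frame $x_{k+1}\frac{\pd}{\pd x_{k+1}},\ldots,x_l\frac{\pd}{\pd x_l},\frac{\pd}{\pd x_{l+1}},\ldots,\frac{\pd}{\pd x_n}$. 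The frame $\be_1,\ldots,\be_k$ for ${}^bN_{C_k(X)}$ maps via ${}^bi_T$ to $x_1\frac{\pd}{\pd x_1},\ldots,x_k\frac{\pd}{\pd x_k}$ in $\Pi^*({}^bTX)$, and $I_X$ in these coordinates sends $x_i\frac{\pd}{\pd x_i}$ (the $b$-basis vectors) to $x_iq_i\frac{\pd}{\pd x_i}$ (which vanishes at $x_1=\cdots=x_k=0$ for $i\le k$ and equals $x_i\frac{\pd}{\pd x_i}$ for $i>k$).

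Next I would define ${}^b\pi_T$ locally by the obvious prescription on basis sections:
\[
x_i\tfrac{\pd}{\pd x_i}\longmapsto 0\;\>(i\le k),\quad x_i\tfrac{\pd}{\pd x_i}\longmapsto x_i\tfrac{\pd}{\pd x_i}\;\>(k<i\le l),\quad \tfrac{\pd}{\pd x_j}\longmapsto\tfrac{\pd}{\pd x_j}\;\>(j>l).
\]
With this definition, checking the two squares in \eq{gc2eq23} is immediate on basis vectors: for $i\le k$ the left square closes because $I_X(x_i\frac{\pd}{\pd x_i})=0$ at points where $x_i=0$, and for $i>k$ and $j>l$ the right square closes because $\d\Pi\ci I_{C_k(X)}$ and $\Pi^*(I_X)$ agree with the identity on the chosen basis sections. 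Exactness of the top row is also clear from the local frames: ${}^bi_T$ is injective with image spanned by $x_1\frac{\pd}{\pd x_1},\ldots,x_k\frac{\pd}{\pd x_k}$, which is exactly the kernel of ${}^b\pi_T$, and ${}^b\pi_T$ is surjective by construction, so the dimension count $k+(n-k)=n$ matches the rank of $\Pi^*({}^bTX)$.

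For uniqueness, I would argue as follows. Suppose ${}^b\pi_T$ and ${}^b\pi_T'$ are two morphisms making \eq{gc2eq23} commute. From commutativity of the right square applied to both maps, $\d\Pi\ci I_{C_k(X)}\ci({}^b\pi_T-{}^b\pi_T')=0$, so by exactness of the bottom row (which identifies $\ker\d\Pi=0$), we have $I_{C_k(X)}\ci({}^b\pi_T-{}^b\pi_T')=0$. But $I_{C_k(X)}$ is an isomorphism over the interior $C_k(X)^\ci=S^0(C_k(X))$, which is dense in $C_k(X)$; hence ${}^b\pi_T-{}^b\pi_T'$ vanishes on a dense open subset, and by smoothness it vanishes identically. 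This local uniqueness then guarantees that the local ${}^b\pi_T$ constructed in different charts agree on overlaps, so they patch to a global vector bundle morphism with the required properties.

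The main (minor) obstacle is simply the bookkeeping of local frames to verify the two squares commute; there is no serious analytic content, since both the existence and the uniqueness reduce to linear algebra on fibres once the local models have been written down.
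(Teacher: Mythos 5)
Your proposal is correct and follows exactly the approach the paper indicates (checking existence and uniqueness fibrewise in a local model $\R^n_l$ for $X$, then patching by uniqueness); the verification of the two squares on the local frames and the density argument for uniqueness are both sound. The only blemish is the stray ``$q_i$'' in your description of $I_X$, which should simply read that $I_X$ sends the $b$-frame vector $x_i\frac{\pd}{\pd x_i}$ to the ordinary vector field $x_i\frac{\pd}{\pd x_i}$, vanishing where $x_i=0$.
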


When $k=1$, we have $C_1(X)\cong\pd X$ and
${}^bN_{C_1(X)}\cong\O_{\pd X}$, the trivial line bundle on $\pd X$.
So the top line of \eq{gc2eq23} becomes the exact sequence
\begin{equation*}
\xymatrix@C=22pt{ 0 \ar[r] & \O_{\pd X} \ar[rr]^(0.4){{}^bi_T} &&
i_X^*({}^bTX) \ar[rr]^{{}^b\pi_T} && {}^bT(\pd X) \ar[r] & 0 }
\end{equation*}
of vector bundles on $\pd X$, the analogue of \eq{gc2eq18} for
b-tangent spaces.

As for the $N_{C_k(X)}$, the ${}^bN_{C_k(X)}$ are functorial, but
only for interior maps:

\begin{dfn} In Definition \ref{gc2def11}, set
${}^bN_{C(X)}=\coprod_{k=0}^n{}^bN_{C_k(X)}$ and
$M_{C(X)}\ab=\coprod_{k=0}^nM_{C_k(X)}$. Then ${}^bN_{C(X)}$
is an $n$-manifold with corners, and $M_{C(X)}$ an object of
$\cManc$. We have projections $\pi:{}^bN_{C(X)},\ab M_{C(X)}\ra
C(X)$, making ${}^bN_{C(X)}$ into a vector bundle of mixed rank over
$C(X)$, and $M_{C(X)}$ into a locally constant family of
commutative monoids over~$C(X)$.

Now let $f:X\ra Y$ be an interior map of manifolds with corners.
From \S\ref{gc22} $C(f):C(X)\ra C(Y)$ is also interior, so from
\S\ref{gc23} we have smooth maps ${}^bTf:{}^bTX\ra{}^bTY$ and
${}^bTC(f):{}^bTC(X)\ra{}^bTC(Y)$, which we may write as vector
bundle morphisms ${}^b\d f:{}^bTX\ra f^*({}^bTY)$ on $X$ and ${}^b\d
C(f):{}^bTC(X)\ra C(f)^*({}^bTC(Y))$ on $C(X)$. Consider the diagram
\e
\begin{gathered}
{}\!\!\!\!\xymatrix@C=18pt@R=17pt{ 0 \ar[r] & {}^bN_{C(X)}
\ar@{.>}[d]^(0.4){{}^bN_{C(f)}} \ar[rr]_(0.4){{}^bi_T} &&
\Pi^*({}^bTX) \ar[d]^(0.4){\Pi^*({}^b\d f)} \ar[rr]_(0.6){{}^b\pi_T}
&& {}^bT(C(X))
\ar[d]^(0.4){{}^b\d C(f)} \ar[r] & 0 \\
0 \ar[r] & {\begin{subarray}{l}\ts\;\> C(f)^*\\
\ts ({}^bN_{C(Y)})\end{subarray}} \ar[rr]^(0.4){C(f)^*({}^bi_T)} &&
{\begin{subarray}{l}\ts C(f)^*\!\ci\!\Pi^*({}^bTY)\\
\ts =\Pi^*\!\ci\! f^*({}^bTY)\end{subarray}}
\ar[rr]^(0.6){C(f)^*({}^b\pi_T)} &&
{\begin{subarray}{l}\ts \quad C(f)^*\\
\ts({}^bT(C(Y)))\end{subarray}} \ar[r] & {0.\!\!} }\!\!\!\!\!\!\!{}
\end{gathered}
\label{gc2eq24}
\e
The rows come from the top row of \eq{gc2eq23} for $X,Y$, and are
exact. One can check using formulae in coordinates that the right
hand square commutes. Thus by exactness there is a unique map
${}^bN_{C(f)}$ as shown making the diagram commute.

We can give a formula for ${}^bN_{C(f)}$ as follows. Suppose $x\in
S^{k'}(X)\subseteq X$ with $f(x)=y\in S^{l'}(Y)\subseteq Y$. Then we
may choose local coordinates $(x_1,\ldots,x_m)\in\R^m_{k'}$ on $X$
with $x=(0,\ldots,0)$ and $(y_1,\ldots,y_n)\in\R^n_{l'}$ on $Y$ with
$y=(0,\ldots,0)$, so that $x_1,\ldots,x_{k'},y_1,\ldots,y_{l'}
\in[0,\iy)$ and $x_{k'+1},\ldots,x_m,y_{l'+1},\ldots,y_n\in\R$.
Write $f$ in coordinates as $\bigl(f_1(x_1,\ldots,x_m),
\ldots,f_n(x_1,\ldots,x_m)\bigr)$. As $f$ is interior, Definition
\ref{gc2def1} shows that for $j=1,\ldots,l'$, near $x=(0,\ldots,0)$
we have
\begin{equation*}
f_j(x_1,\ldots,x_m)=F_j(x_1,\ldots,x_m)\cdot\ts\prod_{i=1}^{k'}
x_i^{a_{i,j}},
\end{equation*}
where $F_j$ is smooth and positive and $a_{i,j}\in \N$. Since
$f_j(0,\ldots,0)=0$, we see that for each $j=1,\ldots,l'$ we have
$a_{i,j}>0$ for some $i=1,\ldots,k'$.

The local boundary components of $X$ at $x$ are $\be_i:=\{x_i=0\}$ for $i=1,\ldots,k'$, and of $Y$ at $y$ are $\ti\be_j:=\{y_j=0\}$ for $j=1,\ldots,l'$. Let $\ga$ be a local $k$-corner component of $X$ at $x$ corresponding to $\{\be_{i_1}, \ldots,\be_{i_k}\}$ for $1\le i_1<\cdots<i_k\le k'$, and $\ti\ga$ a local $l$-corner component of $Y$ at $y$ corresponding to $\{\ti\be_{j_1},\ldots,\ti\be_{j_l}\}$ for $1\le j_1<\cdots<j_l\le l'$, so that $(x,\ga)\in C_k(X)$ and $(y,\ti\ga)\in C_l(Y)$, and suppose $f_*(\ga)=\ti\ga$, so that $C(f):(x,\ga)\mapsto(y,\ti\ga)$. Then we can check from the definitions that
\begin{equation*}
\{j_1,\ldots,j_l\}=\bigl\{j\in \{1,\ldots,l'\}:\text{$a_{i_c,j}>0$, some
$c=1,\ldots,k$}\bigr\},
\end{equation*}
and ${}^bN_{C(f)}$ acts by
\e
{}^bN_{C(f)}:\bigl(x,\ga,
b_{i_1}\be_{i_1}+\cdots+b_{i_k}\be_{i_k}\bigr)\longmapsto
\bigl(y,\ti\ga,\ts\sum_{d=1}^l
\bigl[\sum_{c=1}^ka_{i_c,j_d}b_{i_c}\bigr]\ti\be_{j_d}\bigr).
\label{gc2eq25}
\e
Since $a_{i,j}\in\N$, ${}^bN_{C(f)}$ maps $M_{C(X)}\ra
C(f)^*(M_{C(Y)})$. So write
\begin{equation*}
M_{C(f)}:={}^bN_{C(f)} \vert_{M_{C(X)}}:M_{C(X)}\ra
C(f)^*(M_{C(Y)}).
\end{equation*}
Note that $f$ is {\it simple\/} if and only if $M_{C(f)}$ and ${}^bN_{C(f)}$ are isomorphisms.

Suppose $g:Y\ra Z$ is another interior map. From the diagram
\begin{equation*}
\text{\begin{small}$\displaystyle
\xymatrix@C=10pt@R=20pt{ 0 \ar[r] & {}^bN_{C(X)}
\ar[d]^{{}^bN_{C(f)}} \ar@<-4ex>@/_.5pc/[dd]_(0.25){{}^bN_{C(g\ci f)}}
\ar[rr]_(0.4){{}^bi_T} && \Pi^*({}^bTX) \ar[d]^{\Pi^*({}^b\d f)}
\ar@<-3ex>@/_.5pc/[dd]_(0.25){\Pi^*({}^b\d(g\ci f))}
\ar[rr]_(0.6){{}^b\pi_T} && {}^bT(C(X)) \ar[d]^{{}^b\d C(f)}
\ar@<-2ex>@/_.5pc/[dd]_(0.25){{}^b\d C(g\ci f)} \ar[r] & 0
\\
0 \ar[r] & C(f)^*({}^bN_{C(Y)}) \ar[d]^(0.4){C(f)^*({}^bN_{C(g)})}
\ar[rr]^{\raisebox{4pt}{$\st C(f)^*({}^bi_T)$}} &&
C(f)^*\!\ci\!\Pi^*({}^bTY) \ar[d]^(0.4){C(f)^*(\Pi^*({}^b\d g))}
\ar[rr]^{\raisebox{4pt}{$\st C(f)^*({}^b\pi_T)$}} &&
C(f)^*({}^bT(C(Y))) \ar[d]^(0.4){C(f)^*({}^b\d C(g))} \ar[r] & 0
\\
0 \ar[r] & C(g\!\ci\! f)^*({}^bN_{C(Z)})
\ar[rr]^{\raisebox{5pt}{$\st C(g\ci f)^*({}^bi_T)$}} &&
C(g\!\ci\! f)^*\!\ci\!\Pi^*({}^bTZ)
\ar[rr]^{\raisebox{5pt}{$\st C(g\ci f)^*({}^b\pi_T)$}} &&
C(g\!\ci\! f)^*({}^bT(C(Z))) \ar[r] & 0,\!\!{}
}$\end{small}}
\end{equation*}
using the functoriality of $C,{}^bT$, we find that ${}^bN_{C(g\ci
f)}=C(f)^*({}^bN_{C(g)})\ci {}^bN_{C(f)}$, and hence~$M_{C(g\ci
f)}=C(f)^*(M_{C(g)})\ci M_{C(f)}$.

We can also interpret ${}^bN_{C(f)}$ as a smooth map of manifolds
with corners ${}^bN_{C(f)}:{}^bN_{C(X)}\ra {}^bN_{C(Y)}$, and
$M_{C(f)}$ as a morphism $M_{C(f)}:M_{C(X)}\ra
M_{C(Y)}$ in $\cManc$, both of which are interior as $C(f)$ is.
Then for interior $f:X\ra Y$, $g:Y\ra Z$ we have ${}^bN_{C(g\ci
f)}={}^bN_{C(g)}\ci {}^bN_{C(f)}:{}^bN_{C(X)}\ra {}^bN_{C(Z)}$. Thus
$X\mapsto {}^bN_{C(X)}$, $f\mapsto{}^bN_{C(f)}$ defines functors
${}^bN_C:\Mancin\ra\Mancin$ and ${}^bN_C:\cMancin\ra\cMancin$, which we
call the {\it b-normal corner functors}. Similarly $X\mapsto
M_{C(X)}$, $f\mapsto M_{C(f)}$ defines functors
$M_C:\Mancin,\cMancin\ra\cMancin$, which we call the {\it
monoid corner functors}.

The dual bundles ${}^bN_{C(X)}^*,M_{C(X)}^*$ are not functorial in the same way.
\label{gc2def12}
\end{dfn}

The next proposition is easy to check:

\begin{prop} Definition\/ {\rm\ref{gc2def12}} defines functors
${}^bN_C:\Mancin\ra\Mancin,$ ${}^bN_C:\cMancin\ra\cMancin$ and\/
$M_C:\Mancin,\cMancin\ra\cMancin,$ preserving (direct) products, with a commutative diagram of natural transformations:
\begin{equation*}
\xymatrix@C=60pt@R=2pt{ & M_C \ar@{=>}[dd]^{\text{inclusion}}
\ar@{=>}[dr]^\Pi \\ C \ar@{=>}[ur]^{\raisebox{2pt}{$\st\text{zero section $0$\qquad}$}}
\ar@{=>}[dr]_{\text{zero section $0$\qquad}} && C. \\ & {}^bN_C
\ar@{=>}[ur]_\Pi }
\end{equation*}
\label{gc2prop5}
\end{prop}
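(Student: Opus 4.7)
The plan is that almost all of the work has already been carried out inside Definition~\ref{gc2def12}; what remains is to bundle those observations together and check product preservation and naturality of the structural maps. I would proceed in four stages.

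First, I would confirm functoriality. The identity ${}^bN_{C(\id_X)}=\id_{{}^bN_{C(X)}}$ is immediate from the uniqueness statement in the diagram \eqref{gc2eq24}: the identity morphism is the unique filler when $f=\id_X$, since $\Pi^*({}^b\d\id_X)=\id$ and ${}^b\d C(\id_X)=\id$. Composition ${}^bN_{C(g\ci f)}={}^bN_{C(g)}\ci {}^bN_{C(f)}$ is exactly what Definition~\ref{gc2def12} established via the three-row diagram, again by uniqueness of the left-hand filler. Restricting to the submanifold bundle $M_{C(X)}\subseteq {}^bN_{C(X)}$ gives the functoriality of $M_C$, using that \eqref{gc2eq25} has coefficients in $\N$ so the composition preserves the monoid subbundle. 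Thus $X\mapsto{}^bN_{C(X)}$ and $X\mapsto M_{C(X)}$ define functors on $\Mancin$ and $\cMancin$, landing in $\Mancin,\cMancin$ respectively (as $C(f)$ is interior by Proposition~\ref{gc2prop1}(a), and the explicit formula \eqref{gc2eq25} shows ${}^bN_{C(f)}$ and $M_{C(f)}$ are interior smooth maps of manifolds with corners).

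Second, I would verify preservation of (direct) products. Proposition~\ref{gc2prop1}(f) gives $C(X\t Y)\cong C(X)\t C(Y)$, and a local-coordinate inspection shows the local boundary components of $X\t Y$ at $(x,y)$ are canonically the disjoint union of those of $X$ at $x$ and those of $Y$ at $y$. Hence the basis description of ${}^bN_{C_k}$ in Definition~\ref{gc2def11} yields a natural fibrewise isomorphism
\e
{}^bN_{C(X\t Y)}\cong {}^bN_{C(X)}\t{}^bN_{C(Y)},\qquad M_{C(X\t Y)}\cong M_{C(X)}\t M_{C(Y)},
\label{eq:prodiso}
\e
compatible with the projections to $C(X\t Y)\cong C(X)\t C(Y)$. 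Compatibility with morphisms $f\t g$ and with pairs $(f,g)$ reduces, via the uniqueness in \eqref{gc2eq24}, to the analogous product-preservation of $C$ and ${}^bT$ from Propositions~\ref{gc2prop1}(f) and \ref{gc2prop2}(c); this can also be read off directly from \eqref{gc2eq25}.

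Third, I would check naturality of the structural maps. The zero section $0:C(X)\ra {}^bN_{C(X)}$, the zero section $0:C(X)\ra M_{C(X)}$, the inclusion $M_{C(X)}\hookra {}^bN_{C(X)}$, and the projections $\Pi:{}^bN_{C(X)},M_{C(X)}\ra C(X)$ are each defined pointwise in a way that is manifestly preserved by \eqref{gc2eq25}: ${}^bN_{C(f)}$ sends the zero vector to the zero vector, sends integer-coefficient vectors to integer-coefficient vectors (this is the inclusion-compatibility), and covers $C(f):C(X)\ra C(Y)$ (this is the $\Pi$-compatibility). Hence each of these gives a natural transformation on the indicated functors.

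Fourth, and finally, I would observe that the stated diagram of natural transformations commutes by pointwise checking on a fibre: the composition $C(X)\xrightarrow{0}M_{C(X)}\hookra {}^bN_{C(X)}$ agrees with $C(X)\xrightarrow{0}{}^bN_{C(X)}$ because both send $(x,\ga)$ to the zero vector in the fibre over $(x,\ga)$, while $\Pi\ci 0=\id_{C(X)}$ on both sides. No step is a real obstacle here; the only mild subtlety is making sure that in stage two the product isomorphism is globally well-defined and not just fibrewise, which is handled by checking compatibility with change of local boundary labellings $\be_i$ using the coordinate-free description of ${}^bi_T$ in Definition~\ref{gc2def11}.
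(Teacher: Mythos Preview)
Your proposal is correct and in fact supplies more detail than the paper does: the paper gives no proof at all beyond the remark ``The next proposition is easy to check,'' leaving the verification to the reader. Your four stages (functoriality from the uniqueness argument in \eqref{gc2eq24}, product preservation via Propositions~\ref{gc2prop1}(f) and~\ref{gc2prop2}(c), naturality of $0$, $\Pi$, and the inclusion from the explicit formula \eqref{gc2eq25}, and pointwise commutativity of the diagram) are exactly the routine checks the paper has in mind, so there is nothing to compare.
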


Here is some similar notation to ${}^bN_{C(X)},M_{C(X)}$, but working over $X$ rather than~$C(X)$.

\begin{dfn} Let $X$ be a manifold with corners. For $x\in S^k(X)\subseteq X$, let $\be_1,\ldots,\be_k$ be the local boundary components of $X$ at $x$, and define
\begin{align*}
{}^b\ti N_xX&=\bigl\{b_1\be_1+\cdots+b_k\be_k:b_1,\ldots,b_k\in\R\bigl\},\\
{}^b\ti N_x^{\sst\ge 0}X&=\bigl\{b_1\be_1+\cdots+b_k\be_k:b_1,\ldots,b_k\in[0,\iy)\bigr\},\\
\ti M_xX&=\bigl\{b_1\be_1+\cdots+b_k\be_k:b_1,\ldots,b_k\in\N\bigr\},
\end{align*}
so that $\ti M_xX\subseteq{}^b\ti N_x^{\sst\ge 0}X\subseteq {}^b\ti N_xX$. That is, ${}^b\ti N_xX\cong\R^k$ is the vector space with basis the local boundary components $\be_1,\ldots,\be_k$ at $x$, with $\dim {}^b\ti N_xX=\depth_Xx$. We will think of $\ti M_xX\cong\N^k$ as a {\it toric monoid}, as in \S\ref{gc311} below, with ${}^b\ti N_xX=\ti M_xX\ot_\N\R$ the corresponding real vector space, and ${}^b\ti N_x^{\sst\ge 0}X\cong[0,\iy)^k$ as the corresponding {\it rational polyhedral cone\/} in ${}^b\ti N_xX$, as in~\S\ref{gc314}.

Now let $f:X\ra Y$ be an interior map of manifolds with corners, and $x\in S^k(X)\subseteq X$ with $f(x)=y\in S^l(Y)\subseteq Y$. Write $\be_1,\ldots,\be_k$ for the local boundary components of $X$ at $x$, and $\be_1',\ldots,\be_l'$ for the local boundary components of $Y$ at $y$. We can choose local coordinates $(x_1,\ldots,x_m)\in\R^m_k$ near $x$ in $X$ with $x=(0,\ldots,0)$, such that $\be_i=\{x_i=0\}$ for $i=1,\ldots,k$, and local coordinates $(y_1,\ldots,y_n)\in\R^n_l$ near $y$ in $Y$ with $y=(0,\ldots,0)$, such that $\be_j'=\{y_j=0\}$ for $j=1,\ldots,l$. Then as in \S\ref{gc21}, near $x$ we may write $f$ in coordinates as $f=(f_1,\ldots,f_n)$, where for $j=1,\ldots,l$ we have $f_j(x_1,\ldots,x_m)= F_j(x_1,\ldots,x_m)\cdot x_1^{a_{1,j}}\cdots x_k^{a_{k,j}}$ for some $a_{i,j}\in\N$ and positive smooth functions $F_j$. Define a linear map ${}^b\ti N_xf:{}^b\ti N_xX\ra {}^b\ti N_yY$ by
\begin{align*}
{}^b&\ti N_xf:b_1\be_1+\cdots+b_k\be_k\\
&\longmapsto (a_{1,1}b_1+\cdots+a_{k,1}b_k)\be_1'+\cdots+(a_{1,l}b_1+\cdots+a_{k,l}b_k)\be_l',
\end{align*}
as for $N_{C(f)}$ in \eq{gc2eq25}. Define ${}^b\ti N^{\sst\ge 0}_xf:{}^b\ti N_x^{\sst\ge 0}X\ra {}^b\ti N_y^{\sst\ge 0}Y$ and $\ti M_xf:\ti M_xX\ra\ti M_yY$ to be the restrictions of ${}^b\ti N_xf$ to ${}^b\ti N_x^{\sst\ge 0}X$ and $\ti M_xX$. Note that $f$ is {\it simple\/} if and only if $\ti M_xf:\ti M_xX\ra\ti M_yY$ is an isomorphism for all~$x\in X$.

If $g:Y\ra Z$ is another interior map of manifolds with corners then
\begin{equation*}
{}^b\ti N_x(g\ci f)\!=\!{}^b\ti N_yg\ci {}^b\ti N_xf,\;\>
{}^b\ti N^{\sst\ge 0}_x(g\ci f)\!=\!{}^b\ti N^{\sst\ge 0}_yg\ci {}^b\ti N^{\sst\ge 0}_xf,\;\>\ti M_x(g\ci f)\!=\!\ti M_yg\ci \ti M_xf,
\end{equation*}
and ${}^b\ti N_x\id_X,{}^b\ti N^{\sst\ge 0}_x\id_X,\ti M_x\id_X$ are identities. So the ${}^b\ti N_xX,\ab {}^b\ti N^{\sst\ge 0}_xX,\ab\ti M_xX,\ab{}^b\ti N_xf,\ab{}^b\ti N^{\sst\ge 0}_xf,\ab\ti M_xf$ are functorial.

We could define ${}^b\ti NX=\bigl\{(x,v):x\in X$, $v\in {}^b\ti N_xX\bigr\}$ and ${}^b\ti Nf:{}^b\ti NX\ra{}^b\ti NY$ by ${}^b\ti Nf:(x,v)\mapsto (f(x),{}^b\ti N_xf(v))$, and similarly for ${}^b\ti N^{\sst\ge 0}X,{}^b\ti N^{\sst\ge 0}f$ and $\ti MX,\ti Mf$, and these would also be functorial. However, in contrast to ${}^bN_{C(X)}$ above,
these ${}^b\ti NX,{}^b\ti N^{\sst\ge 0}X$ {\it would not be manifolds with corners}, even of mixed dimension, since the dimensions of the fibres ${}^b\ti N_xX,{}^b\ti N^{\sst\ge 0}_xX$ vary discontinuously with $x$ in $X$. They are useful for stating conditions on interior~$f:X\ra Y$.

\label{gc2def13}
\end{dfn}

\section{Manifolds with generalized corners}
\label{gc3}

We will now define a category $\Mangc$ of {\it manifolds with generalized corners}, or {\it manifolds with g-corners\/} for short, which contains the manifolds with corners $\Manc$ of \S\ref{gc2} as a full subcategory. We extend \S\ref{gc2} to manifolds with g-corners, with the exception of the ordinary tangent bundle $TX$ and normal bundle $N_{C(X)}$, which do not generalize well.
 
\subsection{Monoids}
\label{gc31}

We now discuss monoids, from the point of view usual in the theory of logarithmic geometry, in which they are basic objects. Some good references are Ogus \cite[\S I]{Ogus}, Gillam \cite[\S 1--\S 2]{Gill}, and Gillam and Molcho~\cite[\S 1]{GiMo}.

\subsubsection{The basic definitions}
\label{gc311}

Here are the basic definitions we will need in the theory of monoids. 

\begin{dfn} A ({\it commutative\/}) {\it monoid\/} $(P,+,0)$ is a set $P$ with a binary operation $+:P\t P\ra P$ and a distinguished element $0\in P$ satisfying $p+p'=p'+p$, $p+(p'+p'')=(p+p')+p''$ and $p+0=0+p=p$ for all $p,p',p''\in P$. All monoids in this paper will be commutative. Usually we write $P$ for the monoid, leaving $+,0$ implicit. 

A {\it morphism of monoids\/} $\mu:(P,+,0)\ra(Q,+,0)$ is a map $\mu:P\ra Q$ satisfying $\mu(p+p')=\mu(p)+\mu(p')$ for all $p,p'\in P$ and~$\mu(0)=0$.

If $p\in P$ and $n\in\N=\{0,1,\ldots\}$, we write $n\cdot p={\buildrel
{\ulcorner\,\,\,\text{$n$ copies } \,\,\,\urcorner} \over
{\vphantom{i}\smash{p+\cdots+p}}}$, with $0\cdot p=0$.

A {\it submonoid\/} of a monoid $P$ is a subset $Q\subseteq P$ such that $0\in Q$ and $q+q'\in Q$ for all $q,q'\in Q$. Then $Q$ is also a monoid.

If $Q\subset P$ is a submonoid, there is a natural {\it quotient monoid\/} $Q/P$ and surjective morphism $\pi:P\ra P/Q$, with the universal property that $\pi(Q)=\{0\}$, and if $\mu:P\ra R$ is a monoid morphism with $\mu(Q)=\{0\}$ then $\mu=\nu\ci\pi$ for a unique morphism $\nu:P/Q\ra R$. Explicitly, we may take $P/Q$ to be the set of $\sim$-equivalence classes $[p]$ of $p\in P$, where $p\sim p'$ if there exist $q,q'\in Q$ with $p+q=p'+q'$ in $P$, and $\pi:p\mapsto[p]$.

A {\it unit\/} $u$ in a monoid $P$ is an element $u\in P$ for which there exists $v\in P$ with $u+v=0$. This $v$ is unique, and we write it as $-u$. Write $P^\t$ for the set of all units in $P$. It is a submonoid of~$P$.

Any abelian group $G$ is a monoid. If $P$ is a monoid, then $P^\t$ is an abelian group, and $P$ is an abelian group if and only if $P^\t=P$.

If $P$ is a monoid, there is a natural morphism of monoids $\pi:P\ra P^\gp$ with $P^\gp$ an abelian group, with the universal property that if $\mu:P\ra G$ is a morphism with $G$ an abelian group, then $\mu=\nu\ci\pi$ for a unique morphism of abelian groups $\nu:P^\gp\ra G$. This determines $P^\gp,\pi$ up to canonical isomorphism. Explicitly, we may take $P^\gp$ to be the quotient monoid $(P\t P)/\De_P$, where $\De_P=\{(p,p):p\in P\}$ is the diagonal submonoid of $P\t P$, and $\pi:p\mapsto [p,0]$.

Let $P$ be a monoid. Then:
\begin{itemize}
\setlength{\itemsep}{0pt}
\setlength{\parsep}{0pt}
\item[(i)] We call $P$ {\it finitely generated\/} if there exists a surjective morphism $\pi:\N^k\ra P$ for some $k\ge 0$. Any such $\pi$ may be uniquely written $\pi(n_1,\ldots,n_k)=n_1\cdot p_1+\cdots+n_k\cdot p_k$ for $p_1,\ldots,p_k\in P$, which we call {\it generators\/} of $P$.

If $P$ is finitely generated then $P^\gp$ is a finitely generated abelian group.
\item[(ii)] A finitely generated monoid $P$ is called {\it free\/} if $P\cong\N^k$ for some $k\ge 0$.
\item[(iii)] We call $P$ {\it integral}, or {\it cancellative}, if $\pi:P\ra P^\gp$ is injective. Equivalently, $P$ is integral if $p+p''=p'+p''$ implies $p=p'$ for $p,p',p''\in P$. For integral $P$, we can regard $P$ as a subset of~$P^\gp$.
\item[(iv)] We call $P$ {\it saturated\/} if it is integral, and $p\in P^\gp$ with $n\cdot p\in P\subseteq P^\gp$ for $n\ge 1$ implies that $p\in P\subseteq P^\gp$.
\item[(v)] We call $P$ {\it torsion-free\/} if $P^\gp$ is torsion-free, that is, $n\cdot p=0$ for $n\ge 1$ and $p\in P^\gp$ implies~$p=0$.
\item[(vi)] We call $P$ {\it sharp\/} if $P^\t=\{0\}$. The {\it sharpening\/} $P^\sh$ of $P$ is $P^\sh=P/P^\t$, a sharp monoid with surjective projection $\pi:P\ra P^\sh$.
\item[(vii)] We call $P$ a {\it weakly toric monoid\/} if it is finitely-generated, integral, saturated, and torsion-free.
\item[(viii)] We call $P$ a {\it toric monoid\/} if it is finitely-generated, integral, saturated, torsion-free, and sharp. (Saturated and sharp together imply torsion-free.)
\end{itemize}
Note that definitions of toric monoids in the literature differ: some authors, including Ogus \cite{Ogus}, refer to our weakly toric monoids as {\it toric monoids}, and to our toric monoids as {\it sharp toric monoids}. 

Write $\Mon$ for the category of monoids, and $\Monfg,\Monwt,\Monto$ for the full subcategories of finitely generated, weakly toric, and toric monoids, respectively, so that~$\Monto\subset\Monwt\subset\Monfg\subset\Mon$.

If $P$ is a toric monoid then $P^\gp$ is a finitely generated, torsion-free abelian group, so $P^\gp\cong\Z^k$ for $k\ge 0$. We define the {\it rank\/} of $P$ to be $\rank P=k$.

If $P$ is weakly toric then $P^\t\cong\Z^l$ and $P^\sh$ is a toric monoid, and the exact sequence $0\ra P^\t\ra P\ra P^\sh\ra 0$ splits, so that $P\cong P^\sh\t\Z^l$ for $P^\sh$ a toric monoid. We define $\rank P=\rank P^\gp=\rank P^\sh+l$.
\label{gc3def1}
\end{dfn}

Here are some examples:

\begin{ex}{\bf(a)} $(\Q,+,0)$ is a non-finitely generated monoid. It is integral, saturated, and torsion-free, but not sharp, as $\Q^\t=\Q$.
\smallskip

\noindent{\bf(b)} $\bigl([0,\iy),\cdot,1\bigr)$ is a non-finitely generated monoid. (Note here that the monoid operation is multiplication `$\,\cdot\,$' rather than addition, and the identity is 1 not 0.) We have $[0,\iy)^\gp=\{0\}$, so $[0,\iy)$ is not integral, and $[0,\iy)^\t=(0,\iy)$, so $[0,\iy)$ is not sharp.
\smallskip

\noindent{\bf(c)} $\N^k$ is a toric monoid for $k=0,1,\ldots,$ with $(\N^k)^\gp\cong\Z^k$.
\smallskip

\noindent{\bf(d)} $\Z^k$ is a finitely generated monoid. For instance, as generators take the $k+1$ vectors $(1,0,\ldots,0),(0,1,0,\ldots,0),\ldots,(0,\ldots,1),(-1,-1,\ldots,-1)$. Also $\Z^k$ is integral, saturated, and torsion-free. But $\Z^k$ is not sharp, as $(\Z^k)^\t=\Z^k\ne 0$, so $\Z^k$ is weakly toric, but not toric.
\smallskip

\noindent{\bf(e)} Set $P=\N\amalg\{1'\}$, with `+' as usual on $\N$, and $n+1'=1'+n=n+1$ for $n>0$ in $\N$, and $0+1'=1'+0=1'$. Then $P$ is a finitely generated monoid, with generators $1,1'$, and is torsion-free and sharp. We have $P^\gp=\Z$, with $\pi:P\ra P^\gp$ mapping $\pi:n\mapsto n$ for $n\in\N$ and $\pi:1'\mapsto 1$. Then $\pi(1)=\pi(1')$, so $\pi:P\ra P^\gp$ is not injective, and $P$ is not integral, or saturated, or toric.
\smallskip

\noindent{\bf(f)} Set $P=\{0,1\}$ with $0+0=0$ and $1+0=0+1=1+1=1$. Then $P$ is a finitely generated monoid with generator 1, torsion-free, and sharp. But $P^\gp=\{0\}$, so $P$ is not integral, saturated, or toric.
\smallskip

\noindent{\bf(g)} $P=\{0,2,3,\ldots\}$ is a submonoid of $\N$, with $P^\gp=\Z\supset P$. It is finitely generated, with generators 2,3, and is integral, torsion-free, and sharp. But it is not saturated, since $1\in P^\gp$ with $2\cdot 1\in P$ but $1\not\in P$, so $P$ is not toric.
\smallskip

\noindent{\bf(h)} Set $P=\N\amalg\{1',2',3',\ldots\}$, with $m+n=(m+n)$, $m'+n=(m+n)'$, $m+n'=(m+n)'$, $m'+n'=(m+n)$ for all $m,n>0$ in $\N$, and $0+p=p+0=p$ for $p\in P$. Then $P$ is a finitely generated monoid, with generators $1,1'$, and is integral, saturated, and sharp. We have $P^\gp=\Z\t\Z_2$, where $\pi:P\ra P^\gp$ is $\pi(n)=(n,0)$ and $\pi(n')=(n,\al)$, writing $\Z_2=\{0,\al\}$ with $\al+\al=0$. Thus $P$ is not torsion-free, as $0\ne (0,\al)\in P^\gp$ with $2\cdot(0,\al)=0$, so $P$ is not toric.
\label{gc3ex1}
\end{ex}

\subsubsection{Duality}
\label{gc312}

We discuss dual monoids, following Ogus~\cite[\S 2.2]{Ogus}.

\begin{dfn} Let $P$ be a monoid. The {\it dual monoid}, written $P^\vee$ or $\D(P)$, is the monoid $\Hom(P,\N)$ of morphisms $\mu:P\ra\N$ in $\Mon$, with the obvious addition $(\mu+\nu)(p)=\mu(p)+\nu(p)$ and identity $0(p)=0$.

If $\al:P\ra Q$ is a morphism of monoids, the {\it dual morphism}, written $\al^\vee:Q^\vee\ra P^\vee$ or $\D(\al):\D(Q)\ra\D(P)$, is $\al^\vee:\mu\mapsto \mu\ci\al$ for all~$\mu:Q\ra\N$.

Then $\D:\Mon\ra\Mon^{\bf op}$ mapping $P\mapsto\D(P)$, $\al\mapsto\D(\al)$ is a functor, where $\Mon^{\bf op}$ is the opposite category to~$\Mon$.

Define a morphism $\eta(P):P\ra(P^\vee)^\vee$ by $\eta(P):p\mapsto\bigl(\mu\mapsto\mu(p)\bigr)$ for $p\in P$ and $\mu\in P^\vee$. Then $\eta:\Id_\Mon\Ra\D\ci\D$ is a natural transformation of functors $\Mon\ra\Mon$, where $\Id_\Mon:\Mon\ra\Mon$ is the identity functor.
\label{gc3def2}
\end{dfn}

From Ogus \cite[Th.~2.2.3]{Ogus} we may deduce:

\begin{thm} If\/ $P$ is a finitely generated monoid, then $P^\vee=\D(P)$ is toric. Hence $\D:\Mon\ra\Mon^{\bf op}$ restricts to $\D^{\bf fg}:\Monfg\ra(\Monto)^{\bf op}$ and\/ $\D^{\bf to}:\Monto\ra(\Monto)^{\bf op}$. Also, the natural morphism $\eta(P):P\ra(P^\vee)^\vee$ is an isomorphism if and only if\/ $P$ is a toric monoid. Thus $\eta^{\bf to}:\Id_\Monto\Ra\D^{\bf to}\ci\D^{\bf to}$ is a natural isomorphism of functors $\Mon^{\bf to}\ra\Mon^{\bf to},$ and\/ $\D^{\bf to}:\Monto\ra(\Monto)^{\bf op}$ is an equivalence of categories.
\label{gc3thm1}
\end{thm}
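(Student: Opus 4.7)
The plan is to work through the four toric properties for $P^\vee$, then establish the double-duality isomorphism for toric $P$, and finally derive the equivalence of categories.

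First I would observe that any monoid morphism $\mu:P\ra\N$ extends uniquely to a group morphism $\mu^\gp:P^\gp\ra\Z$, giving a natural inclusion $P^\vee\hookra\Hom_\Z(P^\gp,\Z)$. When $P$ is finitely generated, $P^\gp$ is a finitely generated abelian group, so $\Hom_\Z(P^\gp,\Z)\cong\Z^r$ where $r=\rank P^\gp/(P^\gp)_{\rm tors}$. From this embedding, torsion-freeness and integrality of $P^\vee$ are immediate (any submonoid of $\Z^r$ has these properties). Sharpness follows since if $\mu,-\mu\in P^\vee$ then $\mu(p),-\mu(p)\in\N$ for all $p\in P$, forcing $\mu=0$. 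Saturation follows because if $\mu\in (P^\vee)^\gp\subseteq\Hom_\Z(P^\gp,\Z)$ satisfies $n\mu\in P^\vee$ for some $n\ge 1$, then $n\mu(p)\in\N$ for all $p\in P$, and since $\mu(p)\in\Z$ this forces~$\mu(p)\in\N$.

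The main obstacle is finite generation of $P^\vee$: this is essentially Gordan's lemma, and is the content cited as Ogus [Th.~2.2.3]. The strategy is to present $P$ via a surjection $\pi:\N^k\twoheadrightarrow P$, which dualizes to an embedding $P^\vee\hookra(\N^k)^\vee\cong\N^k$ cut out by finitely many linear equations over $\Z$. Then $P^\vee$ is the set of lattice points in a rational polyhedral cone in $\R^k$, and Gordan's lemma (standard in toric geometry) gives that such a set is a finitely generated monoid. Combined with the other three properties, this shows $P^\vee$ is toric, which gives the first two restrictions of the functor~$\D$.

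For the isomorphism claim, one direction is trivial: since $(P^\vee)^\vee$ is always toric by what we have just proved, if $\eta(P)$ is an isomorphism then $P$ is toric. For the converse, suppose $P$ is toric of rank $r$, and identify $P\subseteq P^\gp\cong\Z^r\subseteq\R^r$. The saturation and sharpness of $P$ imply that $P$ equals $\sigma_P\cap\Z^r$ for the rational polyhedral cone $\sigma_P\subseteq\R^r$ generated by $P$. Dually, $P^\vee$ is identified with $\sigma_P^\vee\cap\Hom_\Z(P^\gp,\Z)$ where $\sigma_P^\vee=\{\ell\in(\R^r)^*:\ell(\sigma_P)\subseteq[0,\iy)\}$. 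Classical convex duality for rational polyhedral cones gives $(\sigma_P^\vee)^\vee=\sigma_P$, and taking lattice points yields that $\eta(P):P\ra(P^\vee)^\vee$ is a bijection. Since $\eta(P)$ is also a monoid morphism, it is an isomorphism.

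Finally, the equivalence of categories for $\D^{\bf to}:\Monto\ra(\Monto)^{\bf op}$ is formal from the above: by naturality of $\eta$ and its being an isomorphism on toric monoids, $\D^{\bf to}\ci\D^{\bf to}\cong\Id_\Monto$ via $\eta^{\bf to}$, so $\D^{\bf to}$ is its own quasi-inverse up to this natural isomorphism, hence an equivalence of categories. The hard part is really the finite generation step (Gordan's lemma) and the convex duality recovering $\sigma_P$ from $\sigma_P^\vee$; the rest is essentially bookkeeping with the embedding $P^\vee\hookra\Hom_\Z(P^\gp,\Z)$.
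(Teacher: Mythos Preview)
Your argument is correct and follows the standard route. Note, however, that the paper does not actually prove this theorem: it simply states that the result ``may be deduced'' from Ogus~[Th.~2.2.3] and gives no further details. So there is nothing to compare against beyond the citation; your sketch is essentially the proof one finds in the cited reference (embedding $P^\vee$ in $\Hom_\Z(P^\gp,\Z)$ to get the easy properties, Gordan's lemma for finite generation, and cone duality $(\sigma_P^\vee)^\vee=\sigma_P$ for the double-dual isomorphism).

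One small point worth tightening in your write-up: when you pass from $(\sigma_P^\vee)^\vee=\sigma_P$ to $(P^\vee)^\vee=P$, you are implicitly using that $(P^\vee)^\gp=\Hom_\Z(P^\gp,\Z)$, so that the second dual is taken in the correct lattice. This is where sharpness of $P$ enters: it makes $\sigma_P$ pointed, hence $\sigma_P^\vee$ full-dimensional, and then one checks that $P^\vee$ generates the full dual lattice (not merely a finite-index sublattice). Without sharpness this fails, as the example $P=\Z$, $P^\vee=\{0\}$, $(P^\vee)^\vee=\{0\}$ shows. You allude to sharpness but use it only to say $P=\sigma_P\cap\Z^r$; make explicit that it is also what ensures the lattice identification survives the second dualization.
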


\begin{ex}{\bf(a)} $(\N^k)^\vee\cong\N^k$.
\smallskip

\noindent{\bf(b)} $(\Z^k)^\vee=\{0\}$, and more generally $G^\vee=\{0\}$ for any abelian group $G$.
\smallskip

\noindent{\bf(c)} $[0,\iy)^\vee=\{0\}$.
\label{gc3ex2}
\end{ex}

Write $R_{\bf fg}^{\bf to}=\D^{\bf to}\ci\D^{\bf fg}:\Monfg\ra\Monto$, and $I_{\bf to}^{\bf fg}:\Monto\hookra\Monfg$ for the inclusion functor. Then for each $P\in\Monfg$ and $Q\in\Monto$ we have 
\begin{align*}
\Hom\bigl(R_{\bf fg}^{\bf to}(P),Q\bigr)&=\Hom\bigl((P^\vee)^\vee,Q\bigr)\cong\Hom(P,Q)=\Hom\bigl(P,I_{\bf to}^{\bf fg}(Q)\bigr),
\end{align*}
where in the second step we use that as $Q$ is toric, any morphism $P\ra Q$ factors uniquely through the projection $P\ra (P^\vee)^\vee$. Thus $R_{\bf fg}^{\bf to}$ is a {\it left adjoint\/} for~$I_{\bf to}^{\bf fg}$.

\subsubsection{Pushouts and fibre products of monoids}
\label{gc313}

Next we discuss pushouts and fibre products of monoids. Some references are Gillam \cite[\S 1.2--\S 1.3]{Gill} and Ogus \cite[\S 1.1]{Ogus}.

\begin{thm}{\bf(a)} All direct and inverse limits exist in the category $\Mon,$ so in particular pushouts and fibre products exist. The construction of inverse limits, including fibre products, commutes with the forgetful functor $\Mon\ra\Sets$. Finite products and coproducts coincide in~$\Mon$.
\smallskip

\noindent{\bf(b)} The category $\Monfg$ is closed under pushouts in $\Mon$. Hence pushouts exist in $\Monfg$.
\smallskip

\noindent{\bf(c)} The category $\Monto$ is not closed under pushouts in $\Monfg$. Nonetheless, pushouts exist in the category $\Monto,$ though they may not agree with the same pushout in $\Monfg$. If\/ $\al:P\ra Q$ and\/ $\be:P\ra R$ are morphisms in $\Monto$ then $Q\amalg_P^{\bf to}R\cong R_{\bf fg}^{\bf to}(Q\amalg_P^{\bf fg}R),$ where $Q\amalg_P^{\bf to}R,Q\amalg_P^{\bf fg}R$ are the pushouts in $\Monto,\Monfg$ respectively, and\/ $R_{\bf fg}^{\bf to}$ is as in\/~{\rm\S\ref{gc312}.} 
\smallskip

\noindent{\bf(d)} The categories $\Monfg$ and\/ $\Monto$ are closed under fibre products in $\Mon$. Thus, fibre products exist in both\/ $\Monfg$ and\/ $\Monto,$ and can be computed as fibre products of the underlying sets.
\label{gc3thm2}
\end{thm}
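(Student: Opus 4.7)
The plan is to handle the four parts sequentially, as each is largely independent; the routine category-theoretic content is in (a) and (b), while the substance lies in (c) and (d).

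For part (a), I would invoke that $\Mon$ is the category of models of the finitary algebraic theory of commutative monoids, hence bicomplete. The inverse limit of a diagram $D:I\ra\Mon$ is realised set-theoretically as the submonoid of $\prod_i D(i)$ of compatible tuples with componentwise operations, which manifestly commutes with the forgetful functor to $\Sets$. The coincidence of finite products and coproducts is the classical verification that $P\oplus Q$ with inclusions $p\mapsto(p,0)$ and $q\mapsto(0,q)$ has the coproduct universal property, using commutativity of the target to combine $(f,g)$ as $(p,q)\mapsto f(p)+g(q)$. For part (b), I would realise the pushout in $\Mon$ as $(Q\oplus R)/{\sim}$, where $\sim$ is the monoid congruence generated by $(\al(p),0)\sim(0,\be(p))$ for $p\in P$; finite generation is preserved since generators of $Q$ and $R$ yield generators of the quotient.

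For part (c), the counterexample showing $\Monto$ is not closed in $\Monfg$ is $P=Q=R=\N$ with $\al=\be$ multiplication by $2$. The $\Monfg$-pushout is $(\N\oplus\N)/{\sim}$ with $(2,0)\sim(0,2)$; its groupification is $\Z^2/\langle(2,-2)\rangle\cong\Z\oplus\Z/2\Z$, which has torsion, so the pushout is not torsion-free and hence not toric. For the existence of pushouts in $\Monto$, I would apply the adjunction from \S\ref{gc312}: given $Q\leftarrow P\ra R$ in $\Monto$, for any $S\in\Monto$,
\[
\Hom_{\Monto}\bigl(R_{\bf fg}^{\bf to}(Q\amalg_P^{\bf fg}R),S\bigr)\cong\Hom_{\Monfg}\bigl(Q\amalg_P^{\bf fg}R,I_{\bf to}^{\bf fg}(S)\bigr),
\]
and the right-hand side parametrises pairs $Q\ra S$, $R\ra S$ agreeing on $P$. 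Hence $R_{\bf fg}^{\bf to}(Q\amalg_P^{\bf fg}R)$ realises the pushout in $\Monto$; this is the general principle that left adjoints preserve colimits.

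For part (d), I would construct $Q\times_P R=\{(q,r)\in Q\oplus R:\al(q)=\be(r)\}$ as a submonoid of $Q\oplus R$, coinciding with the set-theoretic fibre product; the universal property follows from (a). When $Q,R,P\in\Monto$, the monoid $Q\oplus R$ is toric, and $Q\times_P R$ is its intersection with the sublattice $\Ker(\al^\gp-\be^\gp:Q^\gp\oplus R^\gp\ra P^\gp)$ inside $(Q\oplus R)^\gp=Q^\gp\oplus R^\gp$. Such an intersection of the lattice points of a rational polyhedral cone with a rational subspace is finitely generated, saturated, and torsion-free by Gordan's lemma, so is toric. For $Q,R,P\in\Monfg$, finite generation can be deduced by reduction to the toric case via Rédei's theorem (every finitely generated commutative monoid is finitely presented) together with analysis of the unit and torsion parts of the groupifications. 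The main obstacle lies here: arbitrary submonoids of a finitely generated commutative monoid need not be finitely generated, so Gordan's lemma does not apply directly, and one must exploit the specific description of $Q\times_P R$ as the preimage of the diagonal to control the generation combinatorially.
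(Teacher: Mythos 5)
Your treatment of (a), (b), and (c) is correct and follows essentially the same route as the paper: (a) appeals to general properties of algebraic theories (the paper cites Ogus and Gillam for the same facts), (b) uses the quotient $(Q\oplus R)/{\sim}$ and passage of generators exactly as the paper does, and (c) uses the adjunction $R_{\bf fg}^{\bf to}\dashv I_{\bf to}^{\bf fg}$ to transport the $\Monfg$-pushout to $\Monto$, which is the paper's argument. Your concrete counterexample in (c) --- the pushout $\N\amalg_\N^{\bf fg}\N$ along doubling has groupification $\Z\oplus\Z/2\Z$, which has torsion --- is a useful addition, since the paper asserts non-closure without exhibiting one. In (d) for the toric case your argument via rational polyhedral cones and Gordan's lemma is a valid geometric alternative to the paper's, which instead cites Ogus [Th.~2.1.16(6)] for finitely generated, integral, and saturated, and then checks torsion-free and sharp by hand; you should add the one-line observation that $Q\times_PR$ is sharp because it is a submonoid of the sharp monoid $Q\oplus R$, since ``toric'' in this paper includes sharpness.

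There is, however, a genuine gap in (d) for the category $\Monfg$. You correctly identify the difficulty --- submonoids of finitely generated commutative monoids need not be finitely generated, so finite generation of $Q\times_PR\subseteq Q\oplus R$ is not automatic --- but you only sketch a ``reduction to the toric case via R\'edei's theorem together with analysis of the unit and torsion parts of the groupifications'' and then concede that the necessary combinatorial control of generators is not carried out. In particular, since $P\in\Monfg$ need not be integral, one cannot express $Q\times_PR$ as the kernel of a map $Q\oplus R\ra P^\gp$ and invoke Gordan's lemma, and your proposed reduction to the toric case does not obviously account for this. The paper closes this point by citing Gillam [Cor.~1.9.8]; without either that citation or a worked-out argument in its place, this part of your proposal is incomplete.
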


\begin{proof} Part (a) can be found in Ogus \cite[\S 1.1]{Ogus} or Gillam \cite[\S 1.1--\S 1.2]{Gill}. If $\al:P\ra Q$ and $\be:P\ra R$ are morphisms in $\Mon$, then as in \cite[\S 1.3]{Gill} the pushout $S=Q\amalg_{\al,P,\be}R$ is $S=Q\op R/\sim$, where $\sim$ is the smallest monoidal equivalence relation on $Q\op R$ with $(\al(p),0)\sim(0,\be(p))$ for all $p\in P$. Actually computing $\sim$ or $Q\amalg_PR$ explicitly can be tricky.

For (b), if $S=Q\amalg_PR$ is as above with $Q,R\in\Monfg$, and $q_1,\ldots,q_k$, $r_1,\ldots,r_l$ are generators for $Q,R$, then $[q_1,0],\ldots,[q_k,0],[0,r_1],\ldots,[0,r_l]$ are generators for $S$, so $S\in\Monfg$, and $\Monfg$ is closed under pushouts in $\Mon$.

For (c), as $R_{\bf fg}^{\bf to}:\Monfg\ra\Monto$ has a right adjoint $I_{\bf to}^{\bf fg}$ from \S\ref{gc312}, it takes pushouts in $\Monfg$ to pushouts in $\Monto$. Thus, if $P,Q,R\in\Monto$ then
\begin{equation*}
R_{\bf fg}^{\bf to}(Q\amalg_P^{\bf fg}R)\cong R_{\bf fg}^{\bf to}(Q)\amalg_{R_{\bf fg}^{\bf to}(P)}^{\bf to}R_{\bf fg}^{\bf to}(R)\cong
Q\amalg_P^{\bf to}R.
\end{equation*}

For (d), Gillam \cite[Cor.~1.9.8]{Gill} shows $\Monfg$ is closed under fibre products in $\Mon$. If $\mu:P\ra R$ and $\nu:Q\ra R$ are morphisms in $\Monto$ then the fibre product $P\t_RQ$ in $\Mon$ is finitely generated, integral, and saturated by Ogus \cite[Th.~2.1.16(6)]{Ogus}, and it is torsion-free and sharp as $P\t_RQ$ is a submonoid of $P\op Q$, which is torsion-free and sharp since $P,Q$ are toric. Hence $P\t_RQ$ is toric, and $\Monto$ is closed under fibre products in~$\Mon$.
\end{proof}

\subsubsection{Toric monoids and rational polyhedral cones}
\label{gc314}

\begin{dfn} Let $\La$ be a {\it lattice\/} (that is, an abelian group isomorphic to $\Z^k$ for $k\ge 0$), so that $\La_\R:=\La\ot_\Z\R$ is a real vector space isomorphic to $\R^k$, with a natural inclusion $\La\hookra\La_\R$. We identify $\La$ with its image in $\La_\R$, so that $\La\subset\La_\R$. We also have the dual lattice $\La^*:=\Hom(\La,\Z)$ and dual vector space $\La_\R^*=\Hom(\La_\R,\R)$, and we identify $\La^*$ with a subset of~$\La_\R^*$.

A {\it rational polyhedral cone\/} in $\La_\R$ is a subset $C\subseteq\La_\R$ of the form
\e
C=\bigl\{\la\in\La_\R:\al_i(\la)\ge 0,\; i=1,\ldots,k\bigr\},
\label{gc3eq1}
\e
for some finite collection of elements $\al_1,\ldots,\al_k\in \La^*$. An {\it integral polyhedral cone\/} $C_\Z\subseteq\La$ is a subset of the form $C_\Z=C\cap\La$ for some rational polyhedral cone $C\subseteq\La_\R$. We call $C$ or $C_\Z$ {\it pointed\/} if $C\cap -C=\{0\}$ or $C_\Z\cap -C_\Z=\{0\}$. Note that an integral polyhedral cone $C_\Z$ is a monoid, as it is a submonoid of~$\La$.

For $C$ as in \eq{gc3eq1}, a {\it face\/} of $C$ is a subset $D\subseteq C$ of the form 
\begin{equation*}
D=\bigl\{\la\in\La_\R:\al_i(\la)=0,\;\> i\in J,\;\> \al_i(\la)\ge 0,\; i\in\{1,\ldots,k\}\sm J\bigr\},
\end{equation*}
for some $J\subseteq \{1,\ldots,k\}$. That is, we require equality in some of the inequalities in \eq{gc3eq1}. Each face $D$ of $C$ is also a rational polyhedral cone, and the collection of faces $D\subseteq C$ is independent of the choice of $\al_1,\ldots,\al_k$, for $C$ fixed.
\label{gc3def3}
\end{dfn}

The next proposition is well known (see for instance Gillam \cite[Proof of Th.~1.12.3]{Gill}). {\it Gordan's Lemma\/} says that an integral polyhedral cone $C_\Z$ is finitely generated, and the rest of the proof that $C_\Z$ is (weakly) toric is easy.

\begin{prop} A monoid\/ $P$ is weakly toric if and only if it is isomorphic to an integral polyhedral cone $C_\Z\subset\La,$ and toric if and only if it is isomorphic to a pointed integral polyhedral cone $C_\Z\subset\La$. In both cases, we may take the lattice $\La$ to be $P^\gp,$ and\/ $\al_1,\ldots,\al_k$ in \eq{gc3eq1} to be generators of the dual monoid\/~$P^\vee$.
\label{gc3prop1}
\end{prop}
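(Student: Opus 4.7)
The plan is to prove both equivalences by establishing each direction separately. The excerpt already points to Gordan's Lemma for finite generation of integral polyhedral cones, so the real work is to verify the remaining monoid-theoretic conditions on one side, and to produce the defining inequalities explicitly on the other using the dual monoid.

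For the ``polyhedral cone $\Rightarrow$ (weakly) toric'' direction, suppose $P \cong C_\Z = C\cap\La$ with $C$ as in \eq{gc3eq1}. Gordan's Lemma provides finite generation. Integrality is automatic, since $C_\Z$ is a submonoid of the abelian group $\La$, which forces $C_\Z^\gp\hookrightarrow\La$ and so $C_\Z^\gp$ is torsion-free. Saturation follows from the linearity of the defining inequalities: if $p\in C_\Z^\gp\subseteq\La$ and $n\cdot p\in C_\Z$ for some $n\ge1$, then $n\al_i(p)\ge0$ forces $\al_i(p)\ge0$, whence $p\in C_\Z$. Thus $C_\Z$ is weakly toric. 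If in addition $C$ is pointed, then any unit $u\in C_\Z$ lies in $C\cap -C=\{0\}$, so $C_\Z$ is sharp, hence toric.

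For the ``(weakly) toric $\Rightarrow$ polyhedral cone'' direction, set $\La:=P^\gp$; integrality of $P$ gives an embedding $P\hookrightarrow\La$. By Theorem \ref{gc3thm1}, $P^\vee$ is toric, in particular finitely generated, so we may choose generators $\al_1,\ldots,\al_k$ of $P^\vee$; each extends uniquely to an element of $\La^*=\Hom(\La,\Z)$ by the universal property of group completion. Let $C$ be the rational polyhedral cone defined by these $\al_i$ as in \eq{gc3eq1}. The inclusion $P\subseteq C\cap\La$ is immediate, since each $\al_i$ maps $P$ into $\N$. For the reverse inclusion in the toric case, the isomorphism $\eta(P):P\xrightarrow{\cong}(P^\vee)^\vee$ of Theorem \ref{gc3thm1} says precisely that an element of $P^\gp$ lies in $P$ iff it pairs non-negatively with every $\mu\in P^\vee$; by linearity it suffices to test on the generators $\al_1,\ldots,\al_k$. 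For $P$ weakly toric but not toric, use the splitting $P\cong P^\sh\times\Z^l$ with $P^\sh$ toric: every $\al_i\in P^\vee=(P^\sh)^\vee$ vanishes on the $\Z^l$-factor (since $\N$ has no non-trivial units), so the cone condition $\al_i(\la)\ge0$ only constrains the $P^\sh$-component, to which the toric case applies, while the $\Z^l$-component is automatically in $P$.

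The main obstacle is precisely this non-sharp case: Theorem \ref{gc3thm1}'s double-dual isomorphism is asserted only for toric $P$, and indeed $\eta(P)$ fails to be an isomorphism when $P^\t\ne\{0\}$, since $\D(G)=\{0\}$ for any abelian group $G$. The splitting $P\cong P^\sh\times P^\t$ circumvents this cleanly. The pointedness statement in the toric case is then handled by observing that $C\cap-C\cap\La = P\cap-P=P^\t$, which is $\{0\}$ exactly when $P$ is sharp; moreover $C\cap-C$ is itself a rational linear subspace of $\La_\R$, so it is trivial iff its intersection with $\La$ is trivial, giving the equivalence of pointedness of $C$ and sharpness of $P$.
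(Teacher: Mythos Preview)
Your proof is correct and complete. The paper does not actually give a proof of this proposition: it simply remarks that the result is well known, cites Gillam, and notes that Gordan's Lemma handles finite generation of $C_\Z$ with ``the rest of the proof that $C_\Z$ is (weakly) toric'' being easy. Your argument fills in exactly these details for the forward direction, and for the reverse direction you supply the standard duality argument via Theorem~\ref{gc3thm1}, correctly handling the non-sharp case by the splitting $P\cong P^\sh\times\Z^l$ where $\eta(P)$ fails to be an isomorphism; this is entirely in line with what the paper's citation and hint suggest.
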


Rational and integral polyhedral cones give us a geometric, visual way to think about (weakly) toric monoids, as corresponding to a class of polyhedra in $\R^n$, and are particularly helpful for studying {\it faces\/} of (weakly) toric monoids.

\subsubsection{Ideals, prime ideals, faces, and spectra of monoids}
\label{gc315}

The next definition is taken from Ogus \cite[\S 1.4]{Ogus} and Gillam~\cite[\S 2.1]{Gill}.

\begin{dfn} An {\it ideal\/} $I$ of a monoid $P$ is a subset $I\subsetneq P$ such that for all $i\in I$ and $p\in P$ we have $p+i\in I$. Then $0\notin I$, as otherwise $p=p+0\in I$ for all $p\in P$, contradicting $I\ne P$. An ideal $I$ is called {\it prime\/} if $p,q\in P$ and $p+q\in I$ imply that $p\in I$ or~$q\in I$. 

A submonoid $F\subseteq P$ is called a {\it face\/} of $P$ if $p,q\in P$ and $p+q\in F$ imply that $p\in F$ and $q\in F$. If is easy to see that $F\subseteq P$ is a face of $P$ if and only if $I=P\sm F$ is a prime ideal in $P$. This gives a bijection $F\longleftrightarrow I=P\sm F$ between faces $F$ of $P$ and prime ideals $I$ in~$P$.

The {\it codimension\/} $\codim F$ of a face $F\subseteq P$ is the rank of the abelian group $(P/F)^\gp$, which is defined when $(P/F)^\gp$ is finitely generated. If $P$ is toric then~$\rank F+\codim F=\rank P$.

The union $\bigcup_{\al\in A}I_\al$ of any family $I_\al:\al\in A$ of prime ideals in $P$ is a prime ideal in $P$. Dually, the intersection $\bigcap_{\al\in A}F_\al$ of any family $F_\al:\al\in A$ of faces of $P$ is a face of $P$.

The minimal ideal in $P$ is $\es$, and the maximal ideal is $P\sm P^\t$. Both are prime. Dually, the maximal face in $P$ is $P$, and the minimal face is $P^\t$.

The {\it spectrum\/} $\Spec P$ is the set of all prime ideals of $P$, which under $I\mapsto F=P\sm I$ is bijective to the set of faces of $P$.

There is a natural topology on $\Spec P$ called the {\it Zariski topology}, generated by the open sets $S_J=\{I\in \Spec P:J\subseteq I\}$ for all ideals $J\subset P$.

If $\mu:P\ra Q$ is a morphism of monoids, and $I$ is a (prime) ideal in $Q$, then $\mu^{-1}(I)$ is a (prime) ideal in $P$. Dually, if $F$ is a face of $Q$, then $\mu^{-1}(F)$ is a face of $P$. Defining $\Spec\mu:\Spec Q\ra\Spec P$ by $\Spec\mu:I\mapsto \mu^{-1}(I)$, then $\Spec\mu$ is continuous in the Zariski topologies. The natural projection $\pi:P\ra P^\sh$ induces a homeomorphism~$\Spec\pi:\Spec P^\sh\ra \Spec P$.
\label{gc3def4}
\end{dfn}

The parts of the next lemma are proved in Gillam and Molcho \cite[Lem.s 1.2.4 \& 1.4.1]{GiMo}, or are obvious.

\begin{lem}{\bf(i)} Suppose $F$ is a face of a monoid\/ $P$. If\/ $P$ is finitely generated, or integral, or saturated, or torsion-free, or sharp, or weakly toric, or toric, then $F$ is also finitely generated, \ldots, toric, respectively.
\smallskip

\noindent{\bf(ii)} Suppose $F$ is a face of a finitely generated monoid\/ $P$. If\/ $p_1,\ldots,p_n$ generate $P,$ then $\{p_i:p_i\in F,$ $i=1,\ldots,n\}$ generate $F$. 
\smallskip

\noindent{\bf(iii)} If\/ $P$ is a finitely generated monoid, then $\Spec P$ is finite.
\label{gc3lem1}
\end{lem}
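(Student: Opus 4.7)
The plan is to handle (ii) first, since both (i) and (iii) will follow from it with little extra work.

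For (ii), take any $f\in F$ and use the generators to write $f=n_1 p_1+\cdots+n_n p_n$ for some $n_i\in\N$. Because $F$ is a face, $p+q\in F$ forces $p\in F$ and $q\in F$; iterating this on the sum $n_1 p_1+\cdots+n_n p_n$ shows that every $p_i$ with $n_i>0$ lies in $F$. Hence $f$ is a nonnegative integer combination of those $p_i\in F$, and $\{p_i:p_i\in F\}$ generates $F$.

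For (iii), note that (ii) gives an injection from the set of faces of $P$ to subsets of $\{1,\ldots,n\}$ sending $F\mapsto\{i:p_i\in F\}$: a face is determined by the generators it contains. Hence there are at most $2^n$ faces, so $\Spec P$ (bijective to the set of faces) is finite.

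For (i) I would just run through the properties in the order listed. Finitely generated is (ii). The integral and sharp properties are immediate from $F\subseteq P$, since a cancellation equation in $F$ is a cancellation equation in $P$, and a unit of $F$ is a unit of $P$. For the properties involving $P^\gp$, the key intermediate step is that when $P$ is integral, the induced morphism $F^\gp\ra P^\gp$ is injective, because $F$ sits inside $P\subseteq P^\gp$ and $F^\gp$ is identified with the subgroup $\{f-f':f,f'\in F\}\subseteq P^\gp$. Torsion-freeness of $F$ then follows because any subgroup of the torsion-free group $P^\gp$ is torsion-free. For saturation: given $p\in F^\gp\subseteq P^\gp$ with $np\in F\subseteq P$, saturation of $P$ places $p\in P$; then $np=p+(n-1)p\in F$ combined with the face property forces $p\in F$. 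Combining these, both weakly toric and toric pass from $P$ to $F$.

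I do not expect a real obstacle; the only step that needs a little care is the injectivity $F^\gp\hookra P^\gp$ under integrality of $P$, because without integrality the universal map $F^\gp\ra P^\gp$ can collapse information, and the transfer of torsion-freeness and saturation properly depends on this embedding.
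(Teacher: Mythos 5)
Your proofs of (ii) and (iii) are correct: (ii) is the standard argument iterating the face property on any expression $f=n_1p_1+\cdots+n_np_n$, and (iii) follows since by (ii) a face is determined by the subset of generators of $P$ it contains, giving at most $2^n$ faces and hence a finite $\Spec P$. For (i), your deductions are sound for every case the paper actually uses, but the caveat at the end about needing integrality for $F^\gp\hookra P^\gp$ deserves to be promoted from ``a little care'' to an essential hypothesis: the bare implication that $P$ torsion-free forces $F$ torsion-free is in fact false without integrality of $P$. For a counterexample, let $P$ be the commutative monoid on generators $a,b,p$ with relations $2a=2b$ and $a+p=b+p$, and let $F$ be the submonoid of classes not involving $p$; this $F$ is a face because no elementary move in the generating congruence changes the exponent of $p$. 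Then $P^\gp\cong\Z^2$ is torsion-free, since the relation $a+p=b+p$ already forces $a=b$ in $P^\gp$, whereas $F^\gp\cong\Z\oplus\Z/2\Z$ has $2$-torsion generated by $a-b$, since that relation cannot be used from inside $F$. So your route through the embedding $F^\gp\hookra P^\gp$, valid only when $P$ is integral, is the correct one, and the torsion-free case of (i) should really be read with integrality bundled in, as it always is in the paper's applications (weakly toric and toric monoids are by definition integral). Note that the paper itself supplies no proof of this lemma, attributing it to Gillam and Molcho, so there is no in-text argument to compare your proposal against; with the caveat above, your proof is complete and correct.
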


The next proposition summarizes some facts about (weakly) toric monoids, which are well understood in toric geometry.

\begin{prop} Let\/ $P$ be a weakly toric monoid. Then:
\begin{itemize}
\setlength{\itemsep}{0pt}
\setlength{\parsep}{0pt}
\item[{\bf(a)}] By Proposition\/ {\rm\ref{gc3prop1}} we may identify $P\cong C\cap\La,$ where $\La=P^\gp$ is a lattice and\/ $C\subseteq \La_\R=\La\ot_\Z\R$ is a rational polyhedral cone. This identifies faces $F\subseteq P$ of the monoid\/ $P$ with subsets $D\cap\La\subset C\cap\La$ where $D\subseteq C$ is a face of the rational polyhedral cone $C$ as in Definition\/ {\rm\ref{gc3def3},} and this induces a $1$-$1$ correspondence between faces $F$ of\/ $P$ and faces $D$ of\/~$C$.
\item[{\bf(b)}] The faces $F$ of\/ $P$ are exactly the subsets $\al^{-1}(0)=\bigl\{p\in P:\al(p)=0\bigr\}$ for all\/ $\al$ in $P^\vee=\Hom(P,\N),$ the dual monoid of\/~$P$.
\item[{\bf(c)}] Let\/ $F$ be a face of $P,$ and write $F^\w=\bigl\{\al\in P^\vee:\al\vert_F=0\bigr\}$. Then $F^\w$ is a face of\/ $P^\vee,$ with\/ $\rank F^\w=\rank P-\rank F=\codim F,$ and the map $F\mapsto F^\w$ gives a $1$-$1$ correspondence between faces of\/ $P$ and faces of\/~$P^\vee$.
\end{itemize}
Now suppose $P$ is toric. Then:
\begin{itemize}
\setlength{\itemsep}{0pt}
\setlength{\parsep}{0pt}
\item[{\bf(d)}] Let\/ $\La,\La_\R,C$ be as in part\/ {\bf(a)}. Write $\La^*=\Hom(\La,\Z)$ for the dual lattice and\/ $\La_\R^*=\La^*\ot_\Z\R=(\La_\R)^*$ for the dual vector space, and define
\begin{equation*}
C^\vee=\bigl\{\al\in\La_\R^*:\text{$\al(c)\ge 0$ for all\/ $c\in C$}\bigr\}.
\end{equation*}
Then $C^\vee$ is a rational polyhedral cone in $\La_\R^*,$ and there is a natural isomorphism $P^\vee\cong C^\vee\cap\La^*,$ where $P^\vee$ is the dual monoid of $P$.
\item[{\bf(e)}] For each face $F$ of\/ $P$ we have $\rank F\!=\!\codim F^\w,$ $\codim F\!=\!\rank F^\w$.
\item[{\bf(f)}] The isomorphism $\eta(P):P\ra(P^\vee)^\vee$ from Theorem\/ {\rm\ref{gc3thm1}} induces an isomorphism $\eta(P)\vert_F:F\ra(F^\w)^\w$ for all faces $F\subseteq P$.
\end{itemize}
\label{gc3prop2}
\end{prop}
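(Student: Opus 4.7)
The plan is to invoke Proposition~\ref{gc3prop1} to identify $P\cong C\cap\La$ with $\La=P^\gp$ and $C\subseteq\La_\R$ a rational polyhedral cone spanning $\La_\R$, and then translate each assertion into a standard statement about faces of polyhedral cones and their duals from convex/toric geometry.

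For (a), I would first check the easy direction: given a cone-face $D=\{\la\in C:\al_i(\la)=0,\, i\in J\}$ of $C$, the set $D\cap\La$ is a monoid-face of $P$, because if $p,q\in P$ satisfy $p+q\in D\cap\La$, then nonnegativity of the $\al_i$ on $P$ together with $\al_i(p+q)=0$ for $i\in J$ forces $\al_i(p)=\al_i(q)=0$, hence $p,q\in D\cap\La$. For the reverse map, to a monoid-face $F\subseteq P$ I would associate $D_F:=\R_{\ge 0}\cdot F\subseteq\La_\R$ and show it is a cone-face of $C$ with $D_F\cap\La=F$. For (b), the same nonnegativity argument shows $\al^{-1}(0)$ is a monoid-face; conversely, if $F$ corresponds under (a) to a cone-face $D$ defined by $\al_i=0$ for $i\in J$, then $\al:=\sum_{i\in J}\al_i\in P^\vee$ witnesses $F=\al^{-1}(0)$. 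For (d), any $\mu\in P^\vee$ extends uniquely to a linear map $\La\ra\Z$ (since $P^\gp=\La$) which must be nonnegative on $C=\R_{\ge 0}\cdot P$, so lies in $C^\vee\cap\La^*$; the converse restriction is immediate.

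For (c), (e), (f) I would package everything through classical polyhedral cone duality. That $F^\w$ is a face of $P^\vee$ is again by nonnegativity. Under the bijections from (a) and (d), $F$ corresponds to a cone-face $D\subseteq C$ and $F^\w$ corresponds to the standard dual face $D^\perp\cap C^\vee$ of $D$ inside $C^\vee$. Combining the classical dimension identity $\dim D+\dim(D^\perp\cap C^\vee)=\dim\La_\R=\rank P$ with $\rank F=\dim D$ and $\rank F^\w=\dim(D^\perp\cap C^\vee)$ then gives $\rank F^\w=\rank P-\rank F=\codim F$, proving the rank assertion of (c). Part (e) follows by applying the same identity to the toric monoid $P^\vee$, using $\rank P^\vee=\rank P$ in the toric case. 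The $1$-$1$ correspondence in (c) is the involutive cone duality $D\leftrightarrow D^\perp\cap C^\vee$ for faces of polyhedral cones (combined with the observation that every face of the weakly toric $P$ contains $P^\t$, so faces of $P$ are in bijection with faces of the toric monoid $P^\sh$, on which the duality is genuinely involutive). For (f), the tautology $\al(p)=0$ for $p\in F$, $\al\in F^\w$ immediately shows $\eta(P)$ carries $F$ into $(F^\w)^\w$, and this restriction is an isomorphism because $\eta(P)$ itself is by Theorem~\ref{gc3thm1} and because the matching ranks from (e) confirm the restriction is bijective.

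The hard part will be the converse direction of (a): showing that every monoid-face $F$ of the saturated monoid $P=C\cap\La$ arises as $D\cap\La$ for some cone-face $D$ of $C$. The choice $D_F:=\R_{\ge 0}\cdot F$ is forced, and it is a cone-face of $C$ because it equals $\{x\in C:\al_i(x)=0\text{ for all }i\text{ with }\al_i\vert_F=0\}$; however the inclusion $D_F\cap\La\subseteq F$ genuinely uses saturation, since an integer point $x\in D_F\cap\La$ can be shown to satisfy $n\cdot x\in F$ for some $n\ge 1$ (by a Gordan-type argument inside $D_F$), at which point saturation of $F$ (inherited from $P$ by Lemma~\ref{gc3lem1}(i)) yields $x\in F$. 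A secondary subtlety in the weakly toric but non-toric case is to verify that every cone-face of $C$ contains the lineality space $P^\t\ot_\Z\R\subseteq C$, which is what makes the dimension formula $\dim D+\dim(D^\perp\cap C^\vee)=\rank P$ carry over from the pointed setting.
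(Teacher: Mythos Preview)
The paper does not actually prove this proposition: immediately before it, the text says ``The next proposition summarizes some facts about (weakly) toric monoids, which are well understood in toric geometry,'' and then states the result without argument. So there is no proof in the paper to compare against; you have supplied one where the paper chose to cite standard toric geometry instead.

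Your outline is essentially correct and follows the natural route through polyhedral cone duality. Two small points are worth tightening. First, in the reverse direction of (a), the cleanest way to show $D'\cap\La\subseteq F$ (with $D'$ the cone-face cut out by $\{\al_i:\al_i\vert_F=0\}$) does not require saturation at all: pick $f_0\in F$ with $\al_i(f_0)>0$ for every $i$ with $\al_i\vert_F\ne 0$, and for $p\in D'\cap\La\subseteq P$ choose $N$ large so that $Nf_0-p\in C\cap\La=P$; then $p+(Nf_0-p)=Nf_0\in F$ forces $p\in F$ by the face property directly. Your Gordan-plus-saturation route works too, but the face axiom already does the job in one line. Second, in (f) you argue ``matching ranks from (e) confirm the restriction is bijective,'' but injective plus equal rank does not imply surjective for monoid maps in general (think of $2\N\hookrightarrow\N$). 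What saves you here is that $\eta(P)(F)$ and $(F^\w)^\w$ are both \emph{faces} of $(P^\vee)^\vee\cong P$, one contained in the other, of the same rank; by (a) this corresponds to an inclusion of cone-faces of equal dimension, which forces equality. You should say this explicitly.
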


\subsubsection{Monoids and toric geometry}
\label{gc316}

We now explain the connection between monoids and toric geometry over $\C$. This material will not be used later, but explains the term `toric monoid', and may be helpful to those already familiar with toric geometry. It also helps motivate the definition of manifolds with g-corners in~\S\ref{gc32}.

Let $P$ be a weakly toric monoid. Define a commutative $\C$-algebra $\C[P]$ to be the $\C$-vector space with basis formal symbols $e^p$ for $p\in P$, with multiplication $e^p\cdot e^{p'}=e^{p+p'}$ and identity $1=e^0$. Write $Z_P=\Spec\C[P]$, as an affine $\C$-scheme, which is 
of finite type, reduced, and irreducible, as $P$ is weakly toric. 

There is a natural 1-1 correspondence between $\C$-points of $Z_P$ (that is, algebra morphisms $x:\C[P]\ra\C$), and monoid morphisms $\mu:P\ra(\C,\cdot)$ (where $(\C,\cdot)$ is $\C$ regarded as a monoid under multiplication, with identity 1), defined by $\mu(p)=x(e^p)\in\C$ for all~$p\in P$.

Define an algebraic $\C$-torus $T_P$ to be $T_P=\Hom(P,\C^\t)$, where $\C^\t=\C\sm\{0\}$, as an abelian group under multiplication. If $P^\gp\cong\Z^k$ then $T_P\cong(\C^\t)^k$. There is a natural action of $T_P$ on $Z_P$, which on $\C$-points acts by $(t\cdot\mu)(p)=t(p)\cdot\mu(p)$ for $p\in P$, where $t\in T_P=\Hom(P,\C^\t)$ and $\mu\in\Hom\bigl(P,(\C,\cdot)\bigr)$ corresponds to a $\C$-point $x$ of $Z_P$. This $T_P$-action makes $Z_P$ into an {\it affine toric $\C$-variety}. 

Every affine toric $\C$-variety $Z$ is isomorphic to some $Z_P$, for a weakly toric monoid $P$ unique up to isomorphism, where $P$ is toric if and only if $T_P$ has a fixed point (necessarily unique) in~$Z_P$.

\subsection{\texorpdfstring{The model spaces $X_P,$ for $P$ a weakly toric monoid}{The model spaces X(P), for P a weakly toric monoid}}
\label{gc32}

As in \S\ref{gc2}, manifolds with corners are locally modelled on $[0,\iy)^k\t\R^{n-k}$ for $0\le k\le n$. We will define manifolds with generalized corners in \S\ref{gc33} to be locally modelled on spaces $X_P$ depending on a weakly toric monoid $P$. This section defines and studies these spaces $X_P$, and `smooth maps' between them.

\begin{dfn} Let $P$ be a weakly toric monoid. Then as in \S\ref{gc31}, $P$ is isomorphic to a submonoid of $\Z^k$ for some $k\ge 0$. In \S\ref{gc33} we will suppose that $P$ is {\it equal to\/} a submonoid of some $\Z^k$. This is for set theory reasons: if $X$ is a manifold with g-corners, then without some such restriction on the monoids $P^i$, the maximal g-atlas $\{(P^i,U^i,\phi^i):i\in I\}$ on $X$ in Definition \ref{gc3def6} would not be a set, but only a proper class.

Define $X_P$ to be the set of monoid morphisms $x:P\ra[0,\iy)$, where $\bigl([0,\iy),\cdot\bigr)$ is the monoid $[0,\iy)$ with operation multiplication and identity 1. Define the {\it interior\/} $X_P^\ci\subset X_P$ of $X_P$ to be the subset of $x$ with~$x(P)\subseteq(0,\iy)\subset[0,\iy)$. For each $p\in P$, define a function $\la_p:X_P\ra[0,\iy)$ by $\la_p(x)=x(p)$. Then $\la_{p+q}=\la_p\cdot\la_q$ for $p,q\in P$, and~$\la_0=1$.

Define a topology on $X_P$ to be the weakest topology such that $\la_p:X_P\ra[0,\iy)$ is continuous for all $p\in P$. This makes $X_P$ into a locally compact, Hausdorff topological space, and $X_P^\ci$ is open in $X_P$. If $U\subseteq X_P$ is an open set, define the {\it interior\/} $U^\ci$ of $U$ to be~$U^\ci=U\cap X_P^\ci$.

Note that $X_P$ and $U$ are not manifolds, in general, so smooth functions on $X_P,U$ are not yet defined. Let $f:U\ra\R$ be a continuous function. We say that $f$ is a {\it smooth function\/} $U\ra\R$ if there exist $r_1,\ldots,r_n\in P$, an open subset $W\subseteq[0,\iy)^n$, and a smooth map $g:W\ra\R$ (in the usual sense, as in \S\ref{gc21}), such that for all $x\in U$ we have $\bigl(x(r_1),\ldots,x(r_n)\bigr)\in W$ and
\e
f(x)=g\bigl(x(r_1),\ldots,x(r_n)\bigr)=g\bigl(\la_{r_1}(x),\ldots,\la_{r_n}(x)\bigr).
\label{gc3eq2}
\e
We say that a continuous function $f:U\ra(0,\iy)$ is {\it smooth\/} if $f$ is smooth as a map~$U\ra\R$.

We say that a continuous function $f:U\ra[0,\iy)$ is {\it smooth\/} if on each connected component $U'$ of $U$, we either have $f\vert_{U'}=\la_p\vert_{U'}\cdot h$, where $p\in P$ and $h:U'\ra(0,\iy)$ is smooth, or $f\vert_{U'}=0$. Note that (as for manifolds of corners), $f$ is smooth as a map $U\ra[0,\iy)$ implies that $f$ is smooth as a map $f:U\ra\R$, but not vice versa. 

Now let $Q$ be another weakly toric monoid, and $V\subseteq X_Q$ an open set. We say that a continuous map $f:U\ra V$ is {\it smooth\/} if $\la_q\ci f:U\ra[0,\iy)$ is smooth for all $q\in Q$. We say that $f$ is a {\it diffeomorphism\/} if $f$ is invertible and $f,f^{-1}$ are smooth. We say that $f$ is {\it interior\/} if $f$ is smooth and~$f(U^\ci)\subseteq V^\ci$. The identity map $\id_U:U\ra U$ is smooth and interior.

Suppose $R$ is a third weakly toric monoid, and $W\subseteq X_R$ an open set, and $g:V\ra W$ is smooth. It is easy to show that $g\ci f:U\ra W$ is smooth, that is, compositions of smooth maps are smooth. Also compositions of diffeomorphisms (or interior maps) are diffeomorphisms (or interior maps). 
\label{gc3def5}
\end{dfn}

\begin{rem} In \S\ref{gc316}, given a weakly toric monoid $P$, we defined an affine toric $\C$-variety $Z_P=\Hom\bigl(P,(\C,\cdot)\bigr)$, acted on by an algebraic $\C$-torus $T_P=\Hom(P,\C^\t)$. This is related to $X_P$ above as follows. Write ${\rm U}(1)=\bigl\{z\in\C:\md{z}=1\bigr\}\subset\C^\t$ and $T_P^\R=\Hom(P,{\rm U}(1))\subset T_P$, so that $T_P^\R$ is a real torus, the maximal compact subgroup of $T_P$. Using $\C/{\rm U(1)}\cong[0,\iy)$, we can show there is a natural identification~$X_P=\Hom\bigl(P,([0,\iy),\cdot)\bigr)\cong Z_P/T_P^\R$. 

Thus, the spaces $X_P$ appear in the background of complex toric geometry, and several topics treated below --- for instance, the boundary and corners of $X_P$ --- are related to well known facts in toric geometry.
\label{gc3rem1}
\end{rem}

The next proposition gives an alternative description of the material of Definition \ref{gc3def5} in terms of choices of generators and relations for the monoids $P,Q$. The presentation of Proposition \ref{gc3prop3} is often easier to work with, but that of Definition \ref{gc3def5} has the advantage of being intrinsic to the monoids $P,Q$, and independent of choices of generators and relations.

\begin{prop} Suppose\/ $P$ is a weakly toric monoid. Choose generators $p_1,\ldots,p_m$ for $P,$ and a generating set of relations for $p_1,\ldots,p_m$ of the form
\e
a_1^jp_1+\cdots+a_m^jp_m=b_1^jp_1+\cdots+b_m^jp_m\quad\text{in $P$ for $j=1,\ldots,k,$}
\label{gc3eq3}
\e
where\/ $a_i^j,b_i^j\in\N$ for\/ $i=1,\ldots,m$ and\/ $j=1,\ldots,k$. Then:
\begin{itemize}
\setlength{\itemsep}{0pt}
\setlength{\parsep}{0pt}
\item[{\bf(a)}] $\la_{p_1}\t\cdots\t\la_{p_m}:X_P\ra[0,\iy)^m$ is a homeomorphism from $X_P$ to
\e
{}\!\!\!\!\!\!\!\!\!\!\!\!\!\!\!\! X_P'=\bigl\{(x_1,\ldots,x_m)\in[0,\iy)^m:x_1^{a_1^j}\cdots x_m^{a_m^j}=x_1^{b_1^j}\cdots x_m^{b_m^j},\; j=1,\ldots,k\bigr\},
\label{gc3eq4}
\e
regarding $X_P'$ as a closed subset of\/ $[0,\iy)^m$ with the induced topology.
\item[{\bf(b)}] Let\/ $U\subseteq X_P$ be open, and write\/ $U'=(\la_{p_1}\t\cdots\t\la_{p_m})(U)$ for the corresponding open subset of\/ $X_P'$. Then a function $f:U\ra\R$ is smooth in the sense of Definition\/ {\rm\ref{gc3def5}} if and only if there exists an open neighbourhood\/ $W$ of\/ $U'$ in $[0,\iy)^m$ and a smooth map $g:W\ra\R$ in the sense of\/ {\rm\S\ref{gc21},} regarding $W$ as a manifold with corners, such that\/ $f=g\ci(\la_{p_1}\t\cdots\t\la_{p_m}):U\ra\R$. The analogues hold for $f:U\ra(0,\iy),$ $f:U\ra[0,\iy)$ and\/ $g:W\ra(0,\iy),$ $g:W\ra[0,\iy)$.

\item[{\bf(c)}] Now let\/ $Q$ be another weakly toric monoid. Choose generators $q_1,\ldots,q_n$ for $Q$. Let\/ $V\subseteq X_Q$ be open. Then a map $f:U\ra V$ is smooth in the sense of Definition\/ {\rm\ref{gc3def5}} if and only if there exists an open neighbourhood\/ $W$ of\/ $U'$ in $[0,\iy)^m$ and a smooth map $g:W\ra[0,\iy)^n$ in the sense of\/ {\rm\S\ref{gc21},} such that\/ $(\la_{q_1}\t\cdots\t\la_{q_n})\ci f=g\ci(\la_{p_1}\t\cdots\t\la_{p_m}):U\ra[0,\iy)^n$.
\end{itemize}
\label{gc3prop3}
\end{prop}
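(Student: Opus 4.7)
For \textbf{(a)}, write $\La:=\la_{p_1}\t\cdots\t\la_{p_m}:X_P\ra[0,\iy)^m$. First, $\La$ maps into $X_P'$: any monoid morphism $x:P\ra[0,\iy)$ takes the relation $\sum_ia_i^jp_i=\sum_ib_i^jp_i$ of \eq{gc3eq3} to $\prod_ix(p_i)^{a_i^j}=\prod_ix(p_i)^{b_i^j}$, so $\La(x)\in X_P'$. Conversely, given $(x_1,\ldots,x_m)\in X_P'$, define $x:P\ra[0,\iy)$ by $x(\sum_ic_ip_i)=\prod_ix_i^{c_i}$; this is well-defined because \eq{gc3eq3} is a \emph{generating} set of relations, so any identity among $\N$-combinations of the $p_i$ in $P$ follows from these, and the tuple $(x_i)$ satisfies them in $\bigl([0,\iy),\cdot\bigr)$. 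Hence $\La$ is a bijection. Continuity of $\La$ is by definition of the topology on $X_P$; continuity of $\La^{-1}$ follows because for $p=\sum_ic_ip_i\in P$ the composition $\la_p\ci\La^{-1}(x_1,\ldots,x_m)=\prod_ix_i^{c_i}$ is continuous on $[0,\iy)^m$. Finally $X_P'$ is closed in $[0,\iy)^m$ by the defining equations \eq{gc3eq4}.

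For \textbf{(b)}, the ``if'' direction is immediate, taking $n=m$ and $r_j=p_j$ in Definition \ref{gc3def5}. For ``only if'' in the $\R$-valued case: for each $u\in U$ choose a neighborhood $U_u$ and a local representation $f\vert_{U_u}(x)=g(x(r_1),\ldots,x(r_n))$ with $g:W_0\ra\R$ smooth. Writing $r_j=\sum_ic_i^jp_i$, the map $\Phi(x_1,\ldots,x_m):=\bigl(\prod_ix_i^{c_i^1},\ldots,\prod_ix_i^{c_i^n}\bigr)$ is a smooth map $[0,\iy)^m\ra[0,\iy)^n$ in the sense of \S\ref{gc21}, so $G_u:=g\ci\Phi$ is smooth on the open neighborhood $\Phi^{-1}(W_0)$ of $u':=\La(u)$ in $[0,\iy)^m$, with $G_u\ci\La=f$ on $U_u$. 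A smooth partition of unity on an open neighborhood $W\supseteq U'$ in $[0,\iy)^m$ subordinate to a refinement of these local neighborhoods glues the $G_u$ into a single smooth $G:W\ra\R$ with $G\ci\La=f$ on $U$. The $(0,\iy)$-valued case reduces to the $\R$-valued case by restricting to the open set $\{G>0\}\supseteq U'$. For the $[0,\iy)$-valued case, work component-by-component: on each connected component $U_i$ of $U$, either $f=0$ or $f=\la_{p^{(i)}}\cdot h_i$ with $h_i$ positive smooth; extending $h_i$ via the $(0,\iy)$-valued case and multiplying by the monomial $\prod_jx_j^{c_j^{(i)}}$ (where $p^{(i)}=\sum_jc_j^{(i)}p_j$) produces an extension $G_i$ of the Definition \ref{gc2def1}(b)(i) form on some open $W_i\supseteq U_i':=\La(U_i)$, and we shrink the $W_i$ to be pairwise disjoint (possible since each $U_i'$ is both open and closed in $U'$, so disjoint open separating neighborhoods in $[0,\iy)^m$ can be built from the distance function), then set $W:=\bigsqcup_iW_i$ and $G\vert_{W_i}:=G_i$.

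For \textbf{(c)}, by Definition \ref{gc3def5}, $f:U\ra V$ is smooth iff $\la_q\ci f$ is smooth for all $q\in Q$; as products and powers of smooth $[0,\iy)$-valued maps are smooth, this is equivalent to $\la_{q_j}\ci f$ being smooth for $j=1,\ldots,n$. By the $[0,\iy)$-valued case of \textbf{(b)}, each such $\la_{q_j}\ci f$ extends to a smooth $g_j:W_j\ra[0,\iy)$ on an open neighborhood of $U'$; intersecting the $W_j$ yields a common open $W\supseteq U'$ with a smooth $g=(g_1,\ldots,g_n):W\ra[0,\iy)^n$ satisfying $(\la_{q_1}\t\cdots\t\la_{q_n})\ci f=g\ci\La$ on $U$.

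The main obstacle is the gluing step in \textbf{(b)}: different local choices of $(r_j)$ and of $\N$-expressions $r_j=\sum c_i^jp_i$ give different extensions to $[0,\iy)^m$ that agree only along $U'$, so some globalization is needed. In the $\R$- and $(0,\iy)$-valued cases a smooth partition of unity handles this, but in the $[0,\iy)$-valued case the per-component monomial structure of Definition \ref{gc2def1}(b)(i) is not preserved by convex combinations, forcing the componentwise disjoint gluing and the point-set argument separating the connected components of $U'$ by disjoint open neighborhoods of $[0,\iy)^m$.
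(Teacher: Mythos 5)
Your parts (a) and (c) track the paper's proof. Part (b), however, is organized around a misreading of Definition~\ref{gc3def5}: for a smooth function $f:U\ra\R$, that definition asserts the existence of a \emph{single global} choice of $r_1,\ldots,r_n\in P$, open $W'\subseteq[0,\iy)^n$ and smooth $g':W'\ra\R$ such that $f(x)=g'\bigl(x(r_1),\ldots,x(r_n)\bigr)$ for \emph{all} $x\in U$ at once, not a local representation near each point. Read correctly, there is no gluing to perform in the $\R$-valued case: one writes $r_j=\sum_i c_{ij}p_i$ (possible since the $p_i$ generate $P$), sets $\Phi:(x_1,\ldots,x_m)\mapsto\bigl(\prod_i x_i^{c_{i1}},\ldots,\prod_i x_i^{c_{in}}\bigr)$, and takes $W:=\Phi^{-1}(W')$ and $g:=g'\ci\Phi$; this is exactly the paper's one-line argument. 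Your partition-of-unity route reaches the same conclusion but is unnecessary machinery, and as written it also needs a bit more care: the glued $G=\sum_\al\eta_\al G_{u(\al)}$ satisfies $G\ci(\la_{p_1}\t\cdots\t\la_{p_m})=f$ only if the cover to which $\{\eta_\al\}$ is subordinate refines not just $\{\Phi^{-1}(W_0)\}$ but sets $V_u\cap\Phi^{-1}(W_0)$ with $V_u\cap U'$ contained in the image of $U_u$, since $G_u$ agrees with $f$ only there; you gesture at a refinement but do not pin this down.

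The one place a genuine per-piece argument really is required is the $[0,\iy)$-valued case, where Definition~\ref{gc3def5} \emph{is} stated per connected component of $U$. There your observation that distinct components $U_i'$ of $U'$ can be separated by pairwise disjoint opens in $[0,\iy)^m$ (e.g.\ balls of radius $\frac13 d\bigl(u,X_P'\sm U_i'\bigr)$) is correct, and is detail the paper's proof passes over in one sentence. So your part (b) is salvageable and even supplies a useful detail for the $[0,\iy)$ case, but for the $\R$- and $(0,\iy)$-valued cases you should drop the local-to-global apparatus entirely, as the global form of Definition~\ref{gc3def5} already hands you what you need.
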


\begin{proof} For (a), let $x\in X_P$, so that $x:P\ra\bigl([0,\iy),\cdot\bigr)$ is a monoid morphism, and set $x_i=x(p_i)=\la_{p_i}(x)\in[0,\iy)$ for $i=1,\ldots,m$. Since $x$ is a monoid morphism, applying $x$ to \eq{gc3eq3} gives $x_1^{a_1^j}\cdots x_m^{a_m^j}=x_1^{b_1^j}\cdots x_m^{b_m^j}$, as in \eq{gc3eq4}. As $p_1,\ldots,p_m$ generate $P$, and \eq{gc3eq3} is a generating set of relations, we see that $\la_{p_1}\t\cdots\t\la_{p_m}$ maps $x\mapsto(x_1,\ldots,x_m)$, and gives a bijection~$X_P\ra X_P'$.

Let $p\in P$. Then we may write $p=c_1p_1+\cdots+c_mp_m$ for $c_1,\ldots,c_m\in\N$, and $\la_p=\bigl(x_1^{c_1}\cdots x_m^{c_m}\bigr)\ci (\la_{p_1}\t\cdots\t\la_{p_m})$. The topology on $X_P$ is the weakest for which $\la_p:X_P\ra[0,\iy)$ is continuous for all $p\in P$. This is identified by $\la_{p_1}\t\cdots\t\la_{p_m}:X_P\ra X_P'$ with the weakest topology on $X_P'\subseteq[0,\iy)^m$ such that $x_1^{c_1}\cdots x_m^{c_m}:X_P'\ra[0,\iy)$ is continuous for all $c_1,\ldots,c_m\in\N$. But by taking $c_i=\de_{ij}$ for $j=1,\ldots,m$, we see this is just the topology on $X_P'$ induced by the inclusion $X_P'\subseteq[0,\iy)^m$, which proves~(a). 

For functions $f:U\ra\R$ in (b), the `if' part is trivial, taking $r_1,\ldots,r_n$ in Definition \ref{gc3def5} to be $p_1,\ldots,p_m$, with $n=m$. For the `only if' part, let $f:U\ra\R$ be smooth in the sense of Definition \ref{gc3def5}. Then $f(x)=g'\bigl(x(r_1),\ldots,x(r_n)\bigr)$ for all $x\in U$, where $r_1,\ldots,r_n\in P$ and $g':W'\ra\R$ is smooth for $W'$ an open neighbourhood of $(\la_{r_1}\t\cdots\t\la_{r_n})(U)$ in $[0,\iy)^n$. Since $p_1,\ldots,p_m$ generate $P$ we may write $r_j=\sum_{i=1}^mc_{ij}p_i$ for $c_{ij}\in\N$, $i=1,\ldots,m$, $j=1,\ldots,n$. Define $W\subseteq[0,\iy)^m$ and $g:W\ra\R$ by
\begin{align*}
&W=\bigl\{(x_1,\ldots,x_m)\in[0,\iy)^m:(x_1^{c_{11}}\cdots x_m^{c_{m1}},
\ldots,x_1^{c_{1n}}\cdots x_m^{c_{mn}})\in W'\bigr\},\\
&g(x_1,\ldots,x_m)=g'\bigl(x_1^{c_{11}}\cdots x_m^{c_{m1}},
\ldots,x_1^{c_{1n}}\cdots x_m^{c_{mn}}\bigr).
\end{align*}
Then $W$ is an open neighbourhood of $U'$ in $[0,\iy)^m$ and $g$ is smooth, and $f=g\ci(\la_{p_1}\t\cdots\t\la_{p_m})$. This proves part (b) for $f:U\ra\R$, and (b) for $f:U\ra(0,\iy)$ follows.

For functions $f:U\ra[0,\iy)$ in (b), observe that if $W\subseteq[0,\iy)^m$ is open and connected and $g:W\ra[0,\iy)$ is smooth in the sense of \S\ref{gc21} then either we may write $g(x_1,\ldots,x_m)=x_1^{c_1}\cdots x_m^{c_m}\cdot h(x_1,\ldots,x_m)$, where $c_1,\ldots,c_m\in\N$ and $h:W\ra(0,\iy)$ is smooth, or $g=0$. Using this and the argument of the first part of (b), we can prove (b) for~$f:U\ra[0,\iy)$.

For (c), first suppose $f:U\ra V$ is a map, $W$ is an open neighbourhood of $U'$ in $[0,\iy)^m$, and $g:W\ra[0,\iy)^n$ is smooth in the sense of \S\ref{gc21}, with $(\la_{q_1}\t\cdots\t\la_{q_n})\ci f=g\ci(\la_{p_1}\t\cdots\t\la_{p_m}):U\ra[0,\iy)^n$. Write $g=(g_1,\ldots,g_n)$, so that $g_i:W\ra[0,\iy)$ is smooth. Then $\la_{q_i}\ci f=g\ci(\la_{p_1}\t\cdots\t\la_{p_m}):U\ra[0,\iy)$, so part (b) shows that $\la_{q_i}\ci f:U\ra[0,\iy)$ is smooth in the sense of Definition \ref{gc3def5}, for $i=1,\ldots,n$. Let $q\in Q$. Then we may write $q=c_1q_1+\cdots+c_nq_n$ for $c_1,\ldots,c_n\in\N$, as $q_1,\ldots,q_n$ generate $Q$. Then 
\begin{equation*}
\la_q\ci f=(\la_{q_1}\ci f)^{c_1}\cdots(\la_{q_n}\ci f)^{c_n}:U\longra[0,\iy),
\end{equation*}
so $\la_q\ci f:U\ra[0,\iy)$ is smooth as in Definition \ref{gc3def5} as the $\la_{q_i}\ci f$ are, and $f:U\ra V$ is smooth as in Definition \ref{gc3def5}. This proves the `if' part of~(c).

Next suppose $f:U\ra V$ is smooth in the sense of Definition \ref{gc3def5}. Then $\la_{q_i}\ci f:U\ra[0,\iy)$ is smooth as in Definition \ref{gc3def5} for each $i=1,\ldots,n$, so by (b) there exists $W_i\subseteq[0,\iy)^m$ open and $g_i:W_i\ra[0,\iy)$ smooth as in \S\ref{gc21} such that $\la_{q_i}\ci f=g_i\ci(\la_{p_1}\t\cdots\t\la_{p_m}):U\ra[0,\iy)$. Set $W=W_1\cap\cdots\cap W_n$ and $g=g_1\vert_W\t\cdots\t g_n\vert_W:W\ra[0,\iy)^n$. Then $g$ is smooth and $(\la_{q_1}\t\cdots\t\la_{q_n})\ci f=g\ci(\la_{p_1}\t\cdots\t\la_{p_m}):U\ra[0,\iy)^n$, proving the `only if' part of (c), and completing the proof of the proposition.
\end{proof}

\begin{ex}{\bf(i)} When $P=\N$, points of $X_\N$ are monoid morphisms $x:\N\ra\bigl([0,\iy),\cdot\bigr)$, which may be written uniquely in the form $x(m)=y^m$, $m\in\N$, for $y\in[0,\iy)$. This gives an identification $X_\N\cong[0,\iy)$
mapping $x\mapsto y=x(1)$.

In Proposition \ref{gc3prop3}, we may take $P=\N$ to be generated by $p_1=1$, with no relations. Then part (a) shows that $\la_1:X_\N\ra X_\N'=[0,\iy)$ is a homeomorphism, the same identification $X_\N\cong[0,\iy)$ as above.
\smallskip

\noindent{\bf(ii)} When $P=\Z$, points of $X_\Z$ are monoid morphisms $x:\Z\ra\bigl([0,\iy),\cdot\bigr)$, which may be written uniquely in the form $x(m)=e^{my}$ for $y\in\R$. This gives an identification $X_\Z\cong\R$ mapping $x\mapsto y=\log x(1)$.

In Proposition \ref{gc3prop3}, we may take $P=\Z$ to be generated by $p_1=1$ and $p_2=-1$, with one relation $p_1+p_2=0$. Then part (a) shows that $\la_1\t\la_{-1}$ is a homeomorphism from $X_\Z$ to
\begin{equation*}
X_\Z'=\bigl\{(x_1,x_2)\in[0,\iy)^2:x_1x_2=1\bigr\}.
\end{equation*}
In terms of the identification $X_\Z\cong\R\in y$ above, we have
\begin{equation*}
X_\Z'=\bigl\{(e^y,e^{-y}):y\in\R\bigr\}\cong\R.
\end{equation*}

\noindent{\bf(iii)} When $P=\N^k\t\Z^{n-k}$, combining {\bf(i)\rm,\bf(ii)}, points of $X_P$ are monoid morphisms $x:P\ra\bigl([0,\iy),\cdot\bigr)$, which may be written uniquely in the form
\begin{equation*}
x(m_1,\ldots,m_n)=y_1^{m_1}\cdots y_k^{m_k}e^{m_{k+1}y_{k+1}+\cdots+m_ny_n}
\end{equation*}
for $(y_1,\ldots,y_n)\in[0,\iy)^k\t\R^{n-k}$. This identifies~$X_{\N^k\t\Z^{n-k}}\cong[0,\iy)^k\t\R^{n-k}$.
\smallskip

We will often use the identifications $X_\N\cong[0,\iy)$, $X_\Z\cong\R$ and $X_{\N^k\t\Z^{n-k}}\cong[0,\iy)^k\t\R^{n-k}=\R^n_k$ in {\bf(i)}--{\bf(iii)}. Using Proposition \ref{gc3prop3} we see that in each of {\bf(i)}--{\bf(iii)}, the topology on $X_P$, and the notions of smooth functions $U\ra\R$, $U\ra(0,\iy)$, $U\ra[0,\iy)$, agree with the usual topology and smooth functions (in the sense of \S\ref{gc21}) on $[0,\iy),\R,[0,\iy)^k\t\R^{n-k}$. Thus, the $X_P$ for general weakly toric monoids $P$ are a class of smooth spaces generalizing the spaces $[0,\iy)^k\t\R^{n-k}$ used as local models for manifolds with corners.
\label{gc3ex3}
\end{ex}

If $P,Q$ are weakly toric monoids then so is $P\t Q$, and monoid morphisms $P\t Q\ra\bigl([0,\iy),\cdot\bigr)$ are of the form $(p,q)\mapsto x(p)y(q)$, where $x:P\ra\bigl([0,\iy),\cdot\bigr)$ and $y:Q\ra\bigl([0,\iy),\cdot\bigr)$ are monoid morphisms. This gives a natural identification $X_{P\t Q}\cong X_P\t X_Q$. Using this and Example \ref{gc3ex3} we deduce:

\begin{lem} Let\/ $P$ be a weakly toric monoid. Then $P\cong P^\sh\t P^\t,$ where $P^\sh$ is a toric monoid and\/ $P^\t\cong\Z^l$ for $l\ge 0$. Hence $X_P\cong X_{P^\sh}\t X_{\Z^l}\cong X_{P^\sh}\t\R^l$. 

\label{gc3lem2}
\end{lem}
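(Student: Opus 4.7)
The plan is to establish the splitting $P \cong P^\sh \times P^\t$ of the short exact sequence $0 \to P^\t \to P \to P^\sh \to 0$ already described in the text above, and then transfer this splitting to the spaces $X_P$ using the natural identification $X_{P\t Q}\cong X_P\t X_Q$ mentioned just before the lemma.

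First I would check the assertions about $P^\t$ and $P^\sh$. Since $P$ is weakly toric, it is integral and torsion-free, so $P\hookrightarrow P^\gp$ with $P^\gp$ a finitely generated torsion-free abelian group, hence $P^\gp\cong\Z^k$. The units $P^\t$ form a subgroup of $P^\gp$, so $P^\t\cong\Z^l$ for some $0\le l\le k$. The sharpening $P^\sh=P/P^\t$ inherits the property of being finitely generated, integral (hence a submonoid of $(P^\sh)^\gp=P^\gp/P^\t$), saturated, and torsion-free from $P$, and is sharp by construction, so $P^\sh$ is toric.

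Next, I would split the sequence. Since $P^\sh$ is torsion-free, $(P^\sh)^\gp=P^\gp/P^\t$ is torsion-free, hence free of rank $k-l$, so the short exact sequence of abelian groups $0\to P^\t\to P^\gp\to (P^\sh)^\gp\to 0$ splits; pick a splitting so that $P^\gp=P^\t\op G$ with $G\cong\Z^{k-l}$. I claim $P=P^\t\op(P\cap G)$ as monoids. Indeed, for $p\in P\subseteq P^\gp$, write uniquely $p=u+g$ with $u\in P^\t$, $g\in G$; since $-u\in P^\t\subseteq P$, we have $g=p+(-u)\in P$, hence $g\in P\cap G$. So the additive decomposition holds, and $P^\t\cap G=\{0\}$ makes it direct. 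Finally, the projection $P\cap G\hookrightarrow P\twoheadrightarrow P^\sh$ is an isomorphism of monoids (surjective by the decomposition, injective because $P\cap G\cap P^\t=\{0\}$), giving $P\cong P^\sh\t P^\t\cong P^\sh\t\Z^l$.

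For the topological conclusion, the natural identification $X_{P\t Q}\cong X_P\t X_Q$ noted just before the lemma (which follows immediately from the fact that monoid morphisms $P\t Q\to([0,\iy),\cdot)$ factor as pairs of morphisms out of $P$ and $Q$) gives $X_P\cong X_{P^\sh}\t X_{\Z^l}$. Applying it inductively with Example \ref{gc3ex3}(ii), which identifies $X_\Z\cong\R$, yields $X_{\Z^l}\cong\R^l$, completing the proof.

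The only nontrivial step is the splitting of $0\to P^\t\to P\to P^\sh\to 0$ as monoids; the key ingredient is that $P^\sh$ is torsion-free (so that the group-level sequence splits) combined with the closure property $p\in P$, $u\in P^\t\Rightarrow p+(-u)\in P$, which lifts the group-level splitting to $P$. Everything else is bookkeeping.
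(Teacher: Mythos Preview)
Your proof is correct and follows the same approach as the paper. The paper actually does not give a detailed proof of this lemma: the splitting $P\cong P^\sh\t P^\t$ is already asserted without proof at the end of Definition~\ref{gc3def1}, and the lemma is then deduced in one line from that assertion together with the identification $X_{P\t Q}\cong X_P\t X_Q$ and Example~\ref{gc3ex3}. Your write-up supplies the details the paper omits --- splitting the group-level sequence using that $(P^\sh)^\gp$ is free, then showing $P=P^\t\op(P\cap G)$ via the closure of $P$ under adding elements of $P^\t$ --- and these details are all sound.
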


Thus, we can reduce from weakly toric to toric monoids $P$ by including products with $\R^l$ in the spaces $X_P$. A different way to reduce from weakly toric to toric monoids is to note that $\R^l$ is diffeomorphic to $(0,\iy)^l\subset[0,\iy)^l\cong X_{\N^l}$, so $X_P\cong X_{P^\sh}\t\R^l$ is diffeomorphic to an open subset in $X_{P_\sh}\t[0,\iy)^l\cong X_Q$, where $Q=P^\sh\t\N^l$ is toric, giving:

\begin{cor} Let $P$ be a weakly toric monoid. Then there exists a toric monoid\/ $Q$ and an open subset\/ $U_Q\subset X_Q$ such that\/ $X_P$ is diffeomorphic to\/~$U_Q$.

\label{gc3cor1}
\end{cor}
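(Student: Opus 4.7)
The plan is to combine Lemma \ref{gc3lem2} with the observation that $\R$ is diffeomorphic to $(0,\iy)\subset X_\N\cong[0,\iy)$, to replace the $\R^l$ factor by an open subset of a toric model space.

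First, I would apply Lemma \ref{gc3lem2} to obtain an isomorphism $P\cong P^\sh\t\Z^l$ with $P^\sh$ toric and $l\ge 0$, giving a diffeomorphism $X_P\cong X_{P^\sh}\t\R^l$. Next, set $Q=P^\sh\t\N^l$. I would verify briefly that $Q$ is toric: it is finitely generated, integral, saturated, and torsion-free as a product of monoids with these properties, and it is sharp because $Q^\t=(P^\sh)^\t\t(\N^l)^\t=\{0\}\t\{0\}=\{0\}$. Using the natural identification $X_{P^\sh\t\N^l}\cong X_{P^\sh}\t X_{\N^l}$ noted before Lemma \ref{gc3lem2} together with $X_{\N^l}\cong[0,\iy)^l$ from Example \ref{gc3ex3}, I obtain $X_Q\cong X_{P^\sh}\t[0,\iy)^l$.

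Then I would define
\[
U_Q=X_{P^\sh}\t(0,\iy)^l\subset X_{P^\sh}\t[0,\iy)^l\cong X_Q,
\]
which is open since $(0,\iy)^l$ is open in $[0,\iy)^l$. Finally, I would construct the diffeomorphism $\Phi:X_P\ra U_Q$ as the composition of $X_P\cong X_{P^\sh}\t\R^l$ with the map
\[
X_{P^\sh}\t\R^l\longra X_{P^\sh}\t(0,\iy)^l,\qquad (x,y_1,\ldots,y_l)\longmapsto (x,e^{y_1},\ldots,e^{y_l}),
\]
whose inverse uses $(x,t_1,\ldots,t_l)\mapsto(x,\log t_1,\ldots,\log t_l)$.

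The only nontrivial step is checking that $\Phi$ and $\Phi^{-1}$ are smooth in the sense of Definition \ref{gc3def5}; this reduces via Proposition \ref{gc3prop3} to the fact that $y\mapsto e^y$ and $t\mapsto \log t$ are smooth in the ordinary sense on $\R$ and $(0,\iy)$ respectively, and that products of diffeomorphisms are diffeomorphisms. There is no real obstacle here, just bookkeeping with the identifications $X_\Z\cong\R$ and $X_\N\cong[0,\iy)$ from Example \ref{gc3ex3}(i),(ii); the main point worth flagging is that the map $\R\cong X_\Z\ra X_\N\cong[0,\iy)$ arising from this construction is precisely the map dual to the monoid inclusion $\N\hookra\Z$, which lands in $(0,\iy)$ and is a diffeomorphism onto its image.
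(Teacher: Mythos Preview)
Your proposal is correct and follows essentially the same approach as the paper: use Lemma \ref{gc3lem2} to write $X_P\cong X_{P^\sh}\t\R^l$, then take $Q=P^\sh\t\N^l$ and identify $\R^l$ with the open subset $(0,\iy)^l\subset[0,\iy)^l\cong X_{\N^l}$. You have simply supplied more detail (the verification that $Q$ is toric, the explicit exponential map) than the paper, which states the argument in one sentence before the corollary.
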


The next proposition describes the interior $X_P^\ci$ of $X_P$.

\begin{prop} Let\/ $P$ be a weakly toric monoid, so that the interior\/ $X_P^\ci$ of\/ $X_P$ is an open subset of\/ $X_P$. Set\/ $n=\rank P$. Then: 
\begin{itemize}
\setlength{\itemsep}{0pt}
\setlength{\parsep}{0pt}
\item[{\bf(a)}] $X_P^\ci$ is diffeomorphic in the sense of Definition\/ {\rm\ref{gc3def5}} to $\R^n\cong X_{\Z^n}$. 
\item[{\bf(b)}] $X_P^\ci$ is the subset of points\/ $x\in X_P$ which have an open neighbourhood in $X_P$ homeomorphic to an open ball in\/~$\R^n$.
\end{itemize}
\label{gc3prop4}
\end{prop}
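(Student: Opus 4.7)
My plan is to prove (a) by a direct identification and then deduce (b) by reducing it to a local topological question at a vertex of a toric factor.

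For (a), I observe that $P^\gp$ is a finitely generated, torsion-free abelian group of rank $n$, hence $P^\gp\cong\Z^n$. A point $x\in X_P$ lies in $X_P^\ci$ iff $x(P)\subseteq(0,\iy)$, iff $x$ extends uniquely to a group homomorphism $\ti x\colon P^\gp\to(0,\iy)$, where $(0,\iy)$ is a group under multiplication. Composing with $\log\colon(0,\iy)\to\R$ yields a natural bijection $X_P^\ci\to\Hom(P^\gp,\R)\cong\R^n$, which under the identification $\R^n\cong X_{\Z^n}$ of Example \ref{gc3ex3} gives a map $X_P^\ci\to X_{\Z^n}$. To verify it is a diffeomorphism in the sense of Definition \ref{gc3def5}, I would apply Proposition \ref{gc3prop3}: for each generator $p\in P$, the pullback of $\la_p$ to $X_{\Z^n}\cong\R^n$ is the exponential of a linear form, hence smooth; and for a $\Z$-basis $r_1,\ldots,r_n$ of $P^\gp$ written as $r_j=p_j-p'_j$ with $p_j,p'_j\in P$, the pullback of $\la_{r_j}$ to $X_P^\ci$ equals the ratio $\la_{p_j}/\la_{p'_j}$, a smooth positive function on $X_P^\ci$ since $\la_{p'_j}>0$ there.

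For (b), the inclusion ``$\supseteq$'' is immediate from (a). For the converse, let $x\in X_P$ have an open neighbourhood homeomorphic to an open ball in $\R^n$, and suppose for contradiction $x\notin X_P^\ci$. Then $F_x:=\{p\in P:x(p)>0\}$ is a proper face of $P$ (a submonoid containing $0$ and satisfying the face axiom, because $x(p+q)=x(p)x(q)$). I would pass to the localisation $Q:=P+F_x^\gp\subseteq P^\gp$, which is weakly toric with $Q^\gp=P^\gp$ and $Q^\t=F_x^\gp$, and check that restriction $X_Q\to X_P$ is a diffeomorphism onto the open subset $\{y\in X_P:y(F_x)\subseteq(0,\iy)\}\ni x$. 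By Lemma \ref{gc3lem2} applied to $Q$, we obtain $X_Q\cong X_{Q^\sh}\t\R^{\rank F_x}$ under which $x$ corresponds to a pair $(\bar x_0,y_0)$, where $Q^\sh$ is a sharp toric monoid of rank $k=\codim F_x\ge 1$ and $\bar x_0\in X_{Q^\sh}$ is the \emph{vertex}, the unique morphism sending every non-zero element of $Q^\sh$ to $0$. A Euclidean $n$-dimensional neighbourhood of $x$ then projects to a Euclidean $k$-dimensional neighbourhood of $\bar x_0$ in $X_{Q^\sh}$.

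The main obstacle is to show that $\bar x_0$ has no Euclidean neighbourhood of dimension $k\ge 1$ in $X_{Q^\sh}$. My approach is a cone-and-link computation. Choose $\phi\in Q^{\sh\vee}$ strictly positive on $Q^\sh\sm\{0\}$, which exists since $Q^\sh$ is pointed toric by Proposition \ref{gc3prop2}. The formula $(t\cdot y)(p):=t^{\phi(p)}y(p)$ defines a continuous $(0,\iy)$-action on $X_{Q^\sh}$ with unique fixed point $\bar x_0$ and otherwise free; a homogeneous continuous proper function $\rho\colon X_{Q^\sh}\to[0,\iy)$ with $\rho^{-1}(0)=\{\bar x_0\}$ then exhibits $X_{Q^\sh}$ as the open cone on the compact link $K:=\rho^{-1}(1)$. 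Under the identification of $X_{Q^\sh}$ as a topological space with the pointed rational polyhedral cone $C_{Q^\sh}\subset\R^k$ provided by Proposition \ref{gc3prop1}, $K$ is identified with a convex $(k-1)$-polytope cross-section, which is contractible. Hence $X_{Q^\sh}\sm\{\bar x_0\}\simeq K$ is contractible, as is $X_{Q^\sh}$ itself, giving $H_*(X_{Q^\sh},X_{Q^\sh}\sm\{\bar x_0\})=0$ in all degrees. But a Euclidean neighbourhood of $\bar x_0$ would force this local homology to be $\Z$ in degree $k$, a contradiction for $k\ge 1$.
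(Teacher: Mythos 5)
Part (a) is correct and is essentially the same argument as the paper's: identify $X_P^\ci$ with $\Hom(P^\gp,(0,\iy))\cong\Hom(P^\gp,\R)\cong\R^n$, and check smoothness of both directions via Proposition \ref{gc3prop3}. (Your verification of smoothness is a little more explicit than the paper's ``it is easy to see''.)

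Part (b) takes a genuinely different and more detailed route than the paper (which just points at Proposition \ref{gc3prop3}(a) and asserts the conclusion). Your localisation $Q=P+F_x^\gp$ is set up correctly — the verification that $Q$ is weakly toric, that $Q^\t=F_x^\gp$, and that $X_Q\to X_P$ is a diffeomorphism onto the open set $\{y:y(F_x)\subseteq(0,\iy)\}$ all check out — and the reduction via Lemma \ref{gc3lem2} to the vertex $\bar x_0\in X_{Q^\sh}$ is a good idea. However, there is a real gap at the crux of the argument, namely the sentence ``Under the identification of $X_{Q^\sh}$ as a topological space with the pointed rational polyhedral cone $C_{Q^\sh}\subset\R^k$ provided by Proposition \ref{gc3prop1}''. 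Proposition \ref{gc3prop1} provides a \emph{monoid} isomorphism $Q^\sh\cong C\cap\La$ between $Q^\sh$ and the integral points of a cone $C\subset\La_\R$; it says nothing about a \emph{homeomorphism} between the real-points space $X_{Q^\sh}=\Hom_\Mon(Q^\sh,[0,\iy))$ and the cone $C$ (or $C^\vee$). These are different objects living in different ambient spaces — $C\subset\La_\R$, whereas $X_{Q^\sh}$ embeds by Proposition \ref{gc3prop3}(a) as a closed subset cut out by binomial equations in $[0,\iy)^N$ — and the fact that they are nevertheless homeomorphic is a nontrivial theorem (it is essentially the statement that the moment map for the torus action on the affine toric variety $Z_{Q^\sh}$ of \S\ref{gc316} descends to a homeomorphism $Z_{Q^\sh}/T^\R_{Q^\sh}\to C^\vee$). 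Nothing in the paper establishes this, and your argument provides no substitute proof of it, so the contractibility of the link $K$ — the statement on which the entire local-homology contradiction rests — is not actually justified. You would need either to prove the moment-map homeomorphism, or to give a direct argument that $K$ (equivalently $X_{Q^\sh}\sm\{\bar x_0\}$) is contractible, and neither is immediate: for instance the ``convex combination'' $(\mu,\nu,t)\mapsto(1-t)\mu+t\nu$ in the additive $\Hom(Q^\sh,\R\cup\{\iy\})$ picture fails to be jointly continuous at boundary points and $t\to 1$, so $X_{Q^\sh}$ is not obviously convex in any usable sense. A secondary, easily repaired, slip: the sentence ``A Euclidean $n$-dimensional neighbourhood of $x$ then projects to a Euclidean $k$-dimensional neighbourhood of $\bar x_0$'' is not valid as stated; the correct move is to compute $H_*(X_{Q^\sh}\t\R^{n-k},(X_{Q^\sh}\t\R^{n-k})\sm\{(\bar x_0,y_0)\})\cong H_{*-(n-k)}(X_{Q^\sh},X_{Q^\sh}\sm\{\bar x_0\})$ directly by excision and a suspension/K\"unneth shift, which your local-homology conclusion implicitly uses anyway.
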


\begin{proof} For (a), points of $X_P^\ci$ are monoid morphisms $x:P\ra\bigl((0,\iy),\cdot\bigr)$. As $\bigl((0,\iy),\cdot\bigr)$ is a group, any such morphism factorizes through the projection $P\ra P^\gp$, so points of $X_P^\ci$ correspond to group morphisms $P^\gp\ra\bigl((0,\iy),\cdot\bigr)$. But $P^\gp\cong\Z^n$, as $P$ is weakly toric of rank $n$, and monoid morphisms $\Z^n\ra\bigl((0,\iy),\cdot\bigr)$ are points of $X_{\Z^n}\cong\R^n$. Thus, a choice of isomorphism $P^\gp\cong\Z^n$ induces an identification $X_P^\ci\cong\R^n\cong X_{\Z^n}$, and it is easy to see that this is a diffeomorphism in the sense of Definition~\ref{gc3def5}.

For (b), if $x\in X_P^\ci$, part (a) implies that $X_P$ is locally homeomorphic to $\R^n$ near $x$. And if $x\in X_P\sm X_P^\ci$ then using Proposition \ref{gc3prop3}(a) we can show that $X_P$ is not locally homeomorphic to $\R^n$ near $x$.
\end{proof}

\subsection{\texorpdfstring{The category $\Mangc$ of manifolds with g-corners}{The category of manifolds with g-corners}}
\label{gc33}

We can now define the category $\Mangc$ of {\it manifolds with generalized corners}, or {\it g-corners}, extending Definition \ref{gc2def2} for the case of ordinary corners.

\begin{dfn} Let $X$ be a second countable Hausdorff topological space. An {\it $n$-dimensional generalized chart}, or {\it g-chart}, on $X$ is a triple $(P,U,\phi)$, where $P$ is a weakly toric monoid with $\rank P=n$, and $P$ is a submonoid of $\Z^k$ for some $k\ge 0$, and $U\subseteq X_P$ is open, for $X_P$ as in \S\ref{gc32}, and $\phi:U\ra X$ is a homeomorphism with an open set $\phi(U)$ in~$X$.

Let $(P,U,\phi),(Q,V,\psi)$ be $n$-dimensional g-charts on $X$. We call
$(P,U,\phi)$ and $(Q,V,\psi)$ {\it compatible\/} if
$\psi^{-1}\ci\phi:\phi^{-1}\bigl(\phi(U)\cap\psi(V)\bigr)\ra
\psi^{-1}\bigl(\phi(U)\cap\psi(V)\bigr)$ is a diffeomorphism between
open subsets of $X_P,X_Q$, in the sense of Definition~\ref{gc3def5}.

An $n$-{\it dimensional generalized atlas}, or {\it g-atlas}, for $X$ is a family $\{(P^i,U^i,\phi^i):i\!\in\! I\}$ of pairwise compatible $n$-dimensional g-charts on $X$ with $X\!=\!\bigcup_{i\in I}\phi^i(U^i)$. We call such a g-atlas {\it maximal\/} if it is not a proper subset of any other g-atlas. Any g-atlas $\{(P^i,U^i,\phi^i):i\in I\}$ is contained in a unique maximal g-atlas, the family of all g-charts $(P,U,\phi)$ on $X$ compatible with $(P^i,U^i,\phi^i)$ for all~$i\in I$.

An $n$-{\it dimensional manifold with generalized corners}, or {\it g-corners}, is a second countable Hausdorff topological space $X$ with a maximal $n$-dimensional g-atlas. Usually we refer to $X$ as the manifold, leaving the g-atlas implicit. By a {\it g-chart\/ $(P,U,\phi)$ on\/} $X$, we mean an element of the maximal g-atlas. Write~$\dim X=n$.

Motivated by Proposition \ref{gc3prop4}(b), define the {\it interior\/} $X^\ci$ of an $n$-manifold with g-corners $X$ to be the dense open subset $X^\ci\subset X$ of points $x\in X$ which have an open neighbourhood in $X$ homeomorphic to an open ball in $\R^n$. Then Proposition \ref{gc3prop4} implies that if $(P,U,\phi)$ is a g-chart on $X$ then $\phi^{-1}(X^\ci)=U^\ci$, where $U^\ci\subseteq U\subseteq X_P$ is as in Definition \ref{gc3def5}, so $(P,U^\ci,\phi)$ is a g-chart on~$X^\ci$.

Let $X,Y$ be manifolds with g-corners, and $f:X\ra Y$ a continuous map of the underlying topological spaces. We say that $f:X\ra Y$ is {\it smooth\/} if for all g-charts $(P,U,\phi)$ on $X$ and $(Q,V,\psi)$ on $Y$, the map
\e
\psi^{-1}\ci f\ci\phi: (f\ci\phi)^{-1}(\psi(V))\longra V
\label{gc3eq5}
\e
is a smooth map between the open subsets $(f\ci\phi)^{-1}(\psi(V))\subseteq U\subseteq X_P$ and $V\subseteq X_Q$, in the sense of Definition~\ref{gc3def5}. 

This condition is local in $X$ and $Y$, and it holds locally in some charts $(P,U,\phi)$ on $X$ and $(Q,V,\psi)$ on $Y$ if and only if it holds on compatible charts $(P',U',\phi')$, $(Q',V',\psi')$ covering the same open sets in $X,Y$. Thus, to show $f:X\ra Y$ is smooth, it suffices to check \eq{gc3eq5} is smooth only for $(P,U,\phi)$ in some choice of g-atlas $\{(P^i,U^i,\phi^i):i\in I\}$ for $X$ and for $(Q,V,\psi)$ in some choice of g-atlas $\{(Q^j,V^j,\psi^j):j\in J\}$ for $Y$, rather than for all~$(P,U,\phi),(Q,V,\psi)$.

We say that $f:X\ra Y$ is a {\it diffeomorphism\/} if it is a bijection, and both $f:X\ra Y$, $f^{-1}:Y\ra X$ are smooth.

We say that a smooth map $f:X\ra Y$ is {\it interior\/} if $f(X^\ci)\subseteq Y^\ci$. Equivalently, $f$ is interior if the maps \eq{gc3eq5} are interior in the sense of Definition \ref{gc3def5} for all $(P,U,\phi),(Q,V,\psi)$.

In Definition \ref{gc3def5} we saw that for open $U\subseteq X_P$, $V\subseteq X_Q$, $W\subseteq X_R$, compositions $g\ci f$ of smooth (or interior) maps $f:U\ra V$, $g:V\ra W$ are smooth (or interior), and identity maps $\id_U:U\ra U$ are smooth (and interior). It easily follows that compositions $g\ci f:X\ra Z$ of smooth (or interior) maps $f:X\ra Y$, $g:Y\ra Z$ of manifolds with g-corners are smooth (or interior), and identity maps $\id_X:X\ra X$ are smooth (and interior).

Thus, manifolds with g-corners and smooth maps, or interior maps, form a category. Write $\Mangc$ for the category with objects manifolds with g-corners $X,Y$ and morphisms smooth maps $f:X\ra Y$, and $\Mangcin\subset\Mangc$ for the (non-full) subcategory with objects manifolds with g-corners $X,Y$ and morphisms interior maps~$f:X\ra Y$.

Write $\cMangc$ for the category whose objects are disjoint unions $\coprod_{m=0}^\iy X_m$, where $X_m$ is a manifold with g-corners of dimension $m$, allowing $X_m=\es$, and whose morphisms are continuous maps $f:\coprod_{m=0}^\iy X_m\ra\coprod_{n=0}^\iy Y_n$, such that $f\vert_{X_m\cap f^{-1}(Y_n)}:X_m\cap f^{-1}(Y_n)\ra Y_n$ is a smooth map of manifolds with g-corners for all $m,n\ge 0$. Objects of $\cMangc$ will be called {\it manifolds with g-corners of mixed dimension}. We regard $\Mangc$ as a full subcategory of $\cMangc$ in the obvious way. Write $\cMangcin$ for the (non-full) subcategory of $\cMangc$ with the same objects, and morphisms $f:\coprod_{m=0}^\iy X_m\ra\coprod_{n=0}^\iy Y_n$ with $f\vert_{X_m\cap f^{-1}(Y_n)}$ an interior map for all~$m,n$.

Alternatively, we can regard $\cMangc,\cMangcin$ as the categories defined exactly as for $\Mangc,\Mangcin$ above, except that in defining g-atlases $\{(P^i,U^i,\phi^i):i\in I\}$ on $X$, we omit the condition that all charts $(P^i,U^i,\phi^i)$ in the g-atlas must have the same dimension~$\rank P^i=n$.
\label{gc3def6}
\end{dfn}

\begin{rem}{\bf(a)} Section \ref{gc32} and Definition \ref{gc3def6} were motivated by Kottke and Melrose's {\it interior binomial varieties\/} \cite[\S 9]{KoMe}.

In fact Kottke and Melrose do rather less than we do: they define interior binomial subvarieties $X$ only as subsets $X\subset Y$ of an ambient manifold with corners $Y$, rather than as geometric spaces in their own right. Their local models for the inclusion $X\subset Y$ are essentially the same as our inclusion $X_P'\subset[0,\iy)^m$ in Proposition \ref{gc3prop3}(a), and they do not highlight the fact that $X_P$ really depends only on the monoid $P$, and not on the embedding $X_P\hookra[0,\iy)^m$. Nonetheless, it seems clear that Kottke and Melrose could have written down a definition equivalent to Definition \ref{gc3def6}, if they had wanted to.

Our $\Mangcin$ is equivalent to a full subcategory of Gillam and Molcho's category of {\it positive log differentiable spaces},~\cite[\S 6]{GiMo}.
\smallskip

\noindent{\bf(b)} In the definition of g-charts $(P,U,\phi)$ above, we require that the weakly toric monoid $P$ {\it is a submonoid of\/} $\Z^k$ for some $k\ge 0$. As in \S\ref{gc31}, every weakly toric monoid $P$ is isomorphic to a submonoid of some $\Z^k$, so this does not restrict $P$ up to isomorphism. We assume it for set theory reasons, as if we did not then the maximal g-atlas $\{(P^i,U^i,\phi^i):i\in I\}$ of all g-charts $(P,U,\phi)$ on a manifold with g-corners $X$ would not be a set, but only a proper class. We will generally ignore this issue.
\smallskip

\noindent{\bf(c)} As in Remark \ref{gc2rem2} for manifolds with (ordinary) corners, we can also define {\it real analytic\/} manifolds with g-corners, and {\it real analytic\/} maps between them. To do this, in Definition \ref{gc3def5}, if $P$ is a weakly toric monoid and $U\subseteq X_P$ is open, we call a continuous function $f:U\ra\R$ {\it real analytic\/} if there exist $r_1,\ldots,r_n\in P$, an open subset $W\subseteq\R^n$, and a real analytic map $g:W\ra\R$ (i.e. the Taylor series of $g$ at $w$ converges to $g$ near $w$ for all $w\in W$), such that for all $x\in U$ we have $\bigl(x(r_1),\ldots,x(r_n)\bigr)\in W$ and \eq{gc3eq2} holds. 

If $Q$ is another weakly toric monoid, $V\subseteq X_Q$ is open, and $f:U\ra V$ is smooth in the sense of Definition \ref{gc3def5}, we say that $f$ is {\it real analytic\/} if $\la_q\ci f:U\ra\R$ is real analytic in the sense above for all $q\in Q$.

Then we define $\{(P^i,U^i,\phi^i):i\in I\}$ to be a {\it real analytic g-atlas\/} on a topological space $X$ as in Definition \ref{gc3def6}, except that we require the transition functions $(\phi^j)^{-1}\ci\phi^i$ for $i,j\in I$ to be real analytic rather than smooth. We define a {\it real analytic manifold with g-corners\/} to be a Hausdorff, second countable topological space $X$ equipped with a maximal real analytic g-atlas. 

Given real analytic manifolds with g-corners $X,Y$, we define a continuous map $f:X\ra Y$ to be {\it real analytic\/} if whenever $(P,U,\phi),(Q,V,\psi)$ are real analytic g-charts on $X,Y$ (that is, g-charts in the maximal real analytic g-atlases), the transition map $\psi^{-1}\ci f\ci\phi$ in \eq{gc3eq5} is a real analytic map between open subsets of $X_P,X_Q$ in the sense above. Then real analytic manifolds with g-corners and real analytic maps between them form a category~$\Mangcra$. 

There is an obvious faithful functor $F_\Mangcra^\Mangc:\Mangcra\ra\Mangc$, which on objects replaces the maximal real analytic g-atlas by the (larger) corresponding maximal smooth g-atlas containing it. Essentially all the material we discuss for manifolds with g-corners also works for real analytic manifolds with g-corners, except for constructions requiring partitions of unity.
\label{gc3rem2}
\end{rem}

\begin{ex} Let $P$ be a weakly toric monoid. Then $X_P$ is a manifold with g-corners, of dimension $\rank P$, covered by the single g-chart~$(P,X_P,\id_{X_P})$.

Let $\mu:Q\ra P$ be a morphism of weakly toric monoids. Define $X_\mu:X_P\ra X_Q$ by $X_\mu(x)=x\ci\mu$, noting that points $x\in X_P$ are monoid morphisms $x:P\ra \bigl([0,\iy),\cdot\bigr)$. It is easy to show that $X_\mu:X_P\ra X_Q$ is a smooth, interior map of manifolds with g-corners, and we have a functor $X:(\Monwt)^{\bf op}\ra\Mangc$ mapping $P\mapsto X_P$ on objects and $\mu\mapsto X_\mu$ on morphisms.
\label{gc3ex4}
\end{ex}

We relate manifolds with g-corners to manifolds with corners in~\S\ref{gc2}.

\begin{dfn} Let $X$ be an $n$-manifold with (ordinary) corners, in the sense of \S\ref{gc21}. Then $X$ has a maximal atlas of charts $(U,\phi)$, where $U\subseteq \R^n_k=[0,\iy)^k\t\R^{n-k}$ is open and $\phi:U\ra X$ is a homeomorphism with an open set $\phi(U)\subseteq X$, as in Definition \ref{gc2def2}. We can turn $X$ into a manifold with g-corners as follows. Let $(U,\phi)$ be a chart on $X$ with $U\subseteq[0,\iy)^k\t\R^{n-k}$ open. As in Example \ref{gc3ex3}(iii) we identify $X_{\N^k\t\Z^{n-k}}\cong[0,\iy)^k\t\R^{n-k}$, so we may regard $U$ as an open set in $X_{\N^k\t\Z^{n-k}}$, and thus $(\N^k\t\Z^{n-k},U,\phi)$ is a g-chart on~$X$. 

If $(V,\psi)$ is another chart on $X$ and $(\N^l\t\Z^{n-l},V,\psi)$ the corresponding g-chart, then $(U,\phi),(V,\psi)$ compatible in the sense of Definition \ref{gc2def2} implies that $(\N^k\t\Z^{n-k},U,\phi)$, $(\N^l\t\Z^{n-l},V,\psi)$ are compatible g-charts. Hence the maximal atlas of charts $(U,\phi)$ on $X$ induces a g-atlas of g-charts $(\N^k\t\Z^{n-k},U,\phi)$ on $X$, which is a subatlas of a unique maximal g-atlas of g-charts $(P,U,\phi)$ on $X$, making $X$ into a manifold with g-corners, which we temporarily write as~$\hat X$.

Thus, every manifold with corners $X$ may be given the structure of a manifold with g-corners $\hat X$. If $X,Y$ are manifolds with corners and $\hat X,\hat Y$ the corresponding manifolds with g-corners, then Proposition \ref{gc3prop3}(c) implies that a map $f:X\ra Y$ is a smooth map of manifolds with corners, as in \S\ref{gc21}, if and only $f:\hat X\ra\hat Y$ is a smooth map of manifolds with g-corners, in the sense above.

Define $F_\Manc^\Mangc:\Manc\hookra\Mangc$ by $F_\Manc^\Mangc:X\mapsto\hat X$ on objects and $F_\Manc^\Mangc:f\mapsto f$ on morphisms. Then $F_\Manc^\Mangc$ is full and faithful, and embeds the category $\Manc$ from \S\ref{gc2} as a full subcategory of the category $\Mangc$ above. Also $F_\Manc^\Mangc$ takes interior maps in $\Manc$ to interior maps in $\Mangc$, and so restricts to a full and faithful embedding~$F_\Mancin^\Mangcin:\Mancin\hookra\Mangcin$.

Similarly, we regard $\cManc$ in \S\ref{gc21} as a full subcategory of $\cMangc$ above.

Let $\hat X$ be an $n$-manifold with g-corners. Then $\hat X=F_\Manc^\Mangc(X)$ for some $n$-manifold with corners $X$ if and only if $\hat X$ admits a cover by g-charts of the form $(\N^k\t\Z^{n-k},U,\phi)$, and then the maximal atlas for $X$ is the family of all $(U,\phi)$ with $(\N^k\t\Z^{n-k},U,\phi)$ a g-chart on~$X$.

From this we see that the subcategory $F_\Manc^\Mangc(\Manc)$ in $\Mangc$ is closed under isomorphisms in $\Mangc$ (it is {\it strictly full\/}), and is strictly isomorphic (not just equivalent) to $\Manc$. We will often identify $\Manc$ with its image $F_\Manc^\Mangc(\Manc)$ in $\Mangc$, and regard $\Manc$ as a subcategory of $\Mangc$ (and similarly $\Mancin$ as a subcategory of $\Mangcin\subset\Mangc$), and manifolds with corners as special examples of manifolds with g-corners. Since the only difference between a manifold with corners $X$ and the corresponding manifold with g-corners $\hat X$ is the maximal atlas $\{(U^i,\phi^i):i\in I\}$ on $X$ or g-atlas $\{(P^i,U^i,\phi^i):i\in\hat I\}$ on $\hat X$, and we rarely write these (g-)atlases down, this identification should not cause confusion.

As in \S\ref{gc2}, we have full subcategories $\Man,\Manb\subset\Manc$ of {\it manifolds without boundary\/} and {\it manifolds with boundary}, and non-full subcategories $\Mancst,\ab\Mancis\subset\Manc$ of {\it strongly smooth\/} and {\it interior strongly smooth\/} morphisms in $\Manc$. We consider all of these as subcategories of $\Mangc$. If $X$ is any manifold with g-corners then $X^\ci$ is a manifold without boundary, that is, $X^\ci\in\Man\subset\Mangc$.
\label{gc3def7}
\end{dfn}

\begin{ex} The $X_P$ we now describe is the simplest example of a manifold with g-corners which is not a manifold with corners. We will return to this example several times to illustrate parts of the theory. Define 
\begin{equation*}
P=\bigl\{(a,b,c)\in \Z^3:a\ge 0,\; b\ge 0,\; a+b\ge c\ge 0\bigr\}.
\end{equation*}
Then $P$ is a toric monoid with rank 3, with $P^\gp=\Z^3\supset P$. Write
\e
p_1=(1,0,0),\;\> p_2=(0,1,1),\;\> p_3=(0,1,0),\;\> p_4=(1,0,1).
\label{gc3eq6}
\e
Then $p_1,p_2,p_3,p_4$ are generators for $P$, subject to the single relation
\begin{equation*}
p_1+p_2=p_3+p_4.
\end{equation*}
Thus Proposition \ref{gc3prop3}(a) shows that
\e
X_P\cong X_P'=\bigl\{(x_1,x_2,x_3,x_4)\in[0,\iy)^4:x_1x_2=x_3x_4\bigr\}.
\label{gc3eq7}
\e

\begin{figure}[htb]
\centerline{$\splinetolerance{.8pt}
\begin{xy}
0;<1mm,0mm>:
,(0,0)*{\bu}
,(14,0)*{(0,0,0,0)\cong\de_0}
,(-30,-20)*{\bu}
,(-39,-18)*{(x_1,0,0,0)}
,(30,-20)*{\bu}
,(39.5,-18)*{(0,0,x_3,0)}
,(-30,-30)*{\bu}
,(-39,-32)*{(0,0,0,x_4)}
,(30,-30)*{\bu}
,(39.5,-32)*{(0,x_2,0,0)}
,(0,-30)*{\bu}
,(0,-33)*{(0,x_2,0,x_4)}
,(0,-20)*{\bu}
,(0,-17)*{(x_1,0,x_3,0)}
,(-30,-25)*{\bu}
,(-40.5,-25)*{(x_1,0,0,x_4)}
,(30,-25)*{\bu}
,(40.5,-25)*{(0,x_2,x_3,0)}
,(0,0);(-45,-30)**\crv{}
?(.8444)="aaa"
?(.85)="bbb"
?(.75)="ccc"
?(.65)="ddd"
?(.55)="eee"
?(.45)="fff"
?(.35)="ggg"
?(.25)="hhh"
?(.15)="iii"
?(.05)="jjj"
,(0,0);(45,-30)**\crv{}
?(.8444)="aaaa"
?(.85)="bbbb"
?(.75)="cccc"
?(.65)="dddd"
?(.55)="eeee"
?(.45)="ffff"
?(.35)="gggg"
?(.25)="hhhh"
?(.15)="iiii"
?(.05)="jjjj"
,(0,0);(-40,-40)**\crv{}
?(.95)="a"
?(.85)="b"
?(.75)="c"
?(.65)="d"
?(.55)="e"
?(.45)="f"
?(.35)="g"
?(.25)="h"
?(.15)="i"
?(.05)="j"
,(0,0);(40,-40)**\crv{}
?(.95)="aa"
?(.85)="bb"
?(.75)="cc"
?(.65)="dd"
?(.55)="ee"
?(.45)="ff"
?(.35)="gg"
?(.25)="hh"
?(.15)="ii"
?(.05)="jj"
,"a";"aa"**@{.}
,"b";"bb"**@{.}
,"c";"cc"**@{.}
,"d";"dd"**@{.}
,"e";"ee"**@{.}
,"f";"ff"**@{.}
,"g";"gg"**@{.}
,"h";"hh"**@{.}
,"i";"ii"**@{.}
,"j";"jj"**@{.}
,"a";"aaa"**@{.}
,"b";"bbb"**@{.}
,"c";"ccc"**@{.}
,"d";"ddd"**@{.}
,"e";"eee"**@{.}
,"f";"fff"**@{.}
,"g";"ggg"**@{.}
,"h";"hhh"**@{.}
,"i";"iii"**@{.}
,"j";"jjj"**@{.}
,"aa";"aaaa"**@{.}
,"bb";"bbbb"**@{.}
,"cc";"cccc"**@{.}
,"dd";"dddd"**@{.}
,"ee";"eeee"**@{.}
,"ff";"ffff"**@{.}
,"gg";"gggg"**@{.}
,"hh";"hhhh"**@{.}
,"ii";"iiii"**@{.}
,"jj";"jjjj"**@{.}
,(-30,-20);(30,-20)**@{--}
,(-30,-20);(-30,-30)**\crv{}
,(-30,-30);(30,-30)**\crv{}
,(30,-30);(30,-20)**\crv{}
\end{xy}$}
\caption{3-manifold with g-corners $X_P'\cong X_P$ in \eq{gc3eq7}}
\label{gc3fig1}
\end{figure}
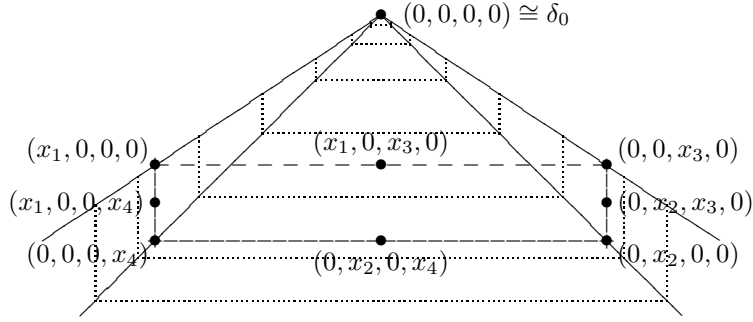

We sketch $X_P'$ in Figure \ref{gc3fig1}. We can visualize $X_P\cong X_P'$ as a 3-dimensional infinite pyramid on a square base. Using the ideas of \S\ref{gc34}, $X_P'$ has one vertex $(0,0,0,0)$ corresponding to $\de_0\in X_P$ mapping $\de_0:P\ra\bigl([0,\iy),\cdot\bigr)$ with $\de_0(p)=1$ if $p=(0,0,0)$ and $\de_0(p)=0$ otherwise, four 1-dimensional edges of points $(x_1,0,0,0),(0,x_2,0,0), (0,0,x_3,0),(0,0,0,x_4)$, four 2-dimensional faces of points $(x_1,0,x_3,0)$, $(x_1,0,0,x_4)$, $(0,x_2,x_3,0)$, $(0,x_2,0,x_4)$, and an interior $X_P^{\prime\ci}\ab\cong\R^3$ of points $(x_1,x_2,x_3,x_4)$. Then $X_P\sm\{\de_0\}$ is a 3-manifold with corners, but $X_P$ is not a manifold with corners near $\de_0$, as we can see from the non-simplicial face structure.
\label{gc3ex5}
\end{ex}

\begin{rem} Looking at Figure \ref{gc3fig1}, it is tempting to try and identify $X_P$ in Example \ref{gc3ex5} with a polyhedron in $\R^3$, with four linear faces, and one vertex like a corner of an octahedron. However, this is a mistake. Although the combinatorics of the edges, faces, etc.\ of $X_P$ are those of a polyhedron in $\R^3$, the smooth structure near $(0,0,0,0)$ is different to that of a polyhedron.
\label{gc3rem3}
\end{rem}

\begin{dfn} If $P,Q$ are weakly toric monoids then $P\t Q$ is a weakly toric monoid, and $X_{P\t Q}\cong X_P\t X_Q$. Thus, the class of local models for manifolds with g-corners is closed under products. Therefore, if $X,Y$ are manifolds with g-corners, we can give the product $X\t Y$ the structure of a manifold with g-corners, such that if $X,Y$ are locally modelled on $X_P,X_Q$ near $x,y$ then $X\t Y$ is locally modelled on $X_{P\t Q}$ near $(x,y)$. That is, if $(P,U,\phi)$ and $(Q,V,\psi)$ are g-charts on $X,Y$ then $(P\t Q,U\t V,\phi\t\psi)$ is a g-chart on $X\t Y$, identifying $U\t V\subseteq X_P\t X_Q$ with an open set in~$X_{P\t Q}\cong X_P\t X_Q$.

There are also two notions of product morphism in $\Mangc$: if $f:W\ra Y$ and $g:X\ra Z$ are smooth (or interior) maps of manifolds with g-corners then the {\it product\/} $f\t g:W\t X\ra Y\t Z$ mapping $f\t g:(w,x)\mapsto(f(w),g(x))$ is smooth (or interior), and if $f:X\ra Y,$ $g:X\ra Z$ are smooth (or interior) maps of manifolds with g-corners then the {\it direct product\/} $(f,g):X\ra Y\t Z$ mapping $(f,g):x\mapsto(f(x),g(x))$ is smooth (or interior).
\label{gc3def8}
\end{dfn}

\subsection{\texorpdfstring{Boundaries $\pd X$, corners $C_k(X)$, and the corner functor}{Boundaries, corners, and the corner functor}}
\label{gc34}

In Definition \ref{gc2def3} we defined the {\it depth stratification\/} $X=\coprod_{l=0}^{\dim X}S^l(X)$ of a manifold with corners $X$. We now generalize this to manifolds with g-corners.

\begin{dfn} Let $P$ be a weakly toric monoid, and $F$ a face of $P$, as in \S\ref{gc315}. For $X_F,X_P$ as in \S\ref{gc32}, define an inclusion map $i_F^P:X_F\hookra X_P$ by $i_F^P(y)=\bar y$, where $y\in X_F$ so that $y:F\ra[0,\iy)$ is a monoid morphism, and $\bar y:P\ra[0,\iy)$ is defined by
\begin{equation*}
\bar y(p)=\begin{cases} y(p), & p\in F, \\ 0, & p\in P\sm F. \end{cases}
\end{equation*}
The condition in Definition \ref{gc3def4} that if $p,q\in P$ with $p+q\in F$ then $p,q\in F$ implies that $\bar y$ is a monoid morphism, so $\bar y\in X_P$. Then $i_F^P:X_F\ra X_P$ is a smooth, injective map of manifolds with g-corners.

For each $x\in X_P$, define the {\it support\/} of $x$ to be
\begin{equation*}
\supp x=\bigl\{p\in P: x(p)\ne 0\bigr\}.
\end{equation*}
It is easy to see that $\supp x$ is a face of $P$. For each face $F$ of $P$, write
\begin{equation*}
X^P_F=\bigl\{x\in X_P:\supp x=F\bigr\}.
\end{equation*}
Then the interior $X_P^\ci$ is $X^P_P$, and we have a decomposition
\e
X_P=\coprod\nolimits_{\text{faces $F$ of $P$}}X^P_F.
\label{gc3eq8}
\e
From the definition of $i_F^P:X_F\hookra X_P$, it is easy to see that
\e
X^P_F=i_F^P(X_F^\ci)\quad\text{and}\quad \ov{X^P_F}=i_F^P(X_F)=\coprod\nolimits_{\text{faces $G$ of $P$ with $G\subseteq F$}}X^P_G,
\label{gc3eq9}
\e
where $\ov{X^P_F}$ is the closure of $X^P_F$ in $X_P$. By Proposition \ref{gc3prop4}(a) we have a diffeomorphism $X^P_F\cong X_F^\ci \cong \R^{\rank F}=\R^{\rank P-\codim F}$. Thus \eq{gc3eq8} is a locally closed stratification of $X_P$ into smooth manifolds without boundary.

For $x\in X_P$, define the {\it depth\/} $\depth_{X_P}x$ to be $\codim(\supp x)=\rank P-\rank(\supp x)$, so that $\depth_{X_P}x=0,\ldots,\dim X_P$. For each $l=0,\ldots,\dim X_P$, define the {\it depth\/ $l$ stratum\/} of $X_P$ to be
\begin{equation*}
S^l(X_P)=\bigl\{x\in X_P:\depth_{X_P}x=l\bigr\}.
\end{equation*}
Then the interior $X_P^\ci$ is $S^0(X_P)$, and
\e
S^l(X_P)=\coprod\nolimits_{\text{faces $F$ of $P$: $\codim F=l$}}X^P_F,
\label{gc3eq10}
\e
so that $S^l(X)$ is a smooth manifold without boundary of dimension $\dim X_P-l$, and \eq{gc3eq9} implies that $\overline{S^l(X_P)}=\bigcup_{k=l}^{\dim X_P}S^k(X_P)$. Hence
\begin{equation*}
X_P=\coprod\nolimits_{l=0}^{\dim X_P}S^l(X_P)
\end{equation*}
is a locally closed stratification of $X_P$ into smooth manifolds without boundary.

If $U\subseteq X_P$ is an open set, we define $S^l(U)=U\cap S^l(X_P)=\bigl\{u\in U:\depth_{X_P}u=l\bigr\}$ for $l=0,\ldots,\dim X_P=\dim U$. Then~$U=\coprod_{l=0}^{\dim U}S^l(U)$.

As in Definition \ref{gc2def8}(b), for $x\in X_P$ write $\cI_x(X_P)$ for the set of germs $[b]$ at $x$ of interior maps $b:X_P\ra[0,\iy)$. It is a monoid under multiplication. Using the notation of \S\ref{gc31}, the units $\cI_x(X_P)^\t$ are germs $[b]$ with $b(x)>0$, and $\cI_x(X_P)^\sh=\cI_x(X_P)/\cI_x(X_P)^\t$. Consider the monoid morphism
\begin{equation*}
\Pi_x:P\longra \cI_x(X_P)^\sh, \quad \Pi_x:p\longmapsto [\la_p]\cdot\cI_x(X_P)^\t.
\end{equation*}
Using Definition \ref{gc2def5}, we see that $\Pi$ is surjective, with kernel $\supp x$. Therefore
\begin{equation*}
P/\supp x\cong \cI_x(X_P)^\sh.
\end{equation*}
Thus $\cI_x(X_P)^\sh$ is a toric monoid, with
\begin{equation*}
\rank \bigl(\cI_x(X_P)^\sh\bigr)=\rank P-\rank(\supp x)=\depth_{X_P}x.
\end{equation*}
Hence if $U\subseteq X_P$ is open then for $l=0,\ldots,\dim U$ we have
\e
S^l(U)=\bigl\{u\in U:\rank\bigl(\cI_x(U)^\sh\bigr)=l\bigr\}.
\label{gc3eq11}
\e

Now \eq{gc3eq11} depends only on $U$ as a manifold with g-corners, rather than as an open subset of some $X_P$. It follows that {\it the depth stratification $U=\coprod_{l=0}^{\dim U}S^l(U)$ is invariant under diffeomorphisms}. That is, if $P,Q$ are weakly toric monoids with $\rank P=\rank Q$, and $U\subseteq X_P$, $V\subseteq X_Q$ are open, and $f:U\ra V$ is a diffeomorphism in the sense of \S\ref{gc32}, then $f\bigl(S^l(U)\bigr)=S^l(V)$ for $l=0,\ldots,\dim U=\dim V$.

Let $X$ be a manifold with g-corners. For $x\in X$, choose a g-chart $(P,U,\phi)$ on the manifold $X$ with $\phi(u)=x$ for $u\in U$, and define the {\it depth\/} $\depth_Xx$ of $x$ in $X$ by $\depth_Xx=\depth_{X_P}u$. This is independent of the choice of $(P,U,\phi)$, by invariance of the depth stratification under diffeomorphisms. For each $l=0,\ldots,\dim X$, define the {\it depth\/ $l$ stratum\/} of $X$ to be
\begin{equation*}
S^l(X)=\bigl\{x\in X:\depth_Xx=l\bigr\}.
\end{equation*}
Then $X=\coprod_{l=0}^{\dim X}S^l(X)$. Each $S^l(X)$ is a manifold without boundary of dimension $\dim X-l$, with $S^0(X)=X^\ci$, and $\overline{S^l(X)}=\bigcup_{k=l}^{\dim X}S^k(X)$, since this holds for the stratifications of the local models $U\subseteq X_P$.
\label{gc3def9}
\end{dfn}

\begin{ex} Let $P=\N^k\t\Z^{n-k}$, and identify $X_P$ with $\R^n_k=[0,\iy)^k\t\R^{n-k}$ as in Example \ref{gc3ex3}(iii). Then faces $F$ of $P$ are in 1-1 correspondence with subsets $I\subseteq\{1,\ldots,k\}$, where the face $F_I$ corresponding to a subset $I$ is
\begin{equation*}
F_I=\bigl\{(a_1,\ldots,a_n)\in \N^k\t\Z^{n-k}:\text{$a_i=0$ for $i\in I$}\bigr\},
\end{equation*}
so that $\rank F_I=n-\md{I}$ and $\codim F_I=\md{I}$. We can show that
\begin{equation*}
X^P_{F_I}=\bigl\{(x_1,\ldots,x_n)\in\R^n_k:\text{$x_i=0$, $i\in I$, and $x_j\ne 0$, $j\in\{1,\ldots,k\}\sm I$}\bigr\},
\end{equation*}
so that $X^P_{F_I}\cong (0,\iy)^{k-\md{I}}\t\R^{n-k}\cong \R^{n-\md{I}}$. Thus, for $(x_1,\ldots,x_n)\in X_P\cong\R^n_k$, $\depth_{X_P}x$ in Definition \ref{gc3def9} is the number of $x_1,\ldots,x_k$ which are zero, and
\begin{equation*}
S^l(\R^n_k)=\bigl\{(x_1,\ldots,x_n)\in\R^n_k:\text{exactly $l$ out of $x_1,\ldots,x_k$ are zero}\bigr\}.
\end{equation*}

But this coincides with the definition of $\depth_{\R^n_k}x$ and $S^l(\R^n_k)$ in Definition \ref{gc2def3}. Therefore we deduce:
\label{gc3ex6}
\end{ex}

\begin{cor} Let\/ $X$ be a manifold with corners as in\/ {\rm\S\ref{gc2},} and regard\/ $X$ as a manifold with g-corners as in Definition\/ {\rm\ref{gc3def7}}. Then the two definitions of depth\/ $\depth_Xx$ for $x\in X,$ and of the depth stratification $X=\coprod_{l=0}^{\dim X}S^l(X),$ in Definitions\/ {\rm\ref{gc2def3}} and\/ {\rm\ref{gc3def9}} agree.
\label{gc3cor2}
\end{cor}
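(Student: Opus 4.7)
The plan is to reduce the statement to the calculation already performed in Example~\ref{gc3ex6} for the local model $\R^n_k$. The content of the corollary is essentially bookkeeping: once one checks that the two definitions of depth coincide on the local model, both assertions follow immediately, since depth is defined pointwise via a chart in both settings, and the depth stratification is defined from depth.

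Concretely, let $X$ be an $n$-manifold with corners, let $x\in X$, and choose a chart $(U,\phi)$ on $X$ with $U\subseteq\R^n_k$ open and $u\in U$ with $\phi(u)=x$. By Definition~\ref{gc2def3}, the depth $\depth_Xx$ computed via the manifold-with-corners structure equals $\depth_Uu$, which is the number of coordinates among $u_1,\ldots,u_k$ that vanish. On the other hand, by the construction in Definition~\ref{gc3def7}, when $X$ is regarded as a manifold with g-corners the chart $(U,\phi)$ gives rise to the g-chart $(\N^k\t\Z^{n-k},U,\phi)$, under the identification $X_{\N^k\t\Z^{n-k}}\cong\R^n_k$ of Example~\ref{gc3ex3}(iii). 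Definition~\ref{gc3def9} then computes $\depth_Xx$ (in the g-corners sense) as $\depth_{X_P}u$ with $P=\N^k\t\Z^{n-k}$.

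The equality of these two numbers is exactly the content of Example~\ref{gc3ex6}: the faces $F_I\subseteq\N^k\t\Z^{n-k}$ are indexed by $I\subseteq\{1,\ldots,k\}$ with $\codim F_I=\md{I}$, the strata $X^P_{F_I}$ correspond to the loci where exactly the coordinates $x_i$, $i\in I$, vanish, and hence $\depth_{X_P}u$ equals the number of $u_1,\ldots,u_k$ that are zero. This matches $\depth_Uu$, so the two notions of depth at $x$ agree. Since $x\in X$ was arbitrary, the two depth functions $\depth_X:X\ra\N$ coincide, and so do the depth-$l$ strata $S^l(X)=\{x\in X:\depth_Xx=l\}$ defined by the two viewpoints.

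No genuine obstacle arises: the only subtlety is confirming that the g-chart associated by $F_\Manc^\Mangc$ to the corners chart $(U,\phi)$ is indeed $(\N^k\t\Z^{n-k},U,\phi)$ under the identification of Example~\ref{gc3ex3}(iii), but this is precisely how the functor $F_\Manc^\Mangc$ was defined in Definition~\ref{gc3def7}. Independence of the chosen chart on either side is guaranteed by the invariance of the respective depth stratifications under diffeomorphisms, which has been established in both \S\ref{gc22} (for $\Manc$) and \S\ref{gc34} (for $\Mangc$, via~\eq{gc3eq11}).
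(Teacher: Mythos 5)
Your proof is correct and follows essentially the same route as the paper: the corollary in the paper is stated immediately after Example~\ref{gc3ex6} with the words ``Therefore we deduce:'', so the paper's intended proof is precisely the reduction to the local model $X_{\N^k\t\Z^{n-k}}\cong\R^n_k$ that you carry out. Your write-up simply makes the bookkeeping explicit, including the chart-independence point, which is a fine addition.
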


Following Definition \ref{gc2def4} closely, we define boundaries $\pd X$ and corners $C_k(X)$ of manifolds with g-corners.

\begin{dfn} Let $X$ be an $n$-manifold with g-corners, $x\in X$, and $k=0,1,\ldots,n$. A {\it local $k$-corner component\/ $\ga$ of\/ $X$ at\/} $x$ is a local choice of connected component of $S^k(X)$ near $x$. That is, for each sufficiently small open neighbourhood $V$ of $x$ in $X$, $\ga$ gives a choice of connected component $W$ of $V\cap S^k(X)$ with $x\in\overline W$, and any two such choices $V,W$ and $V',W'$ must be compatible in that~$x\in\overline{(W\cap W')}$. When $k=1$, we also call local 1-corner components {\it local boundary components of\/ $X$ at\/} $x$. 

As sets, define the {\it boundary\/} $\pd X$ and {\it k-corners\/} $C_k(X)$ for $k=0,1,\ldots,n$ by
\begin{align*}
\pd X&=\bigl\{(x,\be):\text{$x\!\in\! X$, $\be$ is a local boundary
component of $X$ at $x$}\bigr\},\\
C_k(X)&=\bigl\{(x,\ga):\text{$x\in X$, $\ga$ is a local $k$-corner 
component of $X$ at $x$}\bigr\},
\end{align*}
so that $\pd X=C_1(X)$. Since each $x\in X$ has a unique 0-boundary component $[X^\ci]$, we have $C_0(X)\cong X$. Define maps $i_X:\pd X\ra X$, $\Pi:C_k(X)\ra X$, $\io:X\ra C_0(X)$ by $i_X:(x,\be)\mapsto x$, $\Pi:(x,\ga)\mapsto x$ and $\io:x\mapsto(x,[X^\ci])$.

We will explain how to give $\pd X,C_k(X)$ the structure of manifolds with g-corners, so that $i_X,\Pi,\io$ are smooth maps, with $\io$ a diffeomorphism. Let $(P,U,\phi)$ be a g-chart on $X$, and $u\in U\subseteq X_P$ with $\phi(u)=x\in X$. Then \eq{gc3eq10} gives
\begin{equation*}
S^l(U)=\coprod\nolimits_{\text{faces $F$ of $P$: $\codim F=k$}}X^P_F\cap U
\end{equation*}
As $X^P_F\cong\R^{n-k}$ is connected, and furthermore locally connected in $X_P$, we see that local $k$-corner components of $U$ at $u$ are in 1-1 correspondence with faces $F$ of $P$ with $\codim F=k$, such that $u\in\overline{X^P_F}$. Hence by \eq{gc3eq9}, local $k$-corner components of $U$ at $u$ are in 1-1 correspondence with faces $F$ of $P$ with $\codim F=k$ such that $u\in i_F^P(X_F)$. Thus, we have natural 1-1 correspondences
\ea
C_k&(X)\supseteq \Pi^{-1}(\phi(U))
\nonumber\\
&=\bigl\{(x,\ga):\text{$x\in \phi(U)\subseteq X$, $\ga$ is a local $k$-corner component of $X$ at $x$}\bigr\}
\nonumber\\
&\cong\bigl\{(u,\ga'):\text{$u\in U$, $\ga'$ is a local $k$-corner component of $U$ at $u$}\bigr\}
\nonumber\\
&\cong \coprod\nolimits_{\text{faces $F$ of $P$: $\codim F=k$}}(i_F^P)^{-1}(U),
\label{gc3eq12}
\ea
where $(i_F^P)^{-1}(U)\subseteq X_F$ is an open set.

For each face $F$ of $P$ with $\codim F=k$, let $\phi_F^P:(i_F^P)^{-1}(U)\ra\Pi^{-1}(\phi(U))\subseteq C_k(X)$ be the map determined by \eq{gc3eq12}. Then $\bigl(F,(i_F^P)^{-1}(U),\phi_F^P\bigr)$ is a g-chart of dimension $n-k$ on $C_k(X)$, and the union of these over all $F$ covers $\Pi^{-1}(\phi(U))$. If $(P',U',\phi')$ is another g-chart on $X$ then $(P,U,\phi)$, $(P',U',\phi')$ are compatible. Using this one can show that the g-charts $\bigl(F,(i_F^P)^{-1}(U),\phi_F^P\bigr)$ on $C_k(X)$ from $(P,U,\phi)$ and $\bigl(F',(i_{F'}^{P'})^{-1}(U'),\phi_{F'}^{P'}\bigr)$ from $(P',U',\phi')$ are pairwise compatible. Hence the collection of all g-charts $\bigl(F,(i_F^P)^{-1}(U),\phi_F^P\bigr)$ on $C_k(X)$ from all g-charts $(P,U,\phi)$ on $X$ is a g-atlas, where $C_k(X)$ has a unique Hausdorff topology such that $\phi_F^P$ is a homeomorphism with an open set for all such g-charts, and the corresponding maximal g-atlas makes $C_k(X)$ into an $(n-k)$-manifold with g-corners, and $\pd X=C_1(X)$ into an $(n-1)$-manifold with g-corners.
\label{gc3def10}
\end{dfn}

\begin{ex} Let $P$ be a weakly toric monoid, and take $X=X_P$ in Definition \ref{gc3def10}, so that $X$ is covered by one g-chart $(P,X_P,\id_{X_P})$. Then taking $U=X_P$ in \eq{gc3eq12} gives a diffeomorphism
\e
C_k(X_P)\cong \coprod\nolimits_{\text{faces $F$ of $P$: $\codim F=k$}}X_F.
\label{gc3eq13}
\e

\label{gc3ex7}
\end{ex}

\begin{ex} Set $P=\bigl\{(a,b,c)\in \Z^3:a\ge 0,\; b\ge 0,\; a+b\ge c\ge 0\bigr\}$, as in Example \ref{gc3ex5}. Then $P$ has one face $F=P$ of codimension 0, four faces $F$ of codimension 1 all with $F\cong\N^2$, four faces $F$ of codimension 2 all with $F\cong\N$, and one face $F=\{0\}$ of codimension 3. Thus by \eq{gc3eq13} we have diffeomorphisms
\begin{gather*}
C_0(X_P)\cong X_P, \;\> C_1(X_P)\!=\!\pd X_P\!\cong\! [0,\iy)^2\!\amalg\! [0,\iy)^2\!\amalg \![0,\iy)^2\!\amalg\! [0,\iy)^2,\\
C_2(X_P)\cong [0,\iy)\amalg [0,\iy)\amalg [0,\iy)\amalg [0,\iy)\;\>\text{and}\;\> C_3(X_P)\cong *.
\end{gather*}
From these we deduce that
\begin{equation*}
\pd^2X_P=\text{8 copies of $[0,\iy)$,}\quad \pd^3X_P=\text{8 points.}
\end{equation*}

We use these to show that some results in \S\ref{gc22} for manifolds with corners are {\it false\/} for manifolds with g-corners. For a manifold with (ordinary) corners $X$, equations \eq{gc2eq5}, \eq{gc2eq7}, \eq{gc2eq8} and \eq{gc2eq9} say that
\ea
\begin{split}
C_k(X)&\cong\bigl\{(x,\{\be_1,\ldots,\be_k\}):\text{$x\in X,$
$\be_1,\ldots,\be_k$ are distinct}\\
&\qquad\qquad\text{local boundary components for $X$ at $x$}\bigr\},
\end{split}
\label{gc3eq14}\\
\begin{split}
\pd^kX&\cong\bigl\{(x,\be_1,\ldots,\be_k):\text{$x\in X,$
$\be_1,\ldots,\be_k$ are distinct}\\
&\qquad\qquad\text{local boundary components for $X$ at $x$}\bigr\},
\end{split}
\label{gc3eq15}\\
C_k(X)&\cong \pd^kX/S_k,
\label{gc3eq16}\\
\pd C_k(X)&\cong C_k(\pd X),
\label{gc3eq17}
\ea
using in \eq{gc3eq16} the natural free $S_k$-action on $\pd^kX$ permuting $\be_1,\ldots,\be_k$ in~\eq{gc3eq15}.

For the manifold with g-corners $X_P$, equation \eq{gc3eq14} is false for $k=2,3$, as over $x=\de_0$ there are 4 points on the l.h.s.\ and 6 points on the r.h.s.\ for $k=2$, and 1 point on the l.h.s.\ and 4 points on the r.h.s.\ for $k=3$. Similarly \eq{gc3eq15} is false when $k=2,3$. Equation \eq{gc3eq16} is true when $k=2$, but false when $k=3$, since $S_3$ cannot act freely on 8 points, and even for a non-free action, $\pd^3X_P/S_3$ would be at least two points. In \eq{gc3eq17} for $X_P$ when $k=2$, both sides are four points. However, the l.h.s.\ corresponds to the four edges in Figure \ref{gc3fig1}, and the r.h.s.\ to the four faces in Figure \ref{gc3fig1}. There is no natural 1-1 correspondence between these two four-point sets equivariant under automorphisms of $X_P$, so \eq{gc3eq17} is false for $X_P$, that is, there is no such canonical diffeomorphism. 
\label{gc3ex8}
\end{ex}

Example \ref{gc3ex8} shows that in general \eq{gc3eq14}--\eq{gc3eq17} are {\it false\/} for manifolds with g-corners $X$, at least for $k\ge 3$. But some modifications of them might be true, and we certainly expect some relation between $\pd^kX$ and $C_k(X)$. By considering local models $X_P$, and some simple properties of faces in weakly toric monoids, one can prove the following proposition. The moral is that for $k=2$, equations \eq{gc3eq14}--\eq{gc3eq16} have a good extension to manifolds with g-corners, but for $k\ge 3$ they do not generalize very well.

\begin{prop} Let\/ $X$ be a manifold with g-corners. Then:
\begin{itemize}
\setlength{\itemsep}{0pt}
\setlength{\parsep}{0pt}
\item[{\bf(a)}] There are natural identifications
\ea
\begin{split}
C_2(X)\!&\cong\!\bigl\{(x,\{\be_1,\be_2\}):\text{$x\!\in\! X,$
$\be_1,\be_2$ are distinct local boundary}\\
&\quad\text{components of $X$ at $x$ intersecting in codimension $2$}\bigr\},
\end{split}
\label{gc3eq18}\\
\begin{split}
\pd^2X\!&\cong\!\bigl\{(x,\be_1,\be_2):\text{$x\!\in\! X,$
$\be_1,\be_2$ are distinct local boundary}\\
&\quad\text{components of $X$ at $x$ intersecting in codimension $2$}\bigr\}.
\end{split}
\label{gc3eq19}
\ea
There is a natural, free action of\/ $S_2\cong\Z_2$ on $\pd^2X,$ exchanging $\be_1,\be_2$ in {\rm\eq{gc3eq19},} and a natural diffeomorphism\/ $C_2(X)\cong \pd^2X/S_2$.
\item[{\bf(b)}] For all\/ $k=0,1,\ldots,\dim X$ there are natural projections $\pi:\pd^kX\ra C_k(X)$ which are smooth, surjective, and \'etale (a local diffeomorphism).
\item[{\bf(c)}] The symmetric group $S_k$ for $k\ge 2$ is generated by the $k-1$ two-cycles $(12),(23),\cdots,(k-1\,k),$ satisfying relations. Thus, an $S_k$-action on a space is equivalent to $k-1$ actions of\/ $S_2\cong\Z_2,$ satisfying relations.

We can define $k-1$ actions of\/ $S_2$ on $\pd^kX$ as follows: for $j=0,\ldots,k-2,$ part\/ {\bf(a)} with $\pd^jX$ in place of $X$ gives an $S_2$-action on $\pd^{j+2}X,$ and applying $\pd^{k-j-2}$ induces an $S_2$-action on $\pd^kX$. If\/ $X$ has ordinary corners, these $k-1$ $S_2$-actions satisfy the relations required to define an $S_k$-action on $\pd^kX,$ but if\/ $k\ge 3$ and\/ $X$ has g-corners they may not satisfy the relations, and so generate an action of some group $G\not\cong S_k$ on $\pd^kX$.  
\end{itemize}
\label{gc3prop5}
\end{prop}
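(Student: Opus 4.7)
My plan is to reduce every claim to a local statement in a model space $X_P$ for a weakly toric monoid $P$, since the boundary and corner constructions of Definition \ref{gc3def10} are built chart-by-chart. The essential monoidal input is the standard fact about convex polyhedral cones (applied via Proposition \ref{gc3prop1}): for a weakly toric $P$, every codimension $2$ face $F'\subseteq P$ is the intersection of \emph{exactly} two codimension $1$ faces of $P$, and every codimension $k$ face sits at the bottom of at least one maximal flag $P=G_0\supsetneq G_1\supsetneq\cdots\supsetneq G_k=F'$ with $\codim G_j=j$. In the ordinary-corners case $P=\N^m$, any two distinct codimension $1$ faces automatically intersect in codimension $2$, which is what made \eq{gc3eq14}--\eq{gc3eq16} hold unconditionally; for general $P$ the ``intersection in codimension $2$'' condition is genuinely restrictive, as opposite facets of the pyramid of Example \ref{gc3ex5} illustrate.

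For part (a), I would iterate \eq{gc3eq13} to describe $\pd^2 X_P$ as a disjoint union of copies of $X_G$ indexed by pairs $(F_1,G)$ with $F_1\subseteq P$ of codimension $1$ and $G\subseteq F_1$ of codimension $1$ in $F_1$, hence of codimension $2$ in $P$. By the ``exactly two'' fact, such a pair $(F_1,G)$ is the same data as an ordered pair $(F_1,F_2)$ of distinct codimension $1$ faces of $P$ with $F_1\cap F_2=G$ of codimension $2$. Translating back into local boundary components via Definition \ref{gc3def10} yields \eq{gc3eq19}; then \eq{gc3eq18} and the diffeomorphism $C_2(X)\cong\pd^2 X/S_2$ follow by remembering only the unordered pair $\{F_1,F_2\}$ and observing that the involution swapping the two orderings is free. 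Compatibility under change of g-chart is automatic from naturality of the face decomposition.

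For part (b), unfolding $\pd$ $k$ times decomposes $\pd^k X_P$ as a disjoint union of copies of $X_{G_k}$ indexed by maximal flags $P\supsetneq G_1\supsetneq\cdots\supsetneq G_k$ of successive codimension steps $1$, while $C_k(X_P)\cong\coprod_{\codim F'=k}X_{F'}$ from \eq{gc3eq13}. The projection $\pi$ sends the $X_{G_k}$-summand of $\pd^k X_P$ to the $X_{G_k}$-summand of $C_k(X_P)$ by the identity map, so is tautologically a local diffeomorphism; surjectivity is precisely the flag-existence statement above. Gluing across the g-atlas produces the global \'etale surjection $\pi:\pd^k X\to C_k(X)$.

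For part (c), I would define $\si_j$ for $j=1,\ldots,k-1$ by applying part (a) to $\pd^{j-1}X$ to produce an $S_2$-action on $\pd^{j+1}X$ that swaps the last two boundary components, then pulling it back through $\pd^{k-j-1}$ to $\pd^k X$. The relations $\si_j^2=\id$ and $\si_j\si_{j'}=\si_{j'}\si_j$ for $\md{j-j'}\ge 2$ are immediate, as the swaps act on disjoint slot pairs. The main obstacle is the braid relation $\si_j\si_{j+1}\si_j=\si_{j+1}\si_j\si_{j+1}$: in the ordinary-corners local model $\N^k\t\Z^{n-k}$ a direct coordinate check recovers the free $S_k$-action of \S\ref{gc22}, but in the g-corners case it can fail. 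To exhibit a failure I would take Example \ref{gc3ex5} with $k=3$: enumerating the $8$ points of $\pd^3 X_P$ over $\de_0$ as chains $F_1\supsetneq F_2\supsetneq\{0\}$ (four facets of the pyramid cone, each containing two of the four rays), $\si_1$ swaps the two facets sharing a given ray while $\si_2$ swaps the two rays lying in a given facet; a direct computation shows $\si_1\si_2$ permutes the $8$ points as two $4$-cycles, so $(\si_1\si_2)^3\ne\id$, the braid relation fails, and the group generated is the dihedral group $D_4\not\cong S_3$.
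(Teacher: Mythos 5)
Your proof is correct and follows exactly the approach the paper sketches (``by considering local models $X_P$, and some simple properties of faces in weakly toric monoids''): the diamond property and graded-flag-existence for face lattices of rational polyhedral cones are precisely the ``simple properties'' needed, and your chart-by-chart reduction via the face decompositions \eq{gc3eq13} is the intended argument for all three parts. Your explicit computation that $\si_1=(13)(26)(48)(57)$ and $\si_2=(12)(34)(56)(78)$ on the eight flags over $\de_0$ in Example \ref{gc3ex5} generate a dihedral group of order $8$ rather than $S_3$ is a correct and welcome concretisation of the paper's bare ``may not satisfy the relations'' in part (c).
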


Here in \eq{gc3eq18}--\eq{gc3eq19}, distinct local boundary components $\be_1,\be_2$ of $X$ at $x$ may intersect in codimension $2,3,\ldots,\dim X$. For example, $X_P$ in Example \ref{gc3ex5} has four local boundary components $\be_{13},\be_{32},\be_{24},\be_{41},$ at $x=\de_0$, of which adjacent pairs $(\be_{13},\be_{32})$, $(\be_{32},\be_{24})$, $(\be_{24},\be_{41})$ and $(\be_{41},\be_{13})$ intersect in codimension 2, and opposite pairs $(\be_{13},\be_{24})$ and $(\be_{32},\be_{41})$ intersect in codimension~3.

Here is the analogue of Lemma~\ref{gc2lem1}.

\begin{lem} Let\/ $f:X\ra Y$ be a smooth map of manifolds with g-corners. Then $f$ \begin{bfseries}is compatible with the depth stratifications\end{bfseries} $X=\coprod_{k\ge 0}S^k(X),$ $Y=\coprod_{l\ge 0}S^l(Y)$ in Definition\/ {\rm\ref{gc3def9},} in the sense that if\/ $\es\ne W\subseteq S^k(X)$ is a connected subset for some $k\ge 0,$ then $f(W)\subseteq S^l(Y)$ for some unique $l\ge 0$.
\label{gc3lem3}
\end{lem}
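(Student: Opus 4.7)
The assertion is local on $X$; it is equivalent to showing that the function $x\mapsto\depth_Y f(x)$ is locally constant on $S^k(X)$, since then it must be constant on every connected subset. Fix $x_0\in S^k(X)$, and choose a g-chart $(P,U,\phi)$ on $X$ around $x_0$ with $U$ connected (shrinking if necessary) and a g-chart $(Q,V,\psi)$ on $Y$ with $f(\phi(U))\subseteq\psi(V)$. Write $\tilde f=\psi^{-1}\ci f\ci\phi:U\to V$, a smooth map in the sense of Definition \ref{gc3def5}, and let $u_0=\phi^{-1}(x_0)$, with $F=\supp u_0$, so that $\codim F=k$ and $u_0\in X^P_F$.

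The key step is a support formula for $\tilde f$. For each $q\in Q$, smoothness means $\la_q\ci\tilde f:U\to[0,\iy)$ is smooth as a $[0,\iy)$-valued map, and since $U$ is connected the definition gives a dichotomy: either $\la_q\ci\tilde f\equiv 0$, or $\la_q\ci\tilde f=\la_{p_q}\cdot h_q$ for some $p_q\in P$ and some smooth $h_q:U\to(0,\iy)$. Letting $Q^0\subseteq Q$ be the set of $q$ of the first type, it follows that for every $u\in U$,
\begin{equation*}
\supp\tilde f(u)=\bigl\{q\in Q\sm Q^0:p_q\in\supp u\bigr\},
\end{equation*}
a subset of $Q$ depending only on $\supp u$. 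Consequently $\tilde f$ sends all of $X^P_F\cap U$ into a single stratum $X^Q_{F'}$ of $V$, where $F'=\{q\in Q\sm Q^0:p_q\in F\}$, and in particular $\depth_V\tilde f$ takes the constant value $\codim F'$ on $X^P_F\cap U$.

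To finish, I show $X^P_F\cap U$ is open in $S^k(U)$. By \eqref{gc3eq10} and Lemma \ref{gc3lem1}(iii), $S^k(U)=\coprod_{\codim F''=k}X^P_{F''}\cap U$ is a finite disjoint union. Under the identification of Proposition \ref{gc3prop2}(a), faces of $P$ correspond to faces of a rational polyhedral cone, and two distinct faces of a cone of the same dimension are incomparable under inclusion; hence $F\not\subseteq F''$ for $F''\ne F$ of codimension $k$. Combining this with the description $\ov{X^P_{F''}}=\coprod_{G\subseteq F''}X^P_G$ from \eqref{gc3eq9} gives $\ov{X^P_{F''}}\cap X^P_F=\es$, so each $X^P_{F''}\cap U$ is both open and closed in $S^k(U)$. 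Therefore $u_0$ has an open neighborhood in $U$ whose intersection with $S^k(U)$ lies entirely in $X^P_F\cap U$, and on it $\depth_V\tilde f$ is constant. Pushing forward by $\phi$ and using that $\psi$ preserves depth yields an open neighborhood of $x_0$ in $X$ on whose intersection with $S^k(X)$ the function $\depth_Y\ci f$ is constant, completing the proof.

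The main obstacle is the support formula in the second paragraph: one must extract from the monomial form $\la_q\ci\tilde f=\la_{p_q}\cdot h_q$ (with $h_q>0$) the fact that the vanishing locus of $\la_q\ci\tilde f$ is dictated purely by the face $\supp u\subseteq P$. Once this is in hand, the stratum-by-stratum analysis and the face-incomparability argument are routine.
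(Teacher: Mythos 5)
Your proof is correct and takes essentially the same route as the paper's: reduce to g-charts and use the dichotomy that each $\la_q\ci\tilde f$ on a connected $U$ is either identically zero or of the form $\la_{p_q}\cdot h_q$ with $h_q>0$, so that $\supp\tilde f(u)$ (hence depth) depends only on $\supp u$. The paper packages this via the characterization \eq{gc3eq20} of stratum components by vanishing/positivity patterns of the $\la_p$, whereas you state it as local constancy of $\depth_Y\ci f$ on $S^k(X)$ together with an explicit openness check for $X^P_F\cap U$ in $S^k(U)$ via face-incomparability; these are equivalent formulations of the same argument.
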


\begin{proof} The lemma is a local property, so by restricting to single g-charts on $X,Y$, we see it is sufficient to prove that if $P,Q$ are weakly toric monoids, $U\subseteq X_P$, $V\subseteq X_Q$ are open, and $f:U\ra V$ is smooth in the sense of Definition \ref{gc3def5}, then $f$ preserves the stratifications $U=\coprod_{k\ge 0}S^k(U),$ $V=\coprod_{l\ge 0}S^l(V)$.

In Definition \ref{gc3def9}, $S^k(U)$ is a disjoint union of pieces $U\cap X^P_F$ for $\codim F=k$, where the subsets $X^P_F\subseteq X_P$ may be characterized as subsets where either $\la_p=0$ (if $p\notin F$) or $\la_p>0$ (if $p\in F$), for each $p\in P$. Thus we see that
\ea
&\bigl\{S\subseteq U:\text{for some $k\ge 0$, $S$ is a connected component of $S^k(U)$}\bigr\}
\nonumber\\
&=\bigl\{S\subseteq U:\text{for some $I\subseteq P$, $S$ is a connected component of} 
\label{gc3eq20}\\
&\qquad\qquad\text{$\{u\in U:\la_p(u)=0$ for $p\in I$, $\la_p>0$ for $p\in P\sm I\}$}\bigr\}.
\nonumber
\ea
The analogue also holds for $V$. Now as $f:U\ra V$ is smooth, Definition \ref{gc3def5} implies that for each $q\in Q$, locally on $U$ we may write $\la_q\ci f=h\cdot \la_p$ for some $p\in P$ and $h>0$, or $\la_q\ci f=0$. Hence locally on $U$, $f$ pulls back subsets $\{\la_q=0\}$ and $\{\la_q>0\}$ in $V$ for $q\in Q$ to subsets $\{\la_p=0\}$ and $\{\la_p>0\}$ for $p\in P$, or else $f$ pulls back $\{\la_q=0\}$ to $U$ and $\{\la_q>0\}$ to $\es$. This implies that $f$ maps each set in the r.h.s.\ of \eq{gc3eq20} for $U$ to a set in the r.h.s.\ of \eq{gc3eq20} for $V$. The lemma then follows by \eq{gc3eq20} for~$U,V$.
\end{proof}

Here is the analogue of Definition~\ref{gc2def5}.

\begin{dfn} Define the {\it corners\/} $C(X)$ of a manifold with g-corners $X$ by
\begin{align*}
&C(X)=\ts\coprod_{k=0}^{\dim X}C_k(X)\\
&=\bigl\{(x,\ga):\text{$x\in X$, $\ga$ is a local $k$-corner 
component of $X$ at $x$, $k\ge 0$}\bigr\},
\end{align*}
considered as an object of $\cMangc$ in Definition \ref{gc3def6}, a manifold with g-corners of mixed dimension. Define a smooth map $\Pi:C(X)\ra X$ by~$\Pi:(x,\ga)\mapsto x$.

Let $f:X\ra Y$ be a smooth map of manifolds with g-corners, and suppose $\ga$ is a local $k$-corner component of $X$ at $x\in X$. For each sufficiently small open neighbourhood $V$ of $x$ in $X$, $\ga$ gives a choice of connected component $W$ of $V\cap S^k(X)$ with $x\in\overline W$, so by Lemma \ref{gc3lem3} $f(W)\subseteq S^l(Y)$ for some $l\ge 0$. As $f$ is continuous, $f(W)$ is connected, and $f(x)\in\ov{f(W)}$. Thus there is a unique $l$-corner component $f_*(\ga)$ of $Y$ at $f(x)$, such that if $\ti V$ is a sufficiently small open neighbourhood of $f(x)$ in $Y$, then the connected component $\ti W$ of $\ti V\cap S^l(Y)$ given by $f_*(\ga)$ has $\ti W\cap f(W)\ne\es$. This $f_*(\ga)$ is independent of the choice of sufficiently small $V,\ti V$, so is well-defined.

Define a map $C(f):C(X)\ra C(Y)$ by $C(f):(x,\ga)\mapsto (f(x),f_*(\ga))$. A similar proof to Definition \ref{gc2def5} shows $C(f)$ is smooth, that is, a morphism in $\cMangc$. If $g:Y\ra Z$ is another smooth map of manifolds with corners, and $\ga$ is a local $k$-corner component of $X$ at $x$, then $(g\ci f)_*(\ga)=g_*\ci f_*(\ga)$ in local $m$-corner components of $Z$ at $g\ci f(x)$. Therefore $C(g\ci f)=C(g)\ci C(f):C(X)\ra C(Z)$. Clearly $C(\id_X)=\id_{C(X)}:C(X)\ra C(X)$. Hence $C:\Mangc\ra\cMangc$ is a functor, which we call the {\it corner functor}. We extend $C$ to $C:\cMangc\ra\cMangc$ by $C(\coprod_{m\ge 0}X_m)=\coprod_{m\ge 0}C(X_m)$. 

As in \S\ref{gc33} we have full subcategories $\Manc\subset\Mangc$, $\cManc\subset\cMangc$. Corollary \ref{gc3cor2} implies that the definitions of $C:\Mangc\ra\cMangc$, $C:\cMangc\ra\cMangc$ above restrict on $\Manc$, $\cManc$ to the corner functors $C:\Manc\ra\cManc$, $C:\cManc\ra\cManc$ defined in~\S\ref{gc22}.
\label{gc3def11}
\end{dfn}

We show corners are compatible with products.

\begin{ex} Let $X,Y$ be manifolds with g-corners, and consider the product $X\t Y$, with projections $\pi_X:X\t Y\ra X$, $\pi_Y:X\t Y\ra Y$. We form $C(\pi_X):\ab C(X\t Y)\ra C(X)$, $C(\pi_Y):C(X\t Y)\ra C(Y)$, and take the direct product
\e
(C(\pi_X),C(\pi_Y)):C(X\t Y)\longra C(X)\t C(Y).
\label{gc3eq21}
\e
Since $S^k(X\t Y)=\coprod_{i+j=k}S^i(X)\t S^j(Y)$, from Definition \ref{gc3def11} we can show that \eq{gc3eq21} is a diffeomorphism. Thus, as for \eq{gc2eq10}--\eq{gc2eq11} we have diffeomorphisms
\begin{align*}
\pd(X\t Y)&\cong (\pd X\t Y)\amalg (X\t\pd Y),\\
C_k(X\t Y)&\cong \ts\coprod_{i,j\ge 0,\; i+j=k}C_i(X)\t C_j(Y).
\end{align*}
The functor $C$ preserves products and direct products, as in Proposition~\ref{gc2prop1}(f).

\label{gc3ex9}
\end{ex}

Here is a partial analogue of Proposition \ref{gc2prop1}. The proof is straightforward, by considering local models.

\begin{prop} Let\/ $f\!:\!X\!\ra\!Y$ be a smooth map of manifolds with g-corners. 

\smallskip

\noindent{\bf(a)} $C(f):C(X)\ra C(Y)$ is an interior map of manifolds with g-corners of mixed dimension, so $C$ is a functor $C:\Mangc\ra\cMangcin$.
\smallskip

\noindent{\bf(b)} $f$ is interior if and only if\/ $C(f)$ maps $C_0(X)\ra C_0(Y),$ if and only if the following commutes:
\begin{equation*}
\xymatrix@C=95pt@R=13pt{ *+[r]{X} \ar[d]^\io \ar[r]_{f} &
*+[l]{Y} \ar[d]_\io \\ *+[r]{C(X)} \ar[r]^{C(f)} & *+[l]{C(Y).} }\qquad
\end{equation*}
Thus $\io:\Id\!\Ra\! C$ is a natural transformation on
$\Id,C\vert_{\Mangcin}:\Mangcin\!\ra\!\cMangcin$.

\noindent{\bf(c)} The following commutes:
\begin{equation*}
\xymatrix@C=85pt@R=13pt{ *+[r]{C(X)} \ar[d]^\Pi \ar[r]_{C(f)} &
*+[l]{C(Y)} \ar[d]_\Pi \\ *+[r]{X} \ar[r]^f & *+[l]{Y.} }
\end{equation*}
Thus $\Pi:C\Ra\Id$ is a natural transformation.
\label{gc3prop6}
\end{prop}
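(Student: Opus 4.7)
The proof plan is to unpack the definition of $C(f)$ from Definition \ref{gc3def11} and combine it with the stratification compatibility from Lemma \ref{gc3lem3}, handling the three parts essentially independently.

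For part (c), the verification is immediate from the definitions: on a point $(x,\ga)\in C(X)$ we compute
\e
\Pi\ci C(f):(x,\ga)\longmapsto (f(x),f_*(\ga))\longmapsto f(x),\quad f\ci\Pi:(x,\ga)\longmapsto x\longmapsto f(x),
\e
so the square commutes. Naturality of $\Pi$ then follows from the functoriality of $C$ established in Definition \ref{gc3def11}.

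For part (a), the main point is to show $C(f)$ sends the interior of $C(X)$ (as an object of $\cMangc$) into the interior of $C(Y)$. The interior of $C(X)$ is $\coprod_{k\ge 0}C_k(X)^\ci$, so a point $(x,\ga)$ lies in this interior precisely when $\ga$ corresponds to the component of $S^k(X)$ near $x$ containing $x$ itself, i.e.\ when $x\in S^k(X)$. Given such a point, let $W\subseteq S^k(X)$ be a small connected neighbourhood of $x$ representing $\ga$. By Lemma \ref{gc3lem3}, $f(W)\subseteq S^l(Y)$ for a unique $l$, so $f(x)\in S^l(Y)$ and $f_*(\ga)$ is represented by the connected component of $S^l(Y)$ near $f(x)$ containing $f(x)$. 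Hence $(f(x),f_*(\ga))\in C_l(Y)^\ci$, so $C(f)$ is interior. Smoothness was already established in Definition \ref{gc3def11}, so $C$ factors through $\cMangcin$.

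For part (b), the equivalence of the three conditions is a direct unwinding. First, $f$ is interior means $f(X^\ci)\subseteq Y^\ci$, equivalently $f(S^0(X))\subseteq S^0(Y)$. Second, under the identification $C_0(X)\cong X$ via $\io:x\mapsto(x,[X^\ci])$, the condition that $C(f)$ maps $C_0(X)\ra C_0(Y)$ says $f_*([X^\ci])=[Y^\ci]$ for every $x\in X$; by construction of $f_*$, this is the statement that $f$ sends a small open neighbourhood of $x$ in $X^\ci$ into $Y^\ci$, which is equivalent to $f(X^\ci)\subseteq Y^\ci$. Third, the commuting square is simply the assertion $C(f)\ci\io=\io\ci f$, which is again $f_*([X^\ci])=[Y^\ci]$ pointwise. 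Naturality of $\io:\Id\Ra C$ on $\Mangcin$ follows since both routes $\io\ci f$ and $C(f)\ci\io$ equal $(x\mapsto(f(x),[Y^\ci]))$ for interior $f$.

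The only mildly nontrivial step is part (a), and the single substantive input needed is Lemma \ref{gc3lem3}, which ensures that the target stratum $l$ is well-defined and locally constant along the component $W$; everything else is formal manipulation of the definitions of $C(X)$, $C(f)$, $\Pi$ and $\io$, and all arguments are local so they reduce to the already-understood model spaces $X_P$.
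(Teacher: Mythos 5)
Your proof is correct, and it fills in the details of the argument that the paper dismisses with ``The proof is straightforward, by considering local models.'' You take essentially the approach the paper intends: parts (b) and (c) are direct unwindings of the definitions of $C(f)$, $\io$ and $\Pi$, and for part (a) you invoke Lemma \ref{gc3lem3} (itself proved via the local models $X_P$) together with the characterization of $C_k(X)^\ci$ as the set of $(x,\ga)$ with $x\in S^k(X)$, which is exactly what the local-models computation would give.
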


\subsection{\texorpdfstring{B-tangent bundles ${}^bTX$ of manifolds with g-corners}{B-tangent bundles of manifolds with g-corners}}
\label{gc35}

Here is the analogue of Definition~\ref{gc2def6}.

\begin{dfn} We define vector bundles over manifolds with g-corners exactly as for vector bundles over other classes of manifolds: a {\it vector bundle $E\ra X$ of rank\/} $\rank E=k$ is a smooth map $\pi:E\ra X$ of manifolds with g-corners, with a vector space structure on the each fibre $E_x=\pi^{-1}(x)$ for $x\in X$, which locally over $X$ admits a smooth identification with the projection $X\t\R^k\ra X$, preserving the vector space structures on each~$E_x$.

Sometimes we also consider {\it vector bundles of mixed rank\/}
$E\ra X$, in which we allow the rank $k$ to vary on different connected components of $X$. This happens often when working with objects
$X=\coprod_{m=0}^\iy X_m$ in $\cMangc$ from \S\ref{gc33}, for instance, the b-tangent bundle ${}^bTX$ has rank $m$ over $X_m$ for each~$m$.
\label{gc3def12}
\end{dfn}

In \S\ref{gc23} we defined tangent bundles $TX$ and b-tangent bundles ${}^bTX$ for a manifold with (ordinary) corners. The expressions \eq{gc2eq13} for $T_xX$, and \eq{gc2eq14} for ${}^bT_xX$, also make sense for manifolds with g-corners. The next example shows that for manifolds with g-corners $X$, `tangent bundles' $TX$ are not well-behaved.

\begin{ex} Let $X_P$ be the manifold with g-corners of Example \ref{gc3ex5}. Define $T_xX_P$ by \eq{gc2eq13} for all $x\in X_P$. As $X_P\sm\{\de_0\}$ is a manifold with corners of dimension 3, as in \S\ref{gc23} we have $\dim T_xX_P=3$ for all $\de_0\ne x\in X_P$. However, calculation shows that $T_{\de_0}X_P$ has dimension 4, with basis $v_1,v_2,v_3,v_4$ which act on the functions $\la_p:X_P\ra[0,\iy)$ for $p\in P$ by $v_i([\la_p])=1$ if $p=p_i$ and $v_i([\la_p])=0$ otherwise, where $p_1,p_2,p_3,p_4$ are the generators of $P$ in \eq{gc3eq6}. Thus, $\pi:TX_P\ra X_P$ {\it is not a vector bundle over\/} $X_P$, but something more like a coherent sheaf in algebraic geometry, in which the dimensions of the fibres are not locally constant, but only upper semicontinuous. Also $TX_P$ does not have the structure of a manifold with g-corners in a sensible way.
\label{gc3ex10}
\end{ex}

Because of this, we will not discuss tangent bundles for manifolds with corners, but only b-tangent bundles ${}^bTX$, which are well-behaved. First we define ${}^bTX$, and $\pi_X:{}^bTX\ra X$, ${}^bTf:{}^bTX\ra{}^bTY$ just as sets and maps.

\begin{dfn} Let $X$ be a manifold with g-corners, and $x\in X$. Define $C^\iy_x(X),\cI_x(X)$ and $\ev,\exp,\inc$ as in Definitions \ref{gc2def7} and \ref{gc2def8}. As in \eq{gc2eq14}, define a real vector space ${}^bT_xX$ by
\ea
{}^bT_xX=\bigl\{&(v,v'):\text{$v$ is a linear map $C^\iy_x(X)\ra\R$,}
\nonumber\\
&\text{$v'$ is a monoid morphism $\cI_x(X)\ra\R$,}
\nonumber\\
&\text{$v([a]\!\cdot\! [b])\!=\!v([a])\ev([b])\!+\!\ev([a])v([b])$, all $[a],[b]\!\in\!
C^\iy_x(X)$},
\nonumber\\
&\text{$v'\ci\exp([a])=v([a])$, all $[a]\in C^\iy_x(X)$, and}
\nonumber\\
&\text{$v\ci\inc([b])=\ev([b])v'\bigl([b]\bigr)$, all $[b]\in\cI_x(X)$}\bigr\}.
\label{gc3eq22}
\ea
The conditions in \eq{gc3eq22} are not all independent. As a set, define ${}^bTX=\bigl\{(x,v,v'):x\in X$, $(v,v')\in {}^bT_xX\bigr\}$, and define a projection $\pi_X:{}^bTX\ra X$ by $\pi_X:(x,v,v')\mapsto x$, so that $\pi_X^{-1}(x)\cong {}^bT_xX$.

If $f:X\ra Y$ is an interior map of manifolds with g-corners, define a map of sets ${}^bTf:{}^bTX\ra{}^bTY$ as in Definition \ref{gc2def8} by ${}^bTf:(x,v,v')\mapsto (y,w,w')$ for $y=f(x)$, $w=v\ci f$ and $w'=v'\ci f$, where composition with $f$ maps $\ci f:C^\iy_y(Y)\ra C^\iy_x(X)$, $\ci f:\cI_y(Y)\ra\cI_x(X)$, as $f$ is interior.

If $g:Y\ra Z$ is a second interior map of manifolds with g-corners, it is easy to see that ${}^bT(g\ci f)={}^bTg\ci {}^bTf:{}^bTX\ra{}^bTZ$, and ${}^bT(\id_X)=\id_{{}^bTX}:{}^bTX\ra{}^bTX$, so the assignment $X\mapsto {}^bTX$, $f\mapsto{}^bTf$ is functorial.
\label{gc3def13}
\end{dfn}

In Definition \ref{gc3def14} below we will give ${}^bTX$ the structure of a manifold with g-corners, such that $\pi_X:{}^bTX\ra X$ is smooth and makes ${}^bTX$ into a vector bundle over $X$, and ${}^bTf:{}^bTX\ra{}^bTY$ is smooth for all interior maps $f:X\ra Y$. First we explain this for the model spaces $X_P$. Equation \eq{gc3eq24} shows that for monoids, passing from $X_P$ to ${}^bTX_P$ corresponds to passing from $P$ to~$P\t P^\gp$.

\begin{prop} Let\/ $P$ be a weakly toric monoid, so that\/ $X_P$ is a manifold with g-corners as in Example\/ {\rm\ref{gc3ex4},} with b-tangent bundle ${}^bTX_P$. Then there are natural inverse bijections $\Phi_P,\Psi_P$ in the diagram
\e
\xymatrix@C=60pt{
{}^bTX_P \ar@<.5ex>[r]^(0.4){\Psi_P} & X_P\t\Hom(P^\gp,\R), \ar@<.5ex>[l]^(0.6){\Phi_P} }
\label{gc3eq23}
\e
where\/ $\Hom(P^\gp,\R)\cong\R^{\rank P},$ and\/ $\Phi_P,\Psi_P$ are compatible with the projections $\pi:{}^bTX_P\ra X_P,$ $X_P\t\Hom(P^\gp,\R)\ra X_P$. Also there are natural bijections
\e
X_P\t\Hom(P^\gp,\R)\cong X_P\t X_{P^\gp}\cong X_{P\t P^\gp}.
\label{gc3eq24}
\e

\label{gc3prop7}
\end{prop}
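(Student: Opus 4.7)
The plan is to construct $\Psi_P$ intrinsically from the interior functions $\lambda_p\in\cI_x(X_P)$, define the inverse $\Phi_P$ using a presentation of $P$ by generators and relations, and verify they are mutually inverse. The last bijections in \eqref{gc3eq24} then follow from standard identifications. Concretely, given $(x,v,v')\in{}^bTX_P$, each $\lambda_p:X_P\ra[0,\iy)$ is interior, so $[\lambda_p]\in\cI_x(X_P)$; set $\beta(p)=v'([\lambda_p])$. Because $v':\cI_x(X_P)\ra\R$ is a monoid morphism into $(\R,+)$, and $\lambda_{p+q}=\lambda_p\cdot\lambda_q$, $\lambda_0=1$, the map $\beta:(P,+,0)\ra(\R,+,0)$ is a monoid morphism; by the universal property of $P^\gp$ it extends uniquely to $\alpha\in\Hom(P^\gp,\R)$. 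Set $\Psi_P(x,v,v')=(x,\alpha)$, which is compatible with the projections to $X_P$ by construction.

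Next I would construct $\Phi_P$. Choose generators $p_1,\ldots,p_m$ of $P$ with a generating set of relations \eqref{gc3eq3}, giving the embedding $\Lambda:=\lambda_{p_1}\t\cdots\t\lambda_{p_m}:X_P\hookra X_P'\subseteq[0,\iy)^m$ of Proposition~\ref{gc3prop3}(a). Given $(x,\alpha)\in X_P\t\Hom(P^\gp,\R)$, put $s_i=\alpha(p_i)$. At $\Lambda(x)\in[0,\iy)^m$, form the b-tangent vector $\tilde w=\sum_{i=1}^m s_i\cdot u_i\tfrac{\pd}{\pd u_i}$. Using Proposition~\ref{gc3prop3}(b,c) to pull back germs on $X_P$ from germs on $[0,\iy)^m$ along $\Lambda$, define $v,v'$ by the action of $\tilde w$ on those lifts, interpreted via \eqref{gc2eq14}. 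Well-definedness reduces to showing $\tilde w$ annihilates the ideal of relations: for each relation $\sum_i a_i^j p_i=\sum_i b_i^j p_i$, applying $\tilde w$ to $\log(u_1^{a_1^j}\cdots u_m^{a_m^j})-\log(u_1^{b_1^j}\cdots u_m^{b_m^j})$ gives $\sum_i(a_i^j-b_i^j)s_i=\alpha\bigl(\sum_i(a_i^j-b_i^j)p_i\bigr)=0$, since $\alpha$ is a group morphism and the relation holds in $P^\gp$. A direct check verifies that $(v,v')$ satisfies the axioms of \eqref{gc3eq22}; set $\Phi_P(x,\alpha)=(x,v,v')$.

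The main obstacle is verifying that $\Psi_P,\Phi_P$ are mutually inverse. That $\Psi_P\ci\Phi_P=\id$ follows from $v'([\lambda_{p_i}])=s_i=\alpha(p_i)$ combined with the fact that $p_1,\ldots,p_m$ generate $P^\gp$ as a group, so $\alpha$ is determined by the $s_i$. For $\Phi_P\ci\Psi_P=\id$, one must show any $(v,v')\in{}^bT_xX_P$ is determined by $\bigl(v'([\lambda_{p_i}])\bigr)_{i=1}^m$: Proposition~\ref{gc3prop3}(b) shows every germ in $C^\iy_x(X_P)$ or $\cI_x(X_P)$ arises by pulling back a smooth or interior germ on $[0,\iy)^m$ along $\Lambda$, and then the Leibniz rule for $v$, the monoid-morphism property of $v'$, and the compatibilities $v'\ci\exp([a])=v([a])$, $v\ci\inc([b])=\ev([b])v'([b])$ from \eqref{gc2eq14} propagate the values on the $[\lambda_{p_i}]$ to all germs. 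Once $\Psi_P$ is seen to be a bijection, independence of $\Phi_P$ from the choice of generators is automatic since $\Psi_P$ is purely intrinsic.

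Finally, the bijections in \eqref{gc3eq24} are direct: any monoid morphism $P^\gp\ra([0,\iy),\cdot)$ lands in the units $(0,\iy)$ since $P^\gp$ is a group, so $X_{P^\gp}=\Hom(P^\gp,(0,\iy))$, and $\log:(0,\iy)\ra\R$ is a group isomorphism, giving $X_{P^\gp}\cong\Hom(P^\gp,\R)$ (consistent with Proposition~\ref{gc3prop4}(a)); the identification $X_{P\t P^\gp}\cong X_P\t X_{P^\gp}$ is the product identification $X_{Q\t R}\cong X_Q\t X_R$ noted just before Lemma~\ref{gc3lem2}.
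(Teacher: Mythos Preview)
Your proposal is correct and follows essentially the same strategy as the paper. The paper defines $\Psi_P$ exactly as you do via $p\mapsto v'([\lambda_p])$, constructs $\Phi_P$ by the explicit formulas \eqref{gc3eq25}--\eqref{gc3eq26} (which unpack to precisely your b-tangent vector $\tilde w=\sum_i\alpha(p_i)\,u_i\partial_{u_i}$ acting on lifts along $\Lambda$, once one takes $r_i=p_i$), and verifies the two compositions are identities by the same computations using the axioms in \eqref{gc3eq22}; the only cosmetic difference is that the paper allows the presentation elements $r_1,\ldots,r_n$ to vary with the germ rather than fixing generators once, but Proposition~\ref{gc3prop3}(b) makes these equivalent.
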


\begin{proof} As $P$ is weakly toric we have a natural inclusion $P\hookra P^\gp$, where $P^\gp\cong\Z^r$ for $r=\rank P$, so that $\Hom(P^\gp,\R)\cong\R^r$. There are obvious natural bijections $\Hom(P^\gp,\R)\cong X_{P^\gp}$ and $X_P\t X_Q\cong X_{P\t Q}$, so \eq{gc3eq24} follows. 

For $(x,y)\in X_P\t\Hom(P^\gp,\R)$ define a map $v_{x,y}:C^\iy_x(X_P)\ra\R$ by
\e
v_{x,y}:[a]\longmapsto \ts\sum_{i=1}^n\frac{\pd g}{\pd x_i}\bigl(x(r_1),\ldots,x(r_n)\bigr)\cdot x(r_i)\cdot y(r_i)
\label{gc3eq25}
\e
if $U$ is an open neighbourhood of $x$ in $X_P$, $a:U\ra\R$ is smooth, and as in Definition \ref{gc3def5} we write $a:x'\mapsto g\bigl(x'(r_1),\ldots,x'(r_n)\bigr)$ for $x'\in U$, where $r_1,\ldots,r_n\in P$ and $g:W\ra\R$ is smooth, for $W$ an open neighbourhood of $(\la_{r_1}\t\cdots\t\la_{r_n})(U)$ in $[0,\iy)^n$.

Similarly, define $v_{x,y}':\cI_x(X_P)\ra\R$ by
\e
v_{x,y}':[b]\longmapsto y(p)+\ts\sum_{i=1}^n\frac{\pd}{\pd x_i}(\log h)\bigl(x(r_1),\ldots,x(r_n)\bigr)\cdot x(r_i)\cdot y(r_i)
\label{gc3eq26}
\e
if $U$ is an open neighbourhood of $x$ in $X_P$, $b:U\ra[0,\iy)$ is interior, and as in Definition \ref{gc3def5} we write $b:x'\mapsto x'(p)\cdot h\bigl(x'(r_1),\ldots,x'(r_n)\bigr)$ for $x'\in U$, where $p,r_1,\ldots,r_n\in P$ and $h:W\ra (0,\iy)$ is smooth, for $W$ an open neighbourhood of $(\la_{r_1}\t\cdots\t\la_{r_n})(U)$ in $[0,\iy)^n$.

It is not difficult to show that $v_{x,y},v'_{x,y}$ are independent of the choices of presentations for $a,b$, and that they satisfy the conditions of \eq{gc3eq22}, so $(v_{x,y},v'_{x,y})\in {}^bT_xX_P$. Define
\begin{equation*}
\Phi_P:X_P\t\Hom(P^\gp,\R)\longra{}^bTX_P\;\>\text{by}\;\> \Phi_P:(x,y)\longmapsto(x,v_{x,y},v'_{x,y}).
\end{equation*}

Now let $(x,v,v')\in {}^bTX_P$, and consider the map $P\ra\R$ acting by $p\mapsto v'([\la_p])$. By \eq{gc3eq22} this is a monoid morphism $P\ra(\R,+)$, so it factors through a group morphism $P^\gp\ra\R$ as $(\R,+)$ is a group. Thus there exists a unique $y_{x,v'}\in\Hom(P^\gp,\R)$ with $v'([\la_p])=y_{x,v'}(p)$ for all $p\in P$. Define 
\e
\Psi_P:{}^bTX_P\longra X_P\t\Hom(P^\gp,\R)\quad\text{by}\quad \Psi_P:(x,v,v')\longmapsto(x,y_{x,v'}).
\label{gc3eq27}
\e

We will show that that $\Phi_P,\Psi_P$ are inverse maps. By definition $\Psi_P\ci\Phi_P$ maps $(x,y)\mapsto(x,y_{x,v'_{x,y}})$, where for $p\in P^\gp$ we have
\begin{equation*}
y_{x,v'_{x,y}}(p)=v'_{x,y}([\la_p])=y(p)+\log 1=y(p),
\end{equation*}
using \eq{gc3eq26} for $b=\la_p$ and $h=1$. Thus $y_{x,v'_{x,y}}=y$, and $\Psi_P\ci\Phi_P=\id$. Also $\Psi_P\ci\Phi_P$ maps $(x,v,v')\mapsto(x,v_{x,y_{x,v'}},v'_{x,y_{x,v'}})$, where if $x\in U\subseteq X_P$ is open, $a:U\ra\R$ is smooth, and as in Definition \ref{gc3def5} we write $a:x'\mapsto g\bigl(x'(r_1),\ldots,x'(r_n)\bigr)$ for $x'\in U$, where $r_1,\ldots,r_n\in P$ and $g:W\ra\R$ is smooth, then
\begin{align*}
v_{x,y_{x,v'}}([a])&=\ts\sum_{i=1}^n\frac{\pd g}{\pd x_i}\bigl(x(r_1),\ldots,x(r_n)\bigr)\cdot x(r_i)\cdot y_{x,v'}(r_i)\\
&=\ts\sum_{i=1}^n\frac{\pd g}{\pd x_i}\bigl(x(r_1),\ldots,x(r_n)\bigr)\cdot x(r_i)\cdot v'([\la_{r_i}])\\
&=\ts\sum_{i=1}^n\frac{\pd g}{\pd x_i}\bigl(x(r_1),\ldots,x(r_n)\bigr)\cdot  v([x(r_i)])=v([a]),
\end{align*}
using \eq{gc3eq25} in the first step, $v'([\la_p])=y_{x,v'}(p)$ in the second, and $v\ci\inc([b])=\ev([b])v'\bigl([b]\bigr)$ from \eq{gc3eq22} with $b=\la_{r_i}=x(r_i)$ in the third. So $v_{x,y_{x,v'}}=v$. 

Similarly, using \eq{gc3eq26} and $v'([\la_p])=y_{x,v'}(p)$ we find that $v'_{x,y_{x,v'}}=v'$, so that $\Phi_P\ci\Psi_P=\id$. Hence $\Phi_P,\Psi_P$ are inverse maps, and bijections. Clearly they are compatible with the projections $\pi:{}^bTX_P\ra X_P$, $X_P\t\Hom(P^\gp,\R)\ra X_P$. This completes the proof.
\end{proof}

\begin{ex} Let $P\!=\!\N^k\!\t\!\Z^{n-k}$, so that $P^\gp\!\cong\!\Z^n$, and identify $X_{\N^k\t\Z^{n-k}}\!\cong\![0,\iy)^k\t\R^{n-k}$ as in Example \ref{gc3ex3}(iii), and $\Hom(P^\gp,\R)\cong\R^n$ in the obvious way. Following through the definition of $\Phi_P$ in Proposition \ref{gc3prop7}, we find that if $\Phi_P\bigl((x_1,\ldots,x_n),(y_1,\ldots,y_n)\bigr)\!=\!\bigl((x_1,\ldots,x_n),v,v'\bigr)$, then $v:C^\iy_{\bs x}(X_P)\!\ra\!\R$ is
\begin{align*}
v:[a]\longmapsto\ts y_1x_1\frac{\pd }{\pd x_1}a(x_1,\ldots,x_n)+\cdots+y_kx_k\frac{\pd}{\pd x_k}a(x_1,\ldots,x_n)&\\
\ts{}+y_{k+1}\frac{\pd}{\pd x_{k+1}}a(x_1,\ldots,x_n)+\cdots+y_n\frac{\pd}{\pd x_n}a(x_1,\ldots,x_n)&.
\end{align*}
Thus, the identification ${}^bTX_P\cong X_P\t\R^n$ from \eq{gc3eq23} gives a basis of sections of ${}^bTX_P$ corresponding to $x_1\frac{\pd}{\pd x_1},\ldots, x_k\frac{\pd}{\pd x_k},\frac{\pd}{\pd x_{k+1}},\ldots,\frac{\pd}{\pd x_n}$, as ordinary vector fields on~$X_P\cong[0,\iy)^k\t\R^{n-k}$. 

But in Definition \ref{gc2def8}(a) we defined the b-tangent bundle ${}^bT([0,\iy)^k\!\t\!\R^{n-k})$ of $[0,\iy)^k\t\R^{n-k}$ as a manifold with corners to have basis of sections $x_1\frac{\pd}{\pd x_1},\ab\ldots,\ab x_k\frac{\pd}{\pd x_k},\ab\frac{\pd}{\pd x_{k+1}},\ab\ldots,\ab\frac{\pd}{\pd x_n}$. This shows the definitions of ${}^bT([0,\iy)^k\t\R^{n-k})$ in \S\ref{gc23}, and in Definition \ref{gc3def13} and Proposition \ref{gc3prop7} above, are equivalent.
\label{gc3ex11}
\end{ex}

\begin{lem} Let\/ $P,Q$ be weakly toric monoids, $U\subseteq X_P,$ $V\subseteq X_Q$ be open, and\/ $f:U\ra V$ be an interior map, in the sense of Definition\/ {\rm\ref{gc3def5}}. Then the composition of maps
\begin{equation*}
\xymatrix@C=30pt{
U\t \Hom(P^\gp,\R) \ar[r]^(0.65){\Phi_P\vert_{\cdots}}_(0.65)\cong & {}^bTU \ar[r]^{{}^bTf} & {}^bTV \ar[r]^(0.35){\Psi_Q\vert_{\cdots}}_(0.35)\cong & V\t \Hom(Q^\gp,\R) }
\end{equation*}
is an interior map of manifolds with g-corners in the sense of\/ {\rm\S\ref{gc32}--\S\ref{gc33},} where ${}^bTf$ is as in Definition\/ {\rm\ref{gc3def13}} and\/ $\Phi_P,\Psi_Q$ as in Proposition\/~{\rm\ref{gc3prop7}}.
\label{gc3lem4}
\end{lem}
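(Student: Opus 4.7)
The plan is to unwind the definitions of $\Phi_P$, $\Psi_Q$, and ${}^bTf$, reduce to an explicit formula for the composition, and then verify smoothness using the extrinsic description of Proposition \ref{gc3prop3}. First I will identify what the composition does on points. Given $(x,y) \in U\times\Hom(P^\gp,\R)$, the definitions in Definition \ref{gc3def13} and the proof of Proposition \ref{gc3prop7} (specifically formula \eqref{gc3eq27}) give that the composition sends $(x,y)$ to $(f(x),y')$, where $y'\in\Hom(Q^\gp,\R)$ is characterized by
\e
y'(q) = v'_{x,y}\bigl([\la_q\ci f]\bigr)\quad\text{for all }q\in Q.
\label{gc3eqPlan1}
\e
That $y'$ is a genuine group morphism $Q^\gp\ra\R$ (and not just defined on $Q$) follows because $v'_{x,y}:\cI_x(X_P)\ra\R$ is a monoid morphism into the additive group $(\R,+)$, so it automatically extends across $Q^\gp$.

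Next I will evaluate \eqref{gc3eqPlan1} locally. Since $f$ is interior, for each $q\in Q$ the function $\la_q\ci f:U\ra[0,\iy)$ is smooth and does not vanish on $U^\ci$; hence on each sufficiently small connected neighbourhood of any $x_0\in U$, Definition \ref{gc3def5} rules out the zero alternative and yields a decomposition $\la_q\ci f = \la_{\mu(q)}\cdot H_q$ with $\mu(q)\in P$ and $H_q>0$ smooth. Writing $H_q(x')=g_q\bigl(x'(r_1),\ldots,x'(r_n)\bigr)$ for some $r_i\in P$ and smooth $g_q:W\ra(0,\iy)$, formula \eqref{gc3eq26} gives
\e
y'(q) = y(\mu(q)) + \sum_{i=1}^n\tfrac{\pd(\log g_q)}{\pd x_i}\bigl(x(r_1),\ldots,x(r_n)\bigr)\cdot x(r_i)\cdot y(r_i).
\label{gc3eqPlan2}
\e
This is manifestly smooth in $(x,y)$ in the sense of Definition \ref{gc3def5}, linear in $y$, and the identity $y'(q_1+q_2)=y'(q_1)+y'(q_2)$ is immediate from the monoid-morphism property of $v'_{x,y}$, so local choices of $\mu$ and $H$ patch together consistently (integrality of $P$ makes $\mu(q)$ unique up to units, which only shift $H_q$ by a positive smooth factor and leave \eqref{gc3eqPlan2} unchanged).

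Finally I will verify smoothness and interiority of the composition as a map of manifolds with g-corners into $V\t\Hom(Q^\gp,\R)\subseteq X_Q\t X_{Q^\gp}\cong X_{Q\t Q^\gp}$. Using Proposition \ref{gc3prop3}(c), it suffices to check that for every $(q,q')\in Q\t Q^\gp$ the composition $\la_{(q,q')}\ci(\Psi_Q\ci{}^bTf\ci\Phi_P)$ is smooth on $U\t\Hom(P^\gp,\R)$. Under the identification $X_{Q^\gp}\cong\Hom(Q^\gp,\R)$ from Example \ref{gc3ex3}(ii), this pullback is $(x,y)\mapsto\la_q(f(x))\cdot e^{y'(q')}$; the first factor is smooth because $f$ is smooth, and the second is smooth by \eqref{gc3eqPlan2} together with smoothness of $\exp$. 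For interiority, note that $\Hom(P^\gp,\R)\cong X_{P^\gp}$ has empty boundary (since $P^\gp$ is a group, $\{0\}$ is the only face of codimension $\rank P^\gp$), so the depth stratification of $U\t\Hom(P^\gp,\R)$ in $X_{P\t P^\gp}$ has $S^0 = U^\ci\t\Hom(P^\gp,\R)$, and analogously for the target; since $f(U^\ci)\subseteq V^\ci$ by hypothesis, the composition carries the interior to the interior. The main bookkeeping obstacle will be Step 2: justifying the local decomposition $\la_q\ci f=\la_{\mu(q)}\cdot H_q$ and checking that \eqref{gc3eqPlan2} is well-defined independently of the choices of $\mu(q)$ and the presentation of $H_q$.
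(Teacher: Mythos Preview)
Your argument is correct, but the route differs from the paper's. You compute the composition explicitly via \eqref{gc3eq26} and \eqref{gc3eq27}, obtain the pointwise formula $(x,y)\mapsto(f(x),y')$ with $y'(q)=v'_{x,y}([\la_q\ci f])$, and then verify smoothness of each $\la_{(q,q')}$-component directly from Definition~\ref{gc3def5}. This is a self-contained computational check.

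The paper instead argues by reduction to ordinary corners. It chooses generators $p_1,\ldots,p_m$ of $P$ and $q_1,\ldots,q_n$ of $Q$, so that Proposition~\ref{gc3prop3}(c) factors $f$ through a smooth map $g:W\ra[0,\iy)^n$ with $W\subseteq[0,\iy)^m$ open. Applying ${}^bT$ and the identifications $\Phi,\Psi$ to this factorization gives a commutative square whose right-hand column is $\Psi_{\N^n}\ci{}^bTg\ci\Phi_{\N^m}$ on manifolds with ordinary corners. By Example~\ref{gc3ex11} this agrees with the classical ${}^bTg$ of \S\ref{gc23}, which is already known to be smooth and interior. Proposition~\ref{gc3prop3}(c), now applied with $P\t P^\gp$ and $Q\t Q^\gp$ in place of $P,Q$, then transfers smoothness back to the left-hand column, which is the composition in question.

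The trade-off: your direct approach avoids invoking the \S\ref{gc23} theory but requires carrying the explicit germ formula \eqref{gc3eq26} through the argument and handling the bookkeeping about choices of $\mu(q)$ and $H_q$ (which you do correctly, since $v'_{x,y}$ is intrinsically well-defined). The paper's approach is cleaner structurally---it leverages the already-established functoriality of ${}^bT$ on $\Mancin$ rather than recomputing---and the commutative diagram it produces is itself a useful tool for later computations of ${}^bTf$ in local coordinates. Two minor remarks on your write-up: the criterion you invoke in Step~3 is really Definition~\ref{gc3def5} rather than Proposition~\ref{gc3prop3}(c); and for $q'\in Q^\gp\sm Q$ you should note explicitly that $y'(q')=y'(q_1)-y'(q_2)$ for $q'=q_1-q_2$ with $q_1,q_2\in Q$, so that your formula \eqref{gc3eqPlan2} still yields smoothness.
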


\begin{proof} Use the notation of Proposition \ref{gc3prop3}(c). This gives a commutative diagram of interior maps of manifolds with g-corners
\e
\begin{gathered}
\xymatrix@R=17pt@C=120pt{
*+[l]{X_P \supseteq U} \ar[r]_{\la_{p_1}\t\cdots\t\la_{p_m}} \ar@<-.2ex>[d]^{f} & *+[r]{W \subseteq [0,\iy)^m} \ar@<.2ex>[d]_{g} \\
*+[l]{X_Q \supseteq V} \ar[r]^{\la_{q_1}\t\cdots\t\la_{q_n} } & *+[r]{[0,\iy)^n.} } 
\end{gathered}
\label{gc3eq28}
\e
Consider the diagram
\e
\begin{gathered}
\xymatrix@R=17pt@C=150pt{
*+[r]{U\t \Hom(P^\gp,\R)} \ar[d]^{\Phi_P\vert_{U\t\Hom(P^\gp,\R)}}
\ar[r]_{\begin{subarray}{l}(\la_{p_1}\t\cdots\t\la_{p_m})\t \\ ((\ci p_1)\t\cdots\t(\ci p_m))\end{subarray}} & *+[l]{W\t\R^m} \ar[d]_{\cong } 
\\
*+[r]{{}^bTU} \ar[r]_{{}^bT(\la_{p_1}\t\cdots\t\la_{p_m})} \ar[d]^{{}^bTf} & *+[l]{{}^bTW} \ar[d]_{{}^bTg} \\
*+[r]{{}^bTV} \ar[d]^{\Psi_Q\vert_{{}^bTV}} \ar[r]^{{}^bT(\la_{q_1}\t\cdots\t\la_{q_n}) } & *+[l]{{}^bT([0,\iy)^n)} \ar[d]_{\cong }
\\
*+[r]{V\t \Hom(Q^\gp,\R)} \ar[r]^{\begin{subarray}{l}(\la_{q_1}\t\cdots\t\la_{q_n})\t \\ ((\ci q_1)\t\cdots\t(\ci q_n))\end{subarray}} & *+[l]{[0,\iy)^n\t\R^n.\!\!{}} } 
\end{gathered}
\label{gc3eq29}
\e
The middle rectangle commutes by applying the functor ${}^bT$ of Definition \ref{gc3def13} to \eq{gc3eq28}, and the upper and lower rectangles commute by the definitions. 

The right hand column of \eq{gc3eq29} involves manifolds with corners $W\subseteq [0,\iy)^m$, $[0,\iy)^n$, and Example \ref{gc3ex11} showed that for these the definitions of ${}^bTX$ in \ref{gc23} and above, are equivalent. This equivalence is functorial, so the definitions of ${}^bTg$ in \S\ref{gc23} and Definition \ref{gc3def13} are also equivalent. But ${}^bTg$ in \S\ref{gc23} is an interior map of manifolds with corners. Hence the composition of the right hand column in \eq{gc3eq29} is an interior map of manifolds with corners. Regarding $U\t \Hom(P^\gp,\R)$ and $V\t \Hom(Q^\gp,\R)$ as open sets in $X_{P\t P^\gp},X_{Q\t Q^\gp}$ as in Proposition \ref{gc3prop7}, Proposition \ref{gc3prop3}(c) with $P\t P^\gp,Q\t Q^\gp$ in place of $P,Q$ now implies that the composition of the left hand column of \eq{gc3eq29} is an interior map of manifolds with g-corners.
\end{proof}

Note that \eq{gc3eq28}--\eq{gc3eq29} give a convenient way to compute the maps ${}^bTf:{}^bTX\ra{}^bTY$ in Definition \ref{gc3def13} locally. We can now give ${}^bTX$ the structure of a manifold with g-corners, a vector bundle over~$X$:

\begin{dfn} Let $X$ be a manifold with g-corners, so that ${}^bTX$ is defined as a set in Definition \ref{gc3def13}, with projection $\pi:{}^bTX\ra X$. Suppose $(P,U,\phi)$ is a g-chart on $X$. For $\Phi_P$ as in Proposition \ref{gc3prop7}, consider the composition
\begin{equation*}
\xymatrix@C=35pt{
U\t \Hom(P^\gp,\R) \ar[rr]^(0.62){\Phi_P\vert_{U\t \Hom(P^\gp,\R)}} && {}^bTU \ar[r]^{{}^bT\phi} & {}^bTX, }
\end{equation*}
which has image ${}^bT(\phi(U))\subseteq {}^bTX$. Here $U\t\Hom(P^\gp,\R)$ is open in $X_P\t\Hom(P^\gp,\R)\cong X_P\t X_{P^\gp}\cong X_{P\t P^\gp}$, so identifying $U\t \Hom(P^\gp,\R)$ with an open set in $X_{P\t P^\gp}$, we can regard
\e
\bigl(P\t P^\gp,U\t \Hom(P^\gp,\R),{}^bT\phi\ci \Phi_P\vert_{U\t \Hom(P^\gp,\R)}\bigr)
\label{gc3eq30}
\e
as a g-chart on~${}^bTX$.

We claim that ${}^bTX$ has the unique structure of a manifold with g-corners (including a topology), of dimension $2\dim X$, such that \eq{gc3eq30} is a g-chart on ${}^bTX$ for all g-charts $(P,U,\phi)$ on $X$, and that with this structure $\pi:{}^bTX\ra X$ is interior and makes ${}^bTX$ into a vector bundle over $X$. To see this, note that if $(P,U,\phi),(Q,V,\psi)$ are g-charts on $X$, then they are compatible, so the change of g-charts morphism $\psi^{-1}\ci\phi:\phi^{-1}\bigl(\phi(U)\cap\psi(V)\bigr)\ra\psi^{-1}\bigl(\phi(U)\cap\psi(V)\bigr)$ is a diffeomorphism between open subsets of $X_P,X_Q$. Applying Lemma \ref{gc3lem4} to $\psi^{-1}\ci\phi$ and its inverse implies that the change of charts morphism between the g-charts \eq{gc3eq30} from $(P,U,\phi),(Q,V,\psi)$ is also a diffeomorphism, so \eq{gc3eq30} and its analogue for $(Q,V,\psi)$ are compatible.

Thus, the g-charts \eq{gc3eq30} from g-charts $(P,U,\phi)$ on $X$ are all pairwise compatible. These g-charts \eq{gc3eq30} also cover ${}^bTX$, since the image of \eq{gc3eq30} is ${}^bT\phi(U)\subseteq {}^bTX$, and the $\phi(U)$ cover $X$. Since $X$ is Hausdorff and second countable, one can show that there is a unique Hausdorff, second countable topology on ${}^bTX$ such that for all $(P,U,\phi)$ as above, ${}^bT\phi(U)$ is open in ${}^bTX$, and ${}^bT\phi\ci \Phi_P\vert_{U\t \Hom(P^\gp,\R)}:U\t \Hom(P^\gp,\R)\ra {}^bT\phi(U)$ is a homeomorphism. Therefore the g-charts \eq{gc3eq30} form a g-atlas on ${}^bTX$ with this topology, which extends to a unique maximal g-atlas, making ${}^bTX$ into a manifold with g-corners. That $\pi:{}^bTX\ra X$ is interior and makes ${}^bTX$ into a rank $n$ vector bundle over $X$ follows from the local models.

Since $\pi:{}^bTX\ra X$ is a vector bundle, it has a dual vector bundle, which we call the {\it b-cotangent bundle\/} and write as $\pi:{}^bT^*X\ra X$

Now let $f:X\ra Y$ be an interior map of manifolds with g-corners. Then for all g-charts $(P,U,\phi)$ on $X$ and $(Q,V,\psi)$ on $Y$, the map $\psi^{-1}\ci f\ci\phi$ in \eq{gc3eq5} is an interior map between open subsets of $X_P,X_Q$. Applying Lemma \ref{gc3lem4} shows that the corresponding map for ${}^bTf:{}^bTX\ra{}^bTY$ and the g-charts \eq{gc3eq30} from $(P,U,\phi),(Q,V,\psi)$ is also interior. As these g-charts cover ${}^bTX,{}^bTY$, this proves that ${}^bTf:{}^bTX\ra{}^bTY$ is an interior map of manifolds with g-corners.

Clearly ${}^bTf:{}^bTX\ra{}^bTY$ satisfies $\pi\ci{}^bTf=f\ci\pi$ and is linear on the vector space fibres ${}^bT_xX,{}^bT_yY$. Thus, ${}^bTf$ induces a morphism of vector bundles on $X$, which we write as ${}^b\d f:{}^bTX\ra f^*({}^bTY)$, as in \S\ref{gc23}. Dually, we have a morphism of b-cotangent bundles, which we write as $({}^b\d f)^*:f^*({}^bT^*Y)\ra {}^bT^*X$.

If $g:Y\ra Z$ is another interior map of manifolds with g-corners, then ${}^bT(g\ci f)={}^bTg\ci{}^bTf$ implies that ${}^b\d(g\ci f)=f^*({}^b\d g)\ci{}^b\d f:{}^bTX\ra (g\ci f)^*({}^bTZ)$, and dually $({}^b\d(g\ci f))^*=({}^b\d f)^*\ci f^*(({}^b\d g)^*):(g\ci f)^*({}^bT^*Z)\ra{}^bT^*X$.

Define the {\it b-tangent functor\/} ${}^bT:\Mangcin\ra\Mangcin$ to map ${}^bT:X\mapsto{}^bTX$ on objects, and ${}^bT:f\mapsto{}^bTf$ on (interior) morphisms $f:X\ra Y$. Then ${}^bT$ is a functor, as in Definition \ref{gc3def13}. It extends naturally to ${}^bT:\cMangcin\ra\cMangcin$. The projections $\pi:{}^bTX\ra X$ and zero sections $0:X\ra{}^bTX$ induce natural transformations $\pi:{}^bT\Ra \Id$ and $0:\Id\Ra {}^bT$. On the subcategories $\Mancin\subset\Mangcin$, $\cMancin\subset\cMangcin$, these functors ${}^bT$ restrict to those defined in~\S\ref{gc23}.
\label{gc3def14}
\end{dfn}

We show b-tangent bundles are compatible with products.

\begin{ex} Let $X,Y$ be manifolds with g-corners, and consider the product $X\t Y$, with projections $\pi_X:X\t Y\ra X$, $\pi_Y:X\t Y\ra Y$. These are interior maps, so we may form ${}^bT\pi_X:{}^bT(X\t Y)\ra {}^bTX$, ${}^bT\pi_Y:{}^bT(X\t Y)\ra {}^bTY$, and take the direct product
\e
({}^bT\pi_X,{}^bT\pi_Y):{}^bT(X\t Y)\longra {}^bTX\t {}^bTY.
\label{gc3eq31}
\e
Considering local models as in Proposition \ref{gc3prop7}, it is easy to check that \eq{gc3eq31} is a diffeomorphism. We sometimes use \eq{gc3eq31} to identify ${}^bT(X\t Y)$ with ${}^bTX\t {}^bTY$, and ${}^bT_{(x,y)}(X\t Y)$ with ${}^bT_xX\op {}^bT_yY$. The functor ${}^bT$ preserves products and direct products, in the sense of Proposition~\ref{gc2prop1}(f). 
\label{gc3ex12}
\end{ex}

\subsection{\texorpdfstring{B-normal bundles of $C_k(X)$}{B-normal bundles of corners}}
\label{gc36}

In \S\ref{gc24}, if $X$ is a manifold with (ordinary) corners, and $\Pi:C_k(X)\ra X$ the projection, we constructed a canonical rank $k$ vector bundle $\pi:{}^bN_{C_k(X)}\ra C_k(X)$, the {\it b-normal bundle of\/ $C_k(X)$ in\/} $X$, fitting into an exact sequence
\e
\smash{\xymatrix@C=19pt{ 0 \ar[r] & {}^bN_{C_k(X)} \ar[rr]^(0.45){{}^bi_T} &&
\Pi^*({}^bTX) \ar[rr]^{{}^b\pi_T} && {}^bT(C_k(X)) \ar[r] & 0, }}
\label{gc3eq32}
\e
and a {\it monoid bundle\/} $M_{C_k(X)}\subseteq{}^bN_{C_k(X)}$, a submanifold of ${}^bN_{C_k(X)}$ such that $\pi:M_{C_k(X)}\ra C_k(X)$ is a locally constant family of toric monoids over $C_k(X)$. We showed that ${}^bN_{C(X)}=\coprod_{k\ge 0}{}^bN_{C_k(X)}$ and $M_{C(X)}=\coprod_{k\ge 0}M_{C_k(X)}$ are functorial over interior $f:X\ra Y$, as for the corner functor~$C:\Manc\ra\cManc$. 

We now generalize all this to manifolds with g-corners. As for ${}^bTX$ in \S\ref{gc35} we do this in stages: first we define ${}^bN_{C_k(X)},M_{C_k(X)}$ just as sets, and $\pi:{}^bN_{C_k(X)}\ra C_k(X)$, ${}^bN_{C(f)}:{}^bN_{C(X)}\ra {}^bN_{C(Y)}$, $M_{C(f)}:M_{C(X)}\ra M_{C(Y)}$ just as maps. Then after some calculations, in Definition \ref{gc3def16} we will give ${}^bN_{C_k(X)},M_{C_k(X)}$ the structure of manifolds with g-corners, such that $\pi,{}^bN_{C(f)},M_{C(f)}$ are smooth.

\begin{dfn} Let $X$ be a manifold with g-corners, and let $(x,\ga)\in C_k(X)$ for $k\ge 0$. As in Definition \ref{gc2def7} we have $\R$-algebras $C^\iy_x(X)$ of germs $[a]$ at $x$ of smooth functions $a:X\ra\R$, and $C^\iy_{(x,\ga)}\bigl(C_k(X)\bigr)$ of germs $[b]$ at $(x,\ga)$ of smooth functions $b:C_k(X)\ra\R$. Then composition with $\Pi$ defines a map
\e
\Pi^*:C^\iy_x(X)\longra C^\iy_{(x,\ga)}\bigl(C_k(X)\bigr),\quad \Pi^*:[a]\longmapsto [a\ci\Pi].
\label{gc3eq33}
\e
This is an $\R$-algebra morphism.

As in Definition \ref{gc2def8} we have monoids $\cI_x(X)$ of germs $[c]$ at $x$ of interior functions $c:X\ra[0,\iy)$, and $\cI_{(x,\ga)}(C_k(X))$ of germs $[d]$ at $(x,\ga)$ of interior functions $d:C_k(X)\ra[0,\iy)$. If $x\in U\subseteq X$ is open and $c:U\ra[0,\iy)$ is interior, setting $V=\Pi^{-1}(U)\subseteq C_k(X)$ and $d=c\ci\Pi:V\ra[0,\iy)$, then $(x,\ga)\in V\subseteq C_k(X)$ is open and either $d$ is interior near $(x,\ga)$, or $d=0$ near $(x,\ga)$. Thus composition with $\Pi$ defines a map
\e
\begin{split}
\Pi^*&:\cI_x(X)\longra \cI_{(x,\ga)}(C_k(X))\amalg \{0\}, \\ \Pi^*&:[c]\longmapsto[c\ci\Pi].
\end{split}
\label{gc3eq34}
\e
This is a monoid morphism, making $\cI_{(x,\ga)}(C_k(X))\amalg \{0\}$ into a monoid by setting $[d]\cdot 0=0$ for all $[d]\in\cI_{(x,\ga)}(C_k(X))$. (Note that $[1]\in\cI_{(x,\ga)}(C_k(X))$ is the monoid identity element, not 0.) Define
\ea
\begin{split}
{}^bN_{C_k(X)}\vert_{(x,\ga)}&=\bigl\{\al\in \Hom_\Mon\bigl(\cI_x(X),\R):\\
&\qquad\qquad\qquad \al\vert_{(\Pi^*)^{-1}[\cI_{(x,\ga)}(C_k(X))]}=0\bigr\}, 
\end{split}
\label{gc3eq35}\\
\begin{split}
M_{C_k(X)}\vert_{(x,\ga)}&=\bigl\{\al\in \Hom_\Mon\bigl(\cI_x(X),\N):\\
&\qquad\qquad\qquad \al\vert_{(\Pi^*)^{-1}[\cI_{(x,\ga)}(C_k(X))]}=0\bigr\}.
\end{split}
\label{gc3eq36}
\ea
Then ${}^bN_{C_k(X)}\vert_{(x,\ga)}$ is a real vector space, and $M_{C_k(X)}\vert_{(x,\ga)}$ is a monoid, and $M_{C_k(X)}\vert_{(x,\ga)}\subseteq {}^bN_{C_k(X)}\vert_{(x,\ga)}$ as $\N\subset\R$. In Example \ref{gc3ex13} we will show that ${}^bN_{C_k(X)}\vert_{(x,\ga)}\cong\R^k$, and $M_{C_k(X)}\vert_{(x,\ga)}$ is a toric monoid of rank $k$, with
\e
{}^bN_{C_k(X)}\vert_{(x,\ga)}\cong M_{C_k(X)}\vert_{(x,\ga)}\ot_\N\R.
\label{gc3eq37}
\e

Equation \eq{gc3eq22} defines ${}^bT_xX$ as a vector space of pairs $(v,v')$. We claim that if $\al\in {}^bN_{C_k(X)}\vert_{(x,\ga)}$, then $(0,\al)\in{}^bT_xX$. To see this, note that the first three conditions of \eq{gc3eq22} for $(0,\al)$ are immediate, and the final two follow from the fact that if $[c]\in\cI_x(X)$ with $c(x)\ne 0$ then $\Pi^*([c])\in \cI_{(x,\ga)}(C_k(X))$, so $\al([c])=0$. Thus we may define a linear map 
\e
{}^bi_T\vert_{(x,\ga)}:{}^bN_{C_k(X)}\vert_{(x,\ga)}\longra {}^bT_xX,
\quad {}^bi_T\vert_{(x,\ga)}:\al\longmapsto(0,\al).
\label{gc3eq38}
\e

Now let $(v,v')\in {}^bT_xX$. We will show in Example \ref{gc3ex13} that there is a unique $(w,w')\in {}^bT_{(x,\ga)}C_k(X)$ such that $w(\Pi^*([a]))=v([a])$ for all $[a]\in C^\iy_x(X)$ and $w'(\Pi^*([b]))=v'([b])$ for all $[b]\in \cI_x(X)$ with $\Pi^*([b])\ne 0$, where the $\Pi^*$ maps are as in \eq{gc3eq33}--\eq{gc3eq34}. Define a linear map ${}^b\pi_T\vert_{(x,\ga)}:{}^bT_xX\ra{}^bT_{(x,\ga)}C_k(X)$ by ${}^b\pi_T\vert_{(x,\ga)}:(v,v')\mapsto(w,w')$. So we have a sequence 
\e
\smash{\xymatrix@C=18pt{ 0 \ar[r] & {}^bN_{C_k(X)}\vert_{(x,\ga)} \ar[rr]^(0.55){{}^bi_T\vert_{(x,\ga)}} && {}^bT_xX \ar[rr]^(0.35){{}^b\pi_T\vert_{(x,\ga)}} && {}^bT_{(x,\ga)}(C_k(X)) \ar[r] & 0 }}
\label{gc3eq39}
\e
of real vector spaces, as in \eq{gc3eq32}. It follows from the definitions that \eq{gc3eq39} is a complex. We will show in Example \ref{gc3ex13} that \eq{gc3eq39} is exact.

Just as sets, define the {\it b-normal bundle\/} ${}^bN_{C_k(X)}$ and {\it monoid bundle\/} $M_{C_k(X)}$ of $C_k(X)$ in $X$ by
\begin{align*}
{}^bN_{C_k(X)}&=\bigl\{(x,\ga,\al):(x,\ga)\in C_k(X),\;\> \al\in {}^bN_{C_k(X)}\vert_{(x,\ga)}\bigr\},\\ 
M_{C_k(X)}&=\bigl\{(x,\ga,\al):(x,\ga)\in C_k(X),\;\> \al\in M_{C_k(X)}\vert_{(x,\ga)}\bigr\},
\end{align*}
so that $M_{C_k(X)}\subseteq {}^bN_{C_k(X)}$. Define projections $\pi:{}^bN_{C_k(X)}\ra C_k(X)$ and $\pi:M_{C_k(X)}\ra C_k(X)$ by $\pi:(x,\ga,\al)\mapsto(x,\ga)$. Define ${}^bi_T:{}^bN_{C_k(X)}\ra\Pi^*({}^bTX)$ by ${}^bi_T:(x,\ga,\al)\mapsto \bigl((x,\ga),{}^bi_T\vert_{(x,\ga)}(\al)\bigr)$ and ${}^b\pi_T:\Pi^*({}^bTX)\ra {}^bT(C_k(X))$ by ${}^b\pi_T:\bigl((x,\ga),(v,v')\bigr)\mapsto \bigl((x,\ga),{}^b\pi_T\vert_{(x,\ga)}(v,v')\bigr)$. In Definition \ref{gc3def16} we will make ${}^bN_{C_k(X)},M_{C_k(X)}$ into manifolds with g-corners, such that $\pi:{}^bN_{C_k(X)}\ra C_k(X)$ is smooth and makes ${}^bN_{C_k(X)}$ into a vector bundle over $C_k(X)$ of rank $k$, and $\pi:M_{C_k(X)}\ra C_k(X)$ is smooth and makes $M_{C_k(X)}$ into a bundle of toric monoids over $C_k(X)$, and \eq{gc3eq32} is an exact sequence of vector bundles.

Define ${}^bN_{C(X)}=\coprod_{k=0}^{\dim X}{}^bN_{C_k(X)}$, with projection $\pi:{}^bN_{C(X)}\ra C(X)=\coprod_{k=0}^{\dim X}C_k(X)$ given by $\pi\vert_{{}^bN_{C_k(X)}}=\pi:{}^bN_{C_k(X)}\ra C_k(X)$. Set $M_{C(X)}=\coprod_{k=0}^{\dim X}M_{C_k(X)}$, so that $M_{C(X)}\subseteq {}^bN_{C(X)}$, and define $\pi=\pi\vert_{M_{C(X)}}:M_{C(X)}\ra C(X)$. Later we will see that ${}^bN_{C(X)}$ is a manifold with g-corners of dimension $\dim X$, with $\pi:{}^bN_{C(X)}\ra C(X)$ is a vector bundle of mixed rank, and $M_{C(X)}$ is an object in $\cMangc$, with $\pi:M_{C(X)}\ra C(X)$ a bundle of toric monoids.

Next let $f:X\ra Y$ be an interior map of manifolds with g-corners, so that $C(f):C(X)\ra C(Y)$ is a morphism in $\cMangc$ as in \S\ref{gc34}. Define a map of sets ${}^bN_{C(f)}:{}^bN_{C(X)}\ra{}^bN_{C(Y)}$ as in Definition \ref{gc2def12} by ${}^bN_{C(f)}:(x,\ga,\al)\mapsto (f(x),f_*(\ga),\al\ci f^*)$, where $f^*:\cI_{f(x)}(Y)\ra\cI_x(X)$ maps $[c]\mapsto [c\ci f]$, and is well-defined as $f$ is interior. From \eq{gc3eq35} we can check that if $\al\in {}^bN_{C_k(X)}\vert_{(x,\ga)}$ then $\al\ci f^*\in{}^bN_{C_l(Y)}\vert_{(f(x),f^*(\ga))}$. As $C(f):C(X)\ra C(Y)$ maps $C(f):(x,\ga)\ra (f(x),f_*(\ga))$, we have $\pi\ci {}^bN_{C(f)}=C(f)\ci\pi:{}^bN_{C(X)}\ra C(Y)$. From the definitions of ${}^bTf$ in Definition \ref{gc3def13} and ${}^bi_T$ above, we see that the following commutes:
\e
\begin{gathered}
\xymatrix@C=90pt@R=13pt{*+[r]{{}^bN_{C(X)}}
\ar[d]^{{}^bN_{C(f)}} \ar[r]_{{}^bi_T} &
*+[l]{{}^bTX} \ar[d]_{{}^bTf} \\
*+[r]{{}^bN_{C(Y)}} \ar[r]^{{}^bi_T} & *+[l]{{}^bTY.} }
\end{gathered}
\label{gc3eq40}
\e
This characterizes ${}^bN_{C(f)}$, as ${}^bi_T$ in \eq{gc3eq38} is injective.

Now $M_{C(X)}$ is the subset of points $(x,\ga,\al)$ in ${}^bN_{C(X)}$ such that $\al$ maps to $\N\subset\R$. If $\al$ maps to $\N$ then $\al\ci f^*$ maps to $\N$, so ${}^bN_{C(f)}$ maps $M_{C(X)}\ra M_{C(Y)}$. Define $M_{C(f)}:M_{C(X)}\ra M_{C(Y)}$ by $M_{C(f)}={}^bN_{C(f)}\vert_{M_{C(X)}}$.

If $g:Y\ra Z$ is a second interior map of manifolds with g-corners, as $\al\ci f^*\ci g^*=\al\ci(g\ci f)^*$ we see that ${}^bN_{C(g\ci f)}={}^bN_{C(g)}\ci{}^bN_{C(f)}:{}^bN_{C(X)}\ra{}^bN_{C(Z)}$, which implies that $M_{C(g\ci f)}=M_{C(g)}\ci M_{C(f)}$. Also ${}^bN_{C(\id_X)}=\id_{{}^bN_{C(X)}}:{}^bN_{C(X)}\ra{}^bN_{C(X)}$, and $M_{C(\id_X)}=\id_{M_{C(X)}}$. Hence the assignments $X\mapsto {}^bN_{C(X)}$, $f\mapsto{}^bN_{C(f)}$ and $X\mapsto M_{C(X)}$, $f\mapsto M_{C(f)}$ are functorial.

Now let $X$ be a manifold with (ordinary) corners. Then \S\ref{gc24} defined a rank $k$ vector bundle ${}^bN_{C_k(X)}\ra C_k(X)$. Comparing the top row of \eq{gc2eq23} with \eq{gc3eq32}, and noting that the definitions of ${}^bTX$ agree in \S\ref{gc23} and \S\ref{gc35} agree for manifolds with corners, we see that there is a canonical identification between ${}^bN_{C_k(X)}$ defined in \S\ref{gc24}, and ${}^bN_{C_k(X)}$ defined above. One can show this identifies the subsets $M_{C_k(X)}\subset{}^bN_{C_k(X)}$ in \S\ref{gc24} and above. Comparing \eq{gc2eq24} and \eq{gc3eq40}, we see that for $f:X\ra Y$ an interior map of manifolds with corners, the maps ${}^bN_{C(f)}$ defined in \S\ref{gc24} and above coincide under these canonical identifications.

\label{gc3def15}
\end{dfn}

We work out the ideas of Definition \ref{gc3def15} explicitly when~$X=X_P$.

\begin{ex} Let $P$ be a weakly toric monoid, so that $X_P$ is a manifold with g-corners as in Example \ref{gc3ex4}. Example \ref{gc3ex7} gives a canonical diffeomorphism
\e
C_k(X_P)\cong \coprod\nolimits_{\text{faces $F$ of $P$: $\codim F=k$}}X_F.
\label{gc3eq41}
\e
Suppose $(x,\ga)\in C_k(X_P)$ is identified with $x'\in X_F$ by \eq{gc3eq41}, for some face $F$ of $P$. Let $[a]\in\cI_x(X_P)$. Then by Definition \ref{gc3def5}, there exist an open neighbourhood $U$ of $x$ in $X_P$, an element $p\in P$ and a smooth function $h:U\ra(0,\iy)$ such that $a=\la_p\vert_U\cdot h:U\ra[0,\iy)$. Then 
\begin{align*}
\Pi^*([a])\cong [(\la_p\vert_U)\cdot h\ci i_F^P]&\in \cI_{(x,\ga)}(C_k(X_P))\amalg\{0\}\\
&\cong \cI_{x'}(X_F)\amalg\{0\},
\end{align*}
where $i_F^P:X_F\hookra X_P$ is as in Definition \ref{gc3def9}. But
\begin{equation*}
\la_p\ci i_F^P=\begin{cases} \la_p', & p\in F, \\ 0, & p\notin F, \end{cases}
\end{equation*}
where $\la_p'$ means $\la_p$, but on $X_F$ rather than $X_P$. Therefore $\Pi^*([\la_p\cdot h])$ lies in $\cI_{(x,\ga)}(C_k(X_P))$ if and only if $p\in F$. So \eq{gc3eq35} becomes
\begin{align*}
{}^bN_{C_k(X_P)}\vert_{(x,\ga)}=\bigl\{\text{$\al:\{[\la_p\cdot h]:p\in P$, $h$ a germ of positive smooth functions}&\\
\text{near $x$ in $X_P\}\ra\R$ is a monoid morphism, and $\al([\la_p\cdot h])=0$ if $p\in F$}\bigr\}&.
\end{align*}
If $\al\in {}^bN_{C_k(X_P)}\vert_{(x,\ga)}$ then as $\al$ is a monoid morphism and $0\in F$
\begin{equation*}
\al\bigl([\la_p\cdot h]\bigr)=\al([\la_p])+\al([\la_0\cdot h])=\al([\la_p])+0=\al([\la_p]).
\end{equation*}
Thus we have canonical isomorphisms
\e
{}^bN_{C_k(X_P)}\vert_{(x,\ga)}\cong\bigl\{\be\in\Hom_\Mon(P,\R):\be\vert_F=0\bigr\}\cong\Hom(P^\gp/F^\gp,\R).
\label{gc3eq42}
\e
Here in the first step we identify $\al\in{}^bN_{C_k(X_P)}\vert_{(x,\ga)}$ with $\be:P\ra\R$ by if $\al([\la_p\cdot h])=\be(p)$ for all $p,h$. In the second step, such $\be:P\ra\R$ with $\be\vert_F=0$ factor through $\be':P^\gp/F^\gp\ra\R$ as $\R$ is a group. Similarly we have
\e
M_{C_k(X_P)}\vert_{(x,\ga)}\cong\bigl\{\be\in\Hom_\Mon(P,\N):\be\vert_F=0\bigr\}=F^\w,
\label{gc3eq43}
\e
where $F^\w$ is as in Proposition \ref{gc2prop2}(c). It is a toric monoid of rank $k$. We have ${}^bN_{C_k(X_P)}\vert_{(x,\ga)}\cong M_{C_k(X_P)}\vert_{(x,\ga)}\ot_\N\R$, proving~\eq{gc3eq37}.

Combining \eq{gc3eq41}, \eq{gc3eq42} and \eq{gc3eq43} gives identifications like $\Psi_P$ in~\eq{gc3eq27}:
\ea
\Psi_P'&:{}^bN_{C_k(X_P)}\longra\coprod_{\text{faces $F$ of $P$: $\codim F=k$}}X_F\t\Hom(P^\gp/F^\gp,\R),
\label{gc3eq44}\\
\Psi_P''&:M_{C_k(X_P)}\longra \coprod_{\text{faces $F$ of $P$: $\codim F=k$}}X_F\t F^\w.
\label{gc3eq45}
\ea
These give ${}^bN_{C_k(X_P)}$ and $M_{C_k(X_P)}$ the structure of manifolds with g-corners of dimensions $\rank P$ and $\rank P-k$, respectively. The projections $\pi:{}^bN_{C_k(X_P)}\ra C_k(X_P)$, $\pi:M_{C_k(X_P)}\ra C_k(X_P)$ are identified with the projections $X_F\t\Hom(P^\gp/F^\gp,\R)\ra X_F$, $X_F\t F^\w\ra X_F$ for each $F$, and so are smooth.

For the case $P=\N^k\t\Z^{n-k}$, so that $X_P\cong[0,\iy)^k\t\R^{n-k}$ is a manifold with (ordinary) corners, it is easy to check that the manifold with corner structures on ${}^bN_{C_k(X_P)}$ and $M_{C_k(X_P)}$ above coincide with those in~\S\ref{gc24}.

Continuing with the notation above for $(x,\ga)\in C_k(X_P)$ identified with $x'\in X_F$, Proposition \ref{gc3prop7} defined isomorphisms 
\begin{equation*}
{}^bT_xX_P\cong\Hom(P^\gp,\R),\quad\text{and}\quad
{}^bT_{(x,\ga)}(C_k(X_P))\cong {}^bT_{x'}X_F\cong \Hom(F^\gp,\R).
\end{equation*}
Under these isomorphisms and \eq{gc3eq42}, one can show that equation \eq{gc3eq39} is identified with the natural exact sequence
\begin{equation*}
\smash{\xymatrix@C=10pt{ 0 \ar[r] & \Hom(P^\gp/F^\gp,\R) \ar[rr]^(0.55){\ci\pi} && \Hom(P^\gp,\R) \ar[rr]^(0.5){\vert_{F^\gp}} && \Hom(F^\gp,\R) \ar[r] & 0, }}
\end{equation*}
where $\pi:P^\gp\ra P^\gp/F^\gp$ is the projection. Hence \eq{gc3eq39} is exact.
\label{gc3ex13}
\end{ex}

Here is an analogue of Lemma \ref{gc3lem4}. It can be proved by the same method, using the fact that ${}^bN_{C(g)}$ defined in \S\ref{gc24} for manifolds with (ordinary) corners is a smooth map, and agrees with Definition \ref{gc3def15} in this case.

\begin{lem} Let\/ $P,Q$ be weakly toric monoids, $U\subseteq X_P,$ $V\subseteq X_Q$ be open, and\/ $f:U\ra V$ be an interior map, in the sense of Definition\/ {\rm\ref{gc3def5}}. Then the composition of maps
\begin{equation*}
\xymatrix@C=160pt@R=15pt{
*+[r]{\coprod\limits_{\text{faces $F$ of $P$}}(i_F^P)^{-1}(U)\!\t\!\Hom(P^\gp/F^\gp,\R)} \ar@<2.8ex>@{.>}[d] \ar[r]_(0.75){\Psi_P'\vert_{\cdots}^{-1}}^(0.75)\cong & *+[l]{{}^bN_{C(U)}} \ar[d]_{{}^bN_{C(f)}} \\
*+[r]{\coprod\limits_{\text{faces $G$ of $Q$}}(i_G^Q)^{-1}(V)\!\t\!\Hom(Q^\gp/G^\gp,\R)} & *+[l]{{}^bN_{C(V)}} \ar[l]_(0.25){\Psi_Q'\vert_{\cdots}}^(0.25)\cong  }
\end{equation*}
is an interior map of manifolds with g-corners in the sense of\/ {\rm\S\ref{gc32}--\S\ref{gc33},} where ${}^bN_{C(f)}$ is as in Definition\/ {\rm\ref{gc3def15}} and\/ $\Psi'_P,\Psi'_Q$ as in Example\/~{\rm\ref{gc3ex13}}.
\label{gc3lem5}
\end{lem}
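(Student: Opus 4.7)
The plan is to follow the proof of Lemma \ref{gc3lem4} almost verbatim, replacing the b-tangent functor ${}^bT$ by the b-normal corner functor ${}^bN_C$, and invoking the final assertion of Definition \ref{gc3def15}, which states that for manifolds with ordinary corners the construction of ${}^bN_{C(-)}$ agrees with the one in \S\ref{gc24}. First, using Proposition \ref{gc3prop3}(c), after choosing generators $p_1,\ldots,p_m$ of $P$ and $q_1,\ldots,q_n$ of $Q$, I obtain an open neighbourhood $W$ of $(\la_{p_1}\t\cdots\t\la_{p_m})(U)$ in $[0,\iy)^m$ and a smooth map $g:W\ra[0,\iy)^n$ with $(\la_{q_1}\t\cdots\t\la_{q_n})\ci f=g\ci(\la_{p_1}\t\cdots\t\la_{p_m})$, exactly as in \eqref{gc3eq28}. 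Since $f$ is interior and $\la_{p_i},\la_{q_j}$ are interior, $g$ is interior near $(\la_{p_1}\t\cdots\t\la_{p_m})(U^\ci)$; after shrinking $W$ if necessary, $g$ may be taken to be an interior map of manifolds with corners on all of $W$.

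Next I apply the functor ${}^bN_C$ (on $\cMangcin$, which restricts to the ${}^bN_C$ on $\cMancin$ of \S\ref{gc24}) to the resulting commutative square, producing a commutative diagram that relates ${}^bN_{C(f)}$ to ${}^bN_{C(g)}$. By the compatibility statement at the end of Definition \ref{gc3def15}, the map ${}^bN_{C(g)}$ on the right coincides with the one of \S\ref{gc24} and is therefore an interior smooth map of manifolds with corners, by the functoriality established in Definition \ref{gc2def12}. Under the identifications $\Psi'_P,\Psi'_Q$ of \eqref{gc3eq44}, the horizontal maps in this square decompose as disjoint unions indexed by faces $F\subseteq P$ (respectively $G\subseteq Q$) of products of base maps $(i_F^P)^{-1}(U)\hookra(i_{\tilde F}^{\N^m})^{-1}(W)$ with linear embeddings $\Hom(P^\gp/F^\gp,\R)\hookra\Hom((\N^m)^\gp/\tilde F^\gp,\R)$, where $\tilde F:=\pi^{-1}(F)\subseteq\N^m$ for the surjection $\pi:\N^m\twoheadrightarrow P$ induced by the choice of generators. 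These horizontal maps are manifestly smooth and interior in the sense of \S\ref{gc32}.

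To conclude, I would apply the criterion of Proposition \ref{gc3prop3}(c) componentwise, taking as weakly toric monoids the products $F\t P^\gp/F^\gp$ and $G\t Q^\gp/G^\gp$ that appear in the targets of $\Psi'_P,\Psi'_Q$: since the outer rectangle commutes, the right-hand vertical side is smooth and interior, and the horizontal sides have the explicit smooth form above, the left vertical side, which is the map in question, is also smooth and interior. The main obstacle is the face-to-face bookkeeping needed in paragraph two: one must verify that a local $k$-corner component $\ga$ of $U$ at $x$ corresponding to a face $F\subseteq P$ is mapped by the corner functor $C(\la_{p_1}\t\cdots\t\la_{p_m})$ to the component indexed by $\tilde F=\pi^{-1}(F)\subseteq\N^m$, and that the explicit formula \eqref{gc2eq25} for ${}^bN_{C(g)}$ restricts under the $\Psi'$-identifications to the abstract description of ${}^bN_{C(f)}$ coming from composition with $f^*:\cI_{f(x)}(Y)\ra\cI_x(X)$ in Definition \ref{gc3def15}. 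This can be carried out by direct computation using \eqref{gc3eq43}, which identifies $M_{C_k(X_P)}\vert_{(x,\ga)}$ with the dual face $F^\w$, together with the fact that the dualization functor $\D$ of \S\ref{gc312} sends the surjection $\pi:\N^m\twoheadrightarrow P$ to an inclusion $P^\vee\hookra(\N^m)^\vee\cong\N^m$ which exactly matches the horizontal embeddings above.
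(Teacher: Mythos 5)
Your proof follows exactly the method the paper indicates (mimicking the proof of Lemma \ref{gc3lem4} with ${}^bN_C$ in place of ${}^bT$, and invoking the agreement of Definition \ref{gc3def15} with \S\ref{gc24} on ordinary corners), filling in considerably more detail, particularly the face-to-face bookkeeping under $\pi:\N^m\twoheadrightarrow P$. The argument is correct.
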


\begin{dfn} Let $X$ be a manifold with g-corners, so that ${}^bN_{C_k(X)},M_{C_k(X)}$ are defined as sets in Definition \ref{gc3def15}. Suppose $(P,U,\phi)$ is a g-chart on $X$. For each face $F$ of $P$ with $\codim F=k$, define a g-chart on ${}^bN_{C_k(X)}$ 
\e
\bigl(F\t P^\gp/F^\gp,(i_F^P)^{-1}(U)\t\Hom(P^\gp/F^\gp,\R),{}^bN_{C(\phi)}\ci (\Psi_P')^{-1}\vert_{\cdots}\bigr).
\label{gc3eq46}
\e
Here $(i_F^P)^{-1}(U)\t\Hom(P^\gp/F^\gp,\R)$ is open in $X_F\t\Hom(P^\gp/F^\gp,\R)\cong X_F\t X_{P^\gp/F^\gp}\cong X_{F\t P^\gp/F^\gp}$, we identify $(i_F^P)^{-1}(U)\t\Hom(P^\gp/F^\gp,\R)$ with an open set in $X_{F\t P^\gp/F^\gp}$, and $\Psi_P'$ is as in \eq{gc3eq44}. Similarly, for each $\al\in F^\w$, with $\Psi_P''$ as in \eq{gc3eq45}, define a g-chart on $M_{C_k(X)}$ 
\e
\bigl(F,(i_F^P)^{-1}(U),{}^bM_{C(\phi)}\ci (\Psi_P'')^{-1}\ci(\id\t\al)\bigr),
\label{gc3eq47}
\e
where $\id\t\al:(i_F^P)^{-1}(U)\ra X_F\t F^\w$ maps $\id\t\al:y\mapsto(y,\al)$.

We claim that ${}^bN_{C_k(X)},M_{C_k(X)}$ have unique structures of manifolds with g-corners (including a topology), of dimensions $\dim X$ and $\dim X-k$ respectively, such that \eq{gc3eq46}--\eq{gc3eq47} are g-charts on ${}^bN_{C_k(X)},M_{C_k(X)}$ for all g-charts $(P,U,\phi)$ on $X$, faces $F$ of $P$, and $\al\in F^\w$. To see this, note that if $(P,U,\phi),(Q,V,\psi)$ are g-charts on $X$, then they are compatible, so the change of g-charts morphism $\psi^{-1}\ci\phi:\phi^{-1}\bigl(\phi(U)\cap\psi(V)\bigr)\ra\psi^{-1}\bigl(\phi(U)\cap\psi(V)\bigr)$ is a diffeomorphism between open subsets of $X_P,X_Q$. Applying Lemma \ref{gc3lem5} to $\psi^{-1}\ci\phi$ and its inverse implies that the change of charts morphisms between the g-charts \eq{gc3eq46}--\eq{gc3eq47} from $(P,U,\phi),(Q,V,\psi)$ are also diffeomorphisms, so \eq{gc3eq46}--\eq{gc3eq47} and their analogues for $(Q,V,\psi)$ are compatible.

Thus, the g-charts \eq{gc3eq46} on ${}^bN_{C_k(X)}$ from g-charts $(P,U,\phi)$ on $X$ are all pairwise compatible. These g-charts also cover ${}^bN_{C_k(X)}$, since for fixed $(P,U,\phi)$ the union over all faces $F$ of image of \eq{gc3eq46} is ${}^bN_{C_k(\phi(U))}\subseteq {}^bN_{C_k(X)}$, and the $\phi(U)$ cover $X$, so the ${}^bN_{C_k(\phi(U))}$ cover ${}^bN_{C_k(X)}$. Since $X$ is Hausdorff and second countable, one can show that there is a unique Hausdorff, second countable topology on ${}^bN_{C_k(X)}$ such that for all g-charts \eq{gc3eq46}, ${}^bN_{C(\phi)}\ci (\Psi_P')^{-1}\vert_{\cdots}$ is a homeomorphism with an open set. Therefore the g-charts \eq{gc3eq46} form a g-atlas on ${}^bN_{C_k(X)}$ with this topology, which extends to a unique maximal g-atlas, making ${}^bN_{C_k(X)}$ into a manifold with g-corners. The same argument works for $M_{C_k(X)}$, using the g-charts~\eq{gc3eq47}.

Taking unions now shows that ${}^bN_{C(X)}=\coprod_{k\ge 0}{}^bN_{C_k(X)}$ is a manifold with g-corners of dimension $\dim X$, and $M_{C(X)}=\coprod_{k\ge 0}M_{C_k(X)}$ an object of~$\cMangc$.

Definition \ref{gc3def15} also defined an inclusion of sets $M_{C_k(X)}\hookra {}^bN_{C_k(X)}$, and maps of sets $\pi:{}^bN_{C_k(X)}\ra C_k(X)$, $\pi:M_{C_k(X)}\ra C_k(X)$, ${}^bi_T:{}^bN_{C_k(X)}\ra\Pi^*({}^bTX)$ and ${}^b\pi_T:\Pi^*({}^bTX)\ra {}^bT(C_k(X))$. Example \ref{gc3ex13} showed that in the local models $X_P$, these are smooth, interior maps, with $M_{C_k(X_P)}\hookra{}^bN_{C_k(X_P)}$ an embedded submanifold, $\pi:{}^bN_{C_k(X_P)}\ra C_k(X_P)$ a vector bundle of rank $k$, and $\pi:M_{C_k(X_P)}\ra C_k(X_P)$ a locally constant bundle of toric monoids, and ${}^bi_T,{}^b\pi_T$ bundle-linear and forming an exact sequence \eq{gc3eq32}. Thus, using the g-charts \eq{gc3eq46}--\eq{gc3eq47}, we see that for general manifolds with g-corners $X$, $M_{C_k(X)}$ is an embedded submanifold of ${}^bN_{C_k(X)}$, and $\pi:{}^bN_{C_k(X)}\ra C_k(X)$ is interior and makes ${}^bN_{C_k(X)}$ into a vector bundle of rank $k$, and $\pi:M_{C_k(X)}\ra C_k(X)$ is interior and a locally constant bundle of toric monoids, and ${}^bi_T,{}^b\pi_T$ are morphisms of vector bundles in an exact sequence~\eq{gc3eq32}.

Since $\pi:{}^bN_{C_k(X)}\ra C_k(X)$ is a vector bundle, it has a dual vector bundle, which we call the {\it b-conormal bundle\/} and write as $\pi:{}^bN^*_{C_k(X)}\ra C_k(X)$. Similarly, $\pi:M_{C_k(X)}\ra C_k(X)$ has a natural dual bundle $\pi:M_{C_k(X)}^\vee\ra C_k(X)$, the {\it comonoid bundle}, with fibres $M_{C_k(X)}^\vee\vert_{(x,\ga)}$ the dual toric monoids $M_{C_k(X)}\vert_{(x,\ga)}^\vee$. Equation \eq{gc3eq37} implies there is a natural inclusion $M_{C_k(X)}^\vee\hookra {}^bN^*_{C_k(X)}$ as an embedded submanifold.

Now let $f:X\ra Y$ be an interior map of manifolds with g-corners. Then for all g-charts $(P,U,\phi)$ on $X$ and $(Q,V,\psi)$ on $Y$, the map $\psi^{-1}\ci f\ci\phi$ in \eq{gc3eq5} is an interior map between open subsets of $X_P,X_Q$. Applying Lemma \ref{gc3lem5} shows that the corresponding maps for ${}^bN_{C(f)}:{}^bN_{C(X)}\ra{}^bN_{C(Y)}$ and $M_{C(f)}:M_{C(X)}\ra M_{C(Y)}$ and the g-charts \eq{gc3eq46}--\eq{gc3eq47} from $(P,U,\phi),(Q,V,\psi)$ are also interior. As these g-charts cover ${}^bN_{C(X)},{}^bN_{C(Y)},M_{C(X)},M_{C(Y)}$, this proves that ${}^bN_{C(f)}:{}^bN_{C(X)}\ra{}^bN_{C(Y)}$ and $M_{C(f)}:M_{C(X)}\ra M_{C(Y)}$ are interior morphisms in $\Mangc$ and $\cMangc$.

Since $\pi\ci {}^bN_{C(f)}=C(f)\ci\pi$ and ${}^bN_{C(f)}$ is bundle-linear, we may also regard ${}^bN_{C(f)}$ as a morphism ${}^bN_{C(f)}:{}^bN_{C(X)}\ra C(f)^*({}^bN_{C(Y)})$ of vector bundles of mixed rank over $C(X)$, with dual morphism ${}^bN^*_{C(f)}:C(f)^*({}^bN_{C(Y)}^*)\ra{}^bN^*_{C(X)}$. Similarly, we can regard $M_{C(f)}$ as a morphism $M_{C(f)}:M_{C(X)}\ra C(f)^*(M_{C(Y)})$ of toric monoid bundles over $C(X)$, with dual morphism $M_{C(f)}^\vee:C(f)^*(M^\vee_{C(Y)})\ra M^\vee_{C(X)}$.

Definition \ref{gc3def13} showed that the maps $N_{C(f)},M_{C(f)}$ are functorial. Thus
$X\mapsto {}^bN_{C(X)}$, $f\mapsto{}^bN_{C(f)}$ defines functors ${}^bN_C:\Mangcin\ra\Mangcin$ and ${}^bN_C:\cMangcin\ra\cMangcin$, which we call the {\it b-normal corner functors}. Similarly $X\mapsto M_{C(X)}$, $f\mapsto M_{C(f)}$ defines functors
$M_C:\Mangcin,\cMangcin\ra\cMangcin$, which we call the {\it monoid corner functors}.
\label{gc3def16}
\end{dfn}

We show ${}^bN_{C_k(X)},M_{C(X)}$ are compatible with products.

\begin{ex} Let $X,Y$ be manifolds with g-corners, and consider the product $X\t Y$. Then $C(X\t Y)\cong C(X)\t C(Y)$, as in \S\ref{gc34}. The projections $\pi_X:X\t Y\ra X$, $\pi_Y:X\t Y\ra Y$ are interior maps, so we may form ${}^bN_{C(\pi_X)},{}^bN_{C(\pi_Y)},M_{C(\pi_X)},M_{C(\pi_Y)}$, and take the direct products
\ea
({}^bN_{C(\pi_X)},{}^bN_{C(\pi_Y)})&:{}^bN_{C(X\t Y)}\longra {}^bN_{C(X)}\t {}^bN_{C(Y)},
\label{gc3eq48}\\
(M_{C(\pi_X)},M_{C(\pi_Y)})&:M_{C(X\t Y)}\longra M_{C(X)}\t M_{C(Y)}.
\label{gc3eq49}
\ea
Considering local models as in Example \ref{gc3ex13}, we find that \eq{gc3eq48}--\eq{gc3eq49} are diffeomorphisms. We sometimes use \eq{gc3eq48}--\eq{gc3eq49} to identify ${}^bN_{C(X\t Y)},\ab M_{C(X\t Y)}$ with ${}^bN_{C(X)}\t {}^bN_{C(Y)},M_{C(X)}\t M_{C(Y)}$. The functors ${}^bN_C,M_C$ preserve products and direct products, in the sense of Proposition~\ref{gc2prop1}(f). 
\label{gc3ex14}
\end{ex}

As for Proposition \ref{gc2prop5} we have:

\begin{prop} Definition\/ {\rm\ref{gc3def16}} defines functors
${}^bN_C:\Mangcin\ra\Mangcin,$ ${}^bN_C:\cMangcin\ra\cMangcin$ and\/
$M_C:\Mangcin,\cMangcin\ra\cMangcin,$ preserving (direct) products, with a commutative diagram of natural transformations:
\begin{equation*}
\xymatrix@C=60pt@R=2pt{ & M_C \ar@{=>}[dd]^{\text{inclusion}}
\ar@{=>}[dr]^\Pi \\ C \ar@{=>}[ur]^{\raisebox{2pt}{$\st\text{zero section $0$\qquad}$}}
\ar@{=>}[dr]_{\text{zero section $0$\qquad}} && C. \\ & {}^bN_C
\ar@{=>}[ur]_\Pi }
\end{equation*}
\label{gc3prop8}
\end{prop}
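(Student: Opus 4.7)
My plan is to assemble the proposition as a summary of work already done in Definition \ref{gc3def16} and Example \ref{gc3ex14}, verifying the three claims (functoriality, product preservation, the diagram of natural transformations) in turn. The heavy lifting has been done: Definition \ref{gc3def15} established the pointwise formula ${}^bN_{C(f)}:(x,\ga,\al)\mapsto(f(x),f_*(\ga),\al\ci f^*)$ and noted ${}^bN_{C(g\ci f)}={}^bN_{C(g)}\ci{}^bN_{C(f)}$ from $\al\ci f^*\ci g^*=\al\ci(g\ci f)^*$, together with ${}^bN_{C(\id_X)}=\id$, and similarly for $M_{C(f)}$. Definition \ref{gc3def16} then upgraded these to smooth interior morphisms in $\cMangc$ using Lemma \ref{gc3lem5} and the coverings by the g-charts \eqref{gc3eq46}--\eqref{gc3eq47}. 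So functoriality of ${}^bN_C$ and $M_C$ (on both $\Mangcin$ and $\cMangcin$) is immediate, and the extension from $\Mangcin$ to $\cMangcin$ is automatic by acting componentwise on disjoint unions.

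Next I would verify preservation of products and direct products. Example \ref{gc3ex14} displayed the comparison maps \eqref{gc3eq48}--\eqref{gc3eq49} and observed they are diffeomorphisms by checking on local models; the verification on local models reduces via the identification $X_{P\t Q}\cong X_P\t X_Q$ and the additive behaviour of faces (every face of $P\t Q$ is $F\t G$ for a unique pair of faces $F\subseteq P$, $G\subseteq Q$) to the explicit formulae \eqref{gc3eq44}--\eqref{gc3eq45}, giving canonical decompositions ${}^bN_{C(X_P\t X_Q)}\cong {}^bN_{C(X_P)}\t{}^bN_{C(X_Q)}$ and the analogue for $M_{C(\cdot)}$. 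Then the two commuting diagrams required for the general definition of product-preservation (cf.\ Proposition \ref{gc2prop2}(c)) are a direct computation in these local coordinates; compatibility under change of g-charts follows from the functoriality already established.

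Finally I would check the commutative diagram of natural transformations. The candidate 2-cells are the zero sections $0:C\Ra M_C$ and $0:C\Ra{}^bN_C$ given fibrewise by $(x,\ga)\mapsto(x,\ga,0)$, the fibrewise inclusion $M_C\Ra{}^bN_C$ coming from $\N\subset\R$, and the projections $\Pi:M_C\Ra C$ and $\Pi:{}^bN_C\Ra C$. Each of these is a smooth morphism in $\cMangcin$: in the g-charts \eqref{gc3eq46}--\eqref{gc3eq47} the zero sections are $y\mapsto(y,0)$ on $(i_F^P)^{-1}(U)\ra(i_F^P)^{-1}(U)\t\Hom(P^\gp/F^\gp,\R)$ (and similarly with $F^\w$), the projections are the obvious $(y,\cdot)\mapsto y$, and the inclusion $M_{C(X)}\hookra{}^bN_{C(X)}$ is induced by the inclusion $F^\w\hookra\Hom(P^\gp/F^\gp,\R)$, which is the tautological embedding of a toric monoid into its associated real vector space. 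Naturality with respect to an interior morphism $f:X\ra Y$ reduces, via the defining formula for ${}^bN_{C(f)}$ and $M_{C(f)}$, to the trivial observations that $f^*$ preserves $0$, that ${}^bN_{C(f)}$ restricts to $M_{C(f)}$ (which was built into Definition \ref{gc3def15}), and that $\pi\ci{}^bN_{C(f)}=C(f)\ci\pi$ and the analogue for $M_C$. Commutativity of the two triangles $\Pi\ci 0=\id_C$ (upper and lower) is pointwise obvious, and the outer-left triangle (zero section into $M_C$ followed by inclusion into ${}^bN_C$ equals zero section into ${}^bN_C$) is the statement $0\in F^\w$ maps to $0\in\Hom(P^\gp/F^\gp,\R)$.

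The main obstacle, such as it is, is purely bookkeeping: keeping track of which claims have already been discharged (functoriality of the set-level maps, smoothness via local models, bijections from Example \ref{gc3ex13}) and which still need a line of justification. No genuinely new idea is required, so the proof would be short, essentially the sentence ``everything has been checked in \S\ref{gc35}--\S\ref{gc36}; the diagram commutes pointwise by inspection and naturality follows from the formula ${}^bN_{C(f)}:(x,\ga,\al)\mapsto(f(x),f_*(\ga),\al\ci f^*)$ and its restriction to $M_C$.''
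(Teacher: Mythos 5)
Your proposal is correct and takes essentially the same approach as the paper, which in fact gives no explicit proof at all: Proposition \ref{gc3prop8} is preceded by ``As for Proposition \ref{gc2prop5} we have:'' and treated as a routine bookkeeping summary of Definitions \ref{gc3def15}--\ref{gc3def16}, Lemma \ref{gc3lem5}, and Example \ref{gc3ex14}, exactly as you lay out.
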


Here is the analogue of Definition \ref{gc2def13}.

\begin{dfn} Let $X$ be a manifold with g-corners. For $x\in S^k(X)\subseteq X$, as above we have a monoid $\cI_x(X)$ of germs $[c]$ at $x$ of interior functions $c:X\ra[0,\iy)$, and a submonoid $C^\iy_x(X,(0,\iy))\subseteq\cI_x(X)$ of $[c]$ with $c(x)>0$. In a similar way to \eq{gc3eq35}--\eq{gc3eq36}, define
\ea
\begin{split}
{}^b\ti N_xX&=\bigl\{\al\in \Hom_\Mon\bigl(\cI_x(X),\R\bigr):\al\vert_{C^\iy_x(X,(0,\iy))}=0\bigr\}, 
\end{split}
\label{gc3eq50}\\
\begin{split}
{}^b\ti N_x^{\sst\ge 0}X&=\bigl\{\al\in \Hom_\Mon\bigl(\cI_x(X),([0,\iy),+)\bigr):\\
&\qquad\qquad\qquad\qquad\qquad\qquad \al\vert_{C^\iy_x(X,(0,\iy))}=0\bigr\}, 
\end{split}
\label{gc3eq51}\\
\begin{split}
\ti M_xX&=\bigl\{\al\in \Hom_\Mon\bigl(\cI_x(X),\N\bigr):\al\vert_{C^\iy_x(X,(0,\iy))}=0\bigr\}, 
\end{split}
\label{gc3eq52}
\ea
so that $\ti M_xX\subseteq{}^b\ti N_x^{\sst\ge 0}X\subseteq {}^b\ti N_xX$. Here $([0,\iy),+)$ in \eq{gc3eq51} is $[0,\iy)$ with monoid operation addition, rather than multiplication as usual. As in Example \ref{gc3ex13}, one can show that $\ti M_xX\cong\N^k$ is a toric monoid of rank $k=\depth_Xx$, with ${}^b\ti N_xX=\ti M_xX\ot_\N\R\cong\R^k$ the corresponding real vector space, and ${}^b\ti N_x^{\sst\ge 0}X$ as the corresponding rational polyhedral cone in ${}^b\ti N_xX$, as in~\S\ref{gc314}.

Now let $f:X\ra Y$ be an interior map of manifolds with g-corners, and $x\in X$ with $f(x)=y\in Y$. As for ${}^bN_{C(f)}$ in Definition \ref{gc3def15}, define maps ${}^b\ti N_xf:{}^b\ti N_xX\ra {}^b\ti N_yY$, ${}^b\ti N^{\sst\ge 0}_xf:{}^b\ti N_x^{\sst\ge 0}X\ra {}^b\ti N_y^{\sst\ge 0}Y$ and $\ti M_xf:\ti M_xX\ra\ti M_yY$ to map $\al\mapsto \al\ci f^*$, where $f^*:\cI_y(Y)\ra \cI_x(X)$ maps $[c]\mapsto [c\ci f]$. Then ${}^b\ti N_xf$ is linear, and ${}^b\ti N^{\sst\ge 0}_xf,\ti M_xf$ are monoid morphisms. These ${}^b\ti N_xX,\ab {}^b\ti N^{\sst\ge 0}_xX,\ab\ti M_xX,\ab{}^b\ti N_xf,\ab{}^b\ti N^{\sst\ge 0}_xf,\ab\ti M_xf$ are functorial.

When $X,Y$ are manifolds with (ordinary) corners, these definitions of ${}^b\ti N_xX,\ab {}^b\ti N^{\sst\ge 0}_xX,\ab\ti M_xX,\ab{}^b\ti N_xf,\ab{}^b\ti N^{\sst\ge 0}_xf,\ab\ti M_xf$ are canonically isomorphic to those in~\S\ref{gc24}.

We could define ${}^b\ti NX=\bigl\{(x,v):x\in X$, $v\in {}^b\ti N_xX\bigr\}$ and ${}^b\ti Nf:{}^b\ti NX\ra{}^b\ti NY$ by ${}^b\ti Nf:(x,v)\mapsto (f(x),{}^b\ti N_xf(v))$, and similarly for ${}^b\ti N^{\sst\ge 0}X,{}^b\ti N^{\sst\ge 0}f$ and $\ti MX,\ti Mf$, and these would also be functorial. They are useful for stating conditions on interior $f:X\ra Y$. However, in contrast to ${}^bN_{C(X)}$ above, these ${}^b\ti NX,{}^b\ti N^{\sst\ge 0}X$ {\it would not be manifolds with g-corners}, as the dimensions of ${}^b\ti N_xX,\ab {}^b\ti N^{\sst\ge 0}_xX$ vary discontinuously with $x$ in $X$. The rational polyhedral cones ${}^b\ti N_x^{\sst\ge 0}X$ may not be manifolds with g-corners either. 
\label{gc3def17}
\end{dfn}

The relation between $M_{C_k(X)}\vert_{(x,\ga)}$ and $\ti M_xX$ in Definitions \ref{gc3def15} and \ref{gc3def17} is this: for each $x\in S^k(X)\subseteq X$, there is a unique local $k$-corner component $\ga$ to $X$ at $x$, and then $M_{C_k(X)}\vert_{(x,\ga)}\cong\ti M_xX$. More generally, if $\de$ is some local $l$-corner component of $X$ at $x$ for $l=0,\ldots,k$, then $M_{C_l(X)}\vert_{(x,\de)}\cong \ti M_xX/F$ for some face $F$ of $\ti M_xX$ with $\rank F=k-l$, and there is a 1-1 correspondence between such $\de$ and such $F$. Also, writing $P=\ti M_xX$, as a toric monoid, then $X$ near $x$ is locally modelled on $X_P\t\R^{\dim X-k}$ near $(\de_0,0)$. Since $X_P$ is a manifold with (ordinary) corners near $\de_0$ if and only if $P\cong\N^k$, we deduce:

\begin{lem} Let\/ $X$ be a manifold with g-corners. Then $X$ is a manifold with corners if and only if the following two equivalent conditions hold:
\begin{itemize}
\setlength{\itemsep}{0pt}
\setlength{\parsep}{0pt}
\item[{\bf(i)}] $M_{C_k(X)}\vert_{(x,\ga)}\cong\N^k$ for all\/ $(x,\ga)\in C_k(X)$ and\/~$k\ge 0$.
\item[{\bf(ii)}] $\ti M_xX\cong\N^k$ for all\/ $x\in X,$ for $k\ge 0$ depending on $x$.
\end{itemize}
\label{gc3lem6}
\end{lem}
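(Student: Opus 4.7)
The plan is to reduce everything to the local normal form stated just before the lemma: for each $x\in X$ with $k=\depth_Xx$, $X$ near $x$ is diffeomorphic to $X_P\t\R^{\dim X-k}$ near $(\de_0,0)$, where $P=\ti M_xX$ is a toric monoid of rank $k$. Since (i), (ii) and the property of being a manifold with corners are all local conditions, this reduces the lemma to statements about the single toric monoid $\ti M_xX$ at each point.

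First I would prove (i)$\,\Leftrightarrow\,$(ii). The paragraph preceding the lemma records $M_{C_k(X)}\vert_{(x,\ga)}\cong\ti M_xX$ whenever $x\in S^k(X)$ and $\ga$ is the unique local $k$-corner component at $x$, giving (i)$\,\Ra\,$(ii). Conversely, that paragraph also records $M_{C_l(X)}\vert_{(x,\de)}\cong \ti M_xX/F$ for a face $F\subseteq\ti M_xX$ of rank $k-l$, for each local $l$-corner component $\de$ at $x$; if $\ti M_xX\cong\N^k$ then every face is some $F_I\cong\N^{k-\md{I}}$ for $I\subseteq\{1,\ldots,k\}$, and the quotient $\N^k/F_I\cong\N^{\md{I}}$ has rank $l$ precisely when $\md{I}=l$, so $M_{C_l(X)}\vert_{(x,\de)}\cong\N^l$ and (ii) implies (i).

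For the main equivalence, if $X$ is a manifold with corners then it is locally modelled on $\R^n_k\cong X_{\N^k\t\Z^{n-k}}$, and a direct computation with Definition \ref{gc3def17} (or comparison with \S\ref{gc24}) gives $\ti M_xX\cong\N^{\depth_Xx}$ at every point, so (ii) holds. Conversely, assuming (ii), fix $x\in X$ with $k=\depth_Xx$; the local normal form makes $X$ near $x$ diffeomorphic to $X_{\ti M_xX}\t\R^{\dim X-k}$ near $(\de_0,0)$, and by (ii) this is $X_{\N^k}\t\R^{\dim X-k}\cong[0,\iy)^k\t\R^{\dim X-k}=\R^{\dim X}_k$, using Example \ref{gc3ex3}(iii). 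The resulting g-chart about $x$ has the manifold-with-corners form $(\N^k\t\Z^{\dim X-k},U,\phi)$, so by Definition \ref{gc3def7}, $X$ is a manifold with corners near $x$; since $x$ is arbitrary, $X$ is a manifold with corners.

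The main obstacle, and the only step that is not essentially formal, is justifying the local normal form that the paper asserts just before the lemma; the plan above uses it as a black box. Spelling it out, one would start from an arbitrary g-chart $(Q,U,\phi)$ about $x$, split off the unit factor of $Q$ via Lemma \ref{gc3lem2} to write $Q\cong Q^\sh\t\Z^j$ and $X_Q\cong X_{Q^\sh}\t\R^j$, and then identify $X_{Q^\sh}$ near the stratum of the face $F=\supp(\phi^{-1}(x))$ with $X_{Q^\sh/F}\t\R^{\rank F}$, finally matching $Q^\sh/F$ with $\ti M_xX$ via the intrinsic characterisation \eq{gc3eq52} of $\ti M_xX$ in terms of $\cI_x(X)$. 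Once this normal form is in hand, the remaining bookkeeping between faces of $\N^k$ and their quotients is routine.
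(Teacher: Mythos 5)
Your proof is correct and unpacks exactly the three assertions in the paragraph preceding the lemma (the identifications $M_{C_k(X)}\vert_{(x,\ga)}\cong\ti M_xX$ and $M_{C_l(X)}\vert_{(x,\de)}\cong\ti M_xX/F$, the local model $X_P\t\R^{\dim X-k}$, and that $X_P$ is a manifold with corners near $\de_0$ iff $P\cong\N^k$), which is precisely the argument the paper leaves implicit. One small caveat on your normal-form sketch: tracing through \eq{gc3eq52} one actually gets $\ti M_xX\cong\Hom(\cI_x(X)^\sh,\N)\cong(Q^\sh/F)^\vee$ rather than $Q^\sh/F$ itself, so the monoid $P$ in the local model is really $(\ti M_xX)^\vee$ — a slip inherited from the paper's own phrasing, and harmless for this lemma since $\N^k$ is self-dual.
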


\section[Differential geometry of manifolds with g-corners]{Differential geometry of manifolds with \\ g-corners}
\label{gc4}

We now extend parts of ordinary differential geometry to manifolds with g-corners: special classes of smooth maps; immersions, embeddings and submanifolds; transversality and fibre products in $\Mangc,\Mangcin$; and other topics. The proofs of Theorems \ref{gc4thm2}, \ref{gc4thm3}, \ref{gc4thm5}, \ref{gc4thm6} and \ref{gc4thm7} below are deferred to~\S\ref{gc5}.

\subsection{Special classes of smooth maps}
\label{gc41}

We define several classes of smooth maps of manifolds with g-corners.

\begin{dfn} Let $f:X\ra Y$ be a smooth map of manifolds with g-corners. We call $f$ {\it simple\/} if either (hence both) of the following two conditions hold: 
\begin{itemize}
\setlength{\itemsep}{0pt}
\setlength{\parsep}{0pt}
\item[(i)] $M_{C(f)}:M_{C(X)}\ra C(f)^*(M_{C(Y)})$ in \S\ref{gc36} is an isomorphism of monoid bundles over $C(X)$.
\item[(ii)] $f$ is interior and $\ti M_xf:\ti M_xX\ra\ti M_{f(x)}Y$ in Definition \ref{gc3def17} is an isomorphism of monoids for all $x\in X$.
\end{itemize}
It is easy to show that (i) and (ii) are equivalent. For manifolds with (ordinary) corners, this agrees with the definition of simple maps in~\S\ref{gc21}.

Clearly, compositions of simple morphisms, and identity morphisms, are simple. Thus, we may define subcategories $\Mangcsi\subset\Mangc$ and $\cMangcsi\subset\cMangc$ with all objects, and morphisms simple maps. Simple maps are closed under products (that is, if $f:W\ra Y$, $g:X\ra Z$ are simple then $f\t g:W\t X\ra Y\t Z$ is simple), but not under direct products (that is, if $f:X\ra Y$, $g:X\ra Z$ are simple then $(f,g):X\ra Y\t Z$ need not be simple).

Suppose $f:X\ra Y$ is a simple morphism in $\Mangc$. Then $C(f):C(X)\ra C(Y)$ is a simple morphism in $\cMangc$. If $(x,\ga)\in C_k(X)$ with $C(f)(x,\ga)=(y,\de)\in C_l(Y)$ then $M_{C(X)}\vert_{(x,\ga)}\cong M_{C(Y)}\vert_{(y,\de)}$ by (i). But $k=\rank M_{C(X)}\vert_{(x,\ga)}$, $l=\rank M_{C(Y)}\vert_{(y,\de)}$, so $k=l$, and $C(f)$ maps $C_k(X)\ra C_k(Y)$ for all $k=0,1,\ldots,$ and maps $\pd X\ra\pd Y$ when~$k=1$.

Thus, we may define a {\it boundary functor\/} $\pd:\Mangcsi\ra\Mangcsi$ mapping $X\mapsto\pd X$ on objects and\/ $f\mapsto \pd f:=C(f)\vert_{C_1(X)}:\pd X\ra\pd Y$ on (simple) morphisms $f:X\ra Y$, and for all\/ $k\ge 0$ a $k$-{\it corner functor\/} $C_k:\Mangcsi\ra\Mangcsi$ mapping $X\mapsto C_k(X)$ on objects and $f\mapsto C_k(f):=C(f)\vert_{C_k(X)}:C_k(X)\ra C_k(Y)$ on morphisms. They extend to~$\pd,C_k:\cMangcsi\ra\cMangcsi$.
\label{gc4def1}
\end{dfn}

Diffeomorphisms are simple maps. Simple maps are important in the definition of Kuranishi spaces with corners in \cite{Joyc5}. Next we define {\it b-normal\/} maps between manifolds with g-corners. For manifolds with (ordinary) corners, these were introduced by Melrose \cite{Melr1,Melr2,Melr3,Melr4}, and several equivalent definitions appear in the literature, two of which we extend to manifolds with g-corners. For manifolds with corners, part (i) below (translated into our notation) appears in Grieser \cite[Def.~3.9]{Grie}, and part (ii) in \cite[\S 2]{Melr2}, \cite[Def.~2.4.14]{Melr4}.

\begin{dfn} Let $f:X\ra Y$ be a smooth map of manifolds with g-corners. We call $f$ {\it b-normal\/} if either of the following two equivalent conditions hold: 
\begin{itemize}
\setlength{\itemsep}{0pt}
\setlength{\parsep}{0pt}
\item[(i)] $C(f):C(X)\ra C(Y)$ in \S\ref{gc34} maps $C_k(X)\ra \coprod_{j=0}^kC_j(Y)$ for all~$k\ge 0$.
\item[(ii)] $f$ is interior, and ${}^bN_{C(f)}:{}^bN_{C(X)}\ra C(f)^*\bigl({}^bN_{C(Y)}\bigr)$ in \S\ref{gc36} is a surjective morphism of vector bundles of mixed rank on $C(X)$.
\end{itemize}

For manifolds with (ordinary) corners, this agrees with the definition of b-normal maps in \S\ref{gc21}, by Proposition~\ref{gc2prop1}(c).

B-normal maps are closed under composition and include identities, so manifolds with g-corners with b-normal maps define a subcategory of $\Mangc$. B-normal maps are closed under products, but not under direct products, as Example \ref{gc2ex3}(a) shows.

The following notation is sometimes useful, for instance in describing boundaries of fibre products. If $f:X\ra Y$ is b-normal then $C(f)$ maps $C_1(X)\ra C_0(Y)\amalg C_1(Y)$, where $C_1(X)=\pd X$, $C_1(Y)=\pd Y$, and $\io:Y\ra C_0(Y)$ is a diffeomorphism. Define $\pd_+^fX=C(f)\vert_{C_1(X)}^{-1}(C_0(Y))$ and $\pd_-^fX=C(f)\vert_{C_1(X)}^{-1}(C_1(Y))$. Then $\pd_\pm^f(X)$ are open and closed in $\pd X$, with $\pd X=\pd^f_+X\amalg \pd^f_-X$. Define $f_+:\pd^f_+X\ra Y$ and $f_-:\pd^f_-X\ra\pd Y$ by $f_+=\io^{-1}\ci C(f)\vert_{\pd^f_+X}$ and~$f_-=C(f)\vert_{\pd^f_-X}$. 

Then $f_\pm$ are smooth maps of manifolds with g-corners. Also, $C(f_\pm)$ are related to $C(f)$ by an \'etale cover, so by (i) or (ii) we see that $f_+$ and $f_-$ are both b-normal. So we can iterate the process, and define $f_{+,-}:\pd^{f_+}_-\pd^f_+X\ra \pd Y$, and so on, where~$\pd^2X=\pd^{f_+}_+\pd^f_+X\amalg\pd^{f_+}_-\pd^f_+X\amalg\pd^{f_-}_+\pd^f_-X\amalg\pd^{f_-}_-\pd^f_-X$.
\label{gc4def2}
\end{dfn}

A smooth map $f:X\ra Y$ of manifolds without boundary is a {\it submersion\/} if $\d f:TX\ra f^*(TY)$ is a surjective morphism of vector bundles on $X$. For manifolds with corners, {\it b-submersions\/} and {\it b-fibrations\/} are two notions of submersions, as in Melrose \cite[\S I]{Melr1}, \cite[\S 2]{Melr2}, \cite[\S 2.4]{Melr4}. Both are important in Melrose's theory of analysis on manifolds with corners. We extend to g-corners.

\begin{dfn} Let $f:X\ra Y$ be an interior map of manifolds with g-corners. We call $f$ a {\it b-submersion\/} if ${}^b\d f:{}^bTX\ra f^*({}^bTY)$ is a surjective morphism of vector bundles on $X$. We call $f$ a {\it b-fibration\/} if $f$ is b-normal and a b-submersion.

B-submersions and b-fibrations are both closed under composition and contain identities, and so define subcategories of $\Mangc$. B-submersions and b-fibrations are closed under products, but not under direct products.

If $f$ is a b-submersion or b-fibration of manifolds with (ordinary) corners, so that $TX,TY$ are defined, then $\d f:TX\ra f^*(TY)$ need not be surjective.
\label{gc4def3}
\end{dfn}

\begin{ex}{\bf(i)} Any projection $\pi_X:X\t Y\ra X$ for $X,Y$ manifolds with g-corners is b-normal, a b-submersion, and a b-fibration.
\smallskip

\noindent{\bf(ii)} Define $f:[0,\iy)^2\ra[0,\iy)$ by $f(x,y)=xy$. Then ${}^b\d f$ is given by the matrix $\bigl(\begin{smallmatrix} 1 \\ 1 \end{smallmatrix}\bigr)$ with respect to the bases $\bigl(x\frac{\pd}{\pd x},y\frac{\pd}{\pd y}\bigr)$ for ${}^bT\bigl([0,\iy)^2\bigr)$ and $z\frac{\pd}{\pd z}$ for ${}^bT\bigl([0,\iy)\bigr)$, so ${}^b\d f$ is surjective, and $f$ is a b-submersion. Also $C(f)$ maps $C_0\bigl([0,\iy)^2\bigr)\!\ra\! C_0\bigl([0,\iy)\bigr)$, $C_1\bigl([0,\iy)^2\bigr)\!\ra\! C_1\bigl([0,\iy)\bigr)$, and $C_2\bigl([0,\iy)^2\bigr)\!\ra\! C_1\bigl([0,\iy)\bigr)$. Thus $f$ is b-normal by Definition \ref{gc4def2}(i), and a b-fibration.
\smallskip

\noindent{\bf(iii)} Define $g:[0,\iy)\t\R\ra[0,\iy)^2$ by $g(w,x)=(w,we^x)$. Then ${}^b\d g$ is given by the matrix $\bigl(\begin{smallmatrix} 1 & 0 \\ 1 & 1 \end{smallmatrix}\bigr)$ with respect to the bases $\bigl(w\frac{\pd}{\pd w},\frac{\pd}{\pd x}\bigr)$ for ${}^bT\bigl([0,\iy)\t\R\bigr)$ and $\bigl(y\frac{\pd}{\pd y},z\frac{\pd}{\pd z}\bigr)$ for ${}^bT\bigl([0,\iy)^2\bigr)$, so $g$ is a b-submersion. 

Also $C(g)$ maps $C_0\bigl([0,\iy)\t\R\bigr)\ra C_0\bigl([0,\iy)^2)$, but $C_1\bigl([0,\iy)\t\R\bigr)\ra C_2\bigl([0,\iy)^2\bigr)$. Thus $g$ is not b-normal, or a b-fibration, by Definition~\ref{gc4def2}(i).
\label{gc4ex1}
\end{ex}

\subsection{Immersions, embeddings, and submanifolds}
\label{gc42}

Recall some definitions and results for ordinary manifolds without boundary:

\begin{dfn} A smooth map $i:X\ra Y$ of manifolds without boundary $X,Y$ is an {\it immersion\/} if $\d i:TX\ra i^*(TY)$ is an injective morphism of vector bundles on $X$, and an {\it embedding\/} if also $i:X\ra i(X)$ is a homeomorphism, where $i(X)\subseteq Y$ is the image.

An {\it immersed\/} (or {\it embedded\/}) {\it submanifold\/} $X$ of $Y$ is an immersion (or embedding) $i:X\ra Y$, where usually we take $i$ to be implicitly given. For the case of embedded submanifolds, as in Remark \ref{gc4rem1}(A) below we often identify $X$ with the image $i(X)\subseteq Y$, and consider $X$ to be a subset of~$Y$.
\label{gc4def4}
\end{dfn}

\begin{thm} Let\/ $i:X\ra Y$ be an embedding of manifolds without boundary $X,Y$ of dimensions $m,n$. Then for each\/ $x\in X,$ there exist local coordinates $(y_1,\ldots,y_n)$ defined on an open neighbourhood\/ $V$ of\/ $i(x)$ in $Y,$ such that\/ $i(X)\cap V=\bigl\{(y_1,\ldots,y_m,0,\ldots,0)\in V\bigr\},$ and setting\/ $U=i^{-1}(V)\subseteq X$ and\/ $x_a=y_a\ci i:U\ra\R,$ then $(x_1,\ldots,x_m)$ are local coordinates on\/~$U\subseteq X$.
\label{gc4thm1}
\end{thm}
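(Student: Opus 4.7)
The plan is to deduce this as a standard consequence of the constant-rank / implicit function theorem in ordinary differential geometry, and then use the embedding hypothesis to upgrade a local conclusion on $i(U)$ to the global-in-$V$ conclusion on $i(X)\cap V$.

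First I would work purely in coordinate charts, since $X,Y$ are manifolds without boundary. Choose a chart $(U_0,\phi)$ on $X$ with $\phi(0)=x$ and a chart $(V_0,\psi)$ on $Y$ with $\psi(0)=i(x)$, and write $\ti\imath=\psi^{-1}\ci i\ci\phi:U_0'\ra V_0$ on the open neighbourhood $U_0'=\phi^{-1}(i^{-1}(\psi(V_0)))\subseteq U_0\subseteq\R^m$. Since $i$ is an immersion, $\d\ti\imath(0):\R^m\ra\R^n$ is injective, so after a linear change of coordinates on $\R^n$ (equivalently, on $V_0$) we may assume $\d\ti\imath(0)$ is the inclusion $\R^m\hookra\R^m\t\R^{n-m}=\R^n$ of the first factor. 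Writing $\ti\imath=(\ti\imath_1,\ldots,\ti\imath_n)$, the matrix $(\pd\ti\imath_a/\pd u_b)(0)$ for $1\le a,b\le m$ is therefore the identity.

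Next I would apply the inverse function theorem to straighten the image. Define
\e
F:U_0'\t W\longra V_0,\quad F(u_1,\ldots,u_m,t_1,\ldots,t_{n-m})=\ti\imath(u_1,\ldots,u_m)+(0,\ldots,0,t_1,\ldots,t_{n-m}),
\label{plan1}
\e
where $W\subseteq\R^{n-m}$ is a small open neighbourhood of $0$. Its Jacobian at $(0,0)$ is block lower-triangular of the form $\bigl(\begin{smallmatrix}I_m & 0\\ *& I_{n-m}\end{smallmatrix}\bigr)$, hence invertible. By the inverse function theorem, there exist open neighbourhoods $U'\subseteq U_0'$ of $0$ and $W'\subseteq W$ of $0$ such that $F$ restricts to a diffeomorphism from $U'\t W'$ onto an open neighbourhood $V'\subseteq V_0$ of $0=\psi^{-1}(i(x))$. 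The inverse $F^{-1}:V'\ra U'\t W'$ composed with $\psi^{-1}$ gives new local coordinates $(y_1,\ldots,y_n)$ on $\psi(V')\subseteq Y$, in which the image of $\ti\imath\vert_{U'}$ is precisely $\{y_{m+1}=\cdots=y_n=0\}$.

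The only remaining issue, and the main obstacle, is that the coordinate description has so far only matched $i(\phi(U'))$ with $\{y_{m+1}=\cdots=y_n=0\}$ inside $\psi(V')$; we must show that no other points of $i(X)$ lie in the slice $\{y_{m+1}=\cdots=y_n=0\}\cap V$ for some possibly smaller $V\subseteq\psi(V')$. This is exactly where the embedding hypothesis enters: because $i:X\ra i(X)$ is a homeomorphism onto its image (with the subspace topology from $Y$), the set $i(\phi(U'))$ is open in $i(X)$, so there is an open neighbourhood $V\subseteq\psi(V')$ of $i(x)$ in $Y$ with $i(X)\cap V=i(\phi(U'))\cap V$. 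Shrinking $U'$ to $U=\phi^{-1}(i^{-1}(V))$ and keeping the coordinates $(y_1,\ldots,y_n)$ on $V$, we obtain $i(X)\cap V=\{(y_1,\ldots,y_m,0,\ldots,0)\in V\}$ as required. Finally, the functions $x_a:=y_a\ci i:U\ra\R$ satisfy $x_a=u_a\ci(\phi^{-1})$ by the construction of $F$ in \eq{plan1}, so they are smooth coordinates on $U$, completing the proof.
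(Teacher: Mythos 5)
Your argument is correct and is the standard proof of this local normal-form theorem for embeddings. Note that the paper does not actually give a proof of Theorem~\ref{gc4thm1}: it is stated under the heading of recalling definitions and results for ordinary manifolds without boundary, and is treated as a known fact, so there is no internal argument to compare against. Your proof supplies exactly the expected argument: reduce to Euclidean charts and use the immersion condition to apply the inverse function theorem to the extended map $F$ (straightening the image to the coordinate slice $\{y_{m+1}=\cdots=y_n=0\}$), then use the hypothesis that $i$ is a homeomorphism onto its image to cut down a neighbourhood $V$ so that no other parts of $i(X)$ intrude into the slice. The final identification $x_a=y_a\ci i=u_a\ci\phi^{-1}$ from the formula for $F$ correctly shows $(x_1,\ldots,x_m)$ is a chart on $U$. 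No gaps.
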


\begin{rem} Theorem \ref{gc4thm1} has two important consequences:
\begin{itemize}
\setlength{\itemsep}{0pt}
\setlength{\parsep}{0pt}
\item[(A)] We can give the image $i(X)$ the canonical structure of a manifold without boundary, depending only on the subset $i(X)\subseteq Y$. Then $i:X\ra i(X)$ is a diffeomorphism. Thus, we can regard embedded submanifolds $X\hookra Y$ as being special subsets $X\subseteq Y$, rather than special smooth maps $i:X\ra Y$.
\item[(B)] Locally in $Y$, we can describe embedded submanifolds $X\hookra Y$ in two complementary ways: {\it either\/} as the image of an embedding $i:X\ra Y$, {\it or\/} as the zeroes $y_{m+1}=\cdots=y_n=0$ of $\dim Y-\dim X$ local, transverse smooth functions~$y_{m+1},\ldots,y_n:Y\ra\R$. 
\end{itemize}

\label{gc4rem1}
\end{rem}

We now extend all this to manifolds with g-corners. Our aim is to give a definition of embedding $i:X\ra Y$ of manifolds with g-corners $X,Y$, which is as general as possible such that an analogue of Theorem \ref{gc4thm1} holds, and in particular, so that the manifold with g-corner structure on $X$ can be recovered up to canonical diffeomorphism from the subset $i(X)\subseteq Y$ and the manifold with g-corner structure on $Y$. This turns out to be quite complicated.

For interior maps $i:X\ra Y$ of manifolds with g-corners, the obvious way to define immersions would be to require ${}^b\d i:{}^bTX\ra i^*({}^bTY)$ to be injective. This is implied by the definition, but we also impose extra conditions on how $i$ acts on the monoids $\ti M_xX,\ti M_yY$ and tangent spaces to strata~$T_xS^k(X),T_yS^l(Y)$.

\begin{dfn} Let $i:X\ra Y$ be a smooth map of manifolds with g-corners, or more generally a morphism in $\cMangc$. We will define when $i$ is an immersion, first when $i$ is interior, and then in the general case.

If $i$ is interior, we call $i$ an {\it immersion\/} (or {\it interior immersion\/}) if whenever $x\in S^k(X)\subseteq X$ with $i(x)=y\in S^l(Y)\subseteq Y$, then: 
\begin{itemize}
\setlength{\itemsep}{0pt}
\setlength{\parsep}{0pt}
\item[(i)] $\d(i\vert_{S^k(X)})\vert_x:T_xS^k(X)\ra T_yS^l(Y)$ must be injective;
\item[(ii)] The monoid morphism $\ti M_xi:\ti M_xX\ra\ti M_yY$ (defined as $i$ is assumed interior) must be injective; and 
\item[(iii)] The quotient monoid $\ti M_yY\big/(\ti M_xi)[\ti M_xX]$ must be torsion-free.
\end{itemize}

To understand this, note that we have noncanonical splittings
\begin{equation*}
{}^bT_xX\cong (\ti M_xX\ot_\Z\R) \op T_xS^k(X),\quad {}^bT_yY\cong (\ti M_yY\ot_\Z\R) \op T_yS^l(Y),
\end{equation*}
and with respect to these we have
\e
{}^bT_xi=\begin{pmatrix} \ti M_xi\ot_\Z\R & * \\ 0 & \d (i\vert_{S^k(X)})\vert_x \end{pmatrix}: \begin{matrix} \ti M_xX\ot_\Z\R \\ {}\op T_xS^k(X) \end{matrix} \longra 
\begin{matrix} \ti M_yY\ot_\Z\R \\ {}\op T_yS^l(Y). \end{matrix}
\label{ag4eq1}
\e
Conditions (i),(ii) are equivalent to the diagonal terms in this matrix being injective, and so imply that ${}^bT_xi:{}^bT_xX\ra {}^bT_yY$ is injective. Conversely, ${}^bT_xi$ injective implies (ii), but not necessarily (i). So for $i$ to be an interior immersion implies that ${}^b\d i:{}^bTX\ra i^*({}^bTY)$ is an injective morphism of vector bundles, but is stronger than this.

If $i:X\ra Y$ is a general smooth map of manifolds with g-corners then $C(i):C(X)\ra C(Y)$ is an interior morphism in $\cMangc$, and we call $i$ an {\it immersion\/} if $C(i)$ is an interior immersion in the sense above. It is not difficult to show that $C(i)\vert_{C_0(X)}:C_0(X)\ra C(Y)$ an interior immersion implies $C(i)\vert_{C_k(X)}:C_k(X)\ra C(Y)$ is an interior immersion for $k>0$, so we could instead say $i$ is an immersion if $C(i)\vert_{C_0(X)}:C_0(X)\ra C(Y)$ is an interior immersion. If $i$ is interior then $C(i)\vert_{C_0(X)}$ maps $C_0(X)\ra C_0(Y)$ and is naturally identified with $i:X\ra Y$, so this yields the same definition of immersion as before.

We call $i:X\ra Y$ an {\it embedding\/} if it is an immersion, and $i:X\ra i(X)$ is a homeomorphism (so in particular, $i$ is injective). We call $i:X\ra Y$ an {\it s-immersion\/} (or {\it s-embedding\/}) if it is a simple immersion (or simple embedding).

An {\it immersed}, or {\it embedded}, or {\it s-immersed}, or {\it s-embedded submanifold\/} $X$ of $Y$ is an immersion, or embedding, or s-immersion, or s-embedding $i:X\ra Y$, respectively, where usually we take $i$ to be implicitly given. For the case of (s-)embedded submanifolds, we often identify $X$ with the image $i(X)\subseteq Y$, and consider $X$ to be a subset of~$Y$.
\label{gc4def5}
\end{dfn}

\begin{ex}{\bf(i)} Define $X=Y=[0,\iy)$, and $f:X\ra Y$ by $f(x)=x^2$. Then $f$ is interior, and ${}^b\d f:{}^bTX\ra f^*({}^bTY)$ maps $x\frac{\pd}{\pd x}\mapsto 2y\frac{\pd}{\pd y}$, and so is an isomorphism of vector bundles. However, $f$ is not an immersion or embedding, because $\ti M_0f:\ti M_0X\ra\ti M_0Y$ is the map $\N\ra\N$, $n\mapsto 2n$, so the quotient monoid $\ti M_0Y\big/(\ti M_0f)[\ti M_0X]$ is $\N/2\N=\Z_2$, which is not torsion-free.

We do not want $f$ to be an embedding, as Remark \ref{gc4rem1}(A) fails for $f$. As $f(X)=Y$, the only sensible manifold with g-corners structure on $f(X)$ depending only on $f(X)\subseteq Y$ and the manifold with g-corners structure on $Y$, is to give $f(X)$ the same manifold with g-corners structure as $Y$. But then $f:X\ra f(X)$ is not a diffeomorphism. The torsion-free condition in Definition \ref{gc4def5}(iii), which fails for $f$, will be needed to prove the analogue of Remark~\ref{gc4rem1}(A).
\smallskip

\noindent{\bf(ii)} Define $X=[0,\iy)$, $Y=[0,\iy)^2$, and $g:X\ra Y$ by $g(x)=(x^2,x^3)=(y,z)$. Then $g$ is interior, and ${}^b\d g:{}^bTX\ra g^*({}^bTY)$ maps $x\frac{\pd}{\pd x}\mapsto 2y\frac{\pd}{\pd y}+3z\frac{\pd}{\pd z}$, and so is an injective morphism of vector bundles. 

The monoid morphism $\ti M_0g:\ti M_0X\ra\ti M_0Y$ is the map $\N\ra\N^2$, $n\mapsto (2n,3n)$. The quotient monoid $\ti M_0Y\big/\ti M_0g[\ti M_0X]$ is $\Z$, which is torsion-free, with projection $\ti M_0Y\ra\ti M_0Y\big/\ti M_0g[\ti M_0X]$ the map $\N^2\ra\Z$ taking $(m,n)\mapsto 3m-2n$. So $g$ is an embedding. Here the torsion-free condition holds as the powers 2,3 in $g(x)=(x^2,x^3)$ have highest common factor~1.

Note that the embedded submanifold $g(X)\subset Y$ may be defined as the solutions of the equation $y^3=z^2$ in $Y$, in smooth maps~$Y\ra[0,\iy)$.

Note too that the smooth function $x:X\ra[0,\iy)$ cannot be written $h\ci g$ for any smooth function $h:Y\ra[0,\iy)$. So when we identify $X$ with the diffeomorphic embedded submanifold $g(X)\subset Y$, this does not imply that the smooth functions $X\ra\R$ or $X\ra[0,\iy)$ can be identified with the restrictions of smooth functions $Y\ra\R$ or $Y\ra[0,\iy)$ to $g(X)\subset Y$.

\smallskip

\noindent{\bf(iii)} Define $X=[0,\iy)\t\R$, $Y=[0,\iy)^2$, and $h:X\ra Y$ by $h(w,x)=(w,we^x)\ab=(y,z)$. Then $h$ is interior, and ${}^b\d h:{}^bTX\ra h^*({}^bTY)$ is given by the matrix $\bigl(\begin{smallmatrix} 1 & 0 \\ 1 & 1 \end{smallmatrix}\bigr)$ with respect to the bases $\bigl(w\frac{\pd}{\pd w},\frac{\pd}{\pd x}\bigr)$ for ${}^bTX$ and $\bigl(y\frac{\pd}{\pd y},z\frac{\pd}{\pd z}\bigr)$ for ${}^bTY$, so ${}^b\d h$ is an isomorphism. The monoid morphism $\ti M_{(0,x)}h:\ti M_{(0,x)}X\ra\ti M_{(0,0)}Y$ maps $\N\ra\N^2$, $n\mapsto (n,n)$, and the quotient monoid $\ti M_{(0,0)}Y\big/\ti M_{(0,x)}h[\ti M_{(0,x)}X]$ is $\Z$, which is torsion-free.

However, $h$ is not an immersion or embedding, as at $(0,x)\in S^1(X)$ with $h(0,x)=(0,0)\in S^2(Y)$, the map $\d (h\vert_{S^1(X)})\vert_{(0,x)}:T_{(0,x)}S^1(X)\ra T_{(0,0)}S^2(Y)$ in Definition \ref{gc4def5}(i) maps $\R\ra 0$, and is not injective.
\smallskip

\noindent{\bf(iv)} As in Proposition \ref{gc3prop3}, let $P$ be a weakly toric monoid, choose generators $p_1,\ldots,p_m$ for $P$ and a generating set of relations \eq{gc3eq3} for $p_1,\ldots,p_m$, and consider the interior map $\La=\la_{p_1}\t\cdots\t\la_{p_m}:X_P\ra[0,\iy)^m$, which has image $\La(X_P)=X_P'\subseteq[0,\iy)^m$ defined in \eq{gc3eq4} by equations in~$[0,\iy)^m$.

One can check that $\La:X_P\ra[0,\iy)^m$ is an embedding. In particular, ${}^b\d\La:{}^bTX_P\ra \La^*({}^bT[0,\iy)^m)$ is the injective morphism of trivial vector bundles $X_P\t\Hom(P,\R)\ra X_P\t\Hom(\N^m,\R)$ induced by the injective linear map $\Hom(P,\R)\ra\Hom(\N^m,\R)$ by composition with the surjective morphism $\pi:\N^m\ra P$ mapping $(a_1,\ldots,a_m)\mapsto a_1p_1+\cdots+a_mp_m$. Similarly, $\ti M_{\de_0}\La:\ti M_{\de_0}X_P\ra\ti M_0[0,\iy)^m$ is the map $\Hom(P,\N)\ra \Hom(\N^m,\N)$ by composition with $\pi$, and this is injective with torsion-free quotient as $\pi$ is surjective.

Thus, any $X_P$ is an embedded submanifold of some $[0,\iy)^m$, so locally any manifold with g-corners is an embedded submanifold of a manifold with corners.

\label{gc4ex2}
\end{ex}

Here are some local properties of immersions, proved in~\S\ref{gc51}.

\begin{thm} Suppose $Q,R$ are toric monoids, $V$ is an open neighbourhood of\/ $(\de_0,0)$ in $X_Q\t\R^m,$ and\/ $i:V\ra X_R\t\R^n$ is an interior immersion with\/ $i(\de_0,0)=(\de_0,0)$. Then:
\begin{itemize}
\setlength{\itemsep}{0pt}
\setlength{\parsep}{0pt}
\item[{\bf(i)}] $\rank Q\le\rank R$ and\/ $m\le n$. 
\item[{\bf(ii)}] There is an open neighbourhood\/ $\ti V$ of\/ $(\de_0,0)$ in $V$ such that\/ $i\vert_{\smash{\ti V}}:\ti V\ra i(\ti V)$ is a homeomorphism, that is, $i\vert_{\smash{\ti V}}:\ti V\ra X_R\t\R^n$ is an embedding.
\item[{\bf(iii)}] There is a natural identification of the monoid morphism
\e
\ti M_{(\de_0,0)}i:\ti M_{(\de_0,0)}(X_Q\t\R^m)\longra\ti M_{(\de_0,0)}(X_R\t\R^n)
\label{gc4eq2}
\e
with\/ $\al^\vee:Q^\vee\ra R^\vee,$ for $\al:R\ra Q$ a unique monoid morphism.

Then $Q,\al$ and\/ $m$ are determined uniquely, up to canonical isomorphisms of\/ $Q,$ by the subset\/ $i(\ti V)$ in $X_R\t\R^n$ near $(\de_0,0),$ for $\ti V$ as in {\bf(ii)}.
\item[{\bf(iv)}] Suppose $P$ is another toric monoid, $U$ is an open neighbourhood of\/ $(\de_0,0)$ in $X_P\t\R^l,$ and\/ $f:U\ra X_R\t\R^n$ is a smooth map with\/ $f(\de_0,0)=(\de_0,0)$ and\/ $f(U)\subseteq i(\ti V),$ for $\ti V$ as in {\bf(ii)}. Then there is an open neighbourhood\/ $\ti U$ of\/ $(\de_0,0)$ in $U$ and a unique smooth map $g:\ti U\ra \ti V$ with\/~$f\vert_{\smash{\ti U}}=i\ci g:\ti U\ra X_R\t\R^n$.
\item[{\bf(v)}] Now suppose $\al:R\ra Q$ in {\bf(iii)} is an isomorphism, and\/ $m=n$. Then there exist open neighbourhoods $\dot V$ of\/ $(\de,0)$ in $V$ and\/ $\dot W$ of\/ $(\de,0)$ in $X_R\t\R^n$ such that\/ $i\vert_{\smash{\dot V}}:\dot V\ra\dot W$ is a diffeomorphism.
\end{itemize}

\label{gc4thm2}
\end{thm}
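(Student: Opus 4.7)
First I would identify the monoid morphism and establish a normal form for $i$. Since $Q,R$ are toric and the $\R^m,\R^n$ factors contribute only units to the relevant germ monoids, Example \ref{gc3ex13} yields natural identifications $\ti M_{(\de_0,0)}(X_Q\t\R^m)\cong Q^\vee$ and $\ti M_{(\de_0,0)}(X_R\t\R^n)\cong R^\vee$, so $\ti M_{(\de_0,0)}i$ is a monoid morphism $Q^\vee\ra R^\vee$. By the duality equivalence of Theorem \ref{gc3thm1} it equals $\al^\vee$ for a unique $\al\colon R\ra Q$, giving the first sentence of (iii). Choosing generators $r_1,\ldots,r_{m_R}$ of $R$, Definition \ref{gc3def5} combined with this identification forces the local factorization $\la_{r_s}\ci i=\la_{\al(r_s)}\cdot h_s$ for smooth positive germs $h_s$, and the $\R^n$-components of $i$ are smooth germs $j_1,\ldots,j_n$ vanishing at $(\de_0,0)$, yielding the normal form
\[
i(x,\bs y)=\bigl(\la_{\al(r_1)}(x)h_1(x,\bs y),\ldots,\la_{\al(r_{m_R})}(x)h_{m_R}(x,\bs y),j_1(x,\bs y),\ldots,j_n(x,\bs y)\bigr).
\]
Part (i) is then immediate: Definition \ref{gc4def5}(ii) gives $\al^\vee$ injective, hence $\rank Q\le\rank R$; meanwhile Lemma \ref{gc3lem3}, together with the observation that $\{\de_0\}\t\R^m$ and $\{\de_0\}\t\R^n$ are the unique connected components of the deepest strata through $(\de_0,0)$, forces $i(\{\de_0\}\t\R^m)\subseteq\{\de_0\}\t\R^n$, whence Definition \ref{gc4def5}(i) yields an injection $\R^m\hookra\R^n$ and $m\le n$.

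For parts (v), (ii), and (iv), I would reduce to a classical inverse/implicit function theorem via the embeddings $\La_Q\colon X_Q\hookra[0,\iy)^{m_Q}$ and $\La_R\colon X_R\hookra[0,\iy)^{m_R}$ of Example \ref{gc4ex2}(iv). For (v), post-composing $i$ with the diffeomorphism $X_{\al^{-1}}\t\id_{\R^n}$ reduces to the case $R=Q$, $\al=\id$, $m=n$, in which the normal form exhibits $i$ as a simple interior map whose lift $\ti\imath$ to an open set in $[0,\iy)^{m_Q}\t\R^n$ has, in b-coordinates $(\log x_a,y_t)$ at the origin, a block-triangular Jacobian with identity block (on the monomial directions, up to positive factors $h_s(\de_0,0)$) and invertible block $\d(i\vert_{\{\de_0\}\t\R^n})\vert_0$ (on the Euclidean directions); the classical IFT for manifolds with corners inverts $\ti\imath$ smoothly, and saturation of $Q$ guarantees the inverse restricts to $X_Q\t\R^n$, giving (v). Part (ii) follows from the same template: the injectivities in (i) allow one to extract an invertible triangular subsquare of ${}^b\d\ti\imath\vert_{(\de_0,0)}$, whose classical local smooth inverse restricts to a left-inverse to $i$, making $i$ a homeomorphism onto its image on some $\ti V$. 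For existence in (iv), the normal form reduces the factorization equation $i\ci g=f$ to solving $\la_{\al(r_s)}(\eta_x)h_s(\eta)=\la_{r_s}\ci f(u)$ and $j_t(\eta)=y_t\ci f(u)$ for $\eta=g(u)$, which is achieved by the IFT via $\La_Q\t\id$; the torsion-free-quotient hypothesis Definition \ref{gc4def5}(iii) is what ensures the resulting $\eta_x$ lies in $X_Q\subseteq[0,\iy)^{m_Q}$ rather than in a strictly larger subset. Uniqueness of $g$ in (iv) follows from (ii). Uniqueness of $Q,\al,m$ in (iii) given $i(\ti V)$ follows from diffeomorphism-invariance of dimension, the depth stratification (Definition \ref{gc3def9}), and $\ti M$ (Definition \ref{gc3def17}): $m$ is read off as the dimension of the top stratum of $i(\ti V)$ at $(\de_0,0)$, $Q$ as $\ti M_{(\de_0,0)}i(\ti V)^\vee$, and $\al$ as $\ti M_{(\de_0,0)}$ applied to the inclusion $i(\ti V)\hookra X_R\t\R^n$.

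The main obstacle will be the inverse function theorem for the model space $X_Q\t\R^n$ at the deepest corner point $(\de_0,0)$, which underpins both (ii) and (v). Unlike the ordinary-corners case of Proposition \ref{gc2prop3}, $X_Q$ is not locally diffeomorphic to any $\R^n_k$ near $\de_0$, so a classical IFT is not directly available. The strategy outlined above --- lifting through $\La_Q\t\id$, inverting on $[0,\iy)^{m_Q}\t\R^n$, and descending --- requires a careful check, using the saturation, integrality, and torsion-freeness of $Q$, that the constructed inverse preserves the binomial equations cutting out $X_Q\hookra[0,\iy)^{m_Q}$; this is precisely the place in the argument where the assumption that $Q$ is toric, rather than merely finitely generated, becomes essential.
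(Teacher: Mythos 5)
There is a genuine gap in your argument for parts (ii) and (iv). You correctly identify that one should lift $i$ through $\La_Q\t\id$ to an interior map $\ti\imath$ between open sets in $[0,\iy)^{m_Q}\t\R^m$ and $[0,\iy)^{m_R}\t\R^n$ and try to invert; but invertibility of ${}^b\d\ti\imath$ alone does \emph{not} make $\ti\imath$ \'etale on a manifold with corners. By Proposition~\ref{gc2prop3}, \'etale requires ${}^b\d$ invertible \emph{and} simplicity, and $\ti\imath$ generally fails to be simple. Concretely, $\al:R\ra Q$ is only surjective up to positive integer multiples: for generators $q_1,\ldots,q_M$ of $Q$ one can find $r_j\in R$ with $\al(r_j)=a_j\cdot q_j$, but the integers $a_j$ can exceed $1$ (cf.\ Example~\ref{gc4ex2}(ii)). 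In the lifted coordinates this produces components $H_j=C_j\cdot w_j^{a_j}$, whose b-differential is an invertible diagonal matrix $\mathrm{diag}(a_j)$ but whose (set-theoretic) inverse involves $w^{1/a_j}$, which is not smooth. The paper's key device, absent from your plan, is the continuous but non-smooth map $\Pi:(y_1,\ldots,y_N,\ldots)\mapsto(y_1^{1/a_1},\ldots,y_M^{1/a_M},\ldots)$; the composite $\Pi\ci h$ \emph{is} simple with ${}^b\d(\Pi\ci h)\vert_0$ invertible, hence \'etale, and $\Pi\ci h\vert_{\ti Y}^{-1}\ci\Pi$ furnishes the continuous left inverse needed for (ii). Part (iv) requires a further argument to show the resulting $g$ is smooth, using that $\be^\vee$ factors through $\al^\vee$ and hence $\la_{r_j}\ci f$ has an honest smooth $a_j$-th root. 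Your observation that ``saturation/integrality/torsion-freeness guarantee the inverse respects the binomial equations'' addresses the descent step but not this prior failure of smooth invertibility.

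Your argument for the uniqueness clause of (iii) is also circular: you propose to read off $Q$ as $\ti M_{(\de_0,0)}i(\ti V)^\vee$, but $\ti M$ is only defined for a manifold with g-corners, and endowing the subset $i(\ti V)$ with a g-corner structure depending only on the subset is precisely what Theorem~\ref{gc4thm2}(iii)--(iv) and Corollary~\ref{gc4cor2} are establishing. The paper instead works purely with the topology of the subset, forming the set $S$ of limits of rescaled logarithmic coordinates as in \eq{gc5eq6}, which recovers the image of $\ci\al:\Hom(Q,\R)\ra\Hom(R,\R)$, and then invokes the torsion-freeness hypothesis of Definition~\ref{gc4def5}(iii) to deduce $\al^\vee(Q^\vee)=R^\vee\cap(\ci\al)[\Hom(Q,\R)]$, whence $Q$ and $\al$ are determined. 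Your treatment of part (i) via strata-preservation (Lemma~\ref{gc3lem3}) and Definition~\ref{gc4def5}(i) is correct and is a mild, legitimate variant of the paper's computation with the block matrix form \eqref{ag4eq1}; your reduction of (v) to the case $\al=\id$ and your setup of the normal form $\la_{r_s}\ci i=\la_{\al(r_s)}\cdot h_s$ are also sound.
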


We give three corollaries of Theorem \ref{gc4thm2}. The first is a factorization property of embeddings.

\begin{cor} Suppose $f:W\ra Y$ and\/ $i:X\ra Y$ are smooth maps of manifolds with g-corners, with\/ $i$ an embedding, and\/ $f(W)\subseteq i(X)$. Then there is a unique smooth map $g:W\ra X$ with\/~$f=i\ci g$. 

If also $f$ is an embedding, then $g$ is an embedding.
\label{gc4cor1}
\end{cor}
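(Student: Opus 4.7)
Since $i$ is an embedding it is injective and $i:X\ra i(X)$ is a homeomorphism, so the hypothesis $f(W)\subseteq i(X)$ forces $g:=i^{-1}\ci f:W\ra X$ to be the unique map of sets with $f=i\ci g$, and $g$ is continuous as a composition of continuous maps. Uniqueness of $g$ as a smooth map is then automatic from injectivity of $i$, so the real work is to show smoothness and, under the extra assumption, the embedding property.

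To prove $g$ is smooth, work locally near an arbitrary $w\in W$. Set $x=g(w)$, $y=i(x)=f(w)$, and choose g-charts on $W,X,Y$ in which $w,x,y$ correspond to $(\de_0,0)$ in local models of the form $X_P\t\R^{l_W}$, $X_Q\t\R^{l_X}$, $X_R\t\R^{l_Y}$, with $P,Q,R$ toric of ranks equal to $\depth_Ww,\depth_Xx,\depth_Yy$ respectively. Such adapted g-charts exist by Lemma~\ref{gc3lem2} combined with the local structure of manifolds with g-corners at a given stratum. In these coordinates the interior of each model coincides with its top-depth stratum through the base point, so by Lemma~\ref{gc3lem3} the local representative $\ti i$ of $i$ sends interior points to interior points, and is therefore interior in the sense of \S\ref{gc32}. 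Moreover, the three conditions of Definition~\ref{gc4def5} applied to $i$ at $x$ translate precisely into the requirement that $\ti i$ is an interior immersion in the sense of Theorem~\ref{gc4thm2}. After shrinking the chart on $W$ so that the local representative $\ti f$ of $f$ has image in $\ti i(\ti V)$, Theorem~\ref{gc4thm2}(iv) supplies a unique smooth map $\ti g$ with $\ti f=\ti i\ci\ti g$. Injectivity of $\ti i$ forces $\ti g$ to be the coordinate representative of $g$, so $g$ is smooth near $w$, and since $w$ was arbitrary $g$ is smooth on all of $W$.

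For the embedding claim, assume $f$ is also an embedding. Then $g=i^{-1}\ci f:W\ra g(W)$ is the composition of the homeomorphism $f:W\ra f(W)$ with the restriction $i^{-1}\vert_{f(W)}:f(W)\ra g(W)$ of the homeomorphism $i^{-1}:i(X)\ra X$, so $g$ is a homeomorphism onto its image. Applying the corner functor to $f=i\ci g$ gives $C(f)=C(i)\ci C(g)$, and we must verify the three conditions of Definition~\ref{gc4def5} for $C(g)$. Denote the relevant monoid morphisms by $\al:A\ra B$ (for $C(g)$), $\be:B\ra C$ (for $C(i)$) and $\ga=\be\ci\al:A\ra C$ (for $C(f)$). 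Injectivity of $\ga$ forces $\al$ injective, and the same purely algebraic argument applied to the diagonal blocks of \eqref{ag4eq1} yields injectivity of the stratum-tangent differential of $C(g)$. For the torsion-free quotient, if $b\in B$ with $nb\in\al(A)$ for some $n\ge 1$, then $n\be(b)\in\be\al(A)=\ga(A)$, so $\be(b)\in\ga(A)=\be(\al(A))$ since $C/\ga(A)$ is torsion-free, and then injectivity of $\be$ gives $b\in\al(A)$. Thus $B/\al(A)$ is torsion-free, $C(g)$ is an interior immersion, and $g$ is an embedding.

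The main technical obstacle is arranging the adapted g-charts in which the a priori non-interior embedding $i$ becomes interior, so that Theorem~\ref{gc4thm2} actually applies; once this reduction to toric charts of matching depth is in place, Theorem~\ref{gc4thm2}(iv) delivers the local smooth factorization, and the embedding claim reduces to elementary monoid algebra.
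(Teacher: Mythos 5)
Your uniqueness argument and continuity of $g=i^{-1}\ci f$ are correct, and the embedding claim at the end is handled carefully; in particular the torsion-free argument via injectivity of the groupified map $\be^\gp$ is a nice fleshing out of what the paper only gestures at with ``easy to check from Definition~\ref{gc4def5}.''

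The smoothness argument, however, has a gap at exactly the point you flag as ``the main technical obstacle.'' You claim that choosing adapted g-charts (with $P,Q,R$ of ranks $\depth_Ww,\depth_Xx,\depth_Yy$) makes the local representative $\ti i$ of $i$ interior, ``since the interior of each model coincides with its top-depth stratum through the base point.'' This is not true: the interior of $X_Q\t\R^{l_X}$ is the depth-$0$ stratum $X_Q^\ci\t\R^{l_X}$, whereas the stratum through $(\de_0,0)$ has depth $\rank Q$; these coincide only when $\rank Q=0$. More fundamentally, whether a smooth map is interior is a chart-independent, intrinsic property (Definition~\ref{gc3def6}), and embeddings need not be interior: $i:*\ra[0,\iy)$, $i(*)=0$, is a non-interior embedding. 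Lemma~\ref{gc3lem3} only says that each connected piece of $S^k(X)$ lands in a single $S^l(Y)$; it does not force $l=0$ when $k=0$. Consequently Theorem~\ref{gc4thm2}(iv), which requires $i$ to be an \emph{interior} immersion of local models, cannot be invoked directly when $i$ is not interior, and you have not dealt with this case.

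The route the paper takes, which is what is needed here, is a reduction to the interior case: for $i$ an embedding, $C(i)\vert_{C_0(X)}:C_0(X)\cong X\ra C(Y)$ is always an interior embedding (interiority is Proposition~\ref{gc3prop6}(a), and the immersion conditions are precisely how Definition~\ref{gc4def5} is phrased for non-interior maps), and one runs the local factorization argument of Theorem~\ref{gc4thm2}(iv) with $C(i)\vert_{C_0(X)}$ in place of $i$ and $C(f)\vert_{C_0(W)}$ in place of $f$, both landing in $C(Y)$. As written, your argument establishes the corollary only under the extra hypothesis that $i$ is interior.
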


\begin{proof} First assume $i$ is interior. The fact that there is a unique continuous map $g:W\ra X$ with $f=i\ci g$ follows from $i:X\ra i(X)$ a homeomorphism. If $w\in W$ with $g(w)=x\in X$ and $f(w)=y\in Y$, then $W$ near $w$ is locally modelled on $X_P\t\R^l$ near $(\de_0,0)$, and $X$ near $x$ is locally modelled on $X_Q\t\R^m$ near $(\de_0,0)$, and $Y$ near $y$ is locally modelled on $X_R\t\R^n$ near $(\de_0,0)$, for some toric monoids $P,Q,R$ and $l,m,n\ge 0$. Using Theorem \ref{gc4thm2}(iv) we see that $g$ is smooth near $w$ in $W$, so $g$ is smooth.

If $i$ is not interior, then $C(i)\vert_{C_0(X)}:C_0(X)\cong X\ra C(Y)$ is an interior embedding, and we use basically the same proof with $C(i)\vert_{C_0(X)}$ in place of~$i$.

The final part is easy to check from Definition \ref{gc4def5}. For example, in Definition \ref{gc4def5}(i),(ii) $\d(f\vert_{S^k(W)})\vert_w,\ti M_xf$ injective imply $\d(g\vert_{S^k(W)})\vert_w,\ti M_xg$ injective, as $\d(f\vert_{S^k(W)})\vert_w,\ti M_xf$ factor via~$\d(g\vert_{S^k(W)})\vert_w,\ti M_xg$.
\end{proof}

The second is an analogue of Remark \ref{gc4rem1}(A). It means we can regard embedded submanifolds of manifolds with g-corners $Y$ as being special subsets $X\subseteq Y$, rather than special smooth maps~$i:X\ra Y$.

\begin{cor} Suppose $i:X\ra Y$ is an embedding of manifolds with g-corners. Then we can construct on the image $i(X)$ the canonical structure of a manifold with g-corners, depending only on the subset\/ $i(X)\subseteq Y$ and the manifold with g-corners structure on $Y$ and independent of\/ $i,X,$ and with this structure $i:X\ra i(X)$ is a diffeomorphism. 
\label{gc4cor2}
\end{cor}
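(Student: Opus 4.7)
The plan is to build a g-atlas on $i(X)$ by transporting the g-atlas from $X$ through $i$, and then to show this structure is independent of the presentation $(X,i)$ of the subset $i(X)\subseteq Y$. The key ingredient is the factorization property of embeddings given by Corollary~\ref{gc4cor1}, which has just been established from Theorem~\ref{gc4thm2}.

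First I would equip $i(X)$ with the subspace topology inherited from $Y$, so that $i(X)$ is automatically Hausdorff and second countable, and $i:X\to i(X)$ is a homeomorphism by assumption. Given any g-chart $(P,U,\phi)$ in the maximal g-atlas on $X$, define $\hat\phi=i\ci\phi:U\to i(X)$. Because $\phi$ is a homeomorphism onto an open subset of $X$ and $i$ is a homeomorphism onto $i(X)$, the map $\hat\phi$ is a homeomorphism onto the open subset $i(\phi(U))\subseteq i(X)$, so $(P,U,\hat\phi)$ is a g-chart on $i(X)$. Any two such g-charts $(P,U,\hat\phi)$ and $(P',U',\hat{\phi'})$ are compatible, since on the relevant overlaps the transition map is $\hat{\phi'}{}^{-1}\ci\hat\phi=(\phi')^{-1}\ci i^{-1}\ci i\ci\phi=(\phi')^{-1}\ci\phi$, which is a diffeomorphism of open subsets of $X_{P},X_{P'}$ by compatibility of the original charts on $X$. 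These charts cover $i(X)$ because the charts $(P,U,\phi)$ cover $X$, so we obtain a g-atlas which extends uniquely to a maximal g-atlas on $i(X)$. By construction, in corresponding g-charts $(P,U,\phi)$ and $(P,U,\hat\phi)$ the map $i:\phi(U)\to\hat\phi(U)$ is represented by $\id_U$, which is a diffeomorphism; thus $i:X\to i(X)$ is a diffeomorphism.

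The remaining, and main, point is canonicity: the manifold with g-corners structure on $i(X)$ just constructed must depend only on the subset $i(X)\subseteq Y$ and the structure of $Y$. Suppose $(X',i')$ is another manifold with g-corners equipped with an embedding $i':X'\to Y$ such that $i'(X')=i(X)$ as subsets of $Y$; write $\MKur_X,\MKur_{X'}$ for the two resulting maximal g-atlases on $i(X)$. Applying Corollary~\ref{gc4cor1} to $f=i':X'\to Y$ and the embedding $i:X\to Y$ (with $i'(X')\subseteq i(X)$) produces a unique smooth map $g:X'\to X$ with $i\ci g=i'$; symmetrically, there is a unique smooth $g':X\to X'$ with $i'\ci g'=i$. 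Then $i\ci(g\ci g')=i'\ci g'=i=i\ci\id_X$, so by the uniqueness clause of Corollary~\ref{gc4cor1} (applied with $W=X,$ $f=i$) we get $g\ci g'=\id_X$, and symmetrically $g'\ci g=\id_{X'}$. Hence $g:X'\to X$ is a diffeomorphism.

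Given any g-chart $(P,U,\phi')$ in the maximal g-atlas on $X'$, the map $g\ci\phi':U\to X$ is a smooth open homeomorphism, and its compatibility with every chart in the maximal g-atlas on $X$ is automatic because $g$ is a diffeomorphism; thus $(P,U,g\ci\phi')$ lies in the maximal g-atlas on $X$. The induced g-chart on $i(X)$ is $i\ci(g\ci\phi')=i'\ci\phi'$, which is precisely the g-chart on $i(X)$ coming from $(P,U,\phi')$ via $i'$. Hence every g-chart in $\MKur_{X'}$ lies in $\MKur_X$, and by symmetry $\MKur_X=\MKur_{X'}$. This establishes canonicity and completes the proof. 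The substantive part of the argument is the canonicity step, but Corollary~\ref{gc4cor1} makes it a formal consequence of the uniqueness of factorizations through an embedding; no further local analysis is required beyond that already encoded in Theorem~\ref{gc4thm2}.
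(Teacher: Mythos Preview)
Your proof is correct and follows essentially the same approach as the paper: transport the g-atlas through $i$ to make $i:X\to i(X)$ a diffeomorphism, then use Corollary~\ref{gc4cor1} to produce mutually inverse smooth maps between any two presentations $(X,i)$ and $(X',i')$, concluding that the structure is canonical. The only cosmetic differences are that the paper deduces $g\ci g'=\id_X$ directly from injectivity of $i$ rather than from the uniqueness clause of Corollary~\ref{gc4cor1}, and that you spell out the chart-level comparison explicitly while the paper leaves it implicit; neither affects the substance. (One minor remark: using the symbol $\MKur$ for the maximal g-atlas is unfortunate, since in this paper $\MKur$ denotes a category of M-Kuranishi spaces.)
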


\begin{proof} Since $i:X\ra i(X)$ is a homeomorphism by Definition \ref{gc4def5}, there is a unique manifold with g-corners structure on $i(X)$, such that $i:X\ra i(X)$ is a diffeomorphism. We have to prove this depends only on the subset $i(X)\subseteq Y$, and not on the choice of manifold with g-corners $X$ and embedding $i:X\ra Y$ with image $i(X)$. So suppose $i':X'\ra Y$ is another embedding of manifolds with g-corners with $i'(X')=i(X)$. Corollary \ref{gc4cor1} gives unique smooth maps $g:X\ra X'$ with $i=i'\ci g$, and $h:X'\ra X$ with $i'=i\ci h$. Then $i\ci h\ci g=i'\ci g=i$, so $h\ci g=\id_X$ as $i$ is injective, and similarly $g\ci h=\id_{\smash{X'}}$. 

Thus $g$ and $h$ are inverse, and $g:X\ra X'$ is a diffeomorphism. Hence the manifold with g-corners structure on $i(X)$ making $i:X\ra i(X)$ a diffeomorphism is the same as the manifold with g-corners structure on $X$ making $i':X'\ra i(X)$ a diffeomorphism, and is independent of the choice of~$X,i$.
\end{proof}

Here are analogues of Definition \ref{gc2def9} and Proposition~\ref{gc2prop3}.

\begin{dfn} A smooth map $f:X\ra Y$ of manifolds with g-corners is called {\it \'etale\/} if it is a local diffeomorphism. That is, $f$ is \'etale if and only if for all $x\in X$ there are open neighbourhoods $U$ of $x$ in $X$ and $V=f(U)$ of $f(x)$ in $Y$ such that $f\vert_U:U\ra V$ is a diffeomorphism (invertible with smooth inverse).

\label{gc4def6}
\end{dfn}

\begin{cor} A smooth map $f:X\ra Y$ of manifolds with g-corners is \'etale if and only if\/ $f$ is simple (hence interior) and\/ ${}^b\d f:{}^bTX\ra f^*({}^bTY)$ is an isomorphism of vector bundles on $X$.

If\/ $f$ is \'etale, then $f$ is a diffeomorphism if and only if it is a bijection.
\label{gc4cor3}
\end{cor}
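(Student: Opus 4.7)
The plan is to prove both directions separately, with the hard direction being \emph{if}, which we reduce to a direct application of Theorem~\ref{gc4thm2}(v).

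For \emph{only if}: suppose $f:X\ra Y$ is \'etale. Then near each $x\in X$ there are open neighbourhoods $U\ni x$, $V\ni f(x)$ with $f\vert_U:U\ra V$ a diffeomorphism; in particular $f\vert_U$ is simple and ${}^b\d(f\vert_U):{}^bTU\ra f\vert_U^*({}^bTV)$ is an isomorphism (since ${}^bT$ is a functor on $\Mangcin$ turning diffeomorphisms into isomorphisms, as in \S\ref{gc35}). Simplicity and ${}^b\d f$ being an isomorphism are local properties, so $f$ is simple with ${}^b\d f$ an isomorphism on all of~$X$.

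For \emph{if}: suppose $f$ is simple and ${}^b\d f$ is an isomorphism. Since simple implies interior and \'etale is a local property, I would fix $x\in X$ with $y=f(x)$ and show $f$ is a local diffeomorphism near~$x$. Write $P=\ti M_xX$, $Q=\ti M_yY$, which are toric monoids of ranks $\depth_Xx,\depth_Yy$. As noted in the paragraph before Lemma~\ref{gc3lem6}, $X$ near $x$ is locally modelled on $X_P\t\R^{\dim X-\rank P}$ near $(\de_0,0)$, and similarly $Y$ near $y$ is locally modelled on $X_Q\t\R^{\dim Y-\rank Q}$ near $(\de_0,0)$. Since $f$ is simple, Definition~\ref{gc4def1}(ii) gives that $\ti M_xf:P\ra Q$ is an isomorphism of monoids; since ${}^b\d f$ is an isomorphism at $x$, we have $\dim X=\dim Y$, and combined with $\rank P=\rank Q$ this forces $\dim X-\rank P=\dim Y-\rank Q$. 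Hence, using these local models, $f$ near $x$ is represented by an interior map $i:V\ra X_Q\t\R^m$ for $V$ an open neighbourhood of $(\de_0,0)$ in $X_P\t\R^m$ with $m=\dim X-\rank P$.

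Next I would verify that $i$ is an interior immersion in the sense of Definition~\ref{gc4def5}. Condition (ii) holds since $\ti M_{(\de_0,0)}i$ is identified with $\ti M_xf$, which is an isomorphism; condition (iii) holds trivially since the quotient monoid is zero; condition (i) follows from the block-triangular decomposition \eqref{ag4eq1}: since both the diagonal block $\ti M_{(\de_0,0)}i\ot_\Z\R$ and the full map ${}^bT_{(\de_0,0)}i$ are isomorphisms (the latter because ${}^b\d f$ is an isomorphism), the other diagonal block $\d(i\vert_{S^k(V)})\vert_{(\de_0,0)}$ must also be an isomorphism, in particular injective. Now Theorem~\ref{gc4thm2}(iii) identifies $\ti M_{(\de_0,0)}i$ with $\al^\vee:Q^\vee\ra P^\vee$ for some monoid morphism $\al:Q\ra P$; since $\ti M_{(\de_0,0)}i$ is an isomorphism, so is $\al$ (using that $\D^{\bf to}$ is an equivalence of categories by Theorem~\ref{gc3thm1}). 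The ranks on the $\R^m$ factors agree as noted above. Thus Theorem~\ref{gc4thm2}(v) applies and yields open neighbourhoods $\dot V,\dot W$ of the base points with $i\vert_{\dot V}:\dot V\ra\dot W$ a diffeomorphism. Translated back to $f$, this says $f$ is a local diffeomorphism near $x$, so $f$ is \'etale.

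For the final sentence: if $f$ is \'etale and bijective, then $f^{-1}:Y\ra X$ exists as a map of sets; locally on $Y$, $f^{-1}$ agrees with the inverse of a local diffeomorphism coming from the \'etale condition, so $f^{-1}$ is smooth, and $f$ is a diffeomorphism. The converse is immediate from the definition. The main obstacle I anticipate is purely bookkeeping: ensuring the reduction to toric monoids $P,Q$ (via $\ti M_xX,\ti M_yY$) correctly matches the hypotheses of Theorem~\ref{gc4thm2}(v); once that is set up, (v) does all the real work.
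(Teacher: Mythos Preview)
Your proof is correct and follows essentially the same route as the paper: the ``only if'' direction via local inverses, the ``if'' direction by reducing to local models and invoking Theorem~\ref{gc4thm2}(v), and the final bijection clause by smoothness of the set-theoretic inverse. You are more explicit than the paper in verifying that $f$ is an immersion (the paper simply asserts it), which is a good addition; note one small slip: with your conventions $\ti M_{(\de_0,0)}i$ goes $P^\vee\ra Q^\vee$, so $\al^\vee:P^\vee\ra Q^\vee$ for $\al:Q\ra P$, not $\al^\vee:Q^\vee\ra P^\vee$ as written, though this does not affect the argument.
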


\begin{proof} Suppose $f$ is \'etale. For $x\in X$ with $f(x)=y$, $f$ has a local inverse $g$ near $x$, so ${}^b\d f\vert_x:{}^bT_xX\ra {}^bT_yY$ is an isomorphism with inverse ${}^b\d g\vert_y:{}^bT_yY\ra {}^bT_xX$, and $\ti M_xf:\ti M_xX\ra\ti M_yY$ is an isomorphism with inverse $\ti M_yg:\ti M_yY\ra\ti M_xX$. As this holds for all $x\in X$, ${}^b\d f:{}^bTX\ra f^*({}^bTY)$ is an isomorphism, and $f$ is simple. This proves the `only if' part.

Next suppose $f$ is simple and ${}^b\d f$ is an isomorphism, and let $x\in X$ with $f(x)=y\in Y$. Then $X$ near $x$ is locally modelled on $X_Q\t\R^m$ near $(\de_0,0)$, and $Y$ near $y$ is locally modelled on $X_R\t\R^n$ near $(\de_0,0)$, for some toric monoids $Q,R$ and $m,n\ge 0$. Also $f$ is an immersion, so we can apply Theorem \ref{gc4thm2}. As $f$ is simple, $\al:R\ra Q$ identified with \eq{gc4eq2} is an isomorphism, and as ${}^b\d f$ is an isomorphism, $\dim X=\dim Y$, so $m=n$. Thus Theorem \ref{gc4thm2}(v) says there exist open neighbourhoods $x\in\dot V\subseteq X$ and $y\in\dot W\subseteq Y$ with $f\vert_{\smash{\dot V}}:\dot V\ra\dot W$ a diffeomorphism. Hence $f$ is \'etale, proving the `if' part.

For the final part, diffeomorphisms are \'etale bijections, and if $f:X\ra Y$ is an \'etale bijection, then it has an inverse map $f^{-1}:Y\ra X$, and the \'etale condition implies that $f^{-1}$ is smooth near each point $f(x)$ in $Y$, so $f^{-1}$ is smooth, and $f$ is a diffeomorphism.
\end{proof}

Next we investigate the analogue of Remark \ref{gc4rem1}(B): the question of whether embedded submanifolds $X\hookra Y$ can be described locally as the solutions of $\dim Y-\dim X$ transverse equations in $Y$, and conversely, whether the solution set of $k$ transverse equations in $Y$ is an embedded submanifold $X\hookra Y$ with $\dim Y-\dim X=k$. The answer turns out to be complicated. 

The next theorem, proved in \S\ref{gc52}, gives a special case in which a set of transverse equations can be used to define an embedded submanifold. It will be used to prove theorems in \S\ref{gc43} on existence of transverse fibre products.

\begin{thm} Suppose $Q$ is a toric monoid, $V$ is an open neighbourhood of\/ $(\de_0,0)$ in $X_Q\t\R^n,$ and\/ $f_i,g_i:V\ra[0,\iy)$ are interior maps for $i=1,\ldots,k$ with\/ $f_i(\de_0,0)=g_i(\de_0,0),$ and\/ $h_j:V\ra\R$ are smooth maps for $j=1,\ldots,l$ with\/ $h_j(\de_0,0)=0,$ such that\/ ${}^b\d f_1\vert_{(\de_0,0)}-{}^b\d g_1\vert_{(\de_0,0)},\ldots,{}^b\d f_k\vert_{(\de_0,0)}-{}^b\d g_k\vert_{(\de_0,0)},\d h_1\vert_{(\de_0,0)},\ldots,\d h_l\vert_{(\de_0,0)}$ are linearly independent in\/ ${}^bT_{(\de_0,0)}^*V$. Define
\e
X^\ci\!=\!\bigl\{v\!\in\! V^\ci:f_i(v)\!=\!g_i(v),\;\> i\!=\!1,\ldots,k,\;\> h_j(v)\!=\!0,\;\> j\!=\!1,\ldots,l\bigr\},
\label{gc4eq3}
\e
and let\/ $X=\ov{X^\ci}$ be the closure of\/ $X^\ci$ in $V$. Suppose $(\de_0,0)\in X$.

Here ${}^b\d f_i$ is a vector bundle morphism ${}^bTV\ra f_i^*({}^bT[0,\iy)),$ but we regard it as a morphism ${}^bTV\ra\R,$ and hence a section of\/ ${}^bT^*V,$ by identifying ${}^bT[0,\iy)\cong\R$ with\/ $x\frac{\pd}{\pd x}\cong 1,$ for $x$ the coordinate on $[0,\iy),$ and similarly for ${}^b\d g_i$. In effect we have\/ ${}^b\d f_i=f_i^{-1}\d f_i=\d\log f_i,$ so that\/ ${}^b\d f_i-{}^b\d g_i=\d\log(f_i/g_i),$ but\/ ${}^b\d f_i,{}^b\d g_i$ are still well-defined where $f_i=0$ and\/ $g_i=0$ in\/~$V$.

Then there exists a toric monoid\/ $P,$ an open neighbourhood\/ $U$ of\/ $(\de_0,0)$ in $X_P\t\R^m,$ where $\rank P+m=\rank Q+n-k-l,$ an interior embedding $\phi:U\hookra V$ with $\phi(\de_0,0)=(\de_0,0),$ and an open neighbourhood\/ $V'$ of\/ $(\de_0,0)$ in $V,$ such that\/~$\phi(U)=X\cap V'$.

Using the isomorphism 
\e
{}^bT_{(\de_0,0)}^*V={}^bT_{\de_0}^*X_Q\op T_0^*\R^n\cong (Q\ot_\N\R)\op\R^n,
\label{gc4eq4}
\e
write\/ ${}^b\d f_i\vert_{(\de_0,0)}-{}^b\d g_i\vert_{(\de_0,0)}=\be_i\op\ga_i$ for\/ $i=1,\ldots,k,$ where\/ $\be_i\in Q\ot_\N\Z\subseteq Q\ot_\N\R$ and\/ $\ga_i\in\R^n$. Then there is a natural isomorphism
\e
P^\vee\cong\bigl\{\rho\in Q^\vee:\text{$\rho(\be_i)=0$ for\/ $i=1,\ldots,k$}\bigr\},
\label{gc4eq5}
\e
which identifies the inclusion $P^\vee\hookra Q^\vee$ with\/~$\ti M_{(\de_0,0)}\phi:\ti M_{(\de_0,0)}U\ra\ti M_{(\de_0,0)}V$.

Let us now make the additional assumption that\/ $\be_1,\ldots,\be_k$ are linearly independent over\/ $\R$ in\/ $Q\ot_\N\R$. The r.h.s.\ of\/ \eq{gc4eq5} makes sense without supposing that\/ $(\de_0,0)\in X$. Under our additional assumption, $(\de_0,0)\in X$ is equivalent to the condition that the r.h.s.\ of\/ \eq{gc4eq5} does not lie in any proper face $F\subsetneq Q^\vee$ of the toric monoid\/~$Q^\vee$.
\label{gc4thm3}
\end{thm}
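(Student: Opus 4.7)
The plan is to build the embedding $\phi$ by composing a canonical ``monomial'' embedding $X_\pi \t \id : X_P \t \R^m \hookra X_Q \t \R^n$ associated to a monoid morphism $\pi : Q \ra P$, with a torus-action diffeomorphism of $V$ that deforms the monomial equations into the actual equations $\{f_i = g_i,\ h_j = 0\}$. For the smooth part I would first observe that the map $I^* : T^*_{(\de_0,0)} V \ra {}^bT^*_{(\de_0,0)} V$ annihilates the $T^*_{\de_0} X_Q$ factor, so the images in ${}^bT^*V$ of $\d h_1,\ldots,\d h_l$ are exactly their $\R^n$-components and the hypothesis forces these to be linearly independent in $\R^n$; the classical implicit function theorem applied to these in the $\R^n$-direction (parametrised by $X_Q$) then yields a local diffeomorphism sending $\{h_j = 0\}$ to $X_Q \t \R^{n-l}$. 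Any $f_i,g_i$ with $f_i(\de_0,0) = g_i(\de_0,0) > 0$ can be folded into this stage, since then $f_i - g_i$ is smooth with $\be_i = 0$ and ${}^b\d f_i - {}^b\d g_i = 0 \op \ga_i$, in the same form as a $\d h_j$. After these reductions I may assume $l = 0$ and every $\be_i$ is nonzero.

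Under the additional linear-independence hypothesis on the $\be_i$, I would define $P^\vee := \bigl\{\rho \in Q^\vee : \rho(\be_i) = 0,\ i = 1,\ldots,k\bigr\}$, which is $Q^\vee$ intersected with a rank-$k$ sublattice of $(Q^\vee)^\gp$ and so a toric submonoid of rank $\rank Q - k$, and set $P := (P^\vee)^\vee$. The inclusion $P^\vee \hookra Q^\vee$ dualises to a monoid morphism $\pi : Q \ra P$, and by Example \ref{gc3ex4} the induced $X_\pi : X_P \ra X_Q$ is interior, with a short check showing it is an embedding whose image near $\de_0$ is the monomial subvariety $\bigl\{x \in X_Q : x(r_i) = x(s_i),\ i = 1,\ldots,k\bigr\}$, where $f_i = F_i \la_{r_i}$, $g_i = G_i \la_{s_i}$ is the interior decomposition of Definition \ref{gc3def5} and $\be_i = r_i - s_i \in Q^\gp$. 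To deform the actual equations $F_i \la_{r_i} = G_i \la_{s_i}$ into the monomial equations $\la_{r_i} = \la_{s_i}$, I would choose a smooth map $c : V \ra \Hom(Q^\gp, \R) \cong X_Q^\circ$ solving $c(v)(\be_i) = \log\bigl(G_i(v)/F_i(v)\bigr)$, obtainable by extending the $\be_i$ to a $\Q$-basis of $Q^\gp \ot \Q$ and setting $c$ freely on the complementary directions. The self-map $\Phi : (x,y) \mapsto (e^{c(x,y)} \cdot x,\,y)$, using the multiplication $X_Q^\circ \t X_Q \ra X_Q$, is smooth, fixes $(\de_0,0)$, and has Jacobian $\mathrm{diag}(e^{c(p_1)}, \ldots, e^{c(p_s)}, I_n)$ there in local coordinates from generators $p_1,\ldots,p_s$ of $Q$, hence is a local diffeomorphism; the identity $\la_p \ci \Phi = e^{c(\cdot)(p)} \la_p$ then gives $\Phi^{-1}\bigl(\{f_i = g_i\}\bigr) = \{\la_{r_i} = \la_{s_i}\}$ near $(\de_0,0)$. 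Setting $\phi := \Phi \ci (X_\pi \t \id_{\R^m})$ with $m = n - l$ realises $X \cap V'$ as $\phi(U)$, and the identification \eq{gc4eq5} with $\ti M_{(\de_0,0)}\phi$ follows at once from the functoriality calculation $\ti M_{(\de_0,0)}\phi = \pi^\vee : P^\vee \hookra Q^\vee$.

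For the final paragraph, under the independence hypothesis, $\phi(U)$ agrees with $X \cap V'$ near $(\de_0,0)$, so $(\de_0,0) \in X$ iff $\de_0 \in X_Q$ lies in the image of $X_\pi$, iff the monoid preimage $\pi^{-1}(0_P)$ equals $\{0_Q\}$, iff $\bigl\langle \be_1,\ldots,\be_k\bigr\rangle_\Z \cap Q = \{0\}$ in $Q^\gp$. For any $q$ in this intersection, every $\rho \in P^\vee$ has $\rho(q) = 0$, so the minimal face of $Q$ containing $q$ lies in the common zero set of $P^\vee$; hence the condition is equivalent to $P^\vee$ having no common zero on any non-trivial face of $Q$, which by the face--dual bijection of Proposition \ref{gc3prop2} is exactly the statement that $P^\vee$ is not contained in any proper face $F^\w \subsetneq Q^\vee$. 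The principal obstacle will be the torus-action construction of $\Phi$: one must check that $c$ indeed descends from $\R^s$ to $\Hom(Q^\gp,\R)$ to be compatible with the monoidal relations cutting out $X_Q \subset [0,\iy)^s$, and that $\Phi$ is a genuine diffeomorphism of $V \subset X_Q \t \R^n$ (not just of the ambient $[0,\iy)^s \t \R^n$). A secondary difficulty, for the general case without the independence hypothesis, is that each $\R$-relation $\sum a_i \be_i = 0$ forces a hidden smooth constraint $\sum a_i \log(G_i/F_i) \equiv 0$ on $V$, which must be extracted and absorbed into the Step-1 smooth reduction before the monoidal-straightening step can be applied to the remaining, now independent, $\be_i$.
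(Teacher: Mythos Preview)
Your approach is essentially correct and takes a genuinely different route from the paper. The paper works extrinsically throughout: it fixes generators $q_1,\ldots,q_N$ for $Q$, embeds $X_Q\hookra[0,\iy)^N$ via $\la_{q_1}\t\cdots\t\la_{q_N}$, lifts $f_i,g_i,h_j$ to $\ti f_i,\ti g_i,\ti h_j$ on an open set of $[0,\iy)^N\t\R^n$, and then constructs the straightening diffeomorphism $\Psi$ on the ambient space by an explicit change of coordinates $\hat x_p=x_p\cdot\prod_i(\ti D_i/\ti E_i)^{d^i_p}$, with exponents $d^i_p$ read off from an inverse matrix chosen so that \eq{gc5eq21} holds; étaleness of $\Psi$ is then Proposition~\ref{gc2prop3} for ordinary corners, and one checks $\Psi$ preserves the relations cutting out $X_Q'$. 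Your torus-action map $\Phi(x,y)=(e^{c(x,y)}\cdot x,y)$ is the intrinsic version of exactly this, using the multiplication $X_Q^\ci\t X_Q\ra X_Q$ instead of coordinates; it is cleaner conceptually, at the cost of having to verify directly (via Corollary~\ref{gc4cor3}, noting $\Phi$ is simple) that $\Phi$ is étale on $X_Q\t\R^n$ rather than on an ambient space. The paper also straightens $h_j$ and $f_i=g_i$ simultaneously, whereas you do them sequentially; either works.

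Two points need repair. First, your appeal to $I^*:T^*V\ra{}^bT^*V$ is unsafe, since $T^*V$ is not a vector bundle when $Q$ is not free (Example~\ref{gc3ex10}); argue directly from \eq{gc3eq25} that smooth $h_j$ have vanishing $Q\ot_\N\R$-component at $(\de_0,0)$. Second, and more substantively, the claim that $P^\vee=\{\rho\in Q^\vee:\rho(\be_i)=0\}$ automatically has rank $\rank Q-k$ is false: intersecting $Q^\vee$ with a corank-$k$ sublattice of $(Q^\vee)^\gp$ can collapse rank (e.g.\ $Q^\vee=\N^2$, $\be_1=(1,1)\in Q^\gp$ gives $P^\vee=\{0\}$). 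The paper uses the hypothesis $(\de_0,0)\in X$ precisely here, showing it forces $P^\vee$ to contain elements with all coordinates large and hence not to lie in any proper face of $Q^\vee$; you must invoke this before the dimension count $\rank P+m=\rank Q+n-k-l$ goes through. Also check the direction of your straightening: with $c(\be_i)=\log(G_i/F_i)$ one computes $\la_{r_i}(\Phi(x,y))=\la_{s_i}(\Phi(x,y))$ iff $f_i(x,y)=g_i(x,y)$, so $\Phi$ carries $\{f_i=g_i\}$ to the monomial locus, not the reverse, and $\phi$ should be $\Phi^{-1}\ci(X_\pi\t\id)$.

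For the final paragraph, your chain of equivalences is right but the middle step needs care: you want $(\de_0,0)$ in the \emph{closure} of $X_\pi(X_P^\ci)\t\R^m$, not merely in $X_\pi(X_P)\t\R^m$. The paper handles this by a direct limit argument (taking $c_j=-\log x_j$ and observing that points of $X_\pi(X_P^\ci)$ approaching $\de_0$ correspond to elements of $P^\vee\ot_\N\R$ lying in the interior of the dual cone of $Q$). Your monoid-theoretic equivalence $\pi^{-1}(0_P)=\{0_Q\}\Leftrightarrow P^\vee$ not contained in a proper face of $Q^\vee$ is correct, but bridging from the closure statement to $\pi^{-1}(0_P)=\{0_Q\}$ still requires this limit analysis or an equivalent; the observation that $X_\pi(\de_0^P)=\de_0^Q$ iff $\pi^{-1}(0)=\{0\}$ gives one direction but not the other without further work.
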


Theorem \ref{gc4thm3} is only a partial analogue of Remark \ref{gc4rem1}(B), as it proves that subsets locally defined as the zeroes of transverse equations (as in \eq{gc4eq3}) are embedded submanifolds, but it does {\it not\/} claim the converse, that embedded submanifolds are always locally defined as the zeroes of transverse equations. The next example shows that the converse of Theorem \ref{gc4thm3} is actually false.

\begin{ex} Define $\phi:[0,\iy)\ra[0,\iy)^2$ by $\phi(x)=(x^2+x^3,x^3)$. Then $\phi$ is an interior embedding. However, {\it there do not exist\/} interior $f,g:[0,\iy)^2\ra[0,\iy)$ with $f(0,0)=g(0,0)=0$ such that $\phi((0,\iy))=\bigl\{(y,z)\in(0,\iy)^2:f(y,z)=g(y,z)\bigr\}$ and ${}^b\d f-{}^b\d g$ is nonzero on $\phi([0,\iy))$, even only near $(0,0)$ in $[0,\iy)^2$. To see this, observe that we must have
\begin{equation*}
f(y,z)=D(y,z)y^az^b,\;\> g(y,z)=E(y,z)y^cz^d
\end{equation*}
for $D,E:[0,\iy)^2\ra(0,\iy)$ smooth and defined near $(0,0)$ and $a,b,c,d\in\N$. Then ${}^b\d f-{}^b\d g$ nonzero at $(0,0)$ implies that~$(a,b)\ne(c,d)$.

The equation $f(x^2+x^3,x^3)=g(x^2+x^3,x^3)$ is now equivalent to
\e
x^{2a+3b-2c-3d}(1+x)^{a-c}=E(x^2+x^3,x^3)/D(x^2+x^3,x^3). 
\label{gc4eq6}
\e
Putting $x=0$ and using $D,E>0$ gives $2a+3b-2c-3d=0$. Applying $\frac{d}{\d x}$ to \eq{gc4eq6} and setting $x=0$ then yields $a-c=0$, so $(a,b)=(c,d)$, a contradiction.

We can write $\phi([0,\iy))$ in the form $\bigl\{(y,z)\in[0,\iy)^2:h(y,z)=0\bigr\}$ for $h:[0,\iy)^2\ra\R$ smooth, e.g.\ with $h(y,z)=(y-z)^3-z^2$. But then $\d h\vert_{(0,0)}=0$ in both $T_{(0,0)}[0,\iy)^2$ and ${}^bT_{(0,0)}[0,\iy)^2$, so $h$ is not transverse. 

Note that if we had defined $\phi(x)=(x^2,x^3)$, we could write $\phi([0,\iy))=\bigl\{(y,z)\in[0,\iy)^2:f(y,z)=g(y,z)\bigr\}$ for $f(y,z)=y^3$ and $g(y,z)=z^2$. The problem is with the higher-order $x^3$ term in $x^2+x^3$ in~$\phi(x)=(x^2+x^3,x^3)$.
\label{gc4ex3}
\end{ex}

Using Theorem \ref{gc4thm3} we prove:

\begin{cor} Let\/ $Y$ be a manifold with g-corners, $f_i,g_i:Y\ra[0,\iy)$ be interior for $i=1,\ldots,k,$ and\/ $h_j:Y\ra\R$ be smooth for $j=1,\ldots,l,$ set
\begin{equation*}
X^\ci=\bigl\{x\in Y^\ci:f_i(x)=g_i(x),\;\> i=1,\ldots,k,\;\> h_j(x)=0,\;\> j=1,\ldots,l\bigr\},
\end{equation*}
and let\/ $X=\ov{X^\ci}$ be the closure of\/ $X^\ci$ in $Y$. Suppose that\/ ${}^b\d f_1\vert_x-{}^b\d g_1\vert_x,\ldots,\ab{}^b\d f_k\vert_x-{}^b\d g_k\vert_x,\d h_1\vert_x,\ldots,\d h_l\vert_x$ are linearly independent in\/ ${}^bT_x^*Y$ for each\/ $x\in X,$ interpreting ${}^b\d f_i-{}^b\d g_i$ as in Theorem\/ {\rm\ref{gc4thm3}}. Then $X$ has a unique structure of a manifold with g-corners with\/ $\dim X=\dim Y-k-l,$ such that the inclusion $X\hookra Y$ is an embedding.
\label{gc4cor4}
\end{cor}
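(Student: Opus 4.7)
The plan is to reduce the statement to a local application of Theorem \ref{gc4thm3} at each point of $X$, and then use the canonicity of the induced manifold with g-corners structure on the image of an embedding (Corollary \ref{gc4cor2}) to glue the local models together.

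First I would fix $x\in X$ and choose a g-chart on $Y$ around $x$. By Lemma \ref{gc3lem2} and Corollary \ref{gc3cor1}, after shrinking we may arrange the chart in the form of an open neighbourhood $V$ of $(\de_0,0)$ in $X_Q\t\R^n$ for some toric monoid $Q$, with $x$ corresponding to $(\de_0,0)$; here $\rank Q=\depth_Y x$ and $n=\dim Y-\depth_Y x$. Pull back the interior maps $f_i,g_i$ to interior maps $V\ra[0,\iy)$ and the smooth maps $h_j$ to smooth maps $V\ra\R$. Since $x\in X$, we have $f_i(\de_0,0)=g_i(\de_0,0)$ and $h_j(\de_0,0)=0$ after pullback. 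The hypothesis that ${}^b\d f_i\vert_x-{}^b\d g_i\vert_x$ and $\d h_j\vert_x$ are linearly independent in ${}^bT^*_xY$ translates directly into the linear independence hypothesis of Theorem \ref{gc4thm3} at $(\de_0,0)$, because the g-chart identifies ${}^bT^*_xY\cong {}^bT^*_{(\de_0,0)}(X_Q\t\R^n)$. Applying Theorem \ref{gc4thm3}, we obtain a toric monoid $P$, an open neighbourhood $U$ of $(\de_0,0)$ in $X_P\t\R^m$ with $\rank P+m=\rank Q+n-k-l=\dim Y-k-l$, an open $V'\subseteq V$ with $(\de_0,0)\in V'$, and an interior embedding $\phi:U\hookra V$ such that $\phi(U)=X\cap V'$ inside the chart.

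Transporting $\phi$ by the chart homeomorphism, we obtain an interior embedding $\iota_x:U_x\hookra Y$ of an $(\dim Y-k-l)$-dimensional manifold with g-corners $U_x$, whose image is an open neighbourhood $X\cap W_x$ of $x$ in $X$ (with $W_x$ open in $Y$). By Corollary \ref{gc4cor2}, the subset $X\cap W_x\subseteq Y$ inherits a canonical manifold with g-corners structure depending only on this subset (not on the choice of $\iota_x$), such that $\iota_x:U_x\ra X\cap W_x$ is a diffeomorphism. Now I would cover $X$ by such neighbourhoods $\{X\cap W_x:x\in X\}$ and note that on overlaps $X\cap W_x\cap W_{x'}$, the two canonical structures coincide as subsets of $Y$, again by Corollary \ref{gc4cor2}. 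This shows the local g-atlases patch together to give a single maximal g-atlas on $X$, so $X$ becomes a manifold with g-corners of dimension $\dim Y-k-l$. By construction the inclusion $X\hookra Y$ restricts on each $X\cap W_x$ to the embedding $\iota_x$, hence is an immersion, and it is trivially a homeomorphism onto its image, so it is an embedding. Uniqueness of this structure follows from Corollary \ref{gc4cor2} applied to the inclusion: any manifold with g-corners structure on $X$ making $X\hookra Y$ an embedding must induce the same smooth structure on the image, which is $X$ itself.

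The main obstacle is purely bookkeeping: verifying that the linear independence condition at an arbitrary $x\in X$, phrased in terms of the intrinsic cotangent bundle ${}^bT^*Y$, really does pull back to the coordinate-dependent linear independence condition of Theorem \ref{gc4thm3} at $(\de_0,0)$ inside the local model $X_Q\t\R^n$. This is mechanical once one unwinds the identification ${}^bT^*_xY\cong(Q\ot_\N\R)\op\R^n$ used in \eq{gc4eq4}. Everything else reduces to the local existence supplied by Theorem \ref{gc4thm3} and to the subset-only dependence of the induced structure supplied by Corollary \ref{gc4cor2}, which takes care of both gluing and uniqueness with no further work.
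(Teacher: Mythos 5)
Your proposal is correct and follows essentially the same route as the paper's proof: apply Theorem \ref{gc4thm3} in a local g-chart around each $x\in X$ to realize $X$ near $x$ as the image of an embedding, then invoke Corollary \ref{gc4cor2} to get the canonical manifold with g-corners structure on that image and its independence of choices. You simply spell out the gluing and uniqueness steps more explicitly than the paper, which compresses them into ``As this holds for all $x\in X$, the corollary follows.''
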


\begin{proof} Let $x\in X\subseteq Y$. Then we can locally identify $Y$ near $x$ with $X_Q\t\R^n$ near $(\de_0,0)$, for some toric monoid $Q$ and $n\ge 0$. Theorem \ref{gc4thm3} proves that $X$ near $x$ is of the form $\phi(U)$ for $\phi:U\ra Y$ an embedding, and $U$ a manifold with g-corners of dimension $\rank P+m=\rank Q+n-k-l=\dim Y-k-l$. Corollary \ref{gc4cor2} now shows that $X$ near $x$ has a unique structure of a manifold with g-corners with $\dim X=\dim Y-k-l\ge 0,$ such that the inclusion $X\hookra Y$ near $x$ is an embedding. As this holds for all $x\in X$, the corollary follows.
\end{proof}

Note that the corollary is false for $X,Y$ manifolds with (ordinary) corners, as the next example shows.

\begin{ex} Let $Y=[0,\iy)^4$ with coordinates $(y_1,y_2,y_3,y_4)$, and define $f,g:Y\ra[0,\iy)^2$ by $f(y_1,y_2,y_3,y_4)=y_1y_2$ and $g(y_1,y_2,y_3,y_4)=y_3y_4$. Then ${}^b\d f-{}^b\d g$ is a nonvanishing section of ${}^bTY$, so Corollary \ref{gc4cor4} defines a manifold with g-corners $X$ embedded in $Y$, which is
\begin{equation*}
X=\bigl\{(y_1,y_2,y_3,y_4)\in[0,\iy)^4:y_1y_2=y_3y_4\bigr\}.
\end{equation*}
This is $X_P'$ in \eq{gc3eq7}, so $X$ is diffeomorphic to $X_P$ in Example \ref{gc3ex5}, which is our simplest example of a manifold with g-corners which is not a manifold with corners. Thus in Corollary \ref{gc4cor4}, if $Y$ is a manifold with corners, $X$ can still have g-corners rather than (ordinary) corners.
\label{gc4ex4}
\end{ex}

\subsection{Transversality and fibre products}
\label{gc43}

Here is a definition from category theory.

\begin{dfn} Let $\cC$ be a category, and $g:X\ra Z$, $h:Y\ra Z$ be morphisms in $\cC$. A {\it fibre product\/} of $g,h$ in $\cC$ is an object $W$ and morphisms $e:W\ra X$ and $f:W\ra Y$ in $\cC$, such that $g\ci e=h\ci f$, with the universal property that if $e':W'\ra X$ and $f':W'\ra Y$ are morphisms in $\cC$ with $g\ci e'=h\ci f'$ then there is a unique morphism $b:W'\ra W$ with $e'=e\ci b$ and $f'=f\ci b$. Then we write $W=X\t_{g,Z,h}Y$ or $W=X\t_ZY$. The diagram
\e
\begin{gathered}
\xymatrix@R=13pt@C=50pt{ W \ar[r]_f \ar[d]^e & Y
\ar[d]_h \\ X \ar[r]^g & Z}
\end{gathered}
\label{gc4eq7}
\e
is called a {\it Cartesian square}. Fibre products need not exist, but if they do exist they are unique up to canonical isomorphism in~$\cC$.
\label{gc4def7}
\end{dfn}

The next definition and theorem are well known.

\begin{dfn} Let $g:X\ra Z$ and $h:Y\ra Z$ be smooth maps of manifolds without boundary. We call $g,h$ {\it transverse\/} if $T_xg\op T_yh:T_xX\op T_yY\ra T_zZ$ is surjective for all $x\in X$ and $y\in Y$ with~$g(x)=h(y)=z\in Z$.
\label{gc4def8}
\end{dfn}

\begin{thm} Suppose $g:X\ra Z$ and\/ $h:Y\ra Z$ are transverse smooth maps of manifolds without boundary. Then a fibre product $W=X\t_{g,Z,h}Y$ exists in $\Man,$ with\/ $\dim W=\dim X+\dim Y-\dim Z$. We may write
\begin{equation*}
W=\bigl\{(x,y)\in X\t Y:\text{$g(x)=h(y)$ in $Z$}\bigr\}
\end{equation*}
as an embedded submanifold of\/ $X\t Y,$ where $e:W\ra X$ and\/ $f:W\ra Y$ act by $e:(x,y)\mapsto x$ and\/~$f:(x,y)\mapsto y$.
\label{gc4thm4}
\end{thm}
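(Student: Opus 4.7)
The plan is to realize $W$ as an embedded submanifold of $X\t Y$ by cutting it out transversely, then to verify the universal property of the fibre product formally.

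First I would define $W$ as a subset of $X\t Y$ by $W=\{(x,y)\in X\t Y:g(x)=h(y)\}$, with maps $e=\pi_X\vert_W:W\ra X$ and $f=\pi_Y\vert_W:W\ra Y$. To give $W$ a smooth structure, I would consider the map $F:X\t Y\ra Z\t Z$ defined by $F(x,y)=(g(x),h(y))$, and observe that $W=F^{-1}(\Delta_Z)$, where $\Delta_Z\subset Z\t Z$ is the diagonal, an embedded submanifold of dimension $\dim Z$. The transversality hypothesis on $g,h$ is exactly equivalent to the transversality of $F$ to $\Delta_Z$: indeed, at $(x,y)\in W$ with $g(x)=h(y)=z$, we have $\d F(T_xX\op T_yY)+T_{(z,z)}\Delta_Z=T_zZ\op T_zZ$ iff $\d g(T_xX)+\d h(T_yY)=T_zZ$, using that $T_{(z,z)}\Delta_Z$ is the diagonal in $T_zZ\op T_zZ$. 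The usual preimage theorem for transverse maps (itself a consequence of the implicit function theorem and Theorem \ref{gc4thm1}) then shows that $W$ carries the unique structure of an embedded submanifold of $X\t Y$ without boundary, with $\dim W=\dim X+\dim Y-\dim Z$, and that $e,f$ are smooth.

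Next I would verify the universal property. Suppose $e':W'\ra X$, $f':W'\ra Y$ are smooth with $g\ci e'=h\ci f'$. Define $b:W'\ra X\t Y$ by $b(w')=(e'(w'),f'(w'))$; then $b$ is smooth, and the condition $g\ci e'=h\ci f'$ means $F\ci b$ lands in $\Delta_Z$, so $b(W')\subseteq W$. By Corollary \ref{gc4cor1} (applied to the embedding $W\hookra X\t Y$, which is in particular the embedding case of the analogous statement for manifolds without boundary), $b$ factors through a unique smooth map $W'\ra W$, which is the required morphism, and uniqueness is automatic from injectivity of the embedding. Compatibility $e'=e\ci b$, $f'=f\ci b$ follows directly from the construction of $b$.

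The main obstacle is verifying the smooth structure on $W$, i.e.\ the transverse preimage theorem; the rest is formal. In practice this reduces to a local computation: near any $(x_0,y_0)\in W$ with $g(x_0)=h(y_0)=z_0$, choose charts on $X,Y,Z$ and use surjectivity of $\d g\op(-\d h):T_{x_0}X\op T_{y_0}Y\ra T_{z_0}Z$ to produce, via the implicit function theorem, local coordinates on $X\t Y$ in which $W$ is the zero set of $\dim Z$ independent coordinate functions, hence a submanifold of the claimed dimension. Since $X,Y,Z$ are manifolds without boundary, no boundary or corner phenomena intervene, so this is the classical argument; in particular, the monoid-theoretic complications that arise in the g-corner setting of later sections do not appear here.
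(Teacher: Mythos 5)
The paper treats Theorem \ref{gc4thm4} as background: it is introduced with ``The next definition and theorem are well known'' and no proof is supplied. Your argument --- writing $W=F^{-1}(\Delta_Z)$ for $F=(g,h):X\t Y\ra Z\t Z$, checking that transversality of $g,h$ is exactly transversality of $F$ to the diagonal, applying the preimage theorem, and then verifying the universal property by factoring through the embedding $W\hookra X\t Y$ --- is the standard textbook proof, and it is correct. One small remark: citing Corollary \ref{gc4cor1} for the factorization step is logically safe (its proof via Theorem \ref{gc4thm2} does not rely on Theorem \ref{gc4thm4}), but for manifolds without boundary it is more than is needed, since the factorization property is immediate from the slice charts of Theorem \ref{gc4thm1}; you flag this yourself, so there is no gap.
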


The goal of this section is to extend Definition \ref{gc4def8} and Theorem \ref{gc4thm4} to manifolds with g-corners. We will consider fibre products in both the category $\Mangc$, and in the subcategory $\Mangcin$ with morphisms interior maps. Remark \ref{gc4rem2} compares our results with others in the literature.

Writing $*$ for the point regarded as an object of $\Mangc$, for any manifold with g-corners, morphisms $e:*\ra X$ in $\Mangc$ correspond to points $x\in X$, and (interior) morphisms $e:*\ra X$ in $\Mangcin$ correspond to points $x\in X^\ci$. So applying the universal property in Definition \ref{gc4def7} with $W'=*$ yields:

\begin{lem} Suppose we are given a Cartesian square \eq{gc4eq7} in $\Mangc$. Then as in Theorem\/ {\rm\ref{gc4thm4}} there is a canonical identification \begin{bfseries}of sets only\end{bfseries}
\e
W\cong\bigl\{(x,y)\in X\t Y:\text{$g(x)=h(y)$ in $Z$}\bigr\},
\label{gc4eq8}
\e
identifying $e:W\ra X,$ $f:W\ra Y$ with\/ $e':(x,y)\mapsto x,$ $f':(x,y)\mapsto y$.

If instead\/ \eq{gc4eq7} is a Cartesian square in $\Mangcin,$ in the same way, for the interiors $W^\ci,X^\ci,Y^\ci,Z^\ci$ we have a canonical identification of sets
\e
W^\ci\cong\bigl\{(x,y)\in X^\ci\t Y^\ci:\text{$g(x)=h(y)$ in $Z^\ci$}\bigr\}.
\label{gc4eq9}
\e

\label{gc4lem}
\end{lem}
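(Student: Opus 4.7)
The plan is to apply the universal property of the Cartesian square to the point $*$, viewed as an object of $\Mangc$ (respectively $\Mangcin$), and compare with the set of pairs $(x,y)$ on the right-hand side of \eq{gc4eq8}. Throughout, I use the fact that in $\Mangc$ morphisms $*\to X$ are in natural bijection with points $x\in X$, and in $\Mangcin$ morphisms $*\to X$ are in natural bijection with points $x\in X^\ci$, since a morphism from the point must land in the interior in order to be interior (equivalently, $*$ has no boundary, so any morphism out of $*$ preserves the interior stratum).

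First I would construct a map $\Phi$ from $W$ to the set on the right of \eq{gc4eq8}. Given $w\in W$, viewed as a morphism $w:*\to W$, set $x=e\ci w\in X$ and $y=f\ci w\in Y$. The commutativity $g\ci e=h\ci f$ forces $g(x)=g\ci e\ci w=h\ci f\ci w=h(y)$, so $(x,y)$ lies in the right-hand side of \eq{gc4eq8}. This defines $\Phi:w\mapsto(e(w),f(w))$, which by construction identifies $e$ with the first projection and $f$ with the second.

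Next I would construct the inverse. Given $(x,y)$ with $g(x)=h(y)=z$, view $x,y,z$ as morphisms $*\to X$, $*\to Y$, $*\to Z$. Then $g\ci x=z=h\ci y$ as morphisms $*\to Z$, so the universal property of $W=X\t_{g,Z,h}Y$ applied with $W'=*$, $e'=x$, $f'=y$ produces a unique morphism $b:*\to W$, i.e.\ a unique point $w\in W$, with $e\ci b=x$ and $f\ci b=y$. Setting $\Psi(x,y)=w$ gives a map in the reverse direction. Uniqueness in the universal property forces $\Psi\ci\Phi=\id_W$, and the definitions immediately give $\Phi\ci\Psi=\id$ on the fibre product set. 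Hence $\Phi$ is a bijection of sets identifying $e,f$ with the coordinate projections.

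For the interior statement, the same argument is carried out in the category $\Mangcin$: morphisms $*\to X,Y,Z$ in $\Mangcin$ correspond to points of $X^\ci,Y^\ci,Z^\ci$, so the universal property of the Cartesian square in $\Mangcin$ applied to $W'=*$ yields a bijection between $W^\ci$ (the set of such morphisms $*\to W$ in $\Mangcin$) and the set in \eq{gc4eq9}. The only point that needs a line of justification is that morphisms $*\to W$ in $\Mangcin$ are naturally identified with points of $W^\ci$, which is just the defining property of the interior in Definition \ref{gc3def6}. There is no genuine obstacle here; the lemma is a purely formal consequence of the universal property plus the identification of $\Hom_{\Mangc}(*,-)$ with the underlying set functor (and its interior variant in $\Mangcin$).
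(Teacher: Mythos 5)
Your proof is correct and takes exactly the same approach as the paper: the paper also observes that $\Hom_{\Mangc}(*,X)$ (resp.\ $\Hom_{\Mangcin}(*,X)$) identifies with the underlying set of $X$ (resp.\ $X^\ci$), and then deduces the lemma by applying the universal property of the Cartesian square with $W'=*$. Your write-up simply spells out the resulting bijection $\Phi,\Psi$ in more detail than the paper, which states the conclusion in one line.
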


The next example shows the lemma may not hold at the level of topological spaces, or embedded submanifolds, even for manifolds without boundary.

\begin{ex} Take $X=Y=\R$ and $Z=\R^2$, and define $g:X\ra Z$, $h:Y\ra Z$ by $g(x)=(x,0)$, $h(y)=\bigl(y,e^{-1/y^2}\sin \frac{\pi}{y}\bigr)$ for $y\ne 0$, and $h(0)=(0,0)$. Then a fibre product $W$ exists in $\Man$. As in \eq{gc4eq8}, as sets we may write
\begin{equation*}
W=\bigl\{(x,y)\in X\t Y:\text{$g(x)=h(y)$ in $Z$}\bigr\}=
\bigl\{(\ts\frac{1}{n},\frac{1}{n}):0\ne n\in\Z\bigr\}\cup\bigl\{(0,0)\bigr\}.
\end{equation*}
However, $W$ is a 0-manifold, a set with the discrete topology, but the topology induced on $W$ by its inclusion in $X\t Y=\R^2$ is not discrete near $(0,0)$. Thus in this case \eq{gc4eq8} is not an isomorphism of topological spaces, and $W$ is not an embedded submanifold of $X\t Y$. This does not contradict Theorem \ref{gc4thm4}, as $g,h$ are not transverse at~$(0,0)$.
\label{gc4ex5}
\end{ex}

Here are two notions of transversality for manifolds with g-corners, generalizing Definition \ref{gc4def8}. We take $g,h$ interior so that ${}^bT_xg,{}^bT_yh$ are defined.

\begin{dfn} Let $g:X\ra Z$ and $h:Y\ra Z$ be interior maps of manifolds with g-corners, or more generally interior morphisms in $\cMangc$. Then:
\begin{itemize}
\setlength{\itemsep}{0pt}
\setlength{\parsep}{0pt}
\item[(a)] We call $g,h$ {\it b-transverse\/} if ${}^bT_xg\op{}^bT_yh:{}^bT_xX\op{}^bT_yY\ra{}^bT_zZ$ is surjective for all $x\in X$ and $y\in Y$ with~$g(x)=h(y)=z\in Z$.
\item[(b)] We call $g,h$ {\it c-transverse\/} if they are b-transverse, and for all $x\in X$ and $y\in Y$ with $g(x)=h(y)=z\in Z$, the linear map ${}^b\ti N_xg\op{}^b\ti N_yh:{}^b\ti N_xX\op{}^b\ti N_yY\ra{}^b\ti N_zZ$ is surjective, and the submonoid
\end{itemize}
\e
\bigl\{(\la,\mu)\!\in\! \ti M_xX\!\t\! \ti M_yY:\text{$\ti M_xg(\la)\!=\!\ti M_yh(\mu)$ in $\ti M_zZ$}\bigr\}\!\subseteq\! \ti M_xX\!\t \!\ti M_yY\label{gc4eq10}
\e
\begin{itemize}
\setlength{\itemsep}{0pt}
\setlength{\parsep}{0pt}
\item[]is not contained in any proper face $F\subsetneq\ti M_xX\t \ti M_yY$ of $\ti M_xX\t\ti M_yY$.
\end{itemize}
If $g$ (or $h$) is a b-submersion in the sense of \S\ref{gc41}, and $h$ (or $g$) is interior, then $g,h$ are b-transverse.
\label{gc4def9}
\end{dfn}

B-normal maps and b-fibrations from \S\ref{gc41} give conditions for c-transversality.

\begin{prop} Let\/ $g:X\ra Z$ and\/ $h:Y\ra Z$ be interior maps of manifolds with g-corners. Then $g,h$ are c-transverse if either
\begin{itemize}
\setlength{\itemsep}{0pt}
\setlength{\parsep}{0pt}
\item[{\bf(i)}] $g,h$ are b-transverse and\/ $g$ or\/ $h$ is b-normal; or
\item[{\bf(ii)}] $g$ or\/ $h$ is a b-fibration.
\end{itemize}
\label{gc4prop1}
\end{prop}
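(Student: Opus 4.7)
First I would observe that (ii) reduces to (i): a b-fibration is by definition a b-normal b-submersion, and if (say) $g$ is a b-submersion, then ${}^b\d g:{}^bTX\ra g^*({}^bTZ)$ is already surjective, so ${}^bT_xg\op{}^bT_yh$ surjects onto ${}^bT_zZ$ through its first summand alone; hence $g,h$ are b-transverse, while $g$ remains b-normal, giving the hypotheses of (i).

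So assume (i), with $g$ b-normal (the case where $h$ is b-normal is symmetric). Fix $x\in X$, $y\in Y$ with $g(x)=h(y)=z$, and let $k=\depth_Xx$, $l=\depth_Zz$. I need to verify the two conditions in Definition \ref{gc4def9}(b) beyond b-transversality. For the surjectivity of ${}^b\ti N_xg\op{}^b\ti N_yh$, I would show that ${}^b\ti N_xg:{}^b\ti N_xX\ra{}^b\ti N_zZ$ is already surjective. Let $\ga_0$ be the unique local $k$-corner component of $X$ at $x$. Since $g$ is interior, Lemma \ref{gc3lem3} forces $g$ to map a neighbourhood of $x$ in $S^k(X)$ into $S^l(Z)$, so $C(g)(x,\ga_0)=(z,\de_0)$ where $\de_0$ is the top local $l$-corner component at $z$. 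Under the canonical identifications ${}^bN_{C_k(X)}\vert_{(x,\ga_0)}\cong{}^b\ti N_xX$ and ${}^bN_{C_l(Z)}\vert_{(z,\de_0)}\cong{}^b\ti N_zZ$ following Definitions \ref{gc3def15} and \ref{gc3def17}, the fibre of ${}^bN_{C(g)}$ at $(x,\ga_0)$ is exactly ${}^b\ti N_xg$. The b-normality hypothesis \ref{gc4def2}(ii) makes ${}^bN_{C(g)}$ a surjective bundle morphism, so ${}^b\ti N_xg$ is surjective, and hence so is its direct sum with ${}^b\ti N_yh$.

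For the submonoid condition on $S$ from \eq{gc4eq10}, I would note that $S$ is not contained in any proper face of $\ti M_xX\t\ti M_yY$ iff $S$ contains a pair $(\la_0,\mu_0)$ with $\la_0$ in the interior of $\ti M_xX$ (not in any proper face) and $\mu_0$ in the interior of $\ti M_yY$. To produce such a pair, pick interior elements $\la_*\in\ti M_xX$ and $\mu_*\in\ti M_yY$ (e.g.\ sums of generators), and set $\nu_g=\ti M_xg(\la_*)$, $\nu_h=\ti M_yh(\mu_*)\in\ti M_zZ$. Because ${}^b\ti N_xg=\ti M_xg\ot_\N\R$ is surjective by the previous step, its $\Q$-version is surjective, and clearing denominators yields $N\ge 1$ and $\la_1\in\ti M_xX^\gp$ with $\ti M_xg(\la_1)=N(\nu_h-\nu_g)$ in $\ti M_zZ^\gp$ (using the extension of $\ti M_xg$ to groupifications). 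For $M\ge 1$ define
\begin{equation*}
\la_0=MN\la_*+M\la_1\in\ti M_xX^\gp,\qquad \mu_0=MN\mu_*\in\ti M_yY.
\end{equation*}
A direct computation gives $\ti M_xg(\la_0)=MN\nu_g+M\cdot N(\nu_h-\nu_g)=MN\nu_h=\ti M_yh(\mu_0)$, so $(\la_0,\mu_0)$ satisfies the fibre-product equation. It remains to force $\la_0$ into the interior of $\ti M_xX$. Using Proposition \ref{gc3prop1} identify $\ti M_xX$ with the integral points of a rational polyhedral cone in $\ti M_xX^\gp\ot_\Z\R$ cut out by finitely many generators $\rho$ of $\ti M_xX^\vee$. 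Since $\la_*$ is interior, $\rho(\la_*)>0$ for every such $\rho$, and
\begin{equation*}
\rho(\la_0)=MN\rho(\la_*)+M\rho(\la_1)>0
\end{equation*}
for all sufficiently large $M$ (uniformly over the finite set of $\rho$). Hence $\la_0$ lies in the interior of the cone, and since it also lies in the lattice $\ti M_xX^\gp$, saturation of $\ti M_xX$ yields $\la_0\in\ti M_xX$, interior. Clearly $\mu_0$ is interior in $\ti M_yY$, so $(\la_0,\mu_0)$ is the desired interior element of $S$.

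The main obstacle is the final step: turning the $\R$-level surjectivity of ${}^b\ti N_xg$ into an honest lattice-level lift inside the toric monoid $\ti M_xX$ rather than merely its groupification. The device of adding a large multiple of a fixed interior element $\la_*$ to the group-level correction $\la_1$, controlled by the cone description of $\ti M_xX$, is what makes the argument work, and is where b-normality (via surjectivity of ${}^b\ti N_xg$) and the toric geometry of \S\ref{gc314} come together.
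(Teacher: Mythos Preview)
Your reduction of (ii) to (i) and your proof that ${}^b\ti N_xg$ is surjective both agree with the paper. The gap is the final step. You claim $\rho(\la_0)=MN\rho(\la_*)+M\rho(\la_1)>0$ for all sufficiently large $M$, but $\rho(\la_0)=M\bigl(N\rho(\la_*)+\rho(\la_1)\bigr)$, whose sign is independent of $M>0$. Since $N$ was fixed by clearing denominators and $\la_1$ is an arbitrary lift in $\ti M_xX^\gp$, nothing forces $N\rho(\la_*)+\rho(\la_1)>0$. Concretely, with $\ti M_xX=\N^2$, $\ti M_zZ=\N$, $\ti M_xg(a,b)=a+b$ (which is b-normal), $\la_*=(1,1)$, $\mu_*=1$, $\nu_h=1$, $\nu_g=2$, the choice $N=1$, $\la_1=(1,-2)$ satisfies $\ti M_xg(\la_1)=-1=N(\nu_h-\nu_g)$, yet $\la_0=M(2,-1)$ lies outside $\N^2$ for every $M\ge 1$. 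The ``add a large interior multiple'' device fails here because the interior term $MN\la_*$ and the correction $M\la_1$ scale together under the constraint $\ti M_xg(\la_0)=\ti M_yh(\mu_0)$.

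The paper closes this by using two facts you do not invoke. First, since $h$ is interior, $\ti M_yh$ carries the monoid interior $(\ti M_yY)^\ci$ into $(\ti M_zZ)^\ci$; so one picks $\mu\in(\ti M_yY)^\ci$ and sets $\nu=\ti M_yh(\mu)\in(\ti M_zZ)^\ci$. Second, b-normality of $g$ gives more than $\R$-surjectivity of ${}^b\ti N_xg$: it yields that $\ti M_xg$ is surjective \emph{up to finite multiples from interior to interior}, i.e.\ there exist $\la\in(\ti M_xX)^\ci$ and $n\ge 1$ with $\ti M_xg(\la)=n\nu$. Then $(\la,n\mu)$ lies in \eq{gc4eq10} and in $(\ti M_xX)^\ci\t(\ti M_yY)^\ci$, finishing the argument. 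Extracting this monoid-level surjectivity from $\R$-surjectivity alone, as you attempt, requires exactly the step your scaling argument was meant to supply.
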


\begin{proof} For (i), suppose $g,h$ are b-transverse and $g$ is b-normal, and let $x\in X$ and $y\in Y$ with $g(x)=h(y)=z\in Z$. As $g$ is b-normal, one can show that ${}^b\ti N_xg:{}^b\ti N_xX\ra {}^b\ti N_zZ$ is surjective, which implies that ${}^b\ti N_xg\op{}^b\ti N_yh:{}^b\ti N_xX\op{}^b\ti N_yY\ra{}^b\ti N_zZ$ is surjective, as we want.

If $P$ is a toric monoid, write $P^\ci=P\sm\bigcup_{\text{faces $F\subsetneq P$}}F$ for the complement of all proper faces $F$ in $P$. Since $h$ is interior, $\ti M_xh$ maps $(\ti M_yY)^\ci\ra (\ti M_zZ)^\ci$. Let $\mu\in (\ti M_yY)^\ci$, and set $\nu=\ti M_xh(\mu)\in(\ti M_zZ)^\ci$. As $g$ is b-normal, one can show that $\ti M_xg:\ti M_xX\ra \ti M_zZ$ is surjective up to finite multiples: there exists $\la\in (\ti M_xX)^\ci$ with $\ti M_xg(\la)=n\cdot\nu$ for some $n>0$. Then $(\la,n\cdot\mu)$ lies in \eq{gc4eq10} and in $(\ti M_xX)^\ci\t(\ti M_yY)^\ci$. So \eq{gc4eq10} does not lie in any proper face of $\ti M_xX\t\ti M_yY$, and $g,h$ are c-transverse.

For (ii), $g$ a b-fibration means it is a b-normal b-submersion, and $g$ a b-submersion implies $g,h$ b-transverse, so (ii) follows from~(i).
\end{proof}

The following theorem is proved in~\S\ref{gc53}.

\begin{thm} Let\/ $g:X\ra Z$ and\/ $h:Y\ra Z$ be b-transverse (or c-transverse) interior maps of manifolds with g-corners. Then $C(g):C(X)\ra C(Z)$ and\/ $C(h):C(Y)\ra C(Z)$ are also b-transverse (or c-transverse, respectively) interior maps in $\cMangc$.
\label{gc4thm5}
\end{thm}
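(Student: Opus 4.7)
The plan is to verify both transversality conditions pointwise, at each triple $(x,\ga)\in C(X)$, $(y,\de)\in C(Y)$ with $C(g)(x,\ga)=C(h)(y,\de)=(z,\ep)\in C(Z)$, using local model computations combined with the functorial diagrams relating ${}^bT,{}^b\ti N,\ti M$ at the parent points $x,y,z$ with those at the corner points. For any such triple, choose g-charts identifying $X,Y,Z$ near $x,y,z$ with $X_P\t\R^a,X_Q\t\R^b,X_R\t\R^c$ centered at $(\de_0,0)$, for toric monoids $P,Q,R$ with $P\cong\ti M_xX$, etc. The interior maps $g,h$ are locally induced by monoid morphisms $\mu_g:R\ra P,\mu_h:R\ra Q$ (times smooth maps on the $\R$-factors, which contribute only extra Euclidean summands to ${}^bT$ and are transparent to all the monoid-theoretic conditions). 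If $G_P=\supp x,G_Q=\supp y,G_R=\supp z$ and $F_P,F_Q,F_R$ are the faces of $P,Q,R$ corresponding to $\ga,\de,\ep$, then $G_R=\mu_g^{-1}(G_P)=\mu_h^{-1}(G_Q)$ and $F_R=\mu_g^{-1}(F_P)=\mu_h^{-1}(F_Q)$, so $\mu_g$ restricts to $\mu_g|_{F_R}:F_R\ra F_P$ which is the monoid morphism underlying $C(g)$ near $(x,\ga)$, by Example \ref{gc3ex13}.

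For b-transversality of $C(g),C(h)$, consider the square of real vector spaces
\begin{equation*}
\xymatrix@C=30pt@R=12pt{
{}^bT_xX\op{}^bT_yY \ar[rr]^{{}^bT_xg\op{}^bT_yh} \ar@{->>}[d] && {}^bT_zZ \ar@{->>}[d] \\
{}^bT_{(x,\ga)}C(X)\op{}^bT_{(y,\de)}C(Y) \ar[rr]^(0.55){{}^bTC(g)\op{}^bTC(h)} && {}^bT_{(z,\ep)}C(Z),
}
\end{equation*}
where the vertical maps are the ${}^b\pi_T$ surjections of \eqref{gc3eq32} and the square commutes by $\Pi\ci C(g)=g\ci\Pi$ from Proposition \ref{gc3prop6}(c) and functoriality of ${}^bT$; the top map is surjective by hypothesis and the right vertical is surjective, so the composite down-then-right is surjective, forcing the bottom map to be surjective since the left vertical is surjective. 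For the ${}^b\ti N$-surjectivity in c-transversality, identify in the local model ${}^b\ti N_xX\cong\Hom((P/G_P)^\gp,\R)$ and ${}^b\ti N_{(x,\ga)}C(X)\cong\Hom((F_P/G_P)^\gp,\R)$, etc., with ${}^b\ti N g$ given by pullback along $\bar\mu_g:R/G_R\ra P/G_P$ and ${}^b\ti N C(g)$ by pullback along $\bar\mu_g|_{F_R/G_R}$. Given $\phi:(F_R/G_R)^\gp\ra\R$, extend it to a linear functional $\ti\phi:(R/G_R)^\gp\ra\R$ (using that $(F_R/G_R)^\gp_\R\hookra(R/G_R)^\gp_\R$ is a subspace); by ${}^b\ti N$-surjectivity for $g,h$, write $\ti\phi=\al\ci\bar\mu_g+\be\ci\bar\mu_h$ with $\al,\be$ defined on $(P/G_P)^\gp,(Q/G_Q)^\gp$; then $\al|_{F_P/G_P}$ and $\be|_{F_Q/G_Q}$ give a preimage of $\phi$, as desired.

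For the remaining condition on the submonoid \eqref{gc4eq10}, first observe the general fact that a submonoid $S$ of a toric monoid $M$ lies in no proper face of $M$ iff $S$ contains an interior element (summing representatives from each facet's complement produces such an element). By c-transversality of $g,h$, there exists $(\la,\mu)\in (P/G_P)^\vee\t(Q/G_Q)^\vee$ with $\la,\mu$ interior in their respective monoids and $\la\ci\bar\mu_g=\mu\ci\bar\mu_h$ in $(R/G_R)^\vee$. Set $\la_0=\la|_{F_P/G_P},\mu_0=\mu|_{F_Q/G_Q}$. Restricting both sides of $\la\ci\bar\mu_g=\mu\ci\bar\mu_h$ to $F_R/G_R$ yields $\la_0\ci(\bar\mu_g|_{F_R/G_R})=\mu_0\ci(\bar\mu_h|_{F_R/G_R})$, so $(\la_0,\mu_0)$ lies in the corner submonoid. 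Furthermore $\la_0$ is interior in $(F_P/G_P)^\vee$: every nonzero face of $F_P/G_P$ is a nonzero face of $P/G_P$ (as $F_P/G_P$ is a face of $P/G_P$), on none of which $\la$ vanishes, so $\la_0$ vanishes on no nonzero face of $F_P/G_P$. Similarly $\mu_0$ is interior, hence $(\la_0,\mu_0)$ is interior in the product monoid, proving the submonoid condition for $C(g),C(h)$.

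The main obstacle is essentially bookkeeping: correctly translating the intrinsic definitions of ${}^b\ti NC(g),\ti MC(g)$ at the corner point $(x,\ga)$ into the face-monoid language of the local model, so that the compatibility square between the $(x,y,z)$-level and the $((x,\ga),(y,\de),(z,\ep))$-level maps commutes as claimed. Once Example \ref{gc3ex13} is in hand this amounts only to writing out the canonical identifications and noting that $\bar\mu_g|_{F_R/G_R}$ is literally the restriction of $\bar\mu_g$, and the same proof handles morphisms in $\cMangc$ since the conditions are checked pointwise in fixed dimensions.
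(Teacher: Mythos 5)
Your proof is correct and follows essentially the same pointwise strategy as the paper: verify each transversality condition at triples $(x,\ga),(y,\de),(z,\ep)$ by relating the data at the corner points to the data at the parent points $x,y,z$ through the exact sequence \eqref{gc3eq32} and the restriction maps $\ti M_xX\to\ti M_{(x,\ga)}C(X)$, etc. Your explicit local-monoid computations (in the $P,Q,R,F_P,\mu_g$ language) unwind the same diagram chases the paper performs abstractly, and your interior-element characterization of ``not contained in a proper face'' is a clean, self-contained alternative to the paper's observation that these restrictions are surjective monoid morphisms and surjections preserve non-containment in proper faces.

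One justification should be repaired. You claim the square with top row ${}^bT_xg\op{}^bT_yh$, bottom row ${}^bT_{(x,\ga)}C(g)\op{}^bT_{(y,\de)}C(h)$, and verticals ${}^b\pi_T$ commutes ``by $\Pi\ci C(g)=g\ci\Pi$ and functoriality of ${}^bT$.'' But $\Pi:C(X)\to X$ is not interior, so ${}^bT\Pi$ is undefined and the verticals ${}^b\pi_T$ are not of the form ${}^bT\Pi$ --- this is exactly the asymmetry that forces the paper to introduce ${}^b\pi_T$ via the exact sequence \eqref{gc3eq32} rather than by functoriality. The square does commute, but the right justification is that ${}^bT_xg$ preserves the b-normal subspaces (the commuting square \eqref{gc3eq40}) and hence descends to the cokernels of ${}^bi_T$, giving exactly ${}^bT_{(x,\ga)}C(g)$; equivalently, one verifies it directly in g-charts as in the corner-case computation underlying \eqref{gc2eq24}. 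With that fix the argument is sound.
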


The next two theorems, perhaps the most important in the paper, proved in \S\ref{gc54} and \S\ref{gc55}, show that b-transversality is is a sufficient condition for existence of a fibre product $W=X\t_{g,Z,h}Y$ in $\Mangcin$, and c-transversality a sufficient condition for existence of a fibre product in $\Mangc$, and in the latter case we have $C(W)=C(X)\t_{C(g),C(Z),C(h)}C(Y)$ in $\cMangc$ and $\cMangcin$. The explicit expressions for $W^\ci,W$ in \eq{gc4eq11}--\eq{gc4eq12} come from Lemma~\ref{gc4lem}.

\begin{thm} Let\/ $g:X\ra Z$ and\/ $h:Y\ra Z$ be b-transverse interior maps of manifolds with g-corners. Then a fibre product\/ $W=X\t_{g,Z,h}Y$ exists in $\Mangcin,$ with\/ $\dim W=\dim X+\dim Y-\dim Z$. Explicitly, we may write
\e
W^\ci=\bigl\{(x,y)\in X^\ci\t Y^\ci:\text{$g(x)=h(y)$ in $Z^\ci$}\bigr\},
\label{gc4eq11}
\e
and take $W$ to be the closure $\ov{W^\ci}$ of\/ $W^\ci$ in $X\t Y,$ and then $W$ is an embedded submanifold of\/ $X\t Y$ in the sense of\/ {\rm\S\ref{gc42},} and\/ $e:W\ra X$ and\/ $f:W\ra Y$ act by $e:(x,y)\mapsto x$ and\/~$f:(x,y)\mapsto y$.
\label{gc4thm6}
\end{thm}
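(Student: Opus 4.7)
The plan is to take $W=\overline{W^\circ}$ with $W^\circ$ defined by (4.11), equip $W$ with the structure of an embedded submanifold of $X\times Y$ by applying Theorem 4.5 locally, and verify the universal property for $\Mangcin$ using Corollary 4.8. The explicit description (4.11) of $W^\circ$ is forced by Lemma 4.7 applied to the interior subcategory.

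For the local construction, fix $(x,y)\in W$ with $g(x)=h(y)=z$ and choose g-charts identifying open neighbourhoods of $x,y,z$ with open subsets of $X_{P_X}\times\R^{m_X}$, $X_{P_Y}\times\R^{m_Y}$, $X_{P_Z}\times\R^{m_Z}$ having basepoints $(\de_0,0)$. Let $p_1,\ldots,p_k$ generate $P_Z$ and let $z_1,\ldots,z_{m_Z}$ be the Euclidean coordinates on $\R^{m_Z}$; on $V=(X_{P_X}\times\R^{m_X})\times(X_{P_Y}\times\R^{m_Y})$ set interior maps $F_i=\la_{p_i}\ci g\ci\pi_X$ and $G_i=\la_{p_i}\ci h\ci\pi_Y$ from $V$ to $[0,\iy)$, and smooth maps $H_j=z_j\ci g\ci\pi_X-z_j\ci h\ci\pi_Y$ from $V$ to $\R$. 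A point of $V^\ci$ satisfies all $F_i=G_i$ and all $H_j=0$ if and only if it lies in $W^\ci$, so $W\cap V$ is precisely the locus to which Theorem 4.5 applies. The transversality hypothesis required there --- linear independence of ${}^b\d F_i-{}^b\d G_i$ and $\d H_j$ in ${}^bT^*_{(x,y)}V$ --- translates, under ${}^bT^*_{(x,y)}V={}^bT^*_x X\op{}^bT^*_y Y$, to injectivity of
\[
{}^bT^*_z Z\longra{}^bT^*_x X\op{}^bT^*_y Y,\qquad \xi\longmapsto\bigl(({}^b\d g|_x)^*(\xi),\,-({}^b\d h|_y)^*(\xi)\bigr),
\]
which is exactly dual to the surjectivity of ${}^bT_x g\op{}^bT_y h$ asserted by b-transversality. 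Theorem 4.5 thus yields a toric monoid $P$, an open neighbourhood $U\subseteq X_P\times\R^m$ of $(\de_0,0)$ with $\rank P+m=\dim X+\dim Y-\dim Z$, and an interior embedding $\phi:U\hookra V$ whose image equals $W\cap V'$ for some open $V'\ni(x,y)$; the base-point hypothesis $(x,y)\in\overline{W^\ci}$ of that theorem is tautological from $W=\overline{W^\ci}$. These local embeddings glue, by the uniqueness in Corollary 4.8, to make $W$ a manifold with g-corners of dimension $\dim X+\dim Y-\dim Z$ with $\io:W\hookra X\times Y$ an embedding.

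For the universal property set $e=\pi_X\ci\io$, $f=\pi_Y\ci\io$: these are smooth by composition, interior since $W^\ci\subseteq X^\ci\times Y^\ci$, and satisfy $g\ci e=h\ci f$ on $W^\ci$, hence on $W$ by density. Given a manifold with g-corners $W'$ and interior morphisms $e':W'\ra X$, $f':W'\ra Y$ with $g\ci e'=h\ci f'$, the direct product $(e',f'):W'\ra X\times Y$ is smooth; interiority of $e',f',g,h$ forces $(e',f')((W')^\ci)\subseteq W^\ci$, and continuity together with $W=\overline{W^\ci}$ gives $(e',f')(W')\subseteq W$. Corollary 4.8 then supplies a unique smooth $b:W'\ra W$ with $\io\ci b=(e',f')$, equivalently $e\ci b=e'$ and $f\ci b=f'$, and $b$ is interior because it takes $(W')^\ci$ into $W^\ci$.

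The main obstacle is the local application of Theorem 4.5: one must verify that b-transversality delivers the required linear independence of b-differentials (not of ordinary differentials) of the defining equations, and that the toric monoid $P$ determined by (4.5) captures the correct combinatorial g-corner structure of $W$ at $(x,y)$. This is where the b-tangent formalism of \S 3.5 is essential, since the ordinary tangent bundle is ill-defined on g-corners by Example 3.10.
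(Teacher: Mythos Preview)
Your approach is the same as the paper's, and the universal-property half is essentially identical. There is, however, a genuine gap in the local step.

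You take $p_1,\ldots,p_k$ to \emph{generate} $P_Z$ and then claim that linear independence of the ${}^b\d F_i-{}^b\d G_i$ and $\d H_j$ is equivalent to injectivity of $({}^b\d g)^*\oplus(-({}^b\d h)^*):{}^bT_z^*Z\to{}^bT_x^*X\oplus{}^bT_y^*Y$. That equivalence holds only if the ${}^b\d\la_{p_i}\vert_z$ are already linearly independent in ${}^bT_z^*Z\cong(P_Z\ot_\N\R)\oplus\R^{m_Z}$, i.e.\ only if the $p_i$ are linearly independent in $P_Z\ot_\N\R$. For a non-free toric monoid $P_Z$ any monoid generating set has strictly more than $\rank P_Z$ elements, so your ${}^b\d F_i-{}^b\d G_i$ will be linearly \emph{dependent} and the hypothesis of Theorem~\ref{gc4thm3} fails. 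The dimension count also breaks: with $k>\rank P_Z$ the formula $\rank P+m=\rank(P_X\t P_Y)+(m_X+m_Y)-k-m_Z$ would give $\dim W<\dim X+\dim Y-\dim Z$.

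The fix is exactly what the paper does: choose $s_1,\ldots,s_p\in P_Z$ with $p=\rank P_Z$ that form a basis of $P_Z\ot_\N\R$ over~$\R$ (not monoid generators), and then check separately that the conditions $\la_{s_i}(\sigma_1)=\la_{s_i}(\sigma_2)$ for $i=1,\ldots,p$ cut out the diagonal in $X_{P_Z}^\ci\times X_{P_Z}^\ci$. This is the content of the paper's equation~\eq{gc5eq34}, and it is where the restriction to the interior in the definition \eq{gc4eq11} of $W^\ci$ is genuinely used: on the boundary of $X_{P_Z}\times X_{P_Z}$ those $p$ equations need not cut out the diagonal. With this correction your argument goes through and matches the paper's proof.
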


\begin{thm} Suppose $g:X\ra Z$ and\/ $h:Y\ra Z$ are c-transverse interior maps of manifolds with g-corners. Then a fibre product\/ $W=X\t_{g,Z,h}Y$ exists in $\Mangc,$ with\/ $\dim W=\dim X+\dim Y-\dim Z$. Explicitly, we may write
\e
W=\bigl\{(x,y)\in X\t Y:\text{$g(x)=h(y)$ in $Z$}\bigr\},
\label{gc4eq12}
\e
and then $W$ is an embedded submanifold of\/ $X\t Y$ in the sense of\/ {\rm\S\ref{gc42},} and\/ $e:W\ra X$ and\/ $f:W\ra Y$ act by\/ $e:(x,y)\mapsto x$ and\/ $f:(x,y)\mapsto y$. This $W$ is also a fibre product in $\Mangcin,$ and agrees with that in Theorem\/~{\rm\ref{gc4thm6}}.

Furthermore, the following is Cartesian in both\/ $\cMangc$ and\/ $\cMangcin\!:$
\e
\begin{gathered}
\xymatrix@R=13pt@C=90pt{ *+[r]{C(W)} \ar[r]_{C(f)} \ar[d]^{C(e)} & *+[l]{C(Y)}
\ar[d]_{C(h)} \\ *+[r]{C(X)} \ar[r]^{C(g)} & *+[l]{C(Z).\!\!} }
\end{gathered}
\label{gc4eq13}
\e
Equation \eq{gc4eq13} has a grading-preserving property, in that if\/ $(w,\be)\in C_i(W)$ with\/ $C(e)(w,\be)=(x,\ga)\in C_j(X),$ and\/ $C(f)(w,\be)=(y,\de)\in C_k(Y),$ and\/ $C(g)(x,\ga)\ab=C(h)(y,\de)=(z,\ep)\in C_l(Z),$ then\/ $i+l=j+k$. Hence
\e
C_i(W)\cong \ts\coprod_{j,k,l\ge 0: i=j+k-l} C_j^l(X)\t_{C(g)\vert_{\cdots},C_l(Z),C(h)\vert_{\cdots}}C_k^l(Y),
\label{gc4eq14}
\e
where $C_j^l(X)=C_j(X)\cap C(g)^{-1}(C_l(Z))$ and\/ $C_k^l(Y)=C_k(Y)\cap C(h)^{-1}(C_l(Z)),$ open and closed in $C_j(X),C_k(Y)$. When $i=1,$ this gives a formula for~$\pd W$.
\label{gc4thm7}
\end{thm}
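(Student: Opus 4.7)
My plan is to construct $W$ by first building it locally as an embedded submanifold of $X\t Y$ using the zero-locus criterion of Corollary \ref{gc4cor4}, then obtaining the universal property in $\Mangc$ from the factorization property of embeddings (Corollary \ref{gc4cor1}), then identifying the resulting object with the interior fibre product of Theorem \ref{gc4thm6}, and finally applying Theorem \ref{gc4thm5} to push the whole construction through the corner functor. For the local construction, fix $(x_0,y_0)\in W$ as in \eqref{gc4eq12} with $g(x_0)=h(y_0)=z_0$ and choose g-charts identifying neighbourhoods of $x_0,y_0,z_0$ with open subsets of $X_Q\t\R^{l_X},\;X_{Q'}\t\R^{l_Y},\;X_R\t\R^{l_Z}$ near $(\de_0,0)$ for toric monoids $Q,Q',R$. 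Picking generators $r_1,\ldots,r_s$ of $R$ and linear coordinates $\zeta_1,\ldots,\zeta_{l_Z}$ on $\R^{l_Z}$, the set $W$ is locally cut out in $X\t Y$ by the $s$ interior equations $\la_{r_i}\ci g=\la_{r_i}\ci h$ and the $l_Z$ smooth equations $\zeta_j\ci g=\zeta_j\ci h$ (pulled back via the two projections).

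Using the identification ${}^bT^*_{(x_0,y_0)}(X\t Y)\cong{}^bT^*_{x_0}X\op{}^bT^*_{y_0}Y$ from Example \ref{gc3ex12}, the linear-independence hypothesis of Corollary \ref{gc4cor4} at $(x_0,y_0)$ on this collection of $\dim Z$ covectors is equivalent to surjectivity of the map ${}^bT_{x_0}X\op{}^bT_{y_0}Y\to{}^bT_{z_0}Z$, $(\xi,\eta)\mapsto{}^b\d g(\xi)-{}^b\d h(\eta)$, which is precisely b-transversality. More subtly, the `$(\de_0,0)\in X$' condition of Theorem \ref{gc4thm3} applied on $V\cong X_{Q\t Q'}\t\R^{l_X+l_Y}$ translates, via identification of the cone $P^\vee$ in \eqref{gc4eq5} with the matched submonoid \eqref{gc4eq10} inside $\ti M_{x_0}X\t\ti M_{y_0}Y\cong(Q\t Q')^\vee$, precisely into the `not contained in any proper face' clause of c-transversality in Definition \ref{gc4def9}(b). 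Invoking Corollary \ref{gc4cor4} therefore equips $W$ locally near $(x_0,y_0)$ with the structure of an embedded submanifold of $X\t Y$ of dimension $\dim X+\dim Y-\dim Z$; gluing over all such points yields the global manifold-with-g-corners structure on $W$, with the projections $e,f:W\ra X,Y$ smooth and $g\ci e=h\ci f$.

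For the universal property, suppose $e':W'\ra X$ and $f':W'\ra Y$ are smooth with $g\ci e'=h\ci f'$. Then the set-theoretic map $b:W'\ra W$, $w'\mapsto(e'(w'),f'(w'))$ factors the smooth direct product $(e',f'):W'\ra X\t Y$ through the embedding $W\hookra X\t Y$, so Corollary \ref{gc4cor1} supplies a unique smooth $b$; hence $W$ represents the fibre product in $\Mangc$. The same argument with `smooth' replaced by `interior' everywhere shows $W$ also represents the fibre product in $\Mangcin$, so agreement with Theorem \ref{gc4thm6} follows from uniqueness of representing objects, together with \eqref{gc4eq9}--\eqref{gc4eq11} on the interiors.

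For the corner square \eqref{gc4eq13}, Theorem \ref{gc4thm5} ensures $C(g)$ and $C(h)$ are c-transverse interior morphisms in $\cMangc$, and Proposition \ref{gc3prop6}(a) makes all four arrows $C(e),C(f),C(g),C(h)$ interior, so the construction above (which applies verbatim to objects of mixed dimension) produces the fibre product $C(X)\t_{C(g),C(Z),C(h)}C(Y)$ in both $\cMangc$ and $\cMangcin$. A set-level comparison using the local-corner description of Definition \ref{gc3def10}---local corner components of $W$ at $(x,y)$ correspond bijectively, in the toric local models, to faces of $Q\t Q'$ compatible with the matching relations---identifies this fibre product with $C(W)$; both sides carry canonical g-corner structures from the construction, so the identification is a diffeomorphism by Corollary \ref{gc4cor2}. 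The grading relation $i+l=j+k$ of \eqref{gc4eq14} is then immediate from additivity of dimensions on each connected component of \eqref{gc4eq13}. The main obstacle in the argument is the closure matching in the second paragraph: under b-transversality alone $W^\ci$ is always smooth, but its closure in $X\t Y$ can fail to meet a boundary point $(x_0,y_0)$ or can acquire the `wrong' g-corner structure there, and c-transversality is precisely the combinatorial criterion on \eqref{gc4eq10} that excludes these pathologies uniformly along $\pd(X\t Y)$.
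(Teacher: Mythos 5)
Your proposal follows essentially the same route as the paper's proof: build $W$ locally as a transverse zero locus via Theorem \ref{gc4thm3}/Corollary \ref{gc4cor4}, obtain the universal property from the factorization property of embeddings (Corollary \ref{gc4cor1}), and push through the corner functor using Theorem \ref{gc4thm5}. You have also correctly identified that the crux is the closure-matching issue, handled by translating the ``$(\de_0,0)\in X$'' condition of Theorem \ref{gc4thm3} into the ``not in any proper face'' clause of c-transversality. However, there are two places where the argument as written has a real gap.

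First, a slip that matters for the covector count. You ``pick generators $r_1,\ldots,r_s$ of $R$'' and then speak of ``this collection of $\dim Z$ covectors.'' For the linear-independence hypothesis of Corollary \ref{gc4cor4} to be \emph{equivalent} to b-transversality, you must choose $s_1,\ldots,s_p$ to be a basis over $\R$ of $S\ot_\N\R$ (so $p=\rank R$), not a monoid generating set, which may have more elements and whose associated covectors $\d\log\la_{r_i}$ are then linearly dependent. But with a mere basis, the equations $\la_{s_i}\ci g=\la_{s_i}\ci h$ cut out the diagonal only over the interior $X_R^\ci$ (see the paper's equation \eqref{gc5eq34}); on the boundary the full solution set can be strictly larger than $W$. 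So you cannot say ``$W$ is locally cut out by these equations''; rather, $\ov{W^\ci}$ is produced by Corollary \ref{gc4cor4}, and then you must separately argue, using the last part of Theorem \ref{gc4thm3} and c-transversality, that this closure coincides with $\bigl\{(x,y):g(x)=h(y)\bigr\}$. You gesture at this in your final paragraph, but the body of the argument conflates the two steps.

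Second, and more seriously, the identification $C(W)\cong C(X)\t_{C(Z)}C(Y)$ is not justified. Corollary \ref{gc4cor2} concerns two \emph{embeddings} with the same image in a fixed ambient manifold with g-corners; it does not apply here, because the natural maps to a common ambient --- $\Pi_1:C(W)\ra X\t Y$ and $\Pi_2:C(X)\t_{C(Z)}C(Y)\ra X\t Y$ --- are only proper immersions, generally not injective. A ``set-level comparison'' of local corner components gives a bijection, but you still need this bijection and its inverse to be smooth, and that is not automatic. The paper proves it by a genuinely harder argument: it constructs $\check b=(C(e),C(f)):C(W)\ra\check W$ from the universal property (so $\check b$ is at least smooth), checks via a stratum-by-stratum dimension count (equations \eqref{gc5eq35}--\eqref{gc5eq37}) that $\check b$ restricts to a diffeomorphism of interiors, and then deduces that $\check b$ is a global diffeomorphism from $\check b$ smooth, $\check b\vert_{\rm interiors}$ a diffeomorphism, $\Pi_1,\Pi_2$ proper immersions, and $\Pi_1=\Pi_2\ci\check b$, using Corollary \ref{gc4cor1}. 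Without that proper-immersion bootstrap (or some substitute), your appeal to Corollary \ref{gc4cor2} does not close the argument.
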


\begin{rem} Here is how our work above relates to previous results in the literature. The author \cite[\S 6]{Joyc1} defined `transverse' and `strongly transverse' maps $g:X\ra Z$, $h:Y\ra Z$ in the category $\Mancst$ of manifolds with corners and strongly smooth maps,  similar to b- and c-transverse maps above, and proved an analogue of Theorem \ref{gc4thm7} for (strongly) transverse fibre products in~$\Mancst$.

Kottke and Melrose \cite[\S 11]{KoMe} studied fibre products in the category $\Mancin$ of manifolds with (ordinary) corners and interior smooth maps, in the notation of \S\ref{gc2}. They defined `b-transversal' maps $g:X\ra Z$, $h:Y\ra Z$ in $\Mancin$, which agree with our b-transverse maps when $X,Y,Z$ have ordinary corners. They prove an analogue of Theorem \ref{gc4thm6}, that if $g,h$ are b-transversal and satisfy an extra condition, then a fibre product $X\t_{g,Z,h}Y$ exists in $\Mancin$. Under further conditions including $g,h$ b-normal, they prove $X\t_{g,Z,h}Y$ is also a fibre product in $\Manc$, as in Theorem~\ref{gc4thm7}.

Kottke and Melrose's extra condition is equivalent to saying that the fibre product $W=X\t_{g,Z,h}Y$ in $\Mangcin$ given by Theorem \ref{gc4thm6} has ordinary corners rather than g-corners. Without this condition, they know that $W=X\t_{g,Z,h}Y$ exists as an `interior binomial variety', which is basically a manifold with g-corners $W$ embedded in a manifold with ordinary corners $X\t Y$. So they come close to proving our Theorem \ref{gc4thm6} when $X,Y,Z$ have ordinary corners and $W$ has g-corners. Their results were part of the motivation for this paper.
\label{gc4rem2}
\end{rem}

Combining Proposition \ref{gc4prop1}(ii) and Theorem \ref{gc4thm7} yields:

\begin{cor} Suppose\/ $g:X\ra Z$ and\/ $h:Y\ra Z$ are morphisms in\/ $\Mangc,$ with\/ $g$ a b-fibration and\/ $h$ interior. Then a fibre product\/ $W=X\t_{g,Z,h}Y$ with\/ $\dim W=\dim X+\dim Y-\dim Z$ exists in $\Mangc,$ which may be written\/ $W=\bigl\{(x,y)\in X\t Y:g(x)=h(y)\bigr\},$ as an embedded submanifold of\/~$X\t Y$. 
\label{gc4cor5}
\end{cor}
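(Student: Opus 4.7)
The plan is to observe that the corollary is essentially a direct combination of the two results cited: Proposition \ref{gc4prop1}(ii) hands us c-transversality of $(g,h)$ as soon as one of them is a b-fibration and the other interior, and Theorem \ref{gc4thm7} then supplies a fibre product in $\Mangc$ with the stated description and dimension. So there is no substantive new content to prove, only a short deduction to assemble.

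First I would verify the hypotheses of Proposition \ref{gc4prop1}(ii): by Definition \ref{gc4def3}, a b-fibration is in particular a b-submersion, hence interior, so both $g$ and $h$ are interior morphisms of manifolds with g-corners, and Proposition \ref{gc4prop1}(ii) applies directly to conclude that $g,h$ are c-transverse. (For completeness I would note that b-submersion of $g$ forces ${}^bT_xg\op{}^bT_yh$ to be surjective, giving b-transversality, and b-normality of $g$ gives the surjectivity of ${}^b\ti N_xg\op{}^b\ti N_yh$ and the condition on the submonoid \eq{gc4eq10}, as in the proof of Proposition \ref{gc4prop1}.)

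Next I would invoke Theorem \ref{gc4thm7}: since $g,h$ are c-transverse interior maps of manifolds with g-corners, a fibre product $W=X\t_{g,Z,h}Y$ exists in $\Mangc$, with $\dim W=\dim X+\dim Y-\dim Z$, and may be written as the set $\bigl\{(x,y)\in X\t Y:g(x)=h(y)\bigr\}$, realised as an embedded submanifold of $X\t Y$ in the sense of \S\ref{gc42}. This gives exactly the conclusion of the corollary.

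There is really no main obstacle here; the only small thing to keep in mind is that a b-fibration is by definition interior (being a b-submersion), which is what lets us form ${}^bT_xg$ and apply the transversality machinery in the first place. If desired, one could additionally remark that by the last part of Theorem \ref{gc4thm7}, the corner structure of $W$ is controlled by the Cartesian square \eq{gc4eq13} and the formula \eq{gc4eq14}, but this is not needed for the statement proved here.
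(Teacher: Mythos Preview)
Your proposal is correct and matches the paper's approach exactly: the paper simply states that the corollary follows by combining Proposition \ref{gc4prop1}(ii) and Theorem \ref{gc4thm7}, with no further argument given.
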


If we do not assume $h$ is interior, Corollary \ref{gc4cor5} is false:

\begin{ex} Define $X=[0,\iy)^2$, $Y=*$, $Z=[0,\iy)$ and smooth maps $g:X\ra Z$, $h:Y\ra Z$ by $g(x,y)=xy$ and $h:*\mapsto 0$. Then $g$ is a b-fibration, but $h$ is not interior. In this case no fibre product $W=X\t_{g,Z,h}Y$ exists in $\Mangc$, as by Lemma \ref{gc4lem} it would be given as a set by $W=\bigl\{(x,y)\in[0,\iy)^2:xy=0\bigr\}$, but no manifold with g-corners structure on $W$ near $(0,0)$ can satisfy all the required conditions. 
\label{gc4ex6}
\end{ex}

Here are examples of three phenomena which can occur with b-transverse but not c-transverse fibre products in $\Mangcin$ and~$\Mangc$:

\begin{ex} Let $X=[0,\iy)\t\R$, $Y=[0,\iy)$ and $Z=[0,\iy)^2$. Define $g:X\ra Z$ by $g(x_1,x_2)=(x_1,x_1e^{x_2})$ and $h:Y\ra Z$ by $h(y)=(y,y)$. Then $g,h$ are b-transverse, as $g$ is a b-submersion by Example \ref{gc4ex1}(iii). But $g,h$ are not c-transverse, since at $(0,x_2)\in X$ and $0\in Y$ with $g(0,x_2)=h(0)=(0,0)\in Z$, we may identify ${}^b\ti N_{(0,x_2)}g\op{}^b\ti N_0h:{}^b\ti N_{(0,x_2)}X\op {}^b\ti N_0Y\ra {}^b\ti N_{(0,0)}Z$ with the map $\R\op\R\ra\R^2$ taking $(\la,\mu)\mapsto(\la+\mu,\la+\mu)$, which is not surjective.

Theorem \ref{gc4thm6} gives a fibre product $W=X\t_{g,Z,h}Y$ in $\Mangcin$, where
\begin{equation*}
W=\bigl\{(w,0,w):w\in[0,\iy)\bigr\}\cong[0,\iy).
\end{equation*}
Lemma \ref{gc4lem} shows that if a fibre product $W'=X\t_{g,Z,h}Y$ exists in $\Mangc$, then as a set with projections $e:W'\ra X$, $f:W'\ra Y$ we have
\begin{equation*}
W'=\bigl\{(w,0,w):w\in[0,\iy)\bigr\}\cup\bigl\{(0,x,0):x\in\R\bigr\}\subset X\t Y.
\end{equation*}
This is the union of copies of $[0,\iy)$ and $\R$ intersecting in one point $(0,0,0)$. In this case {\it no fibre product\/ $X\t_ZY$ exists in\/} $\Mangc$, as no manifold with g-corners structure on $W'$ near $(0,0,0)$ can satisfy all the required conditions.

Theorem \ref{gc4thm5} shows $C(g),C(h)$ are b-transverse, so by Theorem \ref{gc4thm6} (generalized to $\cMangcin$) the fibre product $C(X)\t_{C(g),C(Z),C(h)}C(Y)$ in $\cMangcin$ exists. It is the disjoint union of $[0,\iy)$ from $C_0(X)\t_{C_0(Z)}C_0(Y)$ and $\R$ from $C_1(X)\t_{C_2(Z)}C_1(Y)$. But $C(W)=[0,\iy)\amalg\{0\}$, so $C(W)\not\cong C(X)\t_{C(Z)}C(Y)$. The
fibre product $C(X)\t_{C(g),C(Z),C(h)}C(Y)$ in $\cMangc$ does not exist.
\label{gc4ex7}
\end{ex}

\begin{ex} Let $X=Y=[0,\iy)$ and $Z=[0,\iy)^2$, and define $g:X\ra Z$, $h:Y\ra Z$ by $g(x)=(x,x)$, $h(y)=(y,y^2)$. Then $g,h$ are b-transverse. However, they are not c-transverse, as at $0\in X$ and $0\in Y$ with $g(0)=h(0)=(0,0)\in Z$, although ${}^b\ti N_{0}g\op{}^b\ti N_0h:{}^b\ti N_0X\op {}^b\ti N_0Y\ra {}^b\ti N_{(0,0)}Z$ is surjective, the submonoid \eq{gc4eq10} is zero, and so lies in a proper face of~$\ti M_0X\t\ti M_0Y\cong\N^2$.

The fibre product $W$ in $\Mangcin$ in \eq{gc4eq11} given by Theorem \ref{gc4thm6} is $W=\{(1,1)\}$, a single point. Although Theorem \ref{gc4thm7} does not apply, it is easy to show that $W'=\{(0,0),(1,1)\}$ in equation \eq{gc4eq12} is a fibre product in $\Mangc$. So {\it fibre products $X\t_ZY$ in $\Mangcin$ and $\Mangc$ exist but do not coincide}. 

Note that $W\subsetneq W'$. In general, if $g,h$ are b-transverse but not c-transverse, and fibre products $W=X\t_ZY$ in $\Mangcin$ and $W'=X\t_ZY$ in $\Mangc$ both exist, then $W$ is (diffeomorphic to) a proper, open and closed subset of~$W'$.

In this case a fibre product $C(X)\t_{C(Z)}C(Y)$ exists in $\cMangcin$ and is 2 points, so agrees with $C(W')$ but not with $C(W)$, and a fibre product $C(X)\t_{C(Z)}C(Y)$ exists in $\cMangc$ and is 3 points, so does not agree with either $C(W)$ or~$C(W')$.

\label{gc4ex8}
\end{ex}

\begin{ex} Let $X=Y=[0,1)^2$ and $Z=\bigl\{(z_1,z_2,z_3,z_4)\in[0,\iy)^2:z_1z_2=z_3z_4\bigr\}$, as in \eq{gc3eq7}, so that $Z\cong X_P$ for $P$ the toric monoid of Example \ref{gc3ex5}. Define $g:X\ra Z$, $h:Y\ra Z$ by $g(x_1,x_2)=(x_1,x_1x_2^2,x_2,x_1^2x_2)$ and $h(y_1,y_2)=(y_1y_2^2,y_1,y_1^2y_2,y_2)$. Then the only points $\bs x\in X$, $\bs y\in Y$, $\bs z\in Z$ with $g(\bs x)=h(\bs y)=\bs z$ are $\bs x=(0,0)$, $\bs y=(0,0)$, $\bs z=(0,0,0,0)$. These $g,h$ are b-transverse, but not c-transverse, as at $\bs x=\bs y=(0,0)$ the submonoid \eq{gc4eq10} is zero, and lies in a proper face of~$\ti M_{\bs x}X\t\ti M_{\bs y}Y\cong\N^4$.

In this case the fibre product $W=X\t_{g,Z,h}Y$ in $\Mangcin$ given by Theorem \ref{gc4thm6} is $W=\es$. A fibre product $W'=X\t_{g,Z,h}Y$ in $\Mangc$ exists, with $W'=\bigl\{\bigl((0,0),(0,0)\bigr)\bigr\}$. Note however that $\dim W'=0<1=\dim X+\dim Y-\dim Z$, so {\it the fibre product\/ $W'$ in $\Mangc$ has smaller than the expected dimension}. 

Again, a fibre product $C(X)\t_{C(Z)}C(Y)$ exists in $\cMangcin$ and agrees with $C(W')$ but not with $C(W)$, and a fibre product $C(X)\t_{C(Z)}C(Y)$ exists in $\cMangc$ and is 2 points, so does not agree with either $C(W)$ or~$C(W')$.
\label{gc4ex9}
\end{ex}

\begin{rem} One could also look for useful sufficient conditions for fibre products $X\t_{g,Z,h}Y$ to exist in $\Mangc$ when $g:X\ra Z$, $h:Y\ra Z$ are not both interior. Example \ref{gc4ex6} shows that $g$ a b-fibration and $h$ general is not a sufficient condition, but one can prove that $g$ a simple b-fibration and $h$ general is sufficient. A good approach may be to suppose that $C(g):C(X)\ra C(Z)$, $C(h):C(Y)\ra C(Z)$ are b-transverse (they are already interior), so that a fibre product $C(X)\t_{C(Z)}C(Y)$ exists in $\cMangcin$, and then seek extra discrete conditions ensuring that the highest-dimensional component of $C(X)\t_{C(Z)}C(Y)$ is a fibre product $X\t_{g,Z,h}Y$ in~$\Mangc$.
\label{gc4rem3}
\end{rem}

\subsection{(M-)Kuranishi spaces with g-corners}
\label{gc44}

`Kuranishi spaces' are a class of singular spaces generalizing manifolds and orbifolds, which first appeared in the work of Fukaya, Oh, Ohta and Ono \cite{FOOO,FuOn} as the geometric structure on moduli spaces of $J$-holomorphic curves in symplectic geometry. One can consider both Kuranishi spaces without boundary \cite{FuOn}, and with corners \cite{FOOO}. The definition of Kuranishi spaces has been controversial from the outset, and has changed several times.

Recently it has become clear \cite{Joyc5} that Kuranishi spaces should be understood as `derived smooth orbifolds' and are part of the subject of Derived Differential Geometry, the differential-geometric analogue of the Derived Algebraic Geometry of Jacob Lurie and To\"en--Vezzosi.

One version of Derived Differential Geometry is the author's 2-categories of `d-manifolds' $\dMan$ and `d-orbifolds' $\dOrb$ \cite{Joyc2,Joyc3,Joyc4}, which are defined as special classes of derived schemes and derived stacks over $C^\iy$-rings, using the tools of (derived) algebraic geometry.

In a second approach, the author \cite{Joyc5} gave a new definition of Kuranishi space, modifying \cite{FOOO,FuOn}. This yielded an ordinary category $\MKur$ of `M-Kuranishi spaces' $\MKur$, a kind of derived manifold, and a 2-category of `Kuranishi spaces' $\Kur$, a kind of derived orbifold. The definition involves an atlas of charts (`Kuranishi neighbourhoods' $(V,E,\Ga,s,\psi)$) and looks very different to that of d-manifolds and d-orbifolds, but there are equivalences of categories $\MKur\simeq\mathop{\rm Ho}(\dMan)$ and of 2-categories~$\Kur\simeq\dOrb$.

In \cite[\S 3 \& \S 5]{Joyc5} the author also defined (2-)categories $\MKurc$, $\Kurc$ of \hbox{(M-)Kuranishi} spaces with corners. The construction starts with a category $\Manc$ of manifolds with corners, as in \S\ref{gc2}, with the $V$ in Kuranishi neighbourhoods $(V,E,\Ga,s,\psi)$ objects in $\Manc$. The definition is not very sensitive to the details of the category $\Manc$ --- variations on $\Manc$ satisfying a list of basic properties we expect of manifolds with corners will do just as well. 

So, as explained in detail in \cite[\S 3.8 \& \S 5.6]{Joyc5}, by replacing $\Manc$ by $\Mangc$ in \cite[\S 3 \& \S 5]{Joyc5}, we can define a category $\MKurgc$ of {\it M-Kuranishi spaces with g-corners\/} containing $\MKurc,\ab\Manc,\ab\Mangc$ as full subcategories, and a 2-category $\Kurgc$ of {\it Kuranishi spaces with g-corners\/} containing $\Kurc$, $\Manc$, $\Mangc$ as full (2-)subcategories.

Fibre products in $\Kurgc$ exist under weaker conditions than in $\Kurc$, as the same holds for $\Mangc,\Manc$. For example, in \cite{Joyc7} we will prove analogues of Theorem \ref{gc4thm6} and Corollary~\ref{gc4cor5}:

\begin{thm}{\bf (a)} Suppose $\bX,\bY$ are Kuranishi spaces with g-corners, $Z$ is a manifold with g-corners, and\/ $\bs g:\bX\ra Z,$ $\bs h:\bY\ra Z$ are interior $1$-morphisms in $\Kurgc$. Then a fibre product\/ $\bW=\bX\t_{\bs g,Z,\bs h}\bY$ exists in the\/ $2$-category\/ $\Kurgcin$ of Kuranishi spaces with g-corners and interior\/ $1$-morphisms, with virtual dimension\/~$\vdim\bW=\vdim\bX+\vdim\bY-\dim Z$.
\smallskip

\noindent{\bf(b)} Suppose $\bs g:\bX\ra\bZ$ is a (weak) b-fibration and\/ $\bs h:\bY\ra\bZ$ an interior $1$-morphism in $\Kurgc$. Then a fibre product\/ $\bW=\bX\t_{\bs g,\bZ,\bs h}\bY$ exists in $\Kurgc,$ with\/~$\vdim\bW=\vdim\bX+\vdim\bY-\vdim\bZ$.
\label{gc4thm8}
\end{thm}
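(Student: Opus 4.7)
The plan is to reduce both parts to the corresponding statements for manifolds with g-corners (Theorems \ref{gc4thm6} and \ref{gc4thm7}), working chart-by-chart as in the construction of fibre products in $\Kurc$ in \cite[\S 3, \S 5]{Joyc5}, and then gluing. Given a point $(x,y)\in\bX\t\bY$ with $\bs g(x)=\bs h(y)=z$, one chooses Kuranishi neighbourhoods $(V_X,E_X,\Ga_X,s_X,\psi_X)$ on $\bX$ near $x$ and $(V_Y,E_Y,\Ga_Y,s_Y,\psi_Y)$ on $\bY$ near $y$, on which $\bs g,\bs h$ are represented by interior morphisms $g_V:V_X\ra Z$ and $h_V:V_Y\ra Z$ of manifolds with g-corners (for (b), one first restricts to a g-chart $(P,U,\phi)$ on $\bZ$ near $z$ so that $\bs g,\bs h$ factor through $U$).

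For part (a), because $Z$ is a genuine manifold with g-corners, one can use the standard Kuranishi trick of enlarging $V_X,V_Y$ by the fibres of the obstruction bundles so that $g_V,h_V$ become b-submersions on a neighbourhood of $s_X^{-1}(0),s_Y^{-1}(0)$; this makes $(g_V,h_V)$ b-transverse. Theorem \ref{gc4thm6} then produces a fibre product $V_W=V_X\t_{g_V,Z,h_V}V_Y$ in $\Mangcin$ with the expected dimension, and $E_W=\pi_X^*(E_X)\op\pi_Y^*(E_Y)$, $\Ga_W=\Ga_X\t\Ga_Y$, $s_W=(s_X,s_Y)$ assemble into a Kuranishi neighbourhood on the candidate fibre product $\bW$, with footprint identified with the set-theoretic fibre product via Lemma \ref{gc4lem}. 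For part (b), the (weak) b-fibration hypothesis on $\bs g$ means $g_V$ may be arranged to be a b-fibration after shrinking, so by Proposition \ref{gc4prop1}(ii) the pair $(g_V,h_V)$ is c-transverse; then Theorem \ref{gc4thm7} gives a fibre product in $\Mangc$, and the grading-preserving corner formula \eq{gc4eq13}--\eq{gc4eq14} provides the boundary behaviour needed to match $\pd\bW$ with $(\pd_+^{\bs g}\bX\t_\bZ\bY)\amalg(\bX\t_\bZ\pd_+^{\bs h}\bY)\amalg(\pd_-^{\bs g}\bX\t_{\pd\bZ}\pd_-^{\bs h}\bY)$.

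Next, I would verify that different choices of local data give canonically equivalent Kuranishi neighbourhoods on $\bW$. Given a coordinate change (or 1-morphism of Kuranishi neighbourhoods) between two choices on $\bX$ and similarly on $\bY$, functoriality of the fibre product in $\Mangcin$ (resp.\ $\Mangc$) produces a coordinate change between the resulting neighbourhoods on $\bW$, and the 2-morphisms in $\Kurgc$ between different such lifts are obtained from the universal property of the fibre products of the $V_W$'s. Assembling these and appealing to the sheaf-like gluing lemmas of \cite[\S 4, \S 5]{Joyc5}, which are formal and depend only on the properties of the underlying category of manifolds, yields a well-defined (M-)Kuranishi space $\bW$ with interior 1-morphisms $\bs e:\bW\ra\bX$, $\bs f:\bW\ra\bY$ and a 2-morphism $\bs g\ci\bs e\Ra\bs h\ci\bs f$.

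Finally, the universal property of $\bW$ in $\Kurgcin$ (resp.\ $\Kurgc$) follows by the same chart-level argument applied to any test interior 1-morphisms $\bs e':\bW'\ra\bX$, $\bs f':\bW'\ra\bY$ with a 2-morphism $\bs g\ci\bs e'\Ra\bs h\ci\bs f'$: the universal property of $V_W=V_X\t_Z V_Y$ in $\Mangcin$ or $\Mangc$ produces the required 1-morphism $\bW'\ra\bW$ and its uniqueness up to 2-isomorphism. The main technical obstacle will be managing the 2-categorical bookkeeping of coordinate changes, particularly in case (b) where the target $\bZ$ is itself a Kuranishi space and one must combine the c-transverse fibre product of \eq{gc4eq13} over g-chart models of $\bZ$ with the coherent gluing of such local fibre products; this is essentially the same difficulty as in \cite[\S 5.6]{Joyc5} for ordinary corners, and is solved by systematic use of the functoriality and universal properties established in \S\ref{gc43}.
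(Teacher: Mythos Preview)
The paper does not actually prove this theorem. Immediately before the statement the author writes ``in \cite{Joyc7} we will prove analogues of Theorem \ref{gc4thm6} and Corollary~\ref{gc4cor5}'', and \cite{Joyc7} is listed in the bibliography as ``in preparation, 2016''. So Theorem~\ref{gc4thm8} is only announced here; there is no proof in the present paper to compare your proposal against.

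That said, your outline is along the lines one would expect: the paper explains in \S\ref{gc44} that $\Kurgc$ is obtained from the construction of \cite[\S 3, \S 5]{Joyc5} by replacing $\Manc$ with $\Mangc$, and explicitly frames Theorem~\ref{gc4thm8}(a),(b) as the Kuranishi analogues of Theorem~\ref{gc4thm6} and Corollary~\ref{gc4cor5}. So reducing to those results chart-by-chart and then invoking the gluing machinery of \cite{Joyc5} is the natural strategy, and presumably the one \cite{Joyc7} follows. One point to be careful about: in part (a) there is no transversality hypothesis at all (the paper stresses this), so your step of ``enlarging $V_X,V_Y$ by the fibres of the obstruction bundles so that $g_V,h_V$ become b-submersions'' is doing real work and needs to be justified carefully in the g-corners setting; and in part (b), $\bZ$ is a Kuranishi space rather than a manifold, so the reduction to Theorem~\ref{gc4thm7} is less direct than you indicate --- you acknowledge this in your last paragraph, but it is likely the main substance of the deferred proof rather than routine bookkeeping.
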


Neither part holds in $\Kurc$ rather than $\Kurgc$. Note that there is no transversality assumption in (a), or any discrete conditions on monoids.

Kuranishi spaces with g-corners will be important in future applications in symplectic geometry that the author is planning, for two reasons. Firstly, the author would like to develop an approach to moduli spaces of $J$-holomorphic curves using `representable 2-functors', modelled on Grothendieck's representable functors in algebraic geometry. It turns out that even if the moduli space is a Kuranishi space with (ordinary) corners, as in \cite{FOOO}, the definition of the moduli 2-functor near curves with boundary nodes involves fibre products which do not exist in $\Kurc$, and the moduli 2-functor cannot be defined unless Theorem \ref{gc4thm8}(b) holds. So we need $\Kurgc$ to define moduli spaces using this method.

Secondly, some kinds of moduli spaces of $J$-holomorphic curves should actually have g-corners rather than ordinary corners, in particular the moduli spaces of `pseudoholomorphic quilts' of Ma'u, Wehrheim and Woodward \cite{Mau,MaWo,WeWo1,WeWo2,WeWo3}, which are used to define actions of Lagrangian correspondences on Lagrangian Floer cohomology and Fukaya categories. 

Ma'u and Woodward \cite{MaWo} define moduli spaces $\oM_{n,1}$ of `stable $n$-marked quilted discs'. As in \cite[\S 6]{MaWo}, for $n\ge 4$ these are not ordinary manifolds with corners, but have an exotic corner structure; in the language of this paper, the $\oM_{n,1}$ are manifolds with g-corners. As in \cite[Ex.~6.3]{MaWo}, the first exotic example $\oM_{4,1}$ has a point locally modelled on $X_P$ near $\de_0$ in Example \ref{gc3ex5}. Ma'u and Woodward \cite[Th.~1.2]{MaWo} show the complexification $\oM_{n,1}^\C$ of $\oM_{n,1}$ is a complex projective variety with toric singularities, which fits with our discussion of complex toric varieties and the model spaces $X_P$ in \S\ref{gc316} and Remark~\ref{gc3rem1}.

More generally, if one omits the simplifying monotonicity and genericity assumptions in \cite{Mau,WeWo1,WeWo2,WeWo3}, the moduli spaces of marked quilted $J$-holomorphic discs discussed in \cite{Mau,WeWo1,WeWo2,WeWo3} should be Kuranishi spaces with g-corners (though we do not claim to prove this), just as moduli spaces of marked $J$-holomorphic discs in Fukaya et al. \cite{FOOO} are Kuranishi spaces with (ordinary) corners.

In another area of symplectic geometry, Pardon \cite{Pard} defines contact homology of Legendrian submanifolds using moduli spaces of $J$-holomorphic curves which are a topological version of Kuranishi spaces with g-corners.

\subsection{Other topics}
\label{gc45}

Sections \ref{gc42}--\ref{gc43} extended known results for manifolds without boundary or with corners to manifolds with g-corners, but the extensions were not obvious, did not always work, and required new proofs when they did. Quite a lot of other material in differential geometry does extend to manifolds with g-corners in an obvious way, and does not require new proofs. This section gives some examples.

\subsubsection{Orientations}
\label{gc451}

Orientations on manifolds with corners are discussed by the author \cite[\S 7]{Joyc1}, \cite[\S 5.8]{Joyc2} and Fukaya et al.\ \cite[\S 8.2]{FOOO}. We extend to manifolds with g-corners:

\begin{dfn} Let $X$ be a manifold with g-corners with $\dim X=n$. Then $\La^n({}^bT^*X)$ is a real line bundle on $X$. An {\it orientation\/} $o$ on $X$ is an equivalence class $[\om]$ of top-dimensional forms $\om\in C^\iy\bigl(\La^n({}^bT^*X)\bigr)$ with $\om\vert_x\ne 0$ for all $x\in X$, where two such $\om,\om'$ are equivalent if $\om'=c\cdot\om$ for $c:X\ra(0,\iy)$ smooth. The {\it opposite orientation\/} is $-o=[-\om]$. Then we call $(X,o)$ an {\it oriented manifold with g-corners}. Usually we suppress the orientation $o$, and
just refer to $X$ as an oriented manifold with g-corners. When $X$ is an oriented
manifold with g-corners, we write $-X$ for $X$ with the opposite orientation.
\label{gc4def10}
\end{dfn}

This is the same as one of the usual definitions of orientations on manifolds or manifolds with corners, except that we use ${}^bT^*X$ rather than $T^*X$. Since ${}^bT^*X$ and $T^*X$ coincide on $X^\ci$, the difference is not important.

As in conventional differential geometry, locally on $X$ there are two possible orientations. Globally orientations need not exist -- the obstruction to existence lies in $H^1(X,\Z_2)$ -- and if they do exist then the family of orientations on $X$ is a torsor for $H^0(X,\Z_2)$. 

As discussed in \cite[\S 7]{Joyc1}, \cite[\S 5.8]{Joyc2}, \cite[\S 8.2]{FOOO} for manifolds with corners, if $X$ is an oriented manifold with g-corners we can define a natural orientation on $\pd X$, and hence on $\pd^2X,\pd^3X,\ldots,\pd^{\dim X}X$, and if $X,Y,Z$ are oriented manifolds with g-corners and $g:X\ra Z$, $h:Y\ra Z$ are b-transverse interior maps then we can define a natural orientation on the fibre product $W=X\t_{g,Z,h}Y$ in $\Mangcin$ from Theorem \ref{gc4thm6}. To do these requires a choice of orientation convention. 

Orientations do not lift to corners $C_k(X)$ for $k\ge 2$. If $X$ is oriented then $\pd^2X$ is oriented, and the natural free $\Z_2$-action on $\pd^2X$ from Proposition \ref{gc3prop5}(a) is orientation-reversing, so that $C_2(X)\cong\pd^2X/\Z_2$ does not have a natural orientation, and $C_k(X)$ need not be orientable for $k\ge 2$, as in~\cite[Ex.~7.3]{Joyc1}.

In all of this, there are no new issues in working with orientations on manifolds with g-corners, except for using $\La^n({}^bT^*X)$ rather than $\La^nT^*X$, which is easy, and which one can already do for manifolds with ordinary corners.

\subsubsection{Partitions of unity}
\label{gc452}

Partitions of unity are often used in differential-geometric constructions, to glue together choices of local data.

\begin{dfn} Let $X$ be a manifold with g-corners and $\{U_i:i\in I\}$ an open cover of $X$, where $I$ is an indexing set. A {\it partition of unity on $X$ subordinate to\/} $\{U_i:i\in I\}$ is a family $\{\eta_i:i\in I\}$ of smooth functions $\eta_i:X\ra\R$ satisfying:
\begin{itemize}
\setlength{\itemsep}{0pt}
\setlength{\parsep}{0pt}
\item[(i)] $\eta_i(X)\subseteq[0,1]$ for all $i\in I$.
\item[(ii)] $\eta_i\vert_{X\sm U_i}=0$ for all $i\in I$.
\item[(iii)] Each $x\in X$ has an open neighbourhood $x\in V\subseteq X$ such that $\eta_i\vert_V=0$ for all except finitely many $i\in I$.
\item[(iv)] $\sum_{i\in I}\eta_i=1$, where the sum makes sense by (iii) as near any $x\in X$ there are only finitely many nonzero terms.
\end{itemize}
\label{gc4def11}
\end{dfn}

By the usual proof for manifolds, as in Lee \cite[Th.~2.23]{Lee}, one can show:

\begin{prop} Let\/ $X$ be a manifold with g-corners and\/ $\{U_i:i\in I\}$ an open cover of\/ $X$. Then there exists a partition of unity $\{\eta_i:i\in I\}$ on $X$ subordinate to~$\{U_i:i\in I\}$.
\label{gc4prop2}
\end{prop}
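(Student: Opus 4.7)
The plan is to adapt the standard proof of existence of partitions of unity on smooth manifolds, as in Lee's \emph{Introduction to Smooth Manifolds}, Theorem 2.23. The strategy has three ingredients: paracompactness of $X$, construction of smooth bump functions in g-charts, and the usual normalization argument. Only the second step requires anything specific to g-corners; the other two are formal.

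First I would check that $X$ is paracompact. By Definition \ref{gc3def6}, $X$ is Hausdorff and second countable, and each local model $X_P$ is locally compact (being homeomorphic by Proposition \ref{gc3prop3}(a) to a closed subset of $[0,\iy)^m$), so $X$ is locally compact. A locally compact, Hausdorff, second countable space admits an exhaustion by compact sets $K_1\subset K_2\subset\cdots$ with $K_i\subset\mathop{\rm int}(K_{i+1})$, from which one extracts a locally finite refinement of $\{U_i:i\in I\}$ by precompact open sets $\{V_a:a\in A\}$, each contained in the domain $\phi^a(U^a)$ of some g-chart $(P^a,U^a,\phi^a)$ and in some $U_{i(a)}$, together with a shrinking $V_a'\Subset V_a$ that still covers $X$.

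Second, I would establish the existence of smooth bump functions: given a g-chart $(P,U,\phi)$, a point $x=\phi(u)\in\phi(U)$, and an open neighbourhood $W\subset U$ of $u$, I want a smooth $\eta:X\ra[0,1]$ with $\eta(x)>0$ and $\supp\eta\subset\phi(W)$. Choose generators $p_1,\ldots,p_m$ for $P$, so by Proposition \ref{gc3prop3}(a) the map $\La=\la_{p_1}\t\cdots\t\la_{p_m}:X_P\ra X_P'\subset[0,\iy)^m$ is a homeomorphism onto a closed subset. Pick an open neighbourhood $W'\subset[0,\iy)^m$ of $\La(u)$ with $\La^{-1}(W')\cap X_P\subset W$, and a classical $C^\iy$ bump function $\tilde\eta:\R^m\ra[0,1]$ supported in $W'$ with $\tilde\eta(\La(u))>0$. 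Then $\tilde\eta\ci\La$ is smooth on $U$ in the sense of Definition \ref{gc3def5}, by the `if' direction of Proposition \ref{gc3prop3}(b). Pushing forward by $\phi$ and extending by zero outside $\phi(U)$ yields a smooth function $\eta:X\ra[0,1]$ with the required properties; smoothness of the extension by zero is automatic because $\supp(\tilde\eta\ci\La)$ is compactly contained in $U$, so the extension vanishes on an open neighbourhood of $X\sm\phi(U)$.

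Third, with paracompactness and bump functions in hand, the standard argument goes through verbatim. Using Step 2, for each $a\in A$ construct a smooth $\zeta_a:X\ra[0,\iy)$ with $\zeta_a>0$ on $\overline{V_a'}$ and $\supp\zeta_a\subset V_a\subset\phi^a(U^a)\cap U_{i(a)}$. Local finiteness of $\{V_a\}$ makes $\sigma:=\sum_{a\in A}\zeta_a$ a well-defined smooth function on $X$, and $\sigma(x)>0$ for every $x\in X$ since the $V_a'$ cover $X$. Setting $\ti\eta_a=\zeta_a/\sigma$ gives a smooth partition of unity subordinate to $\{V_a\}$, and finally
\[
\eta_i:=\sum_{a\in A:\,i(a)=i}\ti\eta_a
\]
is a smooth function with $\supp\eta_i\subset U_i$ (by local finiteness, the sum on any precompact set is finite, so the support behaves well). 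Properties (i)--(iv) of Definition \ref{gc4def11} are then immediate.

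The main obstacle is really only the second step, and it is not a serious one: it reduces the construction of bump functions on a manifold with g-corners to the construction of ordinary bump functions on $\R^m$, via the embedding $X_P\hookra[0,\iy)^m$ and the characterization of smooth functions on open subsets of $X_P$ as restrictions of smooth functions on $[0,\iy)^m$ from Proposition \ref{gc3prop3}(b). Everything else is the classical argument.
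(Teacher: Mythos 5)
Your proof is correct and matches the approach the paper intends: the paper simply cites Lee's Theorem 2.23 and says ``by the usual proof'', and your write-up fills in that standard argument, with the only g-corners-specific ingredient being the construction of bump functions via the embedding $\La = \la_{p_1}\t\cdots\t\la_{p_m}:X_P\ra[0,\iy)^m$ of Proposition \ref{gc3prop3} and the ``if'' direction of Proposition \ref{gc3prop3}(b). The paracompactness and normalization steps are formal, exactly as you say.
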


\subsubsection{Riemannian metrics}
\label{gc453}

Following Melrose \cite[\S 2]{Melr3}, \cite[\S 4]{Melr4} for manifolds with corners, we define:

\begin{dfn} Let $X$ be a manifold with g-corners. A {\it b-metric\/} $g$ on $X$ is a smooth section $g\in C^\iy\bigl(S^2({}^bT^*X)\bigr)$ which restricts to a positive definite quadratic form on ${}^bT_xX$ for all~$x\in X$.

\label{gc4def12}
\end{dfn}

This follows the usual definition of Riemannian metrics on manifolds without boundary, but using ${}^bTX,{}^bT^*X$ rather than $TX,T^*X$. By the usual proof for manifolds using partitions of unity (as in \S\ref{gc452}) one can show that any manifold with g-corners $X$ admits b-metrics~$g$.

On the interior $X^\ci$ we have ${}^bTX=TX$, ${}^bT^*X=T^*X$, so $g^\ci:=g\vert_{X^\ci}$ is an ordinary Riemannian metric on the manifold without boundary $X^\ci$. If $X$ is a compact manifold with g-corners, then $(X^\ci,g^\ci)$ is a complete, generally noncompact Riemannian manifold, with interesting asymptotic behaviour near infinity, determined by the boundary and corners of~$X$.

Melrose \cite{Melr1,Melr2,Melr3,Melr4} studies analysis of elliptic operators on $(X^\ci,g^\ci)$ for $X$ a compact manifold with corners (and also more general situations). It seems likely that his theory extends to $X$ a compact manifold with g-corners.

\subsubsection{Extension of smooth maps from boundaries}
\label{gc454}

Let $X$ be a manifold with corners. As in \eq{gc2eq7}, there is a natural identification
\e
\begin{split}
\pd^2X\cong\bigl\{(x,\be_1,\be_2):\,&\text{$x\in X,$
$\be_1,\be_2$ are distinct}\\
&\text{local boundary components for $X$ at $x$}\bigr\},
\end{split}
\label{gc4eq15}
\e
where $i_{\pd X}:\pd^2X\ra\pd X$ maps $(x,\be_1,\be_2)\mapsto(x,\be_1)$ and $\Pi:\pd^2X\ra X$ maps $(x,\be_1,\be_2)\mapsto x$. There is a natural, free action of $\Z_2=\{1,\si\}$ on $\pd^2X$ by diffeomorphisms, where $\si:\pd^2X\ra\pd^2X$ acts by $\si:(x,\be_1,\be_2)\mapsto(x,\be_2,\be_1)$, with $\Pi\ci\si=\Pi$. It is easy to show:

\begin{prop} Let\/ $X$ be a manifold with (ordinary) corners, and\/ $\si:\pd^2X\ra\pd^2X$ be as above. Then:
\begin{itemize}
\setlength{\itemsep}{0pt}
\setlength{\parsep}{0pt}
\item[{\bf(a)}] Suppose $g:\pd X\ra\R$ is a smooth function. Then there exists a smooth function $f:X\ra\R$ with\/ $f\vert_{\pd X}=g$ if and only if\/ $g\vert_{\pd^2X}:\pd^2X\ra\R$ satisfies $g\vert_{\pd^2X}=g\vert_{\pd^2X}\ci\si$.
\item[{\bf(b)}] Suppose $E\ra X$ is a vector bundle, and\/ $t\in C^\iy(E\vert_{\pd X})$. Then there exists\/ $s\in C^\iy(E)$ with\/ $s\vert_{\pd X}=t$ if and only if\/ $t\vert_{\pd^2X}\in C^\iy(E\vert_{\pd^2X})$ satisfies\/~$\si^*(t\vert_{\pd^2X})=t\vert_{\pd^2X}$.
\end{itemize}
\label{gc4prop3}
\end{prop}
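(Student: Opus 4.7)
The ``only if'' direction of (a) is immediate: if $f:X\to\R$ is smooth and $g=f\vert_{\pd X}$, then under the identification \eqref{gc4eq15} we have $g\vert_{\pd^2X}(x,\be_1,\be_2)=g(x,\be_1)=f(x)$, which depends only on $x\in X$ and is therefore fixed by the swap $\si:(x,\be_1,\be_2)\mapsto(x,\be_2,\be_1)$.

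For the ``if'' direction, the plan is to reduce to a local extension statement via partitions of unity. By Proposition \ref{gc4prop2} (applied to $X$ regarded as a manifold with g-corners via $F_\Manc^\Mangc$), choose a partition of unity $\{\rho_a:a\in A\}$ on $X$ subordinate to a cover by coordinate charts $\phi_a:U_a\hookra X$ with $U_a\subseteq\R^n_{k_a}$ open. It suffices to produce, in each chart, a smooth extension $f_a$ of $g\vert_{\pd X\cap\phi_a(U_a)}$ to $\phi_a(U_a)$; then $f:=\sum_a\rho_a f_a$ is smooth on $X$, and because $\sum_a\rho_a\equiv 1$ and all $f_a$ restrict to $g$ on $\pd X$ we get $f\vert_{\pd X}=g$.

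The local extension reduces to the following lemma, proved by induction on $m$: given $U\subseteq\R^n_k$ open, a subset $S=\{i_1,\ldots,i_m\}\subseteq\{1,\ldots,k\}$, and smooth functions $g_j$ on $F_{i_j}\cap U$ (where $F_i=\{u_i=0\}$) with $g_j=g_{j'}$ on $F_{i_j}\cap F_{i_{j'}}\cap U$ for $j\neq j'$, there exists smooth $f:U\to\R$ with $f\vert_{F_{i_j}\cap U}=g_j$ for all $j$. The base case $m=0$ is trivial. For the step, fix $j_0$ and use Seeley's extension theorem (applied to $F_{i_{j_0}}\cong\R^{n-1}_{k-1}$) to pick a smooth extension $\tilde g_{j_0}$ of $g_{j_0}$ to $U$; replacing $g_j$ by $g_j-\tilde g_{j_0}\vert_{F_{i_j}}$ reduces to the case $g_{j_0}=0$. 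The compatibility then forces $g_j\vert_{F_{i_j}\cap F_{i_{j_0}}}=0$ for $j\neq j_0$, so Hadamard's lemma on the manifold with corners $F_{i_j}\cap U$ yields $g_j=u_{i_{j_0}}h_j$ with $h_j$ smooth on $F_{i_j}\cap U$. The $h_j$ inherit pairwise compatibility from the $g_j$ (by continuity from the locus $u_{i_{j_0}}>0$), so by the inductive hypothesis applied to $S\setminus\{i_{j_0}\}$ they extend to a smooth $H:U\to\R$, and $f:=u_{i_{j_0}}H+\tilde g_{j_0}$ is the required extension.

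Part (b) reduces to part (a) by trivializing $E$ locally. Over a chart $U_a$ small enough that $E\vert_{U_a}\cong U_a\times\R^r$, the section $t\vert_{\pd X\cap U_a}$ is an $r$-tuple of smooth functions on $\pd U_a$, and the hypothesis $\si^*(t\vert_{\pd^2X})=t\vert_{\pd^2X}$ becomes the compatibility condition of (a) in each component; apply (a) coordinatewise to obtain a local extension, then patch globally with a partition of unity as in the proof of (a). The principal difficulty in the whole argument is the inductive local extension step in the third paragraph: verifying that the Seeley extension, Hadamard factorization, and reassembly interact smoothly at all corner strata is the main technical content, though once the induction variable is taken to be the number of participating boundary components (rather than the codimension $k$), the recursion closes cleanly.
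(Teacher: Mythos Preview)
Your argument is correct and follows the same outline as the paper: the paper simply remarks that partitions of unity reduce the statement to the local model $\R^n_k=[0,\iy)^k\t\R^{n-k}$ near $0$, and declares the local case easy without writing it out. You use the same reduction and then supply a clean induction (Seeley extension, subtraction, Hadamard factorization) for the local step that the paper leaves implicit; nothing in your route differs in substance from the intended one.
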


Since local solutions $f$ or $s$ to the equations $f\vert_{\pd X}=g$, $s\vert_{\pd X}=t$ can be combined using a partition of unity (as in \S\ref{gc452}) to make global solutions, it is enough to prove Proposition \ref{gc4prop3} near 0 in~$X=\R^n_k=[0,\iy)^k\t\R^{n-k}$.

Note that the analogue of Proposition \ref{gc4prop3}(a) for smooth maps $X\ra[0,\iy)$ is false. For example, there is no smooth map $f:[0,\iy)^2\ra[0,\iy)$ with $f(x,0)=x$ and $f(0,y)=y$, as $f(x,y)=x+y$ is not a smooth map~$f:[0,\iy)^2\ra[0,\iy)$. 

Now let $X$ be a manifold with g-corners. By Proposition \ref{gc3prop5}(a), we have\begin{align*}
\pd^2X\cong\,&\bigl\{(x,\be_1,\be_2):\text{$x\!\in\! X,$
$\be_1,\be_2$ are distinct local boundary}\\
&\quad\text{components of $X$ at $x$ intersecting in codimension $2$}\bigr\},
\end{align*}
as in \eq{gc4eq15}, and a free action of $\Z_2=\{1,\si\}$ on $\pd^2X$ by diffeomorphisms, where $\si:\pd^2X\ra\pd^2X$ acts by $\si:(x,\be_1,\be_2)\mapsto(x,\be_2,\be_1)$. We can show:

\begin{prop} The analogue of Proposition\/ {\rm\ref{gc4prop3}} holds for $X$ a manifold with g-corners.
\label{gc4prop4}
\end{prop}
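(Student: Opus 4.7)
The plan is to mimic the proof of Proposition \ref{gc4prop3} but with the local model $X_P$ in place of $\R^n_k$. Necessity is immediate: if $f$ restricts to $g$ on $\pd X$, then both $g\vert_{\pd^2X}$ and $g\vert_{\pd^2X}\ci\si$ equal $f\ci\Pi$ where $\Pi=i_X\ci i_{\pd X}:\pd^2X\ra X$, which manifestly satisfies $\Pi\ci\si=\Pi$ since both ordered pairs $(x,\be_1,\be_2)$ and $(x,\be_2,\be_1)$ project to $x$. The same reasoning handles part (b) at the level of vector-bundle sections.

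For sufficiency, partitions of unity (Proposition \ref{gc4prop2}) reduce the construction of $f$ (or $s$) to a neighbourhood of each point $x\in X$ in some g-chart $(P,U,\phi)$. Using the local product structure near a point of the stratum $X^P_G\subset X_P$ discussed preceding Lemma \ref{gc3lem6}, we may further assume without loss of generality that $U$ is an open neighbourhood of $(\de_0,0)$ in $X_Q\t\R^l$ for some toric monoid $Q$, with $x$ corresponding to $(\de_0,0)$; since $\R^l$ contributes no boundary strata, the task reduces to extending across $\de_0\in X_Q$.

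The plan in this local setting is to transfer the problem to an ambient manifold with ordinary corners via the interior embedding $\La=\la_{q_1}\t\cdots\t\la_{q_m}:X_Q\hookra[0,\iy)^m$ of Example \ref{gc4ex2}(iv), where $q_1,\ldots,q_m$ generate $Q$. By Proposition \ref{gc3prop3}(b), smooth functions near $\de_0\in X_Q$ are exactly restrictions (via $\La$) of smooth functions near $0\in[0,\iy)^m$, and similarly on each codim-$1$ face $X_F\subset X_Q$: by Lemma \ref{gc3lem1}(ii) such a face is generated by $\{q_i:q_i\in F\}$, so $\La(X_F)$ lies in the coordinate face $\{x_i=0:q_i\notin F\}$ of $[0,\iy)^m$. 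First extend each $g\vert_{X_F}$ to a smooth function $\bar g_F$ on a neighbourhood of $0$ in the whole of $[0,\iy)^m$ (again by Proposition \ref{gc3prop3}(b)), then use a partition of unity on $[0,\iy)^m$ subordinate to a cover adapted to the face stratification of $Q$ to assemble compatible smooth data on the codim-$1$ facets of $[0,\iy)^m$. Applying Proposition \ref{gc4prop3} in the ambient manifold with ordinary corners $[0,\iy)^m$ then yields a smooth ambient extension, and restricting to $X_Q\subseteq[0,\iy)^m$ gives the required $f$. Part (b) follows by applying (a) componentwise in a local trivialization of $E$, using that such a trivialization pulls back to a $\si$-equivariant local trivialization of $E\vert_{\pd X}$ over $\pd^2X$.

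The main obstacle is the assembly step: a codim-$2$ intersection $X_F\cap X_{F'}$ in $X_Q$ may embed into a face of $[0,\iy)^m$ of codimension strictly greater than $2$, so the $\si$-compatibility on $\pd^2X_Q$ supplied by hypothesis must be translated through $\La$ into the corresponding $\si$-compatibility on $\pd^2[0,\iy)^m$. The partition-of-unity averaging, together with the fact that $\bar g_F$ and $\bar g_{F'}$ already agree on $\La(X_F\cap X_{F'})$ from the initial extension, ensures that the assembled data on $\pd[0,\iy)^m$ is coherent across all strata of $[0,\iy)^m$ that $\La$ meets, so that Proposition \ref{gc4prop3} is indeed applicable.
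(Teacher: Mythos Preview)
Your approach matches the paper's: its proof is only a two-sentence sketch (localize via partitions of unity to $X_P\times\R^n$ near $(\de_0,0)$, embed in $[0,\iy)^N\times\R^n$, and invoke Proposition~\ref{gc4prop3} there), and you supply considerably more detail along the same lines.

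One point you gloss over is worth making explicit. You phrase the main obstacle as ``a codim-$2$ intersection $X_F\cap X_{F'}$ in $X_Q$ may embed into a face of $[0,\iy)^m$ of codimension strictly greater than~$2$.'' But there is a prior g-corners phenomenon: two distinct codim-$1$ faces $F,F'$ of $Q$ can themselves meet in a face of codimension strictly greater than~$2$ in~$Q$ --- as with the opposite faces in Example~\ref{gc3ex8}, which intersect only at the vertex. In that case $(x,F,F')$ does \emph{not} lie in $\pd^2X_Q$ by Proposition~\ref{gc3prop5}(a), so the $\si$-compatibility hypothesis gives no direct relation between $g_F$ and $g_{F'}$ on $X_{F\cap F'}$. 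The required agreement follows only by transitivity: the codim-$1$ faces of $Q$ containing $G=F\cap F'$ are the facets of the link of $G$ in the rational polyhedral cone $C_Q$, and since any two facets of a polytope are joined by a chain of facets meeting pairwise in codimension~$1$, one obtains a chain $F=F_0,F_1,\ldots,F_r=F'$ of codim-$1$ faces of $Q$, each containing $G$, with $F_i\cap F_{i+1}$ of codimension~$2$. The $\si$-condition then gives $g_{F_i}\vert_{X_{F_i\cap F_{i+1}}}=g_{F_{i+1}}\vert_{X_{F_i\cap F_{i+1}}}$, and restricting to $X_G\subseteq X_{F_i\cap F_{i+1}}$ yields $g_F\vert_{X_G}=g_{F'}\vert_{X_G}$.

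Once this compatibility is established for \emph{all} pairs $F,F'$, the assembly becomes straightforward: the $g_F$ glue to a well-defined function on the closed set $\bigcup_F\La(X_F)\subset[0,\iy)^m$, which near each point extends smoothly to $[0,\iy)^m$ (by Proposition~\ref{gc3prop3}(b) applied to the relevant face), and these local extensions patch via an ordinary partition of unity on $[0,\iy)^m$ to a global smooth $\bar f$; restricting $\bar f$ to $X_Q'$ gives the desired~$f$. This is cleaner than routing through $\si$-compatible data on all of $\pd[0,\iy)^m$, which is more than you need.
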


Again, since partitions of unity exist for manifolds with g-corners as in \S\ref{gc452}, it is enough to prove Proposition \ref{gc4prop4} near $(\de_0,0)$ in $X=X_P\t\R^n$ for $P$ a toric monoid, and we can do this by embedding $X_P\t\R^n$ in $[0,\iy)^N\t\R^n$ and using Proposition \ref{gc4prop3} for~$[0,\iy)^N\t\R^n$.

Results like Proposition \ref{gc4prop3} are important in constructing virtual chains for Kuranishi spaces with corners with prescribed values on the boundary, as in Fukaya et al.\ \cite{FOOO}, and Proposition \ref{gc4prop4} will be useful for applications of manifolds with g-corners and Kuranishi spaces with g-corners that the author plans in symplectic geometry.

A different generalization of manifolds with corners would be to consider spaces $X$ locally modelled on polyhedra in $\R^n$, with the obvious notion of smooth map. For such spaces, the analogue of Proposition \ref{gc4prop3} is false. For example, suppose $X$ near $x$ is modelled on the corner of an octahedron in $\R^3$, as in Figure \ref{gc3fig1}. Consider smooth $g:\pd X\ra\R$ with $g\vert_{\pd^2X}=g\vert_{\pd^2X}\ci\si$. The possible sets of derivatives $(\pd_1g,\pd_2g,\pd_3g,\pd_4g)$ of $g$ at $x$ along the four edges at $x$ span a space $\R^4$, but for $g=f\vert_{\pd X}$ with $f:X\ra\R$ smooth the derivatives $(\pd_1g,\pd_2g,\pd_3g,\pd_4g)$ lie in an $\R^3\cong T_x^*X$ in $\R^4$, so there are many smooth $g:\pd X\ra\R$ with $g\vert_{\pd^2X}=g\vert_{\pd^2X}\ci\si$ for which there exists no smooth $f:X\ra\R$ with~$g=f\vert_{\pd X}$.

\section{Proofs of theorems in \S\ref{gc4}}
\label{gc5}

Finally we prove Theorems \ref{gc4thm2}, \ref{gc4thm3}, \ref{gc4thm5}, \ref{gc4thm6}, and~\ref{gc4thm7}.

\subsection{Proof of Theorem \ref{gc4thm2}}
\label{gc51}

Let $Q,R,m,n,i$ be as in Theorem \ref{gc4thm2}. Using \S\ref{gc3} we can show there are canonical isomorphisms $\ti M_{(\de_0,0)}(X_Q\t\R^m)\cong Q^\vee$ and $\ti M_{(\de_0,0)}(X_R\t\R^n)\cong R^\vee$. So \eq{gc4eq2} is identified with a monoid morphism $Q^\vee\ra R^\vee$, which must be of the form $\al^\vee$ for unique $\al:R\ra Q$, as in (iii), since $Q\cong(Q^\vee)^\vee$, $R\cong(R^\vee)^\vee$ for the toric monoids~$Q,R$.

By Definition \ref{gc4def5}, $i$ being an immersion imposes strong conditions on the monoid morphism \eq{gc4eq2}, and hence on $\al^\vee:Q^\vee\ra R^\vee$ and $\al:R\ra Q$. So $\al^\vee$ is injective, which implies that $\rank Q\le \rank R$ as in (i). The dual morphism $\al:R\ra Q$ need not be surjective (e.g.\ in Example \ref{gc4ex2}(ii), $\al:\N^2\ra\N$ maps $\al:(a,b)\mapsto 2a+3b$, so $\al(\N^2)=\N\sm\{1\}$), but $\al$ is close to being surjective -- for example, $\al^\gp:R^\gp\ra P^\gp$ is surjective, and the map $C_\al:C_R\ra C_Q$ of the rational polyhedral cones $C_Q,C_R$ associated to $Q,R$ in \S\ref{gc314} is surjective. The surjectivity property we need, which can be proved from Definition \ref{gc4def5}, is that if $q\in Q$ then there exist $r\in R$ and $a=1,2,\ldots$ such that $\al(r)=a\cdot q$, that is, $\al$ is surjective up to positive integer multiples in~$Q$. 

Choose a set of generators $q_1,\ldots,q_M$ for $Q$. Then we can choose $r_1,\ldots,r_M\in R$ and $a_1,\ldots,a_M=1,2,\ldots$ with $\al(r_j)=a_j\cdot q_j$ for $j=1,\ldots,M$. Extend $r_1,\ldots,r_M$ to a set of generators $r_1,\ldots,r_N$ for $R$, for $N\ge M$. Then as in Proposition \ref{gc3prop3}(a), $\la_{q_1}\t\cdots\t\la_{q_M}:X_Q\ra[0,\iy)^M$ and $\la_{r_1}\t\cdots\t\la_{r_N}:X_R\ra[0,\iy)^N$ are homeomorphisms from $X_Q,X_R$ to closed subsets $X_Q'\subseteq [0,\iy)^M$, $X_R'\subseteq [0,\iy)^N$ defined in \eq{gc3eq4} using generating sets of relations for $q_1,\ldots,q_M$ in $Q$ and $r_1,\ldots,r_N$ in $R$. Hence $(\la_{q_1}\t\cdots\t\la_{q_M})\t\id_{\R^m}$ identifies $X_Q\t\R^m$ with $X_Q'\t\R^m\subseteq[0,\iy)^M\t\R^m$. Let $V'\subseteq X_Q'\t\R^m$ be the image of $V$. Similarly $(\la_{r_1}\t\cdots\t\la_{r_N})\t\id_{\R^n}$ identifies $X_R\t\R^n$ with~$X_R'\t\R^n\subseteq[0,\iy)^N\t\R^n$.

Then Proposition \ref{gc3prop3}(c) applied to $i:V\ra X_R\t\R^n$ shows that there exists an open neighbourhood $Y$ of $V'$ in $[0,\iy)^M\t\R^m$, and an interior map $h:Y\ra [0,\iy)^N\t\R^n$ of manifolds with (ordinary) corners, such that 
\e
[(\la_{r_1}\!\t\!\cdots\!\t\!\la_{r_N})\!\t\!\id_{\R^n}]\ci i\!=\!h\ci [(\la_{q_1}\!\t\!\cdots\!\t\!\la_{q_M})\!\t\!\id_{\R^m}]:U\ra [0,\iy)^N\t\R^n.
\label{gc5eq1}
\e
We have simplified things here, since Proposition \ref{gc3prop3}(c) does not allow for the factors $\R^m,\R^n$, but these can be included using embeddings $\R^m\ra[0,\iy)^{m+1}$, $\R^n\ra[0,\iy)^{n+1}$ coming from minimal sets of monoid generators of~$\Z^m,\Z^n$.

Write $(w_1,\ldots,w_M,x_1,\ldots,x_m)$ for the coordinates on $Y\subseteq[0,\iy)^M\t\R^m$ and $(y_1,\ldots,y_N,z_1,\ldots,z_n)$ for the coordinates on $[0,\iy)^N\t\R^n$, and write $h=(H_1,\ldots,H_N,h_1,\ldots,h_n)$ for $H_j=H_j(w_1,\ldots,x_m)$, $h_j=h_j(w_1,\ldots,x_m)$. Then near 0 in $Y$ we have $H_j=C_j(w_1,\ldots,x_m)\cdot \prod_{i=1}^Mw_i^{b_{i,j}}$ for $b_{i,j}\in\N$ and $C_j:Y\ra(0,\iy)$ smooth. Since the coordinates $w_1,\ldots,w_M$ correspond to the generators $q_1,\ldots,q_M\in Q$, and the coordinates $y_1,\ldots,y_N$ to $r_1,\ldots,r_N\in\R$, and $\al(r_j)=a_j\cdot q_j$ for $j=1,\ldots,M$, we see that we can choose $h$ such that
\e
H_j(w_1,\ldots,w_M,x_1,\ldots,x_m)=C_j(w_1,\ldots,x_m)\cdot w_j^{a_j},\quad j=1,\ldots,M.
\label{gc5eq2}
\e

We can now show that
\begin{align*}
{}^b\d i\vert_{(\de_0,0)}=\begin{pmatrix} {}\ci\al & \bigl(\sum_{c=1}^N\frac{\pd C_c}{\pd x_b}(0)\cdot \al(r_c)\bigr)_{b=1}^n \\ 0 & \bigl(\frac{\pd h_c}{\pd x_b}(0)\bigr)_{b=1,\ldots,m}^{c=1,\ldots,n} \end{pmatrix}&: \\
\Hom(Q,\R)\op\R^m \longra \Hom(R,\R)&\op\R^n.
\end{align*}
As $i$ is an immersion, Definition \ref{gc4def5}(i) implies that $\bigl(\frac{\pd h_c}{\pd x_b}(0)\bigr)_{b=1,\ldots,m}^{c=1,\ldots,n}$ is injective. Hence $m\le n$, completing part (i) of Theorem \ref{gc4thm2}. By applying a linear transformation to the coordinates $(z_1,\ldots,z_n)$ on $\R^n$, we can suppose that
\e
\frac{\pd h_c}{\pd x_b}(0,\ldots,0)=\begin{cases} 1, & b=c=1,\ldots,m, \\ 0, & \text{otherwise.}\end{cases}
\label{gc5eq3}
\e

Define a continuous, non-smooth map $\Pi:[0,\iy)^N\t\R^n\ra [0,\iy)^M\t\R^m$ by
\e
\Pi:(y_1,\ldots,y_N,z_1,\ldots,z_n)\longmapsto (y_1^{1/a_1},\ldots,y_M^{1/a_M},z_1,\ldots,z_m).
\label{gc5eq4}
\e
By \eq{gc5eq2}, the composition $\Pi\ci h:V\ra [0,\iy)^M\t\R^m$ is given by
\e
\begin{split}
\Pi&\ci h(w_1,\ldots,w_M,x_1,\ldots,x_m)=\bigl(C_1(w_1,\ldots,x_m)^{1/a_1}\cdot w_1,\ldots,\\
&C_M(w_1,\ldots,x_m)^{1/a_M}\cdot w_m,h_1(w_1,\ldots,x_m),\ldots,h_m(w_1,\ldots,w_m)\bigr),
\end{split}
\label{gc5eq5}
\e
which is smooth (as $C_c>0$), although $\Pi$ is not. By \eq{gc5eq3} and \eq{gc5eq5}, the derivative of $\Pi\ci h$ at $(0,\ldots,0)\in V$ is the $(M+m)\t(M+m)$ matrix
\begin{equation*}
\d(\Pi\ci h)(0)=\begin{pmatrix} \mathop{\rm diag}(C_1(0)^{1/a_1},\ldots,C_M(0)^{1/a_M}) & 0 \\ \bigl(\frac{\pd h_c}{\pd w_b}(0)\bigr)_{b=1,\ldots,M}^{c=1,\ldots,m} & \id_{m\t m}\end{pmatrix},
\end{equation*}
which is invertible. Also \eq{gc5eq5} implies that $\Pi\ci h$ is simple near $0$. Therefore Proposition \ref{gc2prop3} shows $\Pi\ci h$ is \'etale near 0. So there exists an open $0\in\ti Y\subseteq Y$ such that $\Pi\ci h\vert_{\ti Y}$ is a diffeomorphism from $\ti Y$ to its image. 

Set $\ti V=[(\la_{q_1}\t\cdots\t\la_{q_M})\t\id_{\R^m}]^{-1}(\ti Y)$. Then by \eq{gc5eq1} we have
\begin{align*}
(\Pi\ci &h)\vert_{\ti Y}^{-1}\ci
\Pi\ci[(\la_{r_1}\!\t\!\cdots\!\t\!\la_{r_N})\!\t\!\id_{\R^n}]\ci i\vert_{\ti V}\\
&=(\Pi\ci h)\vert_{\ti Y}^{-1}\ci(\Pi\ci h)\vert_{\ti Y}\ci [(\la_{q_1}\!\t\!\cdots\!\t\!\la_{q_M})\!\t\!\id_{\R^m}]\vert_{\ti V}\\
&=[(\la_{q_1}\!\t\!\cdots\!\t\!\la_{q_M})\!\t\!\id_{\R^m}]\vert_{\ti V}.\end{align*}
Since $(\la_{q_1}\t\cdots\t\la_{q_M})\t\id_{\R^m}$ is a homeomorphism with its image, $i\vert_{\ti V}$ is a homeomorphism with its image, proving part (ii) of Theorem~\ref{gc4thm2}.

We have already proved the first part of (iii). For the second part, consider
\e
\begin{split}
S=\bigl\{&(u_1,\ldots,u_N)\in (-\iy,0)^N:\text{there exist sequences $(\bs y_a,\bs z_a)_{a=1}^\iy$}\\
&\text{in $i(\ti V)\cap (X_R^\ci\t\R^n)$ and $(\mu_a)_{a=0}^\iy$ in $(0,\iy)$ such that}\\
&\text{as $a\ra\iy$ we have $(\bs y_a,\bs z_a)\ra (\de_0,0)$ in $X_R\t\R^n$,}\\
&\text{$\mu_a\ra 0$ in $\R$, and $\mu_a\cdot\log[\la_{r_j}(\bs y_a)]\ra u_j$ in $\R$ for $j=1,\ldots,N$}\bigr\}.
\end{split}
\label{gc5eq6}
\e
If $(\bs y_a,\bs z_a)\in i(\ti V)\cap (X_R^\ci\t\R^n)$ is close to $(\de_0,0)$ in $X_R\t\R^n$, then $(\bs y_a,\bs z_a)=i(\bs w_a,\bs x_a)$ for $(\bs w_a,\bs x_a)\in \ti V\subseteq X_Q^\ci\t\R^n$, and $(\bs w_a,\bs x_a)$ is close to $(\de_0,0)$ in $X_Q\t\R^m$ as $i\vert_{\ti V}$ is a homeomorphism with its image.

The definition of smooth maps in \S\ref{gc32} now gives $\la_{r_j}(\bs y_a)=D_j(\bs w_a,\bs z_a)\cdot\la_{\al(r_j)}(\bs w_a)$, for some smooth $D_j:\ti V\ra(0,\iy)$. Hence
\e
\mu_a\cdot \log[\la_{r_j}(\bs y_a)]=\mu_a\cdot\log[\la_{\al(r_j)}(\bs w_a)]+
\mu_a\cdot \log D_j(\bs w_a,\bs z_a).
\label{gc5eq7}
\e
As $a\ra\iy$ we have $\log D_j(\bs w_a,\bs z_a)\ra \log D_j(\de_0,0)$, and $\mu_a\ra 0$, so the final term in \eq{gc5eq7} tends to zero. Thus we may rewrite \eq{gc5eq6} as
\begin{align*}
S=\bigl\{&(u_1,\ldots,u_N)\in (-\iy,0)^N:\text{there exist sequences $(\bs w_a)_{a=1}^\iy$ in $X_Q^\ci$}\\
&\text{and $(\mu_a)_{a=0}^\iy$ in $(0,\iy)$ such that as $a\ra\iy$ we have $\bs w_a\ra \de_0$ in $X_Q$,}\\
&\text{$\mu_a\ra 0$ in $\R$, and $\mu_a\cdot\log[\la_{\al(r_j)}(\bs w_a)]\ra u_j$ in $\R$ for $j=1,\ldots,N$}\bigr\}.
\end{align*}
It is now easy to see that $S$ is the intersection of $(-\iy,0)^N$ with the image of the composition of linear maps
\e
\xymatrix@C=70pt{ 
\Hom(Q,\R) \ar[r]^{\ci \al} & \Hom(R,\R) \ar[r]^{(r_1,\ldots,r_N)} & \R^N. }
\label{gc5eq8}
\e
Thus the subset $i(\ti V)\subseteq X_R\t\R^n$ near $(\de_0,0)$ determines $S$, which determines the image of \eq{gc5eq8}. As $r_1,\ldots,r_N$ generate $R$, the second map in \eq{gc5eq8} is injective, so $i(\ti V)$ near $(\de_0,0)$ determines the image of~$\ci \al:\Hom(Q,\R)\ra\Hom(R,\R)$.

We have a commutative diagram
\begin{equation*}
\xymatrix@C=120pt@R=15pt{
*+[r]{Q^\vee=\Hom(Q,\N)} \ar[d]^{\rm inc} \ar[r]_{\al^\vee=\ci \al} & *+[l]{R^\vee=\Hom(R,\N)} \ar[d]_{\rm inc}\\
*+[r]{\Hom(Q,\R)} \ar[r]^{\ci \al} & *+[l]{\Hom(R,\R).\!} }
\end{equation*}
Since \eq{gc4eq2} is identified with $\al^\vee$, Definition \ref{gc4def5}(ii),(iii) say that $\al^\vee$ is injective and $R^\vee/\al^\vee(Q^\vee)$ is torsion-free. The torsion-freeness implies that $\al^\vee[Q^\vee]=R^\vee\cap (\ci\al)[\Hom(Q,\R)]$. Therefore $i(\ti V)$ near $(\de_0,0)$ determines the image $\al^\vee(Q^\vee)$ in $R^\vee$, where $\al^\vee(Q^\vee)\cong Q^\vee$. The inclusion $\al^\vee(Q^\vee)\hookra R$ is dual to $\al:R\ra Q$, up to $[\al^\vee(Q^\vee)]^\vee\cong Q$. Hence $Q,\al$ are determined uniquely, up to canonical isomorphisms of $Q$, by $i(\ti V)$ near $(\de_0,0)$. Also $i(\ti V)\cap (X_R^\ci\t\R^n)$ is a manifold of dimension $\rank Q+m$, so $m$ is determined. This completes part~(iii).

Let $P,U,l,f$ be as in (iv). Since $i\vert_{\ti V}:\ti V\ra i(\ti V)$ is a homeomorphism and $f(U)\subseteq i(\ti V)$, there is a unique continuous map $g:U\ra \ti V$ with $f=i\ci g$. We must show that $g$ is smooth near $(\de_0,0)\in U$. It is sufficient to show 
$[(\la_{q_1}\t\cdots\t\la_{q_M})\t\id_{\R^m}]\ci g:U\ra [0,\iy)^M\t\R^m$ is smooth near $(\de_0,0)$. But
\e
\begin{split}
[(\la_{q_1}&\t\cdots\t\la_{q_M})\t\id_{\R^m}]\ci g\\
&=(\Pi\ci h\vert_{\ti Y})^{-1}\ci\Pi\ci h\vert_{\ti Y}\ci[(\la_{q_1}\t\cdots\t\la_{q_M})\t\id_{\R^m}]\ci g\\
&=(\Pi\ci h\vert_{\ti Y})^{-1}\ci\Pi\ci [(\la_{r_1}\!\t\!\cdots\!\t\!\la_{r_N})\!\t\!\id_{\R^n}]\ci i\ci g\\
&=(\Pi\ci h\vert_{\ti Y})^{-1}\ci\Pi\ci [(\la_{r_1}\!\t\!\cdots\!\t\!\la_{r_N})\!\t\!\id_{\R^n}]\ci f,
\end{split}
\label{gc5eq9}
\e
where the first step uses $[(\la_{q_1}\t\cdots\t\la_{q_M})\t\id_{\R^m}]\ci g(U)\subseteq\ti Y$ and $\Pi\ci h\vert_{\ti Y}$ has a smooth inverse, the second \eq{gc5eq1}, and the third $f=i\ci g$. In the last line of \eq{gc5eq9}, each term is smooth except $\Pi$ in \eq{gc5eq4}, which involves functions~$y_j^{1/a_j}$.

As in part (iii), we can identify $\ti M_{(\de_0,0)}f$ with $\be^\vee:P^\vee\ra R^\vee$, for some monoid morphism $\be:R\ra P$. Since $f(U)\subseteq i(\ti V)$, using the argument of the proof of (iii) we see that $(\ci\be)[\Hom(P,\R)]\subseteq(\ci\al)[\Hom(Q,\R)]\subseteq\Hom(R,\R)$, and hence that $\be^\vee(P^\vee)\subseteq\al^\vee(Q^\vee)\subseteq R^\vee$. Since $\al^\vee$ is injective, it follows that $\be^\vee:P^\vee\ra R^\vee$ factors through $\al^\vee:Q^\vee\ra R^\vee$. That is, there exists a monoid morphism $\ga^\vee:P^\vee\ra Q^\vee$ with $\be^\vee=\al^\vee\ci\ga^\vee$. Then $\ga:Q\ra P$ is a monoid morphism with $\be=\ga\ci\al$. 

Hence as $f$ is smooth, for $j=1,\ldots,M$, near $(\de_0,0)$ in $U$ we may write 
\begin{equation*}
\la_{r_j}\ci f=E_j\cdot \la_{\be(r_j)}=E_j\cdot \la_{\ga\ci\al(r_j)}=\la_{\ga(a_j\cdot q_j)}=E_j\cdot \la_{\ga(q_j)}^{a_j}:U\ra[0,\iy),
\end{equation*}
where $E_j:U\ra(0,\iy)$ is smooth, as $\al(r_j)=a_j\cdot q_j$. Thus $(\la_{r_j}\ci f)^{1/a_j}=E_j^{1/a_j}\cdot\la_{\ga(q_j)}:U\ra[0,\iy)$ near $(\de_0,0)$ in $U$, which is smooth. But by \eq{gc5eq4}, the only potentially non-smooth functions in the factor $\Pi$ in the last line of \eq{gc5eq9} are $(\la_{r_j}\ci f)^{1/a_j}$ for $j=1,\ldots,M$. So by \eq{gc5eq9}, $[(\la_{q_1}\t\cdots\t\la_{q_M})\t\id_{\R^m}]\ci g$ is smooth on an open neighbourhood $\ti U$ of $(\de_0,0)$ in $U$, and therefore $g$ is smooth on $\ti U$. This completes part~(iv).

Finally suppose $\al:R\ra Q$ is an isomorphism, and $m=n$. Then in the proof above, after choosing generators $q_1,\ldots,q_M$ for $Q$, we can take $r_j=\al^{-1}(q_j)$ for $j=1,\ldots,M$, so that $\al(r_j)=q_j$ with $a_j=1$, and then $r_1,\ldots,r_M$ are already a set of generators for $R\cong Q$, so we take $N=M$. Then $\Pi$ in \eq{gc5eq4} is the identity, and $\Pi\ci h=h$, so the proof above shows that $h$ is \'etale near 0, and we choose open $0\in \ti Y\subseteq Y$ with $0\in h(\ti Y)\subseteq [0,\iy)^M\t\R^m$ open, and $h\vert_{\smash{\ti Y}}:\ti Y\ra h(\ti Y)$ a diffeomorphism.

We have $X_Q'=X_R'\subseteq [0,\iy)^M$, and $h$ maps the closed set $\ti Y\cap (X_Q'\t\R^m)\subseteq\ti Y$ into a closed subset of $h(\ti Y)\cap (X_R'\t\R^m)\subseteq h(\ti Y)$. On the interior $(0,\iy)^M$, $h$ maps $\ti Y\cap (X_Q^{\prime\ci}\t\R^m)$ to an open subset of $h(\ti Y)\cap (X_R^{\prime\ci}\t\R^n)$, as it is a local diffeomorphism of manifolds without boundary. Hence $h[\ti Y\cap (X_Q^{\prime\ci}\t\R^m)]$ is open and closed in $h(\ti Y)\cap (X_R^{\prime\ci}\t\R^n)$. As $h(\ti Y)\cap (X_R^{\prime\ci}\t\R^n)$ is connected near $(\de_0,0)$, making $\ti Y$ smaller we can suppose $h[\ti Y\cap (X_Q^{\prime\ci}\t\R^m)]=h(\ti Y)\cap (X_R^{\prime\ci}\t\R^n)$, so taking closures gives~$h[\ti Y\cap (X_Q'\t\R^m)]=h(\ti Y)\cap (X_R'\t\R^n)$.

Thus, $h^{-1}:h(\ti Y)\ra\ti Y$ maps $h(\ti Y)\cap (X_R'\t\R^n)\ra \ti Y\cap (X_Q'\t\R^m)$. Setting $\dot V\!=\![(\la_{q_1}\!\t\!\cdots\!\t\!\la_{q_M})\!\t\!\id_{\R^m}]^{-1}(\ti Y)\!\subseteq\! V$ and $\dot W\!=\![(\la_{r_1}\!\t\!\cdots\!\t\!\la_{r_M})\!\t\!\id_{\R^n}]^{-1}(h(\ti Y))\ab\!\subseteq\! X_R\t\R^n$, we see that $i\vert_{\smash{\dot V}}:\dot V\ra \dot W$ has a smooth inverse $i\vert_{\smash{\dot V}}^{-1}$ with
\begin{equation*}
[(\la_{q_1}\!\t\!\cdots\!\t\!\la_{q_M})\!\t\!\id_{\R^m}]\ci i\vert_{\smash{\dot V}}^{-1}=\!h^{-1}\ci [(\la_{r_1}\!\t\!\cdots\!\t\!\la_{r_M})\!\t\!\id_{\R^n}]:\dot W\!\ra\! [0,\iy)^M\!\t\!\R^m,
\end{equation*}
as in \eq{gc5eq1}, so $i\vert_{\smash{\dot V}}$ is a diffeomorphism, as in (v). This completes the proof.

\subsection{Proof of Theorem \ref{gc4thm3}}
\label{gc52}

Let $Q,n,V,f_i,g_i,h_j,\be_i,\ga_i,X^\ci$ and $X\ni(\de_0,0)$ be as in Theorem \ref{gc4thm3}. From \S\ref{gc32}, on an open neighbourhood $V'$ of $(\de_0,0)$ in $V$ we can write
\e
f_i(\bs y,\bs z)\!=\!D_i(\bs y,\bs z)\cdot \la_{s_i}(\bs y),\;\>
g_i(\bs y,\bs z)\!=\!E_i(\bs y,\bs z)\cdot \la_{t_i}(\bs y),\;\>
i\!=\!1,\ldots,k,
\label{gc5eq10}
\e
where $(\bs y,\bs z)\in V'$, $\bs y\in X_Q$, $\bs z=(z_1,\ldots,z_n)\in\R^n$, and $s_i,t_i\in Q$, $D_i,E_i:V'\ra(0,\iy)$ are smooth, for $i=1,\ldots,k$. Under the isomorphism \eq{gc4eq4}, the components of ${}^b\d f_i\vert_{(\de_0,0)},{}^b\d g_i\vert_{(\de_0,0)}$ in $Q\ot_\N\R\supseteq Q$ are $s_i,t_i$, so the component $\be_i$ of ${}^b\d f_i\vert_{(\de_0,0)}-{}^b\d g_i\vert_{(\de_0,0)}$ in $Q\ot_\N\R$ is~$\be_i=s_i-t_i$.

Now $\be_1,\ldots,\be_k$ are elements of $Q\ot_\N\Z\subseteq Q\ot_\N\R$. We will first show that if $\be_1,\ldots,\be_k$ are not linearly independent over $\R$ in $Q\ot_\N\R$ then we can replace $f_i,g_i,s_i,t_i,\be_i,h_j$ by $f'_i,g'_i,s'_i,t'_i,\be'_i$ for $i=1,\ldots,k'$ and $h_j'$ for $j=1,\ldots,l'$, such that $k'<k$, $l'>l$ with $k'+l'=k+l$, and $\be'_1,\ldots,\be'_{\smash{k'}}$ are linearly independent over $\R$, and $X^{\prime\ci}$ defined in \eq{gc4eq3} using $f'_i,g'_i,h_j'$ for $i=1,\ldots,k'$, $j=1,\ldots,l'$ agrees near $(\de_0,0)$ with $X^\ci$ defined using $f_i,g_i,h_j$ for $i=1,\ldots,k$, $j=1,\ldots,l$. 

Since $k+l=k'+l'$, this substitution does not change the equation $\rank P+m=\rank Q+n-k-l$ in the theorem. Also the substitution does not change $\langle\be_1,\ldots,\be_k\rangle_\R$, and so does not change the expression for $P^\vee$ in \eq{gc4eq5}. Note that in the last part of Theorem \ref{gc4thm3} we assume that $\be_1,\ldots,\be_k$ are linearly independent over $\R$, so the substitution is unnecessary for the last part.

To do this, permute the indices $i=1,\ldots,k$ in $f_i,g_i,s_i,t_i,\be_i$ if necessary such that $\be_1,\ldots,\be_{\smash{k'}}$ are linearly independent over $\R$, where $k'=\dim_\R\langle \be_1,\ldots,\be_k\rangle_\R$, and for $i=k'+1,\ldots,k$ we have
\e
\be_i=\ts\sum_{i'=1}^{k'}C_{ii'}\be_{i'}
\label{gc5eq11}
\e
for unique $C_{ii'}\in\R$. Then define $l'=l+k-k'$, and $f'_i=f_i$, $g'_i=g_i$, $s'_i=s_i$, $t'_i=t_i$, $\be'_i=\be_i$ for $i=1,\ldots,k$, and $h'_j=h_j$ for $j=1,\ldots,l$, and define $h'_j$ for $j=l+1,\ldots,l'$ by
\e
h'_j=\log D_{j+k'-l}-\log E_{j+k'-l}-\ts\sum_{i'=1}^{k'}C_{(j+k'-l)i'}(\log D_{i'}-\log E_{i'}).
\label{gc5eq12}
\e
The point of this equation is that by \eq{gc5eq10}--\eq{gc5eq12}, on $V^\ci$ we have
\begin{align*}
&\frac{f_{j+k'-l}(\bs y,\bs z)}{g_{j+k'-l}(\bs y,\bs z)}\cdot \prod_{i'=1}^{k'}\frac{g_{i'}(\bs y,\bs z)^{C_{(j+k'-l)i'}}}{f_{i'}(\bs y,\bs z)^{C_{(j+k'-l)i'}}}\\
&=\frac{D_{j+k'-l}(\bs y,\bs z)\la_{s_{j+k'-l}}(\bs y)}{E_{j+k'-l}(\bs y,\bs z)\la_{t_{j+k'-l}}(\bs y)}\cdot \prod_{i'=1}^{k'}\frac{E_{i'}(\bs y,\bs z)^{C_{(j+k'-l)i'}}\la_{t_{i'}}(\bs y)^{C_{(j+k'-l)i'}}}{D_{i'}(\bs y,\bs z)^{C_{(j+k'-l)i'}}\la_{s_{i'}}(\bs y)^{C_{(j+k'-l)i'}}}\\
&=\frac{D_{j+k'-l}(\bs y,\bs z)}{E_{j+k'-l}(\bs y,\bs z)}\cdot \prod_{i'=1}^{k'}\frac{E_{i'}(\bs y,\bs z)^{C_{(j+k'-l)i'}}}{D_{i'}(\bs y,\bs z)^{C_{(j+k'-l)i'}}}=\exp\bigl(h_j'(\bs y,\bs z)\bigr).
\end{align*}
Thus, if we assume $f'_i=g'_i$ for $i=1,\ldots,k'$, which gives $f_i=g_i$ for $i=1,\ldots,k'$, then $f_{j+k'-l}=g_{j+k'-l}$ is equivalent to $\exp(h_j')=1$ is equivalent to $h_j'=0$ on $V^\ci$ for~$j=l+1,\ldots,l'$. 

That is, replacing $f_{j+k'-l}=g_{j+k'-l}$ by $h_j'=0$ for $j=l+1,\ldots,l'$ does not change $X^\ci$ in \eq{gc4eq3}, at least in $V'$ where \eq{gc5eq10} holds. The $(k+l)$-tuples ${}^b\d f_1\vert_{(\de_0,0)}-{}^b\d g_1\vert_{(\de_0,0)},\ab\ldots,{}^b\d f_k\vert_{(\de_0,0)}-{}^b\d g_k\vert_{(\de_0,0)},\d h_1\vert_{(\de_0,0)},\ldots,\d h_l\vert_{(\de_0,0)}$ and ${}^b\d f'_1\vert_{(\de_0,0)}-{}^b\d g'_1\vert_{(\de_0,0)},\ldots,\ab {}^b\d f'_{\smash{k'}}\vert_{(\de_0,0)}-{}^b\d g'_{\smash{k'}}\vert_{(\de_0,0)},\d h'_1\vert_{(\de_0,0)},\ldots,\d h'_{\smash{l'}}\vert_{(\de_0,0)}$ in ${}^bT_{(\de_0,0)}^*V$ differ by an invertible $(k+l)\t(k+l)$ matrix, so ${}^b\d f'_1\vert_{(\de_0,0)}-{}^b\d g'_1\vert_{(\de_0,0)},\ab\ldots,\ab{}^b\d f'_{\smash{k'}}\vert_{(\de_0,0)}-{}^b\d g'_{\smash{k'}}\vert_{(\de_0,0)},\d h'_1\vert_{(\de_0,0)},\ldots,\d h'_{\smash{l'}}\vert_{(\de_0,0)}$ are linearly independent. 

Note that $f_{j+k'-l}(\de_0,0)=g_{j+k'-l}(\de_0,0)$ does not imply that $h_j'(\de_0,0)=0$. Instead, we can deduce $h_j'(\de_0,0)=0$ from the assumption that $(\de_0,0)\in X$, since $h_j'$ is continuous and $(\de_0,0)$ is the limit of points $v\in X^\ci$ in \eq{gc4eq3} with~$h_j'(v)=0$.

We will suppose for the next part of the proof that $f_i,g_i,s_i,t_i,D_i,E_i$ for $i=1,\ldots,k$ and $h_j$ for $j=1,\ldots,l$ are as above, and $\be_1,\ldots,\be_k$ are linearly independent over $\R$ in $Q\ot_\N\R$. Now $\d h_1\vert_{(\de_0,0)},\ldots,\d h_l\vert_{(\de_0,0)}$ are linearly independent in ${}^bT_{(\de_0,0)}^*V={}^bT_{\de_0}^*X_Q\op T_0^*\R^n$, and the components in ${}^bT_{\de_0}^*X_Q$ are zero, so the components in $T_0^*\R^n$ are linearly independent. Hence $l\le n$, and by a linear change of variables $(z_1,\ldots,z_n)$ in $\R^n$ we can suppose that
\e
\frac{\pd h_j}{\pd z_p}(\de_0,0)=\begin{cases} 1, & j=p=1,\ldots, l, \\
0, & j=1,\ldots,l,\;\> p=1,\ldots,n,\;\> j\ne p.
\end{cases}
\label{gc5eq13}
\e

Choose a set of generators $q_1,\ldots,q_N$ for $Q$. Writing $r=\rank Q$, as in \eq{gc3eq3} choose relations for $q_1,\ldots,q_N$ in $Q$ of the form
\e
a^1_iq_1+\cdots+a^N_iq_N=b^1_iq_1+\cdots+b^N_iq_N\quad\text{for $i=1,\ldots,N-r,$}
\label{gc5eq14}
\e
where $a^j_i,b^j_i\in\N$ for $1\le i\le N-r$, $1\le j\le N$, such that the relations \eq{gc5eq14} form a basis over $\R$ for $\Ker\bigl((\N^N)^\vee\ra Q^\vee\bigr)\ot_\N\R$. Then following the proof of Proposition \ref{gc3prop3}(a), we can show that $\la_{q_1}\t\cdots\t\la_{q_N}:X_Q^\ci\ra(0,\iy)^N$ is a homeomorphism from $X_Q^\ci$ to
\e
X_Q^{\prime\ci}\!=\!\bigl\{(x_1,\ldots,x_N)\!\in\!(0,\iy)^N\!:\!x_1^{a^1_i}\cdots x_N^{a^N_i}\!\!=\!x_1^{b^1_i}\cdots x_N^{b^N_i},\; i\!=\!1,\ldots,N\!-\!r\bigr\}.\!
\label{gc5eq15}
\e
Here we restrict to interiors $X_Q^\ci,X_Q^{\prime\ci},(0,\iy)^N$ as we don't assume that the relations \eq{gc5eq14} define $Q$ as a quotient monoid of $\N^N$, but only the weaker condition that they span $\Ker\bigl((\N^N)^\vee\ra Q^\vee\bigr)\ot_\N\R$ over $\R$.

By Proposition \ref{gc3prop3}(b) (slightly generalized as in the proof of Theorem \ref{gc4thm2} in \S\ref{gc51}), there exists an open neighbourhood $W$ of $[(\la_{q_1}\t\cdots\t\la_{q_N})\t\id_{\R^n}](V)$ in $[0,\iy)^N\t\R^n$ such that the interior functions $f_i,g_i:V\ra[0,\iy)$ and $h_j:V\ra\R$ are compositions of $(\la_{q_1}\t\cdots\t\la_{q_N})\t\id_{\R^n}:V\ra [0,\iy)^N\t\R^n$ with interior functions $\ti f_i,\ti g_i:W\ra[0,\iy)$ and $\ti h_j:W\ra\R$, for $i=1,\ldots,N$ and $j=1,\ldots,n$. As in \eq{gc5eq10}, on an open neighbourhood $W'$ of $(0,\ldots,0)$ in $W$ with 
\e
[(\la_{q_1}\t\cdots\t\la_{q_N})\t\id_{\R^n}](V')=W'\cap (X_Q'\t\R^n),
\label{gc5eq16}
\e
we can write
\e
\begin{split}
\ti f_i(\bs x,\bs z)=\ti D_i(\bs x,\bs z)\cdot x_1^{s_i^1}\cdots x_N^{s_i^N},\;\>
\ti g_i(\bs x,\bs z)=\ti E_i(\bs x,\bs z)\cdot x_1^{t_i^1}\cdots x_N^{t_i^N},
\end{split}
\label{gc5eq17}
\e
for $i=1,\ldots,k$, where $\bs x=(x_1,\ldots,x_N)\in[0,\iy)^N$ and $\bs z=(z_1,\ldots,z_n)\in\R^n$ with $(\bs x,\bs z)\in W'\subseteq W\subseteq [0,\iy)^N\t\R^n$, and $\ti D_i,\ti E_i:W\ra(0,\iy)$ are smooth, and $s_i^j,t_i^j\in\N$ with $s_i^1q_1+\cdots+s_i^Nq_N=s_i$, $t_i^1q_1+\cdots+t_i^Nq_N=t_i$ in $Q$. From equation \eq{gc5eq13} it follows that
\e
\frac{\pd\ti h_j}{\pd z_p}(\bs 0,\bs 0)=\begin{cases} 1, & j=p=1,\ldots, l, \\
0, & j=1,\ldots,l,\;\> p=1,\ldots,n,\;\> j\ne p.
\end{cases}
\label{gc5eq18}
\e

Consider the $(N-r+k)\t N$ matrix
\e
\begin{pmatrix}
a^1_1-b^1_1 & a^2_1-b^2_1 & \cdots & a^N_1-b^N_1 \\
a^1_2-b^1_2 & a^2_2-b^2_2 & \cdots & a^N_2-b^N_2 \\
\vdots & \vdots && \vdots \\
a^1_{N-r}-b^1_{N-r} & a^2_{N-r}-b^2_{N-r} & \cdots & a^N_{N-r}-b^N_{N-r} \\[4pt]
s^1_1-t^1_1 & s^2_1-t^2_1 & \cdots & s^N_1-t^N_1 \\
s^1_2-t^1_2 & s^2_2-t^2_2 & \cdots & s^N_2-t^N_2 \\
\vdots & \vdots && \vdots \\
s^1_k-t^1_k & s^2_k-t^2_k & \cdots & s^N_k-t^N_k 
\end{pmatrix}.
\label{gc5eq19}
\e
By definition of the $a^j_i,b^j_i$, the first $N-r$ rows are linearly independent over $\R$. But the last $k$ rows are lifts of $s_1-t_1,\ldots,s_k-t_k$, which are linearly independent over $\R$ in $Q\ot_\N\R$, and $Q\ot_\N\R$ is the quotient of $\R^N$ by the span of the first $N-r$ rows. It follows that all $N-r+k$ rows of \eq{gc5eq19} are linearly independent over $\R$, and the matrix \eq{gc5eq19} has rank~$N-r+k\le N$.

By elementary linear algebra, $N-r+k$ of the columns of \eq{gc5eq19} are linearly independent over $\R$. By permuting $q_1,\ldots,q_N$ we can suppose the first $N-r+k$ columns are linearly independent, so that the first $N-r+k$ columns form an invertible $(N-r+k)\t(N-r+k)$ matrix. Write the inverse matrix as
\begin{equation*}
\begin{pmatrix}
c_1^1 & c_1^2 & \cdots & c_1^{N-r} & d_1^1 & d_1^2 & \cdots & d_1^k \\
c_2^1 & c_2^2 & \cdots & c_2^{N-r} & d_2^1 & d_2^2 & \cdots & d_2^k \\
\vdots & \vdots && \vdots & \vdots & \vdots && \vdots \\
c_{N-r+k}^1 & c_{N-r+k}^2 & \cdots & c_{N-r+k}^{N-r} & d_{N-r+k}^1 & d_{N-r+k}^2 & \cdots & d_{N-r+k}^k \end{pmatrix}.
\end{equation*}
Part of the condition of being inverse matrices is
\ea
\sum_{p=1}^{N-r+k}d^j_p(a_i^p-b_i^p)&=0,\;\> i=1,\ldots,N-r, \;\> j=1,\ldots,k,
\label{gc5eq20}\\
\sum_{p=1}^{N-r+k}d^j_p(s_i^p-t_i^p)&=\begin{cases} 1, & i=j=1,\ldots,k, \\
0, & i,j=1,\ldots,k,\;\> i\ne j.
\end{cases}
\label{gc5eq21}
\ea

Define interior functions $\hat x_1,\ldots,\hat x_N:W'\ra[0,\iy)$ and smooth functions $\hat z_1,\ldots,\hat z_n:W'\ra\R$ by
\ea
\hat x_p(\bs x,\bs z)&=\begin{cases} x_p\cdot \prod_{i=1}^k\frac{\ti D_i(\bs x,\bs z)^{d^i_p}}{\ti E_i(\bs x,\bs z)^{d^i_p}}, & p=1,\ldots,N-r+k,\\
x_p, & p=N-r+k+1,\ldots,x_N,
\end{cases}
\label{gc5eq22}\\
\hat z_j(\bs x,\bs z)&=\begin{cases} \ti h_j(\bs x,\bs z), & j=1,\ldots,l, \\
z_j, & j=l+1,\ldots,n.
\end{cases}
\label{gc5eq23}
\ea
Then \eq{gc5eq17} and \eq{gc5eq20}--\eq{gc5eq23} imply that for all $(\bs x,\bs z)\in W'$ we have
\ea
x_1^{a^1_i}\cdots x_N^{a^N_i}\!\!&=\!x_1^{b^1_i}\cdots x_N^{b^N_i}
&&\!\!\!\!\!\!\!\Longleftrightarrow\!\!\!\! &
\hat x_1^{a^1_i}\cdots \hat x_N^{a^N_i}&=\hat x_1^{b^1_i}\cdots \hat x_N^{b^N_i}, && \!\!\!i\!=\!1,\ldots,N\!-\!r,
\label{gc5eq24}\\
\ti f_i(\bs x,\bs z)\!&=\!\ti g_i(\bs x,\bs z)
&&\!\!\!\!\!\!\!\Longleftrightarrow\!\!\!\! &
\hat x_1^{s_i^1}\cdots\hat x_N^{s_i^N}&=\hat x_1^{t_i^1}\cdots\hat x_N^{t_i^N}, && \!\!\!i\!=\!1,\ldots,k,
\label{gc5eq25}\\
\ti h_j(\bs x,\bs z)\!&=\!0
&&\!\!\!\!\!\!\!\Longleftrightarrow\!\!\!\! &
\hat z_j&=0, && \!\!\!j\!=\!1,\ldots,l.
\label{gc5eq26}
\ea

Define a smooth function $\Psi:W'\ra[0,\iy)^N\t\R^n$ by
\e
\Psi(\bs x,\bs z)=\bigl(\hat x_1(\bs x,\bs z),\ldots,\hat x_N(\bs x,\bs z),\hat z_1(\bs x,\bs z),\ldots,\hat z_n(\bs x,\bs z)\bigr).
\label{gc5eq27}
\e
Then \eq{gc5eq18} and \eq{gc5eq22}--\eq{gc5eq23} imply that $\Psi$ is simple with $\Psi(0)=0$, and ${}^b\d\Psi\vert_0=\id:\R^{N+n}\ra\R^{N+n}$. Thus Proposition \ref{gc2prop3} says $\Psi$ is \'etale near $0$ in $W'$. So by making $V',W'$ smaller, we can suppose that $W'':=\Im\Psi$ is an open neighbourhood of $0$ in $[0,\iy)^N\t\R^n$, and $\Psi:W'\ra W''$ is a diffeomorphism. Equations \eq{gc4eq3}, \eq{gc5eq15}--\eq{gc5eq16} and \eq{gc5eq24}--\eq{gc5eq27} now imply that
\ea
\Psi\ci[(\la_{q_1}&\!\t\!\cdots\!\t\!\la_{q_N})\!\t\!\id_{\R^n}](X^\ci\cap V')\!=\!\bigl\{(x_1,\ldots,x_N,z_1,\ldots,z_n)\!\in \!W^{\prime\prime\ci}:
\nonumber\\
&x_1^{a^1_i}\cdots x_N^{a^N_i}=x_1^{b^1_i}\cdots x_N^{b^N_i},\;\> i=1,\ldots,N-r,
\label{gc5eq28}\\
&x_1^{s_i^1}\cdots x_N^{s_i^N}=x_1^{t_i^1}\cdots x_N^{t_i^N}, \;\> i=1,\ldots,k,\;\> z_j=0, \;\> j=1,\ldots,l\bigr\}.
\nonumber
\ea

As in equation \eq{gc4eq5}, define
\e
P^\vee=\bigl\{\rho\in Q^\vee:\rho(\be_i)=0,\;\> i=1,\ldots,k\bigr\}.
\label{gc5eq29}
\e
Then $P^\vee$ is a toric monoid, a submonoid of $Q^\vee$. Equivalently, we have
\e
\begin{split}
P^\vee\cong\bigl\{(c_1,\ldots,c_N)\in\N^N:\,&\ts\sum_{j=1}^N(a^j_i-b^j_i)c_j=0,\;\> i=1,\ldots,N-r,\\
&\ts\sum_{j=1}^N(s^j_i-t^j_i)c_j=0,\;\> i=1,\ldots,k\bigr\}.
\end{split}
\label{gc5eq30}
\e
Write $\al^\vee:P^\vee\ra Q^\vee$ for the inclusion morphism. Taking duals gives a toric monoid $P$ with a monoid morphism~$\al:Q\ra P$. 

We expect $P^\vee$ and $P$ to have rank $r-k=N-(N-r)-k$, since $P^\vee$ is defined by $k$ linearly independent equations in $Q^\vee$ of rank $r$ in \eq{gc5eq29}, or by $N-r+k$ linearly independent equations in $\N^N$ of rank $N$ in \eq{gc5eq30}. This is not immediate, as for monoids the rank could be lower than expected --- consider for instance $\bigl\{(c_1,c_2)\in\N^2:c_1+c_2=0\bigr\}=\bigl\{(0,0)\bigr\}$, defined by 1 equation in a monoid $\N^2$ of rank 2, but which has rank~$0<2-1$.

To see that $P^\vee,P$ do have the expected rank $r-k$, note that as $(\de_0,0)\in X$ by assumption, $(0,\ldots,0)$ lies in the closure of the r.h.s.\ of \eq{gc5eq28}, so we can find solutions $(x_1,\ldots,x_N,0,\ldots,0)$ to the equations of \eq{gc5eq28} with $x_1,\ldots,x_N>0$ arbitrarily small. Setting $c_j=-\log x_j$, we see $(\de_0,0)\in X$ implies that there exist solutions $(c_1,\ldots,c_N)$ to the equations in \eq{gc5eq30} with $c_1,\ldots,c_N\gg 0$ large in $\R$, and so also with $c_1,\ldots,c_N\gg 0$ large in $\N$, as $a_i^j,b_i^j,s_i^j,t_i^j\in\N$. The only way that $P^\vee$ could have smaller than the expected rank is if all solutions $(c_1,\ldots,c_N)$ in \eq{gc5eq30} lay in some boundary face of $\N^N$, but as there are solutions $(c_1,\ldots,c_N)$ with $c_j\gg 0$ for all $j$, this does not happen. So $P^\vee,P$ have rank~$r-k$.

Set $m=n-l$, so that $\rank P+m=\rank Q+n-k-l$, as in the theorem. Define $\Xi:X_P\t\R^m\ra [0,\iy)^N\t\R^n$ by
\begin{equation*}
\smash{\Xi\bigl(\bs v,(w_1,\ldots,w_m)\bigr)=\bigl(\la_{\al(q_1)}(\bs v),\ldots,\la_{\al(q_N)}(\bs v),{\buildrel{\ulcorner\,\,\,\,\,\, l \,\,\,\,\,\,\urcorner} \over
{\vphantom{i}\smash{0,\ldots,0}}},w_1,\ldots,w_m\bigr).}
\end{equation*}
It is easy to see that $\Xi$ is an embedding, and a similar proof to Proposition \ref{gc3prop3}(a) shows the image in the interior of $[0,\iy)^N\t\R^n$ is
\ea
(\Im\Xi)\cap {}&[(0,\iy)^N\t\R^n]=\bigl\{(x_1,\ldots,x_N,z_1,\ldots,z_n)\!\in (0,\iy)^N\t\R^n:
\nonumber\\
&x_1^{a^1_i}\cdots x_N^{a^N_i}=x_1^{b^1_i}\cdots x_N^{b^N_i},\;\> i=1,\ldots,N-r,
\label{gc5eq31}\\
&x_1^{s_i^1}\cdots x_N^{s_i^N}=x_1^{t_i^1}\cdots x_N^{t_i^N}, \;\> i=1,\ldots,k,\;\> z_j=0, \;\> j=1,\ldots,l\bigr\}.
\nonumber
\ea

Define $U=\Xi^{-1}(W'')$, an open neighbourhood of $(\de_0,0)$ in $X_P\t\R^m$. Then comparing \eq{gc5eq28} and \eq{gc5eq31} shows that
\begin{equation*}
\Xi(U^\ci)=\Psi\ci[(\la_{q_1}\t\cdots\t\la_{q_N})\t\id_{\R^n}](X^\ci\cap V'),
\end{equation*}
so composing with $\Psi^{-1}:W''\ra W'$ and taking closures in $U,V',W'$ shows that
\e
\Psi^{-1}\ci\Xi(U)=[(\la_{q_1}\t\cdots\t\la_{q_N})\t\id_{\R^n}](X\cap V').
\label{gc5eq32}
\e

As $[(\la_{q_1}\t\cdots\t\la_{q_N})\t\id_{\R^n}]\vert_{V'}:V'\hookra W'$ and $\Psi^{-1}\ci\Xi$ are both embeddings, Corollary \ref{gc4cor1} shows that there is a unique embedding $\phi:U\ra V'$ with
\begin{equation*}
[(\la_{q_1}\t\cdots\t\la_{q_N})\t\id_{\R^n}]\ci\phi=\Psi^{-1}\ci\Xi,
\end{equation*}
which is interior as $\Psi^{-1}\ci\Xi$ is. Then \eq{gc5eq32} gives $\phi(U)=X\cap V'$ as $(\la_{q_1}\t\cdots\t\la_{q_N})\t\id_{\R^n}$ is injective, and $\phi(\de_0,0)=(\de_0,0)$ as $\Psi^{-1}\ci\Xi(\de_0,0)=[(\la_{q_1}\t\cdots\t\la_{q_N})\t\id_{\R^n}](\de_0,0)=0$. The monoid morphism $\ti M_{(\de_0,0)}\phi:\ti M_{(\de_0,0)}U\ra\ti M_{(\de_0,0)}V$ is naturally identified with the inclusion $P^\vee\hookra Q^\vee$ from \eq{gc5eq29}. This proves the first two parts of Theorem~\ref{gc4thm3}.

At the beginning of the proof, if $\be_1,\ldots,\be_k$ were not linearly independent over $\R$ then we replaced $f_i,g_i,s_i,t_i,\be_i,h_j$ by $f'_i,g'_i,s'_i,t'_i,\be'_i$ for $i=1,\ldots,k'$ and $h_j'$ for $j=1,\ldots,l'$, with $\be'_1,\ldots,\be'_{k'}$ linearly independent over $\R$. For the last part of Theorem \ref{gc4thm3}, this replacement would cause problems, as if $(\de_0,0)\notin X$ we can have $h_j'(\de_0,0)\ne 0$ for $h_j'$ as in \eq{gc5eq12}. Therefore, as in the last part of the theorem, we now assume that $\be_1,\ldots,\be_k$ from the theorem are linearly independent over $\R$, and take $f_i,g_i,s_i,t_i,\be_i,h_j$ to be as in the theorem, without replacement. We also drop the standing assumption that~$(\de_0,0)\in X$.

The analysis above shows that $(\de_0,0)\in X$ if and only if $0$ lies in the closure of the r.h.s.\ of \eq{gc5eq28}, if and only if there are solutions $(x_1,\ldots,x_N,0,\ldots,0)$ to the equations of \eq{gc5eq28} with $x_1,\ldots,x_N>0$ arbitrarily small. Setting $c_j=-\log x_j$, we see $(\de_0,0)\in X$ if and only if there exist solutions $(c_1,\ldots,c_N)$ to the equations in \eq{gc5eq30} with $c_1,\ldots,c_N\gg 0$ large in $\R$, and so also with $c_1,\ldots,c_N\gg 0$ large in $\N$, as $a_i^j,b_i^j,s_i^j,t_i^j\in\N$. Such solutions $(c_1,\ldots,c_N)\in P^\vee$ cannot lie in any boundary face of $\N^N$, and so not in any boundary face of~$Q^\vee$.

Conversely, if $P^\vee$ in \eq{gc5eq29} does not lie in any boundary face of $Q^\vee$, then the r.h.s.\ of \eq{gc5eq30} does not lie in any boundary face of $\N^N$, and so contains solutions $(c_1,\ldots,c_N)$ with $c_j>0$ for $j=1,\ldots,N$. Then $(x_1,\ldots,x_N,z_1,\ldots,z_n) =(e^{-tc_1},\ldots,e^{-tc_N},0,\ldots,0)$ satisfies the equations of \eq{gc5eq30} for $t>0$, and taking $t\ra\iy$ shows that $(\de_0,0)\in X$.
Thus, $(\de_0,0)\in X$ is equivalent to the condition that the r.h.s.\ of \eq{gc4eq5} (i.e. equation \eq{gc5eq29}) does not lie in any proper face $F\subsetneq Q^\vee$ of the toric monoid $Q^\vee$. This completes the proof of Theorem~\ref{gc4thm3}.

\subsection{Proof of Theorem \ref{gc4thm5}}
\label{gc53}

Let $g:X\ra Z$ and $h:Y\ra Z$ be interior maps of manifolds with g-corners. Suppose $(x,\ga)\in C(X)$ and $(y,\de)\in C(Y)$ with $C(g)[(x,\ga)]=C(h)[(y,\de)]=(z,\ep)$ in $C(Z)$. Then we have a commutative diagram with exact rows \eq{gc3eq32}
\begin{equation*}
\xymatrix@C=18pt@R=16pt{ 0 \ar[r] & {\begin{subarray}{l}\ts {}^bN_{C(X)}\vert_{(x,\ga)}\op{} \\
\ts {}^bN_{C(Y)}\vert_{(y,\de)} \end{subarray}} \ar@<-4ex>[d]^{{}^bN_{C(g)}\vert_{(x,\ga)}\op {}^bN_{C(h)}\vert_{(y,\de)}}\ar[rr]_(0.55){{}^bi_T\op{}^bi_T} &&
{\begin{subarray}{l}\ts {}^bT_xX\op{} \\ \ts {}^bT_yY \end{subarray}} \ar[rr]_(0.4){{}^b\pi_T \op {}^b\pi_T} \ar[d]^{{}^bT_xg\op{}^bT_yh} && {\begin{subarray}{l} \ts {}^bT_{(x,\ga)}(C(X))\op{} \\ \ts {}^bT_{(y,\de)}(C(Y))\end{subarray}} \ar@<-4ex>[d]^{{}^bT_{(x,\ga)}C(g)\op{}^bT_{(y,\de)}C(h)} \ar[r] & 0 \\
 0 \ar[r] & {}^bN_{C(Z)}\vert_{(z,\ep)} \ar[rr]^(0.45){{}^bi_T} &&
{}^bT_zZ \ar[rr]^{{}^b\pi_T} && {}^bT_{(z,\ep)}(C(Z)) \ar[r] & 0.\!\! }
\end{equation*}
If $g,h$ are b-transverse, the central column is surjective, so the right hand column is surjective, and $C(g),C(h)$ are b-transverse, as we have to prove.

Now suppose $g,h$ are c-transverse. Then they are b-transverse, so $C(g),C(h)$ are b-transverse from above, which is the first condition for $C(g),C(h)$ to be c-transverse. We have a commutative diagram with exact rows
\begin{equation*}
\xymatrix@C=18pt@R=14pt{ 0 \ar[r] & {\begin{subarray}{l}\ts {}^bN_{C(X)}\vert_{(x,\ga)}\op{} \\
\ts {}^bN_{C(Y)}\vert_{(y,\de)} \end{subarray}} \ar@<-4ex>[d]^(0.6){{}^bN_{C(g)}\vert_{(x,\ga)}\op {}^bN_{C(h)}\vert_{(y,\de)}} \ar[rr] &&
{\begin{subarray}{l}\ts {}^b\ti N_xX\op{} \\ \ts {}^b\ti N_yY \end{subarray}} \ar[rr] \ar[d]^(0.6){{}^b\ti N_xg\op{}^b\ti N_yh} && {\begin{subarray}{l} \ts {}^b\ti N_{(x,\ga)}(C(X))\op{} \\ \ts {}^b\ti N_{(y,\de)}(C(Y))\end{subarray}} \ar@<-6ex>[d]^(0.6){{}^b\ti N_{(x,\ga)}C(g)\op{}^b\ti N_{(y,\de)}C(h)} \ar[r] & 0 \\
 0 \ar[r] & {}^bN_{C(Z)}\vert_{(z,\ep)} \ar[rr] &&
{}^b\ti N_zZ \ar[rr] && {}^b\ti N_{(z,\ep)}(C(Z)) \ar[r] & 0.\!\! }
\end{equation*}
As $g,h$ are c-transverse, the central column is surjective, so the right hand column is surjective, the second condition for $C(g),C(h)$ to be c-transverse.

We have a commutative diagram of monoids with surjective columns
\e
\begin{gathered}
\xymatrix@C=80pt@R=16pt{
*+[r]{\ti M_xX} \ar[r]_(0.55){\ti M_xg} \ar@{->>}[d] & \ti M_zZ \ar@{->>}[d] & *+[l]{\ti M_yY} \ar[l]^(0.55){\ti M_yh} \ar@{->>}[d]\\
*+[r]{\ti M_{(x,\ga)}C(X)} \ar[r]^(0.55){\ti M_{(x,\ga)}C(g)} & \ti M_{(z,\ep)}C(Z) & *+[l]{\ti M_{(y,\de)}C(Y).\!\!} \ar[l]_(0.55){\ti M_{(y,\de)}C(h)} }
\end{gathered}
\label{gc5eq33}
\e
Equation \eq{gc4eq10} for $g,h$ at $x,y$ is constructed from the top line of \eq{gc5eq33}, and \eq{gc4eq10} for $C(g),C(h)$ at $(x,\ga),(y,\de)$ from the bottom line of \eq{gc5eq33}. Thus the columns of \eq{gc5eq33} induce a morphism from \eq{gc4eq10} for $g,h$ at $x,y$ to \eq{gc4eq10} for $C(g),C(h)$ at $(x,\ga),(y,\de)$. As $g,h$ are c-transverse, \eq{gc4eq10} for $g,h$ at $x,y$ does not lie in a proper face of $\ti M_xX\t\ti M_yY$, so surjectivity of the columns of \eq{gc5eq33} implies that its image in $\ti M_{(x,\ga)}C(X)\t\ti M_{(y,\de)}C(Y)$ does not lie in a proper face of $\ti M_{(x,\ga)}C(X)\t\ti M_{(y,\de)}C(Y)$. Thus \eq{gc4eq10} for $C(g),C(h)$ at $(x,\ga),(y,\de)$ does not lie in a proper face of $\ti M_{(x,\ga)}C(X)\t\ti M_{(y,\de)}C(Y)$, the final  condition for $C(g),C(h)$ to be c-transverse. This completes the proof.

\subsection{Proof of Theorem \ref{gc4thm6}}
\label{gc54}

Suppose $X,Y,Z,g,h,W^\ci$ and $W=\ov{W^\ci}$ are as in Theorem \ref{gc4thm6}. We first prove that $W$ is an embedded submanifold of $X\t Y$, with $\dim W=\dim X+\dim Y-\dim Z$. Suppose $(x,y)\in W$. Then $g(x)=h(y)=z\in Z$, since this holds for all $(x',y')\in W^\ci$ and extends to $W=\ov{W^\ci}$ by continuity of $g,h$. Thus ${}^bT_xg\op{}^bT_yh:{}^bT_xX\op{}^bT_yY\ra{}^bT_zZ$ is surjective by b-transversality.

Let $X,Y,Z$ near $x,y,z$ be modelled on $X_Q\t\R^m,X_R\t\R^n,X_S\t\R^q$ near $(\de_0,0)$ respectively, for toric monoids $Q,R,S$ and $m,n,q\ge 0$, and write points of $X,Y,Z$ near $x,y,z$ as $(\bs u,\bs x),(\bs v,\bs y),(\bs w,\bs z)$ for $\bs u\in X_Q$, $\bs x=(x_1,\ldots,x_m)\in\R^m$, $\bs v\in X_R$, $\bs y=(y_1,\ldots,y_n)\in\R^n$, $\bs w\in X_S$, $\bs z=(z_1,\ldots,z_q)\in\R^q$. Then write $g,h$ near $x,y$ as $g(\bs u,\bs x)=\bigl(G(\bs u,\bs x),(g_1(\bs u,\bs x),\ldots,g_q(\bs u,\bs x))\bigr)=(\bs w,\bs z)$ and $h(\bs v,\bs y)=\bigl(H(\bs v,\bs y),(h_1(\bs v,\bs y),\ldots,h_q(\bs v,\bs y))\bigr)=(\bs w,\bs z)$.

Set $p=\rank S$. Choose $s_1,\ldots,s_p\in S$ which are a basis over $\R$ of $S\ot_\N\R$. Then from the definitions in \S\ref{gc32} one can show that
\e
\bigl\{(\si,\si):\si\!\in\! X_S^\ci\bigr\}\!=\!\bigl\{(\si_1,\si_2)\!\in\! X_S^\ci\!\t\! X_S^\ci:\la_{s_i}(\si_1)\!=\!\la_{s_i}(\si_2),\;i\!=\!1,\ldots,p\bigr\},
\label{gc5eq34}
\e
although the analogue with $X_S$ in place of $X_S^\ci$ need not hold, as $s_1,\ldots,s_p$ may not generate $S$ as a monoid. From \eq{gc4eq11} and \eq{gc5eq34} it follows that for open neighbourhoods $U$ of $(x,y)$ in $X\t Y$ and $V$ of $(\de_0,\de_0,0,0)$ in $X_Q\t X_R\t\R^m\t\R^n$, we have an identification
\begin{align*}
W^\ci\cap U\cong\bigl\{(\bs u,\bs v,\bs x,\bs y)\in V^\ci:\,
&\la_{s_i}\ci G(\bs u,\bs x)=\la_{s_i}\ci H(\bs v,\bs y),\;\> i=1,\ldots,p,\\
&g_j(\bs u,\bs x)-h_j(\bs v,\bs y)=0,\;\> j=1,\ldots,q\bigr\}.
\end{align*}
 
We now apply Theorem \ref{gc4thm3} with $Q\t R,m+n,p,q,\la_{s_i}\ci G(\bs u,\bs x),\la_{s_i}\ci H(\bs v,\bs y),\ab g_j(\bs u,\bs x)-h_j(\bs v,\bs y)$ in place of $Q,n,k,l,f_i,g_i,h_j$, respectively, noting that $X_Q\t X_R\cong X_{Q\t R}$. The fact that ${}^bT_xg\op{}^bT_yh:{}^bT_xX\op{}^bT_yY\ra{}^bT_zZ$ is surjective and $s_1,\ldots,s_p$ are linearly independent in $S\ot_\N\R$ implies that
\begin{align*}
&{}^b\d[\la_{s_i}\ci G(\bs u,\bs x)]\vert_{(\de_0,\de_0,0,0)}-{}^b\d[\la_{s_i}\ci H(\bs v,\bs y)]\vert_{(\de_0,\de_0,0,0)},&& i=1,\ldots,p,\\
&\d[g_j(\bs u,\bs x)-h_j(\bs v,\bs y)]\vert_{(\de_0,\de_0,0,0)}, &&j=1,\ldots,q,
\end{align*}
are linearly independent in ${}^bT^*_{(\de_0,\de_0,0,0)}(X_Q\t X_R\t\R^m\t\R^n)$. So Theorem \ref{gc4thm3} implies that in an open neighbourhood $U'$ of $(x,y)$ in $U\subseteq X\t Y$, $W=\ov{W^\ci}$ is an embedded submanifold of $U$, of dimension $\rank Q+\rank R+m+n-p-q=\dim X+\dim Y-\dim Z$. As this holds for all $(x,y)\in W$, $W$ is an embedded submanifold of $X\t Y$, with~$\dim W=\dim X+\dim Y-\dim Z$.

Write $e:W\ra X$ and $f:W\ra Y$ for the compositions of the inclusion $W\hookra X\t Y$ with the projections to $X,Y$. Then $e,f$ are smooth, and interior as $W^\ci\subseteq X^\ci\t Y^\ci$ so that $e(W^\ci)\subseteq X^\ci$, $f(W^\ci)\subseteq Y^\ci$, and $g\ci e=h\ci f$ as $g(x)=h(y)$ for all $(x,y)\in W$. We claim that \eq{gc4eq7} is a Cartesian square in $\Mangcin$. To prove this, suppose $e':W'\ra X$, $f':W'\ra Y$ are interior morphisms of manifolds with g-corners, with $g\ci e'=h\ci f'$. Consider the direct product $(e',f'):W'\ra X\t Y$. As $e',f'$ are interior with $g\ci e'=h\ci f'$ we see from \eq{gc4eq9} that $(e',f')[W^{\prime\ci}]\subseteq W^\ci\subseteq X^\ci\t Y^\ci$. So taking closures implies that~$(e',f')[W']\subseteq \ov{W^\ci}=W\subseteq X\t Y$.

As the inclusion $W\hookra X\t Y$ is an embedding, Corollary \ref{gc4cor1} implies that $b=(e',f'):W'\ra W$ is smooth, and in fact interior, and is unique with $e'=e\ci b$ and $f'=f\ci b$. This proves the universal property for \eq{gc4eq7} to be Cartesian in $\Mangcin$, so $W=X\t_{g,Z,h}Y$ is a fibre product in~$\Mangcin$. 

\subsection{Proof of Theorem \ref{gc4thm7}}
\label{gc55}

Suppose $g:X\ra Z$ and $h:Y\ra Z$ are c-transverse morphisms in $\Mangcin$. Then $g,h$ are b-transverse, so Theorem \ref{gc4thm6}, proved in \S\ref{gc54}, shows that a fibre product $W=X\t_{g,Z,h}Y$ exists in $\Mangcin$, where as an embedded submanifold of $X\t Y$ we have $W=\ov{W^\ci}$ for $W^\ci$ given by \eq{gc4eq11}, with $\dim W=\dim X+\dim Y-\dim Z$, and projections $e:W\ra X$, $f:W\ra Y$ mapping $e:(x,y)\mapsto x$, $f:(x,y)\mapsto y$.

We first show that as $g,h$ are c-transverse, $W\subseteq X\t Y$ has the simpler expression $W=\bigl\{(x,y)\in X\t Y:g(x)=h(y)\bigr\}$, as in \eq{gc4eq12}. Clearly $W\subseteq\bigl\{(x,y)\in X\t Y:g(x)=h(y)\bigr\}$, since $W^\ci\subseteq\bigl\{(x,y)\in X\t Y:g(x)=h(y)\bigr\}$ by \eq{gc4eq11}, $W=\ov{W^\ci}$, and $g,h$ are continuous. 

Suppose $x\in X$ and $y\in Y$ with $g(x)=h(y)=z\in Z$, but do not assume $(x,y)\in W$. Follow the proof of Theorem \ref{gc4thm6} in \S\ref{gc54} up to the point where we apply Theorem \ref{gc4thm3}. As $g,h$ are c-transverse, ${}^b\ti N_xg\op{}^b\ti N_yh:{}^b\ti N_xX\op{}^b\ti N_yY\ra{}^b\ti N_zZ$ is surjective. In the notation of Theorem \ref{gc4thm3} we can identify ${}^b\ti N_xg\op{}^b\ti N_yh$ with $\be_1\op\cdots\op\be_k:\Hom(Q,\R)\ra\R^k$, so ${}^b\ti N_xg\op{}^b\ti N_yh$ surjective is equivalent to $\be_1,\ldots,\be_k$ linearly independent over $\R$ in $Q\ot_\N\R$, which is a hypothesis of the last part of Theorem \ref{gc4thm3}. 

Now $W,(x,y),\eq{gc4eq10},\ti M_xX\t\ti M_yY$ above are identified with
$X,(\de_0,0),\eq{gc4eq5}$ and $Q^\vee$ in Theorem \ref{gc4thm3}, respectively. Thus the last part of Theorem \ref{gc4thm3} says that $(x,y)\in W$ if and only if the submonoid \eq{gc4eq10} is not contained in any proper face $F\subsetneq\ti M_xX\t \ti M_yY$ of $\ti M_xX\t\ti M_yY$. The latter holds by Definition \ref{gc4def9} as $g,h$ are c-transverse, so $(x,y)\in W$. Therefore $\bigl\{(x,y)\in X\t Y:g(x)=h(y)\bigr\}\subseteq W$, so $W=\bigl\{(x,y)\in X\t Y:g(x)=h(y)\bigr\}$, proving~\eq{gc4eq12}.

We can now show $W$ is also a fibre product $X\t_{g,Z,h}Y$ in $\Mangc$ using Corollary \ref{gc4cor1}, following the proof for $\Mangcin$ in \S\ref{gc54}, but without supposing $e',f'$ are interior. This proves the first part of Theorem~\ref{gc4thm7}.

For the second part, $C(g)$ and $C(h)$ are c-transverse in $\cMangc$ by Theorem \ref{gc4thm5}, so by the first part (extended to $\cMangc$ in the obvious way), setting
\begin{equation*}
\check W=\bigl\{\bigl((x,\ga),(y,\de)\bigr)\in C(X)\t C(Y):C(g)[(x,\ga)]=C(h)[(y,\de)]\bigr\},
\end{equation*}
then $\check W$ is a submanifold of mixed dimension of $C(X)\t C(Y)$, and is a fibre product $\check W=C(X)\t_{C(g),C(Z),C(h)}C(Y)$ in both $\cMangc$ and $\cMangcin$. Applying the universal property of the fibre product to \eq{gc4eq13} gives a unique map $\check b:C(W)\ra\check W$, which is just the direct product $(C(e),C(f)):C(W)\ra C(X)\t C(Y)\supseteq \check W$. We must show $\check b$ is a diffeomorphism.

From the construction of $W$ in \S\ref{gc54}, we see that the strata $S^i(W)$ consist locally of those points $(x,y)\in X\t Y$ with $x\in S^j(X)$, $y\in S^k(Y)$ and $g(x)=h(y)=z\in S^l(Z)$ for some fixed strata $S^j(X),S^k(Y),S^l(Z)$ of $X,Y,Z$. That is, locally $S^i(W)\cong S^j(X)\t_{S^l(Z)}S^k(Y)$. As this is a local transverse fibre product of manifolds without boundary, it has dimension $\dim W-i=(\dim X-j)+(\dim Y-k)-(\dim Z-l)$, which forces $i=j+k-l$. This shows that
\e
S^i(W)=\coprod_{j,k,l\ge 0:i=j+k-l} S^{j,l}(X)\t_{g\vert_{S^{j,l}(X)},S^l(Z),h\vert_{S^{k,l}(Y)}}S^{k,l}(Y),
\label{gc5eq35}
\e
where $S^{j,l}(X)=S^j(X)\cap g^{-1}(S^l(Z))$ and $S^{k,l}(Y)=S^k(Y)\cap h^{-1}(S^l(Z))$, and the fibre products in \eq{gc5eq35} are transverse fibre products of manifolds.

Since $\check W\in\cMangc$ it is a disjoint union of manifolds with g-corners of different dimensions, which range from 0 to $\dim W$. Write $\check W^i$ for the component of $\check W$ of dimension $\dim W-i$, so that $\check W=\coprod_{i=0}^{\dim W}\check W^i$. Then
\e
\check W^i=\coprod_{j,k,l\ge 0:i=j+k-l} C_j^l(X)\t_{C(g)\vert_{C_j^l(X)},C_l(Z),C(h)\vert_{C_k^l(Y)}}C_k^l(Y),
\label{gc5eq36}
\e
where $C_j^l(X)=C_j(X)\cap C(g)^{-1}(C_l(Z))$ and $C_k^l(Y)=C_k(Y)\cap C(h)^{-1}(C_l(Z))$, and the fibre products in \eq{gc5eq36} are b-transverse fibre products in $\Mangcin$. Restricting to interiors gives
\e
(\check W^i)^\ci=\coprod_{j,k,l\ge 0:i=j+k-l} C_j^l(X)^\ci\t_{C(g)\vert_{C_j^l(X)^\ci},C_l(Z)^\ci,C(h)\vert_{C_k^l(Y)^\ci}}C_k^l(Y)^\ci,
\label{gc5eq37}
\e
where the fibre products in \eq{gc5eq37} are transverse fibre products of manifolds.

Mapping $(x,\ga)\mapsto x$ gives a diffeomorphism $C_j(X)^\ci\ra S^j(X)$, which identifies $C_j^l(X)^\ci\cong S^{j,l}(X)$, and similarly $C_k(Y)^\ci\cong S^k(Y)$, $C_k^l(Y)^\ci\cong S^{k,l}(Y)$, and $C_l(Z)^\ci\cong S^l(Z)$. So comparing \eq{gc5eq35} and \eq{gc5eq37} shows we have a canonical diffeomorphism $S^i(W)\cong (\check W^i)^\ci$. But $S^i(W)\cong C_i(W)^\ci$, so $C_i(W)^\ci\cong(\check W^i)^\ci$. One can check that this diffeomorphism $C_i(W)^\ci\ra(\check W^i)^\ci$ is the restriction to $C_i(W)^\ci$ of $\check b:C(W)\ra\check W$. Therefore $\check b\vert_{C(W)^\ci}:C(W)^\ci\ra\check W^\ci$ is a diffeomorphism of the interiors~$C(W)^\ci,\check W^\ci$.

There are natural projections $\Pi_1:C(W)\ra X\t Y$ by composing $\Pi:C(W)\ra W$ with $W\hookra X\t Y$, and $\Pi_2:\check W\ra X\t Y$ by composing $\Pi\t\Pi:C(X)\t C(Y)\ra X\t Y$ with $\check W\hookra C(X)\t C(Y)$. Both $\Pi_1,\Pi_2$ are proper immersions, and $\Pi_1=\Pi_2\ci\check b$. One can prove using Corollary \ref{gc4cor1} that $\check b:C(W)\ra\check W$ smooth with $\check b\vert_{C(W)^\ci}:C(W)^\ci\ra\check W^\ci$ a diffeomorphism and $\Pi_1,\Pi_2$ proper immersions with $\Pi_1=\Pi_2\ci\check b$ together imply that $\check b$ is a diffeomorphism. Therefore \eq{gc4eq13} is Cartesian in both $\cMangc$ and $\cMangcin$, as we have to prove. For the last part, the grading-preserving property \eq{gc4eq14} holds on the interior $C(W)^\ci$ by \eq{gc5eq35}--\eq{gc5eq37}, and so extends to $C(W)$ by continuity. This completes the proof of Theorem~\ref{gc4thm7}.

\medskip

\noindent{\small\sc The Mathematical Institute, Radcliffe Observatory Quarter, Woodstock Road, Oxford, OX2 6GG, U.K.}

\noindent{\small\sc E-mail: \tt joyce@maths.ox.ac.uk}

\end{document}